\pdfoutput=1 %for arXiv submissions
\DeclareSymbolFont{AMSb}{U}{msb}{m}{n}
\documentclass[11pt,noamsfonts,a4paper]{amsart}
\usepackage[left=1in, right=1in, top=1in, bottom=1in]{geometry}
\usepackage{mathtools}
\usepackage{braket}
\usepackage[charter]{mathdesign}
\usepackage[tracking]{microtype}
\usepackage{tabularx}
\usepackage{etoolbox}
\usepackage{graphicx}
\usepackage{stmaryrd} % \mapsfrom

\usepackage{amsmath,amsthm}
\newtheoremstyle{pineapple}%
  {1em}{1em}%
  {\itshape}{}%
  {\bfseries}{. ---}
  {0.5em}{}

\newtheoremstyle{durian}%
  {1em}{1em}%
  {}{}%
  {\bfseries}{. ---}
  {0.5em}{}

% Make the Theorem numbering bold, following:
% https://tex.stackexchange.com/questions/15137/theorem-numbers-in-bold
\makeatletter
\def\swappedhead#1#2#3{%
  % original definition:
  % \thmnumber{\@upn{\the\thm@headfont#2\@ifnotempty{#1}{.~}}}%
  % change:
  \thmnumber{\@upn{\the\thm@headfont#2\@ifnotempty{#1}{.~}}}%
  \thmname{#1}%
  \thmnote{ {\the\thm@notefont(#3)}}}
\makeatother

% Change the label of itemize to --

% Change the spacing between subsection number and the text
\makeatletter
\def\@sect#1#2#3#4#5#6[#7]#8{%
  \edef\@toclevel{\ifnum#2=\@m 0\else\number#2\fi}%
  \ifnum #2>\c@secnumdepth \let\@secnumber\@empty
  \else \@xp\let\@xp\@secnumber\csname the#1\endcsname\fi
  \@tempskipa #5\relax
  \ifnum #2>\c@secnumdepth
    \let\@svsec\@empty
  \else
    \refstepcounter{#1}%
    \edef\@secnumpunct{%
      \ifdim\@tempskipa>\z@ % not a run-in section heading
        \@ifnotempty{#8}{.~}%
      \else
        \@ifempty{#8}{.}{.~}%
      \fi
    }%
    \@ifempty{#8}{%
      \ifnum #2=\tw@ \def\@secnumfont{\bfseries}\fi}{}%
    \protected@edef\@svsec{%
      \ifnum#2<\@m
        \@ifundefined{#1name}{}{%
          \ignorespaces\csname #1name\endcsname\space
        }%
      \fi
      \@seccntformat{#1}%
    }%
  \fi
  \ifdim \@tempskipa>\z@ % then this is not a run-in section heading
    \begingroup #6\relax
    \@hangfrom{\hskip #3\relax\@svsec}{\interlinepenalty\@M #8\par}%
    \endgroup
    \ifnum#2>\@m \else \@tocwrite{#1}{#8}\fi
  \else
  \def\@svsechd{#6\hskip #3\@svsec
    \@ifnotempty{#8}{\ignorespaces#8\unskip
       \@addpunct.}%
    \ifnum#2>\@m \else \@tocwrite{#1}{#8}\fi
  }%
  \fi
  \global\@nobreaktrue
  \@xsect{#5}}
\makeatother

% Make the subsection numbering bold, see
% https://tex.stackexchange.com/questions/219265/how-to-boldface-only-a-subsection-number-in-amsart?rq=1
\makeatletter
\def\@seccntformat#1{%
  \protect\textup{\protect\@secnumfont
    \ifnum\pdfstrcmp{subsection}{#1}=0 \bfseries\fi% subsection # in \bfseries
    \csname the#1\endcsname
    \protect\@secnumpunct
  }%
}
\makeatother

\theoremstyle{pineapple}
\newtheorem{IntroTheorem}{Theorem}

\swapnumbers
\newtheorem{Theorem}[subsection]{Theorem}
\newtheorem{Lemma}[subsection]{Lemma}
\newtheorem{Proposition}[subsection]{Proposition}
\newtheorem{Corollary}[subsection]{Corollary}

\theoremstyle{durian}

\usepackage[dvipsnames]{xcolor}
\usepackage{hyperref} %always add last
\hypersetup{
  colorlinks = true,
  linkcolor = Fuchsia,
  urlcolor = Fuchsia,
  citecolor = ForestGreen,
  linkbordercolor = {white}}
\linespread{1.15}

\usepackage{tikz-cd}
\usetikzlibrary{arrows}

% https://tex.stackexchange.com/questions/394154/how-to-include-inclusion-subgroup-relationship-in-tikz-cd-diagram
\tikzset{
  symbol/.style={
    draw=none,
    every to/.append style={
      edge node={node [sloped, allow upside down, auto=false]{$#1$}}}
  }
}

\usepackage{enumitem}
\setlist[1]{labelindent=\parindent}
\setlist[1]{labelsep=0.5em}
\setlist[enumerate,1]{label={\upshape (\roman*)}, ref={\upshape (\roman*)}}

%the following hack found at https://tex.stackexchange.com/a/212099
\makeatletter
\newcommand{\leqnomode}{\tagsleft@true\let\veqno\@@leqno}
\newcommand{\reqnomode}{\tagsleft@false\let\veqno\@@eqno}
\makeatother

\usepackage[utf8]{inputenc}

%found on https://tex.stackexchange.com/a/565122 with improvements from TikZ manual:
\tikzset{>={Straight Barb[length=2pt,width=4pt]}, commutative diagrams/arrow style=tikz}

% Number equations with the subsection counter
\makeatletter
\let\c@equation\c@subsection

\makeatother

\DeclareMathOperator{\CH}{CH}
\DeclareMathOperator{\gr}{gr}

\DeclareMathOperator{\Fr}{Fr}
\DeclareMathOperator{\id}{id}
\DeclareMathOperator{\Spec}{Spec}

\DeclareMathOperator{\image}{im}
\DeclareMathOperator{\HomSheaf}{\mathcal{H}\!\mathit{om}}

% Stirling numbers
% Taken from https://tex.stackexchange.com/questions/86056/how-to-write-stirling-numbers-of-the-second-kind?noredirect=1&lq=1
\newcommand{\genstirlingI}[3]{%
  \genfrac{[}{]}{0pt}{#1}{#2}{#3}%
}

\newcommand{\stirlingI}[2]{\genstirlingI{}{#1}{#2}}

% QED Symbol

%\renewcommand{\qed}{~\hfill{\psqedsymb}}

\makeatletter
\newcommand*{\coloneqq}{\mathrel{\rlap{%
           \raisebox{0.3ex}{$\m@th\cdot$}}%
           \raisebox{-0.3ex}{$\m@th\cdot$}}%
           =}
\newcommand{\eqqcolon}{=%
           \mathrel{\rlap{%
           \raisebox{0.3ex}{$\m@th\cdot$}}%
           \raisebox{-0.3ex}{$\m@th\cdot$}}}
\makeatother

 %so that punctuation does not mess with tikz layout
\newcommand{\parref}[1]{{\bf\ref{#1}}}

\DeclareMathOperator{\Gram}{Gram}
\DeclareMathOperator{\rank}{rank}

\DeclareMathOperator{\Hom}{Hom}

\DeclareMathOperator{\codim}{codim}
\DeclareMathOperator{\pr}{pr}

\DeclareMathOperator{\Sing}{Sing}
\DeclareMathOperator{\Fitt}{Fitt}

\DeclareMathOperator{\Gr}{\mathbf{Gr}}

\newcommand{\kk}{\mathbf{k}}
\newcommand{\FF}{\mathbf{F}}
\newcommand{\GG}{\mathbf{G}}
\newcommand{\sO}{\mathcal{O}}
\newcommand{\qbics}{q\operatorname{\bf\!-bics}}
\newcommand{\hrefSP}[1]{\href{https://stacks.math.columbia.edu/tag/#1}{#1}}
\newcommand{\citeSP}[1]{\cite[\hrefSP{#1}]{stacks-project}}
\newcommand{\citeForms}[1]{\cite[\href{https://arxiv.org/pdf/2301.09929.pdf\#subsection.#1}{\textbf{#1}}]{qbic-forms}}
\newcommand{\citeThesis}[1]{\cite[\href{https://arxiv.org/pdf/2205.05273.pdf\#subsection.#1}{\textbf{#1}}]{thesis}}

\makeatletter
\newcommand{\smallbullet}{} % for safety
\DeclareRobustCommand\smallbullet{%
  \mathord{\mathpalette\smallbullet@{0.75}}%
}
\newcommand{\smallbullet@}[2]{%
  \vcenter{\hbox{\scalebox{#2}{$\m@th#1\bullet$}}}%
}
\makeatother

\newcommand{\subsectiondash}[1]{\subsection{#1}\textbf{---}\;}
\newcommand{\PP}{\mathbf{P}}

\title{\(q\)-bic hypersurfaces and their Fano schemes}
\author{Raymond Cheng}
\address{Institute of Algebraic Geometry \\
  Leibniz University Hannover \\
  Welfengarten 1 \\
  30167 Hannover \\
  Germany
}
\email{cheng@math.uni-hannover.de}
%\date{January 23, 2023}
%\keywords{\(q\)-bic forms, semi-linear algebra, positive characteristic, Fermat varieties, algebraic groups.}
%\subjclass[2020]{14J70, 14J10, 14G17 (primary), 14N25, 14G10, 20C33 (secondary)}

\begin{document}
\begin{abstract}
A \emph{\(q\)-bic hypersurface} is a hypersurface in projective space of degree
\(q+1\), where \(q\) is a power of the positive ground field characteristic,
whose equation consists of monomials which are products of a \(q\)-power and a
linear power; the Fermat hypersurface is an example. I identify \(q\)-bics as
moduli spaces of isotropic vectors for an intrinsically defined bilinear form
and use this to study their Fano schemes of linear spaces. Amongst other
things, I prove that the scheme of \(m\)-planes in a smooth
\((2m+1)\)-dimensional \(q\)-bic hypersurface is an \((m+1)\)-dimensional
smooth projective variety of general type which admits a purely inseparable
covering by a complete intersection; I compute its Betti numbers by relating it
to Deligne--Lusztig varieties for the finite unitary group; and I prove that
its Albanese variety is purely inseparably isogenous via an Abel--Jacobi map to
a intermediate Jacobian of the hypersurface. The case \(m = 1\) may be viewed
as an analogue of results of Clemens and Griffiths regarding cubic threefolds.
\end{abstract}
\maketitle
\setcounter{tocdepth}{1}

\thispagestyle{empty}
\section*{Introduction}
Explicitly, a \emph{\(q\)-bic hypersurface} is any hypersurface in
projective space of degree \(q+1\), where \(q\) is a power of the
characteristic \(p > 0\) of the ground field \(\kk\), defined by an equation of
the special form:
\[
X \coloneqq
\Set{(x_0: \cdots : x_n) \in \PP^n | \sum\nolimits_{i,j = 0}^n a_{ij} x_i^q x_j = 0}.
\]
Such hypersurfaces, the best known being the Fermat hypersurface of degree
\(q+1\), have been considered time and time again for their extraordinary
properties and interconnections: they are supersingular \cite{Tate:Conjecture,
SK:Fermat}, unirational though not necessarily separably so \cite{Shioda:Fermat,
Shioda:Unirationality, Conduche, Shen:Fermat, ratconn}, have curious
differential-geometric properties \cite{Wallace:Duality, Hefez:Thesis,
Beauville:Moduli, Noma, KP:Gauss}, and are otherwise extremal
in many respects, such as in relation to \(F\)-singularity theory
\cite{KKPSSW:F-Pure}. These hypersurfaces arise in the study of
Deligne--Lusztig theory \cite{Lusztig:Green, DL, Li:DL}, unitary
Shimura varieties \cite{Vollaard, LZ:Kudla, LTXZZ}, and finite Hermitian
geometries \cite{BC:Hermitian, Segre:Hermitian, Hirschfeld:Geometries}. My goal
here is to develop a new perspective from which to understand these
hypersurfaces, bringing new methods to bear and new analogies to make sense of
their idiosyncrasies.

To explain, begin with an old observation: the \(q\)-bic hypersurface \(X\) is
defined by a bilinear form. More precisely, view \(\PP^n\) as the space of
lines in a vector space \(V\), and let \(e_0,\ldots,e_n\) be the basis dual to
the chosen coordinates. The equation for \(X\) is intrinsically
encoded by the biadditive pairing \(\beta \colon V \times V \to \kk\)
determined by \(\beta(e_i,e_j) = a_{ij}\), \(\kk\)-linearity in the second
variable, and \(q\)-power \(\kk\)-linearity in the first. Geometric
properties of \(X\) are reflected in algebraic properties of \(\beta\): for
instance, \(X\) is smooth over \(\kk\) if and only if \(\beta\) is nonsingular,
equivalent to invertibility of the matrix \((a_{ij})_{i,j=0}^n\).

The basic point of this article is to identify \(X\) as the moduli space of
isotropic lines for \(\beta\). This brings moduli- and deformation-theoretic
methods to bear, and begins an analogy with quadrics. This perspective
immediately highlights the schemes parameterizing linear subvarieties of \(X\):
they become moduli spaces of isotropic subspaces for \(\beta\) and are thus
akin to orthogonal Grassmannian. Their basic geometry is as follows:

\begin{IntroTheorem}\label{theorem-fano-schemes}
For each \(0 \leq r < \frac{n}{2}\), the Fano scheme \(\FF\) of \(r\)-planes
in a \(q\)-bic hypersurface \(X \subset \PP^n\) is
\begin{enumerate}
\item nonempty;
\item of dimension at least \((r+1)(n-2r-1)\);
\item connected when \(n \geq 2r+2\); and
\item smooth of dimension \((r+1)(n-2r-1)\) precisely at points corresponding
to \(r\)-planes disjoint from the singular locus of \(X\).
\end{enumerate}
In particular, if \(X\) is smooth, then \(\FF\) is smooth of dimension
\((r+1)(n-2r-1)\) and is irreducible whenever \(\FF\) is positive-dimensional.
Furthermore, in this case, \(\FF\)
\begin{enumerate}
\setcounter{enumi}{4}
\item is stratified by generalized Deligne--Lusztig varieties of type \({}^2
\mathrm{A}_{n+1}\); and
\item
has canonical bundle
\(\omega_\FF \cong \sO_\FF\big((r+1)(q+1) - (n+1)\big)\)
where \(\sO_\FF(1)\) is its Pl\"ucker polarization.
\end{enumerate}
\end{IntroTheorem}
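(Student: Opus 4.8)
The plan is to treat $\FF$ as the moduli space of isotropic $(r+1)$-dimensional subspaces $W \subseteq V$ for the pairing $\beta$, and to build $\FF$ by a tower of Grassmann-bundle constructions that exhibits it as a (local) complete intersection in a Grassmannian bundle. Concretely, I would consider the flag variety of chains $0 \subset W_1 \subset W$ and, more usefully, realize $\FF$ over the Grassmannian $\Gr(r+1, V)$ as the zero scheme of a section of a rank-$\binom{r+2}{2}$ (or, after identifying the symmetric/antisymmetric behaviour of $\beta$ on Frobenius twists, appropriately sized) vector bundle encoding the vanishing of $\beta(w,w')$ for $w,w' \in W$. Granting parts (i)–(iv) — in particular the smoothness and the expected dimension $(r+1)(n-2r-1)$ when $X$ is smooth — this section is regular, so the Koszul resolution computes $\omega_\FF$ by adjunction.

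Carrying this out: first I would fix the tautological subbundle $\mathcal{S} \subset V \otimes \sO_{\Gr}$ of rank $r+1$ on $G \coloneqq \Gr(r+1,V)$, with $\omega_G \cong \det(\mathcal{S})^{\otimes(n+1)} \otimes (\text{twist})$; more precisely $\omega_G \cong (\det \mathcal{S})^{n+1} \otimes \sO_G$ up to the constant rank-$V$ determinant, and the Plücker line bundle is $\sO_G(1) = \det(\mathcal{S}^\vee) = (\det\mathcal{S})^{-1}$. Next I would identify the bundle $\mathcal{E}$ on $G$ whose section cuts out $\FF$: the $q$-linearity in the first slot means $\beta$ restricted to $W$ is a section of $(\Fr^* \mathcal{S})^\vee \otimes \mathcal{S}^\vee$, i.e. of $\Sym$-type twisted by Frobenius, and the relevant bundle has determinant $(\det \mathcal{S}^\vee)^{q+1}$ raised to an appropriate multiplicity — the key numerical input being that $\det \mathcal{E} \cong (\det \mathcal{S}^\vee)^{(r+1)(q+1)} = \sO_G(1)^{\otimes(r+1)(q+1)}$ when restricted along the inclusion $\FF \hookrightarrow G$, since the "size" of the isotropy condition scales like $(r+1)(q+1)$ against the Plücker class. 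Then adjunction for the regular section gives $\omega_\FF \cong \omega_G|_\FF \otimes \det \mathcal{E}|_\FF \cong \sO_\FF\big((r+1)(q+1) - (n+1)\big)$, as claimed.

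The main obstacle I anticipate is pinning down $\mathcal{E}$ and its determinant correctly. Because $\beta$ is only biadditive and $q$-power linear in the first variable — not bilinear — the "space of values $\beta(w,w')$" is not literally $\Sym^2 \mathcal{S}^\vee$ or $\wedge^2 \mathcal{S}^\vee$; one must use the Frobenius twist $\Fr^*\mathcal{S}$ and track how $\det \Fr^* \mathcal{S} \cong (\det \mathcal{S})^{\otimes q}$ interacts with $\det \mathcal{S}$. I would handle this by using the identification of $\FF$ with the isotropic-subspace moduli space (the "basic point" of the paper) together with the hypersurface case $r = n-1$ adjacent: for $X$ itself one has $\omega_X \cong \sO_X(q+1 - (n+1))$ by standard adjunction for a degree-$(q+1)$ hypersurface, which is exactly the $r = 0$ instance of the formula, so the $(r+1)(q+1)$ pattern is forced by compatibility along the incidence correspondence $\FF \times_X X \supset \{(W, x) : x \in \PP(W)\}$. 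Finally I would verify regularity of the section using part (iv): away from the singular locus the expected codimension is achieved, and since $X$ smooth forces every $r$-plane to avoid $\Sing X = \emptyset$, the section is regular everywhere, legitimizing the Koszul computation on all of $\FF$.
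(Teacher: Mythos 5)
Your proposal only addresses item (vi), and even there it explicitly ``grants parts (i)--(iv)''; but (i)--(v) are the bulk of the statement and none of them is sketched. Nonemptiness is proved in the paper (\parref{hypersurfaces-nonempty-fano}) by a properness/dominance argument for the incidence correspondence over the parameter space of all \(q\)-bic hypersurfaces, using that a nonsingular \(q\)-bic form of dimension \(n+1\) admits an isotropic subspace of dimension \(\lfloor\frac{n+1}{2}\rfloor\); connectedness (\parref{hypersurfaces-fano-connected}) requires controlling the nonsmooth locus of the relative Fano scheme over the corank-\(1\) boundary, Stein factorization, and purity of the branch locus; the precise smoothness criterion (iv) rests on the first-order deformation computation identifying \(\mathcal{T}_\FF \cong \HomSheaf(\mathcal{S}_\FF,\mathcal{S}_\FF^{[1],\perp}/\mathcal{S}_\FF)\) (\parref{differential-identify}, \parref{differential-singular-locus}); and the Deligne--Lusztig stratification (v) needs the Hermitian/Frobenius structure of \S\parref{section-hermitian}--\S\parref{section-dl} (\parref{dl-fano-schemes-are-dl}) and is untouched by your argument. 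As a proof of the theorem as stated, this is a substantial gap, not a stylistic difference.

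For the part you do treat, your route is essentially the paper's: \(\FF\) is the zero locus in \(\GG\) of the section \(\beta_{\mathcal{S}}\) of \((\mathcal{S}^{[1]}\otimes_{\sO_\GG}\mathcal{S})^\vee\) (\parref{basics-fano-equations}), and \(\omega_\FF\) is computed by taking determinants in the (co)normal sequence (\parref{differential-generic-smoothness}). However, you hedge on the defining bundle (``rank \(\binom{r+2}{2}\) \dots or appropriately sized''), and this must be resolved, not finessed: a \(q\)-bic form has no symmetry forcing \(\beta(w^{[1]},w')\) and \(\beta(w'^{[1]},w)\) to be dependent, so the isotropy condition is the full \((r+1)^2\) equations and the correct bundle is \((\mathcal{S}^{[1]}\otimes\mathcal{S})^\vee\) of rank \((r+1)^2\); a symmetric-square reduction would give both the wrong expected dimension (spoiling (ii)) and the wrong determinant. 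With the correct bundle the determinant is an immediate calculation rather than the pattern-matching you invoke: since \(\det\mathcal{S}^{[1]}\cong(\det\mathcal{S})^{\otimes q}\), one has
\[
\det\big((\mathcal{S}^{[1]}\otimes\mathcal{S})^\vee\big)\cong(\det\mathcal{S}^\vee)^{\otimes(q+1)(r+1)}\cong\sO_\GG\big((q+1)(r+1)\big),
\]
and adjunction with \(\omega_\GG\cong\sO_\GG(-n-1)\) gives \(\omega_\FF\cong\sO_\FF\big((r+1)(q+1)-(n+1)\big)\); your regularity argument via (iv) and \(\Sing X=\varnothing\) is fine once the rank is pinned down, since then \(\FF\) has codimension exactly \((r+1)^2\) in the Cohen--Macaulay \(\GG\).
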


This is the cumulation of \parref{basics-fano-equations},
\parref{hypersurfaces-nonempty-fano},
\parref{differential-generic-smoothness}, \parref{differential-singular-locus},
\parref{hypersurfaces-fano-connected}, and \parref{dl-fano-schemes-are-dl}.
The basic geometric properties are established by moduli-theoretic means;
notably, connectedness is proven by studying the relative Fano schemes over the
parameter space of all \(q\)-bic hypersurfaces. The tangent sheaf of \(\FF\)
can be identified: see \parref{differential-identify}. Perhaps the most
remarkable fact is that the dimension of \(\FF\) is independent of \(q\),
equivalently, of the degree of \(X\): see the comments following
\parref{basics-fano-equations} for more.

I believe that the Fano schemes of smooth \(q\)-bic hypersurfaces are an
interesting collection of smooth projective varieties, with rich and
fascinating geometry, worthy of further study; the remaining results, which
focus on the Fano schemes of maximal-dimensional planes, hopefully give some
substance to this conviction. I mention in passing two other particularly
interesting cases which I hope will be taken up in future work: first, the
schemes of lines for their analogy with cubics: and, second, the scheme of
\(r\)-planes in a smooth \(q\)-bic hypersurface of dimension \(n - 1 =
(q+1)(r+1)-2\). In the latter case, \(\FF\) is a smooth projective variety of
dimension \((r+1)^2(q-1)\) with trivial canonical bundle, and is, furthermore,
simply connected as soon as \(q > 2\): see
\parref{hypersurfaces-simply-connected}.

Turning now to the remaining results, consider first a smooth \(q\)-bic
hypersurface \(X\) of even dimension \(2m\). Its Fano scheme \(\FF\) of
\(m\)-planes is a finite set of reduced points, the number of which being
geometrically determined in \parref{hermitian-maximal-count} as
\[
\#\FF = \prod\nolimits_{i = 0}^m (q^{2i+1} + 1).
\]
Taking \(m = 1\), this means that a smooth \(q\)-bic surface contains exactly
\((q+1)(q^3+1)\) lines; specializing further to the case \(q = 2\) recovers the
\(3 \times 9 = 27\) lines in a smooth cubic surface.

When \(X\) is of odd dimension \(2m+1\), the geometry of \(\FF\) is more
complicated, and its basic properties are summarized in the following
statement. Below, the \'etale Betti numbers of \(\FF\) are expressed in terms
of Gaussian binomial coefficients with parameter \(\bar{q} \coloneqq -q\): see
\parref{hermitian-planes-count} for the notation and comments on the choice of
parameter.

\begin{IntroTheorem}\label{theorem-lagrangian}
The Fano scheme \(\FF\) of \(m\)-planes in a smooth \(q\)-bic hypersurface
\(X \subset \PP^{2m+2}\) is an \((m+1)\)-dimensional, irreducible, smooth,
projective variety of general type. There exists a dominant purely inseparable
rational map \(Z \dashrightarrow \FF\) of degree \(q^{m(m+1)}\) from a complete
intersection \(Z \subset \PP^{2m+2}\) geometrically isomorphic to
\[
Z \cong
\Set{(x_0:x_1: \cdots : x_{2m+2}) \in \PP^{2m+2} |
x_0^{q^{2i+1} + 1} + x_1^{q^{2i+1} + 1} + \cdots + x_{2m+2}^{q^{2i+1} + 1} = 0\;\;
\text{for}\; 0 \leq i \leq m}.
\]
For each \(0 \leq k \leq 2m+2\), the \(k\)-th \'etale Betti number of \(\FF\)
is given by
\[
b_k(\mathbf{F}) =
\bar{q}^{\binom{2m+3-k}{2}} \stirlingI{2m+2}{k}_{\bar{q}} +
\sum_{i = 0}^{m - \lceil  k/2\rceil} (1-\bar{q})^{i+1}
\bar{q}^{\binom{2m+1-2i-k}{2}}
[2i+1]_{\bar{q}}!!
\stirlingI{2m+3}{2i+2}_{\bar{q}}
\stirlingI{2m-2i}{k}_{\bar{q}}.
\]
\end{IntroTheorem}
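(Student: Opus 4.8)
The plan is to prove the three assertions — the coarse geometry of \(\FF\), the purely inseparable covering by \(Z\), and the Betti number formula — more or less independently, leaning on Theorem~\ref{theorem-fano-schemes} for the first, on the bilinear-form description of \(q\)-bics for the second, and on the Deligne--Lusztig stratification for the third. For the geometry, apply Theorem~\ref{theorem-fano-schemes} with \(n = 2m+2\) and \(r = m\): the constraint \(0 \le r < \tfrac{n}{2}\) holds since \(m < m+1\), and \(n = 2r+2\), so \(\FF\) is connected; since \(X\) is smooth, \(\FF\) is smooth of dimension \((r+1)(n-2r-1) = m+1\), hence irreducible, being connected and positive-dimensional. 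For general type, Theorem~\ref{theorem-fano-schemes}(vi) gives \(\omega_\FF \cong \sO_\FF\big((m+1)(q+1)-(2m+3)\big) = \sO_\FF\big((m+1)(q-1)-1\big)\); since \(\sO_\FF(1)\) is very ample and \((m+1)(q-1)-1 > 0\) for \(m \ge 1\), the canonical bundle is ample, so \(\FF\) is of general type.

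For the covering \(Z \dashrightarrow \FF\): since the assertion is geometric, assume \(X\) is the Fermat \(q\)-bic \(\sum_{i=0}^{2m+2} x_i^{q+1} = 0\), so that \(\FF\) parametrizes \((m+1)\)-dimensional subspaces \(W \subset V\) on which the form \(\beta(v,w) = \sum_i v_i^q w_i\) satisfies \(\beta(w,w) = 0\) identically. The idea is to attach to such a \(W\) a canonical flag built from the successive \(\beta\)-orthogonals of its iterated Frobenius twists \(W^{(q)}, W^{(q^3)}, \dots, W^{(q^{2m+1})}\); over the open locus where these intersections have the generic ranks, a basis adapted to this flag has coordinates — read off through the twisted pairings — forced to satisfy exactly the relations \(x_0^{q^{2i+1}+1} + \dots + x_{2m+2}^{q^{2i+1}+1} = 0\) for \(0 \le i \le m\) cutting out \(Z\). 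This construction and its inverse, each well-defined only up to an iterated Frobenius, exhibit a dominant rational map \(Z \dashrightarrow \FF\) that is purely inseparable of degree \(q^{m(m+1)} = \prod_{i=1}^{m} q^{2i}\), the total \(q\)-power distortion accumulated over the \(m\) nontrivial steps. Pure inseparability forces \(\dim Z = \dim \FF = m+1\), so the \(m+1\) defining equations form a regular sequence and \(Z\) is a complete intersection.

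For the Betti numbers, invoke Theorem~\ref{theorem-fano-schemes}(v), that \(\FF\) is stratified by generalized Deligne--Lusztig varieties of type \({}^2\mathrm{A}_{2m+3}\), attached to the finite unitary group acting on \((V,\beta)\). First, identify the strata explicitly: an open one, the big-cell Deligne--Lusztig variety, and boundary strata indexed by \(i = 0, \dots, m\), the \(i\)-th being fibered over the variety of maximal isotropic subspaces of a Hermitian space of rank \(2i+2\) — this is the source of the double-factorials \([2i+1]_{\bar{q}}!!\), counting the rational such subspaces with the substitution \(\bar{q} = -q\), and of the products \(\stirlingI{2m+3}{2i+2}_{\bar{q}}\,\stirlingI{2m-2i}{k}_{\bar{q}}\). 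Second, compute the \(\ell\)-adic cohomology of each stratum: unitary Deligne--Lusztig varieties have cohomology of Tate type whose Poincar\'e polynomials are \(q\)-analogues evaluated at \(\bar{q} = -q\) — the Ennola substitution reflecting the twisted type \({}^2\mathrm{A}\) against \(\mathrm{A}\) — with the shifts \(\bar{q}^{\binom{2m+3-k}{2}}\) and \(\bar{q}^{\binom{2m+1-2i-k}{2}}\) coming from the (co)dimensions of their affine pavings and the factors \((1-\bar{q})^{i+1}\) from the toral directions. Third, assemble: remove the open stratum and induct on \(m\) — its closed complement is again a Fano scheme of the same type, for a Hermitian space of one smaller Witt index — using the excision long exact sequences; purity of the strata forces the associated spectral sequence to degenerate, so \(b_k(\FF)\) is the sum of the shifted Betti numbers of the strata, which reorganizes into the stated closed formula.

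I expect two steps to be the main obstacles. For the covering, making the canonical-flag construction precise and proving it is purely inseparable of the exact degree \(q^{m(m+1)}\) — equivalently, that it inverts a genuine morphism up to an iterated Frobenius — requires a careful analysis of its differential on the Grassmannian. For the Betti numbers, the crux is controlling the \(\ell\)-adic cohomology of the \emph{parabolic} generalized Deligne--Lusztig varieties for the unitary group — Tate type, the affine paving, and the \(-q\) substitution all rest on nontrivial input from Deligne--Lusztig theory — and then threading the excision sequences, with the sign \(\bar{q} = -q\) tracked consistently, so as to land on exactly the closed formula. Note that the rational map \(Z \dashrightarrow \FF\), being merely rational and with \(Z\) possibly singular, does not by itself yield a cohomological comparison, so the Deligne--Lusztig route is essential for the last part.
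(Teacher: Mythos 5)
Your first and third steps track the paper's strategy: the coarse geometry is exactly Theorem \parref{theorem-fano-schemes} specialized to \((n,r)=(2m+2,m)\) plus the canonical bundle formula, and the Betti numbers do come from the Deligne--Lusztig stratification with the Ennola parameter \(\bar{q}=-q\). The genuine gaps are in the other two steps. For the covering, the paper's \(Z\) is not obtained from a ``canonical flag'' attached to \(W\in\FF\): it is the dynamical locus \(X^m=\{x: \langle x,\phi(x),\ldots,\phi^m(x)\rangle\ \text{isotropic}\}\) of \parref{hermitian-filtration}, which in Hermitian coordinates is visibly cut out by the \(m+1\) Fermat-type equations of degrees \(q^{2i+1}+1\) (\parref{hermitian-complete-intersection}, \parref{hermitian-coordinates}), and the map is \(x\mapsto\langle x,\phi(x),\ldots,\phi^m(x)\rangle\). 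The two points your sketch leaves open are precisely the ones requiring work: (i) dominance, i.e.\ that the generic \(m\)-plane is cyclically generated by \(\phi\) -- in your language, that the ``generic ranks'' locus is nonempty and the construction inverts; the paper gets this by resolving both sides by the blowup towers \(\tilde{X}^m\) and \(\tilde{\FF}_{m,\mathrm{cyc}}\) (\parref{cyclic-blowups}, \parref{cyclic-smoothness}), which are universally homeomorphic via \(\phi^{\pm}\), hence smooth irreducible of the same dimension \(m+1\), and then \(\FF_{m,\mathrm{cyc}}=\FF_m\) by irreducibility of \(\FF_m\) (comments before \parref{cyclic-maximal}); (ii) the degree: ``accumulated \(q\)-power distortion'' is not an argument -- the paper computes the scheme-theoretic fibre of \(\phi^-\) over a Hermitian flag as \(\Spec\kappa[a_{ij}]/(a_{ij}^{q^{2j}})\), of length \(q^{m(m+1)}\) (\parref{cyclic-proof}). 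Your complete-intersection claim is also circular: a rational map defined only on an open locus cannot rule out components of \(Z\) of dimension \(>m+1\); the paper excludes them via the tangent-space and singular-locus computation in \parref{hermitian-complete-intersection} (a too-large component would be generically singular, hence contained in the \((m-1)\)-dimensional union of Hermitian \((m-1)\)-planes).

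For the Betti numbers, the assembling step as you state it would fail: the open strata are not proper, their compactly supported cohomology is not pure, and ``purity forces the spectral sequence to degenerate'' is not available; moreover the cohomology of Coxeter Deligne--Lusztig varieties is not of Tate type -- the Frobenius eigenvalues are powers of \(-q\), and pinning them down is exactly the nontrivial input from Lusztig. The paper instead argues through zeta functions: the closed strata are identified in \parref{dl-description} and \parref{dl-irred-comps} as \(\#\FF_i(X)_{\mathrm{Herm}}\) copies of Coxeter strata of smaller Fano schemes (this, via the double count of \parref{hermitian-planes-count}, is where \((1-\bar q)^{i+1}[2i+1]_{\bar q}!!\stirlingI{2m+3}{2i+2}_{\bar q}\) comes from, rather than a fibration over maximal isotropics as you describe); point counts over \(\FF_{q^{2s}}\) are added using Lusztig's formula \parref{dl-zeta-coxeter}; and \(b_k(\FF)\) is then read off from the factors \((1-\bar q^it)^{\pm1}\) because \(\FF\) is smooth and proper, so \(\mathrm{H}^k\) is pure of weight \(k\) and only the factors with \(\lvert\bar q^i\rvert=q^i\), \(i=k\), can contribute to it. If you insist on a cohomological rather than zeta-theoretic assembly, you must supply exactly this weight separation on \(\mathrm{H}^*_c\) of the strata, which is what your purity/degeneration step is silently standing in for. (Minor point: general type via ampleness of \(\omega_\FF\) needs \((m+1)(q-1)>1\), so your restriction to \(m\geq1\) -- or \(q\geq3\) when \(m=0\) -- is the correct reading.)
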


The latter two statements here are a combination of \parref{cyclic-resolve},
\parref{cyclic-maximal}, and \parref{dl-betti}. The Betti numbers are computed
via Deligne--Lusztig theory from \cite{DL}, using results of Lusztig from
\cite{Lusztig:Frobenius}, and as set up in \parref{dl-maximal-isotropics}. The
covering is related to a unitary analogue of the wonderful compactification of
Drinfeld's upper half-space, see \parref{cyclic-resolve}, and arises from
dynamics of, essentially, the Frobenius endomorphism \(\phi\) of \(X\): as
explained in \parref{hermitian-filtration}, the complete intersection \(Z\)
parameterizes the isotropic lines \(L \subset V\) such that the cyclic subspace
spanned by \(L, \phi(L), \ldots, \phi^r(L)\) remains isotropic for \(\beta\);
and that \(Z\) covers \(\FF\) means that the general \(r\)-plane in \(X\) is
cyclically generated by \(\phi\).

For the final result, notice that Poincar\'e duality for \(\FF\) yields a
series of interesting identities amongst Gaussian numbers and, in particular,
gives a simple formula for the first Betti number of \(\FF\):
\[
b_1(\FF) = b_{2m+1}(\FF) = \bar{q} [2m+2]_{\bar{q}}.
\]
This number coincides with the middle Betti number \(b_{2m+1}(X)\) of the
hypersurface, see \parref{dl-hypersurface-cohomology}, leading to a remarkable
geometric consequence: the Albanese variety \(\mathbf{Alb}_\FF\) of \(\FF\) is
essentially isomorphic to the intermediate Jacobian \(\mathbf{Ab}_X^{m+1}\) of
\(X\). Here, the intermediate Jacobian of \(X\) is taken to mean the algebraic
representative, in the sense of Samuel and Murre from \cite{Samuel,
Murre:Jacobian}, of the group of algebraically trivial cycles of codimension
\(m+1\) in \(X\): see \parref{cgaj-algrep}. The main statement is
as follows:

\begin{IntroTheorem}\label{theorem-cgaj}
Over an algebraically closed field \(\kk\), the Fano incidence correspondence
\(\FF \leftarrow \mathbf{L} \rightarrow X\) induces a purely inseparable
isogeny
\[
\mathbf{L}_* \colon
\mathbf{Alb}_\FF \to
\mathbf{Ab}_X^{m+1}
\]
between supersingular abelian varieties of dimension
\(\frac{1}{2}\bar{q}[2m+2]_{\bar{q}}\).
\end{IntroTheorem}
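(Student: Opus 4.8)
The plan is to identify $\mathbf{L}_*$ as the homomorphism on algebraic representatives induced by the Fano incidence correspondence, to reduce the isogeny assertion to an isomorphism statement for the associated cylinder map on $\ell$-adic cohomology, and then to prove that isomorphism using the Deligne--Lusztig descriptions of the cohomology of $\FF$ and of $X$ together with the fact that the general $m$-plane is cyclically generated by Frobenius. To set up, write $p \colon \mathbf{L} \to \FF$ and $f \colon \mathbf{L} \to X$ for the projections from the universal $m$-plane $\mathbf{L} \subset \FF \times X$. Then $p$ is a $\PP^m$-bundle, so $\dim \mathbf{L} = 2m+1 = \dim X$, and $f$ is surjective because $X$ is covered by its $m$-planes, hence generically finite. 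The class $[\mathbf{L}] \in \CH^{m+1}(\FF \times X)$ acts by $\mathbf{L}_* = f_* \circ p^* \colon \CH_0(\FF)_{\mathrm{alg}} \to \CH^{m+1}(X)_{\mathrm{alg}}$; since $\FF$ is an irreducible variety of dimension $m+1$, its Albanese is the algebraic representative of the source, so, granting the existence of $\mathbf{Ab}_X^{m+1}$ recalled in \parref{cgaj-algrep}, functoriality of algebraic representatives yields the homomorphism $\mathbf{L}_* \colon \mathbf{Alb}_\FF \to \mathbf{Ab}_X^{m+1}$ of the statement.

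First I would record that $\mathbf{L}_*$ is surjective: because $X$ is unirational its Chow group of zero-cycles over $\kk$ is trivial, so a decomposition of the diagonal of $X$ adapted to the dominating codimension-$(m+1)$ family of $m$-planes shows that every algebraically trivial codimension-$(m+1)$ cycle on $X$ is rationally equivalent to an integral combination of $m$-planes; hence $\mathbf{L}_*$ is already surjective on Chow groups, a fortiori on algebraic representatives (compare \parref{cgaj-algrep}). In particular $\dim \mathbf{Ab}_X^{m+1} \leq \dim \mathbf{Alb}_\FF = \tfrac12 b_1(\FF) = \tfrac12 b_{2m+1}(\FF) = \tfrac12 \bar q [2m+2]_{\bar q}$, using Poincar\'e duality and the Betti formula of Theorem~\ref{theorem-lagrangian}. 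It therefore remains to prove that $\mathbf{L}_*$ has finite kernel: the two dimensions are then forced to agree, and surjectivity upgrades $\mathbf{L}_*$ to an isogeny.

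Finiteness of the kernel is the heart of the matter. It suffices to show that the Künneth component $[\mathbf{L}]_{1,2m+1}$ of the correspondence induces an isomorphism $H^1(\FF, \mathbf{Q}_\ell)^{\vee} \xrightarrow{\sim} H^{2m+1}(X, \mathbf{Q}_\ell(m+1))$ --- the $\ell$-adic realization of $\mathbf{L}_*$, composed with the inclusion $T_\ell \mathbf{Ab}_X^{m+1} \otimes \mathbf{Q}_\ell \hookrightarrow H^{2m+1}(X,\mathbf{Q}_\ell(m+1))$ --- since injectivity gives $\mathbf{L}_*$ a finite kernel, while bijectivity also forces $\dim \mathbf{Ab}_X^{m+1} = \tfrac12 b_{2m+1}(X)$; and both sides have dimension $\bar q [2m+2]_{\bar q}$ by \parref{dl-hypersurface-cohomology} and the identity recorded before the theorem, so only surjectivity is at issue. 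Working over a finite field, as the Betti-number computations already require, the target is described in \parref{dl-hypersurface-cohomology} and the source via the Deligne--Lusztig stratification of $\FF$ of Theorem~\ref{theorem-fano-schemes}\,(v) together with \parref{dl-maximal-isotropics} and \parref{dl-betti}; since $\mathbf{L}$ is equivariant for the finite unitary group and for Frobenius, the cylinder map is a morphism of Frobenius modules, and matching the two descriptions rests on the geometric input of \parref{hermitian-filtration} that a general $m$-plane in $X$ is cyclically generated by $\phi$ --- equivalently that the complete intersection $Z$ of Theorem~\ref{theorem-lagrangian} dominates $\FF$ --- which pins down the action of $[\mathbf{L}]$ on the $\phi$-eigenspaces. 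This I expect to be the main obstacle: it is the positive-characteristic analogue of the already delicate fact, for a cubic threefold, that the cylinder homomorphism of its Fano surface of lines is an isomorphism, complicated further by the need to follow how $\phi$ interlaces the two cohomology groups.

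Finally, granting the isomorphism above, $\mathbf{L}_*$ is an isogeny; to see it is purely inseparable one checks that its degree is a power of $p$ --- either by arranging the Deligne--Lusztig identification integrally, so that $T_\ell \mathbf{Alb}_\FF \to T_\ell \mathbf{Ab}_X^{m+1}$ is an isomorphism for every $\ell \neq p$, or by computing the composition of $\mathbf{L}_*$ with the transpose incidence correspondence, which is governed by the same Frobenius-dynamics as the purely inseparable covering $Z \dashrightarrow \FF$ of degree $q^{m(m+1)}$ in Theorem~\ref{theorem-lagrangian} --- and then, both abelian varieties being supersingular ($X$ is supersingular, so $\mathbf{Ab}_X^{m+1}$, and hence the isogenous $\mathbf{Alb}_\FF$, have $p$-divisible groups with no \'etale part), the $p$-power-order kernel of $\mathbf{L}_*$ is infinitesimal, so $\mathbf{L}_*$ is purely inseparable. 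Both supersingular abelian varieties then have dimension $\tfrac12 \bar q [2m+2]_{\bar q}$, as computed above.
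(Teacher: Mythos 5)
Your proposal is a plan rather than a proof, and the decisive step is exactly the one you leave open. The entire isogeny claim is made to rest on the assertion that the cylinder map \(\mathrm{H}^1(\FF,\mathbf{Q}_\ell)^\vee \to \mathrm{H}^{2m+1}_{\mathrm{\acute{e}t}}(X,\mathbf{Q}_\ell(m+1))\) induced by \(\mathbf{L}\) is an isomorphism, to be extracted from the Deligne--Lusztig descriptions of both sides plus equivariance plus the cyclic-generation statement --- and you yourself flag this as ``the main obstacle.'' Nothing in the proposal actually pins down the action of \([\mathbf{L}]\) on cohomology; matching two Frobenius modules of the same dimension does not produce an isomorphism without an explicit computation of the map, and the paper pointedly does \emph{not} take this route: it remarks after \parref{dl-first-betti} that such a cohomological isomorphism can be proved but ``will not be used in the following.'' The second load-bearing claim, surjectivity of \(\mathbf{L}_*\) via a decomposition of the diagonal ``adapted to'' the family of \(m\)-planes, is also unsubstantiated: triviality of \(\CH_0(X)\) gives, after clearing an integer \(N\), a cycle supported on \(D \times X\) for some divisor \(D\), not a presentation of algebraically trivial codimension-\((m+1)\) classes as integral combinations of \(m\)-planes; with integral coefficients this step does not follow from the standard argument, and you would at best get surjectivity up to isogeny after further work.

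The paper's actual proof replaces both missing steps by one concrete computation. Fixing a Hermitian \(m\)-plane \(P_0\), it forms the curves \(C_R\) of \(m\)-planes containing a Hermitian \((m-1)\)-plane \(R \subset P_0\) (smooth \(q\)-bic curves, hence supersingular), and uses the divisor identities \parref{cgaj-P-and-x}, \parref{cgaj-hermitian-m-plane}, \parref{cgaj-curves}, \parref{cgaj-curves-P0} to show in \parref{cgaj-multiplication} that the composite \(\nu^* \circ \mathbf{L}^* \circ \mathbf{L}_* \circ \nu_*\) on \(\mathbf{Jac}_C\) is multiplication by \(q^{2m}\). Combined with the equality of all four dimensions with \(\frac{1}{2}\bar{q}[2m+2]_{\bar{q}}\) --- from Fakhruddin's existence theorem \parref{cgaj-algrep-exists} for \(\mathbf{Ab}_X^{m+1}\), from \parref{dl-first-betti} for \(\mathbf{Alb}_\FF\) and \(\mathbf{Pic}^0_{\FF,\mathrm{red}}\), and from a genus count for \(\mathbf{Jac}_C\) --- this forces each factor, in particular \(\mathbf{L}_*\), to be a \(p\)-power isogeny, and supersingularity of \(\mathbf{Jac}_C\) makes multiplication by \(q^{2m}\), hence each factor, purely inseparable. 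Your suggested alternative of ``computing the composition with the transpose correspondence'' is in spirit what the paper does, but the content lies precisely in the restriction formulas for \(\sO_\FF(D_P)\) on the curves \(C_R\), which your proposal never supplies; without them, or without an actual proof of the cohomological isomorphism, the argument does not close.
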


This is the essential statement contained in
\parref{cgaj-intermediate-jacobian-morphisms} and \parref{cgaj-result}. Its
proof is based on studying special subvarieties in \(\FF\), using the inductive
structure provided by the moduli-theoretic view of \(X\). Besides suggesting
curious phenomena regarding Chow motives in positive characteristic, this
result provides the first explicit series of nontrivial algebraic
representatives in the literature.

Taken all together, the case \(m = 1\) of \(q\)-bic threefolds is particularly
striking: A smooth \(q\)-bic threefold \(X\) has a smooth surface \(S\) of
lines, and the Albanese variety of the surface \(S\) is essentially isomorphic
to the intermediate Jacobian of the hypersurface \(X\) via the Abel--Jacobi
map. Phrased in this way, \(q\)-bic threefolds are analogous to complex cubic
threefolds when compared with the results of Clemens and Griffiths in
\cite{CG}. In fact, when \(q = 2\) and \(X\) is a cubic threefold in
characteristic \(2\), this result plays a crucial role in establishing
irrationality of \(X\) in a situation where all previous methods do not apply.
The analogies run deeper, as can already be seen upon comparing the
geometry in the latter half of \S\parref{section-cgaj} with that in
\cite[Chapter 5]{Huybrechts:Cubics}; the comments following
\parref{cgaj-result} indicate a bit more, including analogies relating to Prym
varieties. The companion paper \cite{qbic-threefolds} studies the geometry of
\(S\) in more detail.

I would like to end this Introduction with three comments on the name
\emph{\(q\)-bic}: First, that these hypersurfaces deserve a uniform name
for varying \(q\) recognizes that they ought to be studied together, to be
understood as a single family, being characterized by the intrinsic structure
given by the form \(\beta\). Second, being moduli
spaces of isotropic vectors, these hypersurfaces behave in many ways like
\emph{q}uadrics. Third, other aspects of their geometry, for instance
in regards to the geometry of lines, evoke that of other low-degree
hypersurfaces, notably of cubic hypersurfaces. I hope the results presented
here and in the future will convince the reader that the naming is worthwhile,
and that the name, apt.

\medskip
\noindent\textbf{Relations with other works. --- }
As previously indicated, \(q\)-bic hypersurfaces have been studied by many
mathematicians in many different contexts: see the comments of
\cite[pp.7--11]{thesis} for a brief survey. Particularly relevant include:
Shimada's work in \cite{Shimada:Lattices}, which is perhaps the first to
systematically study the geometry of linear spaces in the hypersurface via
the associated bilinear form \(\beta\); the works \cite{Tate:Conjecture,
Lusztig:Green, Vollaard, LTXZZ, Li:DL}, in which schemes related to the Fano
schemes considered here arise in relation to Deligne--Lusztig theory; the
work \cite{KKPSSW:F-Pure}, which distinguishes \(q\)-bic hypersurfaces---called
\emph{extremal hypersurfaces} there---amongst all degree \(q+1\) hypersurfaces
via \(F\)-singularity theory; and \cite{BPRS:Lines}, which studies the
geometry of lines on smooth \(q\)-bic surfaces.

\medskip
\noindent\textbf{Outline. --- }
The basic formalism and technique used to study \(q\)-bic hypersurfaces in this
paper is set up in \S\parref{section-basics}. Smoothness and connectedness
properties of the Fano schemes are studied in
\S\parref{section-smoothness-and-connectedness}. Properties of the tautological
incidence correspondences amongst the Fano schemes are studied in
\S\parref{section-correspondences}. Hermitian structures, related to an intrinsic
\(\FF_{q^2}\)-structure on a smooth \(q\)-bic hypersurface, are studied in
\S\parref{section-hermitian}. This gives rise to the connection with
Deligne--Lusztig theory in \S\parref{section-dl}. Finally, special subvarieties in
the Fano variety \(\FF\) of maximal-dimensional linear subvarieties in a smooth
\(q\)-bic hypersurface \(X\) of odd dimension are studied in
\S\parref{section-cgaj}, leading to the relationship between the Albanese of
\(\FF\) and the intermediate Jacobian of \(X\).

\medskip
\noindent\textbf{Acknowledgements. --- }
This paper is based and greatly expands on Chapter 2 and parts of Chapter 4
of my thesis \cite{thesis}. Many thanks to Aise Johan de Jong for conversations
and continued interest regarding this work over the years. Thanks also to Jason
Starr, Chao Li, and Orsola Tommasi for various comments and questions that have
helped shape this work. Special thanks goes to Fumiaki Suzuki for pointing
out \cite{Fakhruddin:Supersingular}, providing existence of the intermediate
Jacobian in this case. During the initial stages of this work, I was partially
supported by an NSERC Postgraduate Scholarship.

\section{Setup}\label{section-basics}
As indicated in the Introduction, \(q\)-bic hypersurfaces are construed in this
paper as moduli spaces for a certain bilinear form. This Section develops this
perspective in an invariant way via the theory of \(q\)-bic forms, as developed
in \cite{qbic-forms}: their essential definitions and properties are recalled
in \parref{basics-qbic-forms} and a few further properties that will be used in
this paper are discussed in
\parref{hermitian-structures}--\parref{hermitian-minimal}. Then \(q\)-bic
hypersurfaces are defined in \parref{basics-definition} invariantly in terms
of \(q\)-bic forms. Their Fano schemes are introduced in
\parref{basics-fano-schemes}, and some of their basic properties are discussed
in the remainder of the Section.

Throughout this article, \(p\) is a prime, \(q \coloneqq p^\nu\) is a
positive integer power, and \(\kk\) is a field containing the finite field
\(\FF_{q^2}\). Given a finite-dimensional vector space \(V\) over \(\kk\),
write \(\PP V\) for the projective space of lines in \(V\). A \emph{plane}
refers to a linear subvariety of projective space.

\subsectiondash{\texorpdfstring{\(q\)}{q}-bic forms}\label{basics-qbic-forms}
To set notation, let \(R\) be an \(\FF_{q^2}\)-algebra and \(M\) a finite
projective \(R\)-module. Write \(M^\vee\) for its \(R\)-linear dual. Let
\(\Fr \colon R \to R\) be the \(q\)-power Frobenius homomorphism of \(R\) and
for integers \(i \geq 0\), define the \emph{\(i\)-th Frobenius twists} of \(M\)
by \(M^{[i]} \coloneqq R \otimes_{\Fr^i,R} M\). There is a canonical
\(q^i\)-linear map \((-)^{[i]} \colon M \to M^{[i]}\) given by
\(m \mapsto m^{[i]} \coloneqq 1 \otimes m\).

A \emph{\(q\)-bic form} on \(M\) over \(R\) is an \(R\)-linear map
\(\beta \colon M^{[1]} \otimes_R M \to R\). The form \(\beta\) induces two
adjoint maps, abusively denoted by \(\beta \colon M \to M^{[1],\vee}\) and
\(\beta^\vee \colon M^{[1]} \to M^\vee\). The form is called
\emph{nondegenerate} if its adjoint maps are injective and \emph{nonsingular}
if they are isomorphisms. An element \(m \in M\) is called \emph{isotropic}
for \(\beta\) if \(\beta(m^{[1]},m) = 0\). A subset \(N \subseteq M\)
is called \emph{isotropic} if every element in \(N\) is so.

Given submodules \(N_1 \subseteq M\) and \(N_2 \subseteq M^{[1]}\), write
\[
N_1^\perp \coloneqq \ker(\beta^\vee \colon M^{[1]} \to M^\vee \twoheadrightarrow N_1^\vee)
\quad\text{and}\quad
N_2^\perp \coloneqq \ker(\beta \colon M \to M^{[1],\vee} \twoheadrightarrow N_2^\vee)
\]
for their orthogonals with respect to \(\beta\). The orthogonals of \(M^{[1]}\)
and \(M\) are called the \emph{kernels} of \(\beta\); when the image of
\(\beta \colon M \to M^{[1],\vee}\) is a local direct summand, the kernels
fit into a canonical exact sequence of finite projective modules
\[
0 \to
M^{[1],\perp} \to
M \xrightarrow{\beta}
M^{[1],\vee} \to
M^{\perp,\vee} \to
0.
\]
The \emph{radical} of \(\beta\) is the largest submodule of \(M^{[1],\perp}\)
whose Frobenius twist lies in \(M^\perp\); equivalently,
\[
\operatorname{rad}(\beta) \coloneqq
\Set{m \in M | \beta(n^{[1]},m) = \beta(m^{[1]},n) = 0\;
\text{for all}\; n \in M}.
\]
For each \(i \geq 1\), twisting by \(\Fr^i\) yields a \(q\)-bic form
\(\beta^{[i]} \colon M^{[i+1]} \otimes_R M^{[i]} \to R\), characterized by
\[
\beta^{[i]}(m^{[i]}, n^{[i]}) = \beta(m,n)^{q^i}
\quad\text{for all}\;
m \in M^{[1]}
\;\text{and}\;
n \in M.
\]

\subsectiondash{Canonical endomorphisms}\label{hermitian-structures}
From now on, consider \(R = \kk\) a field and \(M = V\) a finite-dimensional
\(\kk\)-vector space. A nonsingular \(q\)-bic form \(\beta\) on \(V\) induces
two canonical endomorphisms: On the one hand, the adjoint map \(\beta \colon V
\to V^{[1],\vee}\) yields an isogeny of the \(\kk\)-algebraic group
\(\mathbf{GL}_V\) of linear automorphisms of \(V\), given by
\[
F \colon \mathbf{GL}_V \to \mathbf{GL}_V
\qquad
g \mapsto \beta^{-1} \circ g^{[1],\vee,-1} \circ \beta.
\]
Its fixed subgroup scheme \(\mathbf{GL}_V^F = \mathrm{U}(V,\beta)\) is called
the \emph{unitary group} of \((V,\beta)\), see \citeForms{5.6}. On the other
hand, the Frobenius-twist of \(\beta^\vee \colon V^{[1]} \to V^\vee\) together
with the inverse of \(\beta\) yields an isomorphism of \(\kk\)-vector spaces
\[
\sigma_\beta \coloneqq
\beta^{-1} \circ \beta^{[1],\vee}
\colon V^{[2]} \to V.
\]
This provides a descent datum for \(V\) to \(\mathbf{F}_{q^2}\)
and the associated absolute Frobenius morphism for this structure is given by
the canonical \(q^2\)-linear bijection
\[
\phi \coloneqq
\sigma_\beta \circ (-)^{[2]} \colon
V \to V^{[2]} \to V.
\]
The two endomorphisms are related: the square of \(F\) is the Frobenius
morphism of \(\mathbf{GL}_V\) induced by the \(\mathbf{F}_{q^2}\)-rational
structure \(\sigma_\beta\). In short, \(F^2 = \phi\).

Recall from \citeForms{2.1} that \(v \in V\) is called \emph{Hermitian} if it
satisfies \(\beta(u^{[1]}, v) = \beta(v^{[1]}, u)^q\) for all \(u \in V\).
The endomorphism \(\phi \colon V \to V\) generalizes this to a certain
symmetry property for \(\beta\):

\begin{Lemma}\label{hermitian-phi-properties}
\(\beta(w, \phi(v)) = \beta^{[1]}(v^{[2]},w)\)
for every \(v \in V\) and \(w \in V^{[1]}\), whence
\begin{enumerate}
\item\label{hermitian-phi-properties.fixed}
\(v\) is Hermitian if and only if \(v\) is fixed by \(\phi\);
\item\label{hermitian-phi-properties.double}
\(\beta(\phi(v_1)^{[1]}, \phi(v_2)) = \beta(v_1^{[1]},v_2)^{q^2}\) for
every \(v_1,v_2 \in V\); and
\item\label{hermitian-phi-properties.isotropic}
\(v\) is isotropic if and only if \(\phi(v)\) is isotropic.
\end{enumerate}
\end{Lemma}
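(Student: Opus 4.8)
The plan is to establish the displayed symmetry identity \(\beta(w,\phi(v)) = \beta^{[1]}(v^{[2]},w)\) directly from the definitions of \(\phi\) and \(\sigma_\beta\), and then to deduce (i), (ii), (iii) from it by purely formal manipulations, using the nonsingularity of \(\beta\) and the characterizing property \(\beta^{[i]}(m^{[i]},n^{[i]}) = \beta(m,n)^{q^i}\) of the Frobenius twists. For the identity itself: since \(\phi(v) = \sigma_\beta(v^{[2]})\) and \(\sigma_\beta = \beta^{-1}\circ\beta^{[1],\vee}\), applying the adjoint \(\beta\colon V \to V^{[1],\vee}\) gives \(\beta(\phi(v)) = \beta^{[1],\vee}(v^{[2]})\) in \(V^{[1],\vee}\); evaluating both sides on \(w \in V^{[1]}\) and unwinding the definitions of the adjoint maps of \(\beta\) and of \(\beta^{[1]}\) yields \(\beta(w,\phi(v)) = [\beta^{[1],\vee}(v^{[2]})](w) = \beta^{[1]}(v^{[2]},w)\). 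This amounts to bookkeeping with the canonical identifications \((V^{[i]})^{[j]} = V^{[i+j]}\), and I expect it to be the only place where genuine care is needed.

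Granting the identity, (i) follows because nonsingularity makes \(\beta\colon V \to V^{[1],\vee}\) injective, so \(\phi(v) = v\) if and only if \(\beta(w,\phi(v)) = \beta(w,v)\) for all \(w \in V^{[1]}\); as the vectors \(u^{[1]}\) with \(u \in V\) span \(V^{[1]}\) over \(\kk\), it suffices to test \(w = u^{[1]}\). The identity rewrites the left-hand side as \(\beta^{[1]}(v^{[2]},u^{[1]}) = \beta^{[1]}((v^{[1]})^{[1]},u^{[1]}) = \beta(v^{[1]},u)^q\), so the condition reads \(\beta(u^{[1]},v) = \beta(v^{[1]},u)^q\) for all \(u \in V\), which is exactly the Hermitian condition recalled from \citeForms{2.1}.

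For (ii), I substitute \(\phi(v_2) = \sigma_\beta(v_2^{[2]})\) and \(\phi(v_1)^{[1]} = \sigma_\beta^{[1]}(v_1^{[3]})\) into \(\beta(\phi(v_1)^{[1]},\phi(v_2))\) and push the computation through the adjoint relations \(\beta\circ\sigma_\beta = \beta^{[1],\vee}\) and its Frobenius twist \(\beta^{[1]}\circ\sigma_\beta^{[1]} = \beta^{[2],\vee}\); the expression telescopes to \(\beta^{[2]}(v_1^{[3]},v_2^{[2]})\), which equals \(\beta(v_1^{[1]},v_2)^{q^2}\) by the characterizing property of \(\beta^{[2]}\) (with \(m = v_1^{[1]}\), \(n = v_2\)). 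Equivalently, one applies the already-proven identity once for \(\beta\) and once for the twisted form \(\beta^{[1]}\), after noting the compatibility \(\sigma_{\beta^{[1]}} = \sigma_\beta^{[1]}\), hence \(\phi_{\beta^{[1]}}\circ(-)^{[1]} = (-)^{[1]}\circ\phi\). Then (iii) is immediate: taking \(v_1 = v_2 = v\) in (ii) gives \(\beta(\phi(v)^{[1]},\phi(v)) = \beta(v^{[1]},v)^{q^2}\), and \(x \mapsto x^{q^2}\) is injective on the field \(\kk\), so one side vanishes precisely when the other does. The main obstacle throughout is notational, namely tracking the Frobenius twists and the isomorphisms \((V^{[i]})^{[j]} \cong V^{[i+j]}\) correctly through the adjoint maps; once the symmetry identity is in hand, the rest is formal.
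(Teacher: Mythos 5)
Your proposal is correct and follows essentially the same route as the paper: the displayed identity is obtained exactly as in the paper's one-line computation \(\beta(w,\phi(v)) = w^\vee\circ\beta\circ\beta^{-1}\circ\beta^{[1],\vee}\circ v^{[2]} = \beta^{[1]}(v^{[2]},w)\), and the paper then simply asserts that (i)--(iii) follow. Your explicit deductions of (i)--(iii) --- testing on the spanning vectors \(u^{[1]}\) and using nonsingularity for (i), twisting/telescoping to \(\beta^{[2]}(v_1^{[3]},v_2^{[2]}) = \beta(v_1^{[1]},v_2)^{q^2}\) for (ii), and injectivity of \(x\mapsto x^{q^2}\) for (iii) --- are accurate fillings-in of the details the paper omits.
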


\begin{proof}
It follows from the definition of \(\phi\) that
\[
\beta(w,\phi(v))
= w^\vee \circ \beta \circ (\beta^{-1} \circ \beta^{[1],\vee} \circ v^{[2]})
= w^\vee \circ \beta^{[1],\vee} \circ v^{[2]}
= \beta^{[1]}(v^{[2]},w).
\]
The remaining properties now follow from this identity.
\end{proof}

\subsection{Hermitian subspaces}\label{hermitian-subspaces}
Subspaces \(U\) of \(V\) fixed by \(\phi\) are called \emph{Hermitian
subspaces}. They are characterized by the following equivalent conditions:
\begin{enumerate}
\item\label{hermitian-subspaces.fixed}
\(\phi(U) = U\);
\item\label{hermitian-subspaces.U}
\(U\) is spanned by Hermitian vectors upon base change to the separable
closure of \(\kk\); and
\item\label{hermitian-subspaces.perps}
\(U^{[1],\perp,[1]} = U^\perp\) as subspaces of \(V^{[1]}\).
\end{enumerate}

\begin{proof}[Proof of equivalences]
For \ref{hermitian-subspaces.fixed} \(\Rightarrow\)
\ref{hermitian-subspaces.U}, assume already that \(\kk\) is separably
closed. Then \(\phi \colon U \to U\) is a bijective \(q^2\)-linear map, so
\(U\) has a basis consisting of fixed vectors, see
\cite[Expos\'e XXII, 1.1]{SGAVII}. This is a basis of Hermitian vectors by
\parref{hermitian-phi-properties}\ref{hermitian-phi-properties.fixed}.

For \ref{hermitian-subspaces.U} \(\Rightarrow\) \ref{hermitian-subspaces.perps},
it suffices to consider the case when \(U = \langle u \rangle\) is spanned
by a single Hermitian vector, wherein definitions of orthogonals, as in
\citeForms{1.7}, and of Hermitian vectors imply that
\[
U^{[1],\perp,[1]} =
\set{v \in V^{[1]} | \beta^{[1]}(u^{[2]},v) = 0} =
\set{v \in V^{[1]} | \beta(v,u) = 0}
= U^\perp.
\]

For \ref{hermitian-subspaces.perps} \(\Rightarrow\) \ref{hermitian-subspaces.fixed},
note that the equality of the orthogonals is equivalent to
\[
\image(\beta \colon U \subset V \to  V^{[1],\vee}) =
\image(\beta^{[1],\vee} \colon U^{[2]} \subset V^{[2]} \to V^{[1],\vee}).
\]
Thus \(\beta(v,U) = \beta^{[1]}(U^{[2]},v)\) for every
\(v \in V^{[1]}\) and the result follows upon comparing with
\parref{hermitian-phi-properties}.
\end{proof}

In particular, \parref{hermitian-subspaces}\ref{hermitian-subspaces.fixed}
identifies canonical Hermitian subspaces associated with any given subspace:

\begin{Corollary}\label{hermitian-minimal}
For any subspace \(U \subseteq V\),
\begin{align*}
\pushQED{\qed}
\bigcap\nolimits_{i \geq 0} \phi^i(U) & = \text{maximal Hermitian subspace contained in \(U\)},\;\text{and} \\
\sum\nolimits_{i \geq 0} \phi^i(U)  & = \text{minimal Hermitian subspace containing \(U\)}.
\qedhere
\popQED
\end{align*}
\end{Corollary}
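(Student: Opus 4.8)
\medskip
\noindent\textbf{Proof proposal. --- }
The plan is to deduce both identities formally from the single fact that \(\phi\) is a bijective \(q^2\)-semilinear endomorphism of the finite-dimensional space \(V\), together with the defining property that a subspace is Hermitian exactly when it is \(\phi\)-stable, as recorded in \parref{hermitian-subspaces}\ref{hermitian-subspaces.fixed}. Being a semilinear bijection, \(\phi\) carries each subspace of \(V\) to a subspace of the same dimension (over an imperfect \(\kk\) one reads \(\phi^i(U)\) as the \(\kk\)-span of the set-theoretic image, i.e.\ the relevant Frobenius-twist-type subspace; the argument is unaffected). In particular, since every \(\phi^i(U)\) has dimension \(\dim_\kk U\), both the decreasing chain \(U \supseteq U \cap \phi(U) \supseteq U \cap \phi(U) \cap \phi^2(U) \supseteq \cdots\) and the increasing chain \(U \subseteq U + \phi(U) \subseteq U + \phi(U) + \phi^2(U) \subseteq \cdots\) stabilize; write \(W \coloneqq \bigcap_{i \geq 0} \phi^i(U)\) and \(W' \coloneqq \sum_{i \geq 0} \phi^i(U)\) for their limits, each of which is therefore an intersection, respectively a sum, of finitely many terms.

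Next I would check that \(W\) and \(W'\) are Hermitian. Since \(\phi\) is injective it commutes with the now-finite intersection, giving \(\phi(W) = \bigcap_{i \geq 1} \phi^i(U) \supseteq W\), and equality of dimensions forces \(\phi(W) = W\); dually, since \(\phi\) is additive, \(\phi(W') = \sum_{i \geq 1} \phi^i(U) \subseteq W'\), and again \(\phi(W') = W'\). Thus \(W\) and \(W'\) are Hermitian, and plainly \(W \subseteq \phi^0(U) = U \subseteq W'\). Finally, for extremality: if \(U' \subseteq U\) is Hermitian then \(U' = \phi^i(U') \subseteq \phi^i(U)\) for all \(i \geq 0\), so \(U' \subseteq W\); symmetrically, if \(U' \supseteq U\) is Hermitian then \(U' = \phi^i(U') \supseteq \phi^i(U)\) for all \(i\), so \(U' \supseteq W'\). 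Hence \(W\) is the maximal Hermitian subspace contained in \(U\) and \(W'\) the minimal one containing it.

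There is no genuine obstacle here: the proof is bookkeeping with stabilizing chains of subspaces. The only step that deserves a word is the interchange of \(\phi\) with the intersection and with the sum, which is legitimate precisely because the relevant chains stabilize, so one is really manipulating finite intersections and sums of subspaces rather than infinite ones.
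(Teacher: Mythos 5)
Your argument is correct and is exactly the reasoning the paper leaves implicit: the Corollary is stated with no written proof, being regarded as immediate from the characterization of Hermitian subspaces as the \(\phi\)-stable ones in \parref{hermitian-subspaces}\ref{hermitian-subspaces.fixed}, together with the fact that \(\phi\) is a dimension-preserving bijective semilinear map so the relevant chains of subspaces stabilize. Your bookkeeping (commuting \(\phi\) with the stabilized finite intersection and sum, then comparing dimensions, then checking extremality against any \(\phi\)-stable subspace) fills in that omission faithfully, so there is nothing to add.
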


\subsectiondash{\texorpdfstring{\(q\)}{q}-bic hypersurfaces}\label{basics-definition}
The theory of \(q\)-bic forms will now be used to define the geometric objects
of interest. Namely, given a nonzero \(q\)-bic form \((V,\beta)\) over \(\kk\),
the associated \emph{\(q\)-bic hypersurface} is the subscheme of \(\PP V\)
parameterizing isotropic lines in \(V\) for \(\beta\); in other words, this is
\[
X \coloneqq X_\beta \coloneqq \Set{[v] \in \PP V | \beta(v^{[1]},v) = 0}.
\]
Comparing with the moduli description of projective space, as described in
\citeSP{01NE}, shows that the \(q\)-bic hypersurface \(X\) represents the
functor
\(\mathrm{Sch}_{\mathbf{k}}^{\mathrm{opp}} \to \mathrm{Set}\) given by
\[
S \mapsto \Set{\mathcal{V}' \subset V_S \coloneqq V \otimes_\kk \sO_S \;
\text{a subbundle of rank \(1\) isotropic for \(\beta\)}}.
\]

\subsectiondash{}\label{basics-equations}
The \(q\)-bic form \(\beta\) induces a specific equation for \(X\): Write
\(\mathrm{eu} \colon \sO_{\PP V}(-1) \to V_{\PP V}\) for the tautological
line subbundle on \(\PP V\). The \emph{\(q\)-bic polynomial} associated with
\(\beta\) is the section
\[
f_\beta \coloneqq \beta(\mathrm{eu}^{[1]},\mathrm{eu}) \colon
\sO_{\PP V}(-q-1) \to
V_{\PP V}^{[1]} \otimes_{\sO_{\PP V}} V_{\PP V} \xrightarrow{\beta} \sO_{\PP V}.
\]
Contemplating the moduli description of \(\PP V\) shows that
\(X_\beta = \mathrm{V}(f_\beta)\). To recover the explicit description from
the Introduction, choose a basis \(V = \langle e_0,\ldots,e_n\rangle\) and let
\(\mathbf{x}^\vee \coloneqq (x_0:\cdots:x_n)\) be the corresponding
coordinates on \(\PP V = \PP^n\). For each \(0 \leq i, j \leq n\),
let \(a_{ij} \coloneqq \beta(e_i^{[1]},e_j)\) be the \((i,j)\)-entry of the
associated Gram matrix, as in
\cite[\href{https://arxiv.org/pdf/2301.09929.pdf\#subsection.1.2}{\textbf{1.2}}]{qbic-forms}.
Then the \(q\)-bic polynomial \(f_\beta\) is the homogeneous polynomial
\[
f_\beta(x_0,\ldots,x_n)
= \mathbf{x}^{\vee,[1]} \cdot \Gram(\beta;e_0,\ldots,e_n) \cdot \mathbf{x}
= \sum\nolimits_{i,j = 0}^n a_{ij} x_i^q x_j.
\]

\subsectiondash{Classification}\label{basics-classification}
Isomorphic \(q\)-bic forms yield projectively equivalent \(q\)-bic
hypersurfaces. Over an algebraically closed field, \cite[Theorem A]{qbic-forms}
shows that there are finitely many isomorphism classes of \(q\)-bic forms on a
given vector space in that, given a \(q\)-bic form \(\beta\) on \(V\), there
exists a basis \(V = \langle e_0,\ldots,e_n \rangle\) and nonnegative integers
\(a, b_m \in \mathbf{Z}_{\geq 0}\) such that
\[
\Gram(\beta;e_0,\ldots,e_n) =
\mathbf{1}^{\oplus a} \oplus
\Big(\bigoplus\nolimits_{m \geq 1} \mathbf{N}_m^{\oplus b_m}\Big)
\]
where \(\mathbf{1}\) is the \(1\)-by-\(1\) matrix with unique entry \(1\),
\(\mathbf{N}_m\) is the \(m\)-by-\(m\) Jordan block with \(0\)'s on the
diagonal, and \(\oplus\) denotes block diagonal sums of matrices. The tuple
\((a; b_m)_{m \geq 1}\) is the fundamental invariant of \(\beta\), hence
of \(X\), and is called its \emph{type}; the sum \(\sum_{m \geq 1} b_m\)
is its \emph{corank}. Note that the \(\mathbf{N}_1^{\oplus b_1}\)
piece is a \(b_1\)-by-\(b_1\) zero matrix and its underlying subspace is the
radical of \(\beta\). From this, it is clear that \(X\) is a cone if and only
if \(b_1 \neq 0\) and that its vertex is given by the radical of \(\beta\); see
also \citeThesis{2.4} for an invariant treatment.

A linear section of a \(q\)-bic hypersurface is another \(q\)-bic hypersurface:

\begin{Lemma}\label{basics-hyperplane-section}
Let \(X\) be the \(q\)-bic hypersurface associated with a \(q\)-bic form
\((V,\beta)\) and let \(U \subseteq V\) be any linear subspace. Then
\(X \cap \PP U\) is the \(q\)-bic hypersurface associated with the \(q\)-bic
form
\[
\beta_U \colon
U^{[1]} \otimes_\kk U \subset
V^{[1]} \otimes_\kk V \xrightarrow{\beta} \kk,
\]
which is possibly zero, obtained by restricting \(\beta\) to \(U\).
\qed
\end{Lemma}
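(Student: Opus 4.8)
The plan is to identify both sides as closed subschemes of \(\PP U\), which can be carried out either through the moduli description of \(q\)-bic hypersurfaces in \parref{basics-definition} or through the explicit equation in \parref{basics-equations}; either route is short, and I would use the functorial one as the main argument with the equational computation as a cross-check.

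First I would recall that the linear embedding \(\iota \colon \PP U \hookrightarrow \PP V\) induced by \(U \subseteq V\) represents, on \(S\)-points, those rank-\(1\) subbundles \(\mathcal V' \subseteq V_S\) whose inclusion factors --- necessarily uniquely --- through \(U_S \coloneqq U \otimes_\kk \sO_S\), which is the same datum as a rank-\(1\) subbundle \(\mathcal U' \subseteq U_S\). Intersecting with \(X = X_\beta\) then imposes the further condition that \(\mathcal V'\) be isotropic for \(\beta\), that is, that the composite \(\mathcal V'^{[1]} \otimes_{\sO_S} \mathcal V' \to V_S^{[1]} \otimes_{\sO_S} V_S \xrightarrow{\beta} \sO_S\) vanish. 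Since this composite factors through \(U_S^{[1]} \otimes_{\sO_S} U_S\), its vanishing is equivalent to isotropy of \(\mathcal U'\) for the restricted form \(\beta_U\). Hence \(X \cap \PP U\) and \(X_{\beta_U}\) represent the same subfunctor of \(\PP U\) and therefore coincide as closed subschemes; when \(\beta_U = 0\) this reads \(\PP U \subseteq X\), consistent with \(U\) being an isotropic subspace.

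As a cross-check at the level of equations, I would choose a basis \(e_0, \ldots, e_n\) of \(V\) with \(U = \langle e_0, \ldots, e_k \rangle\), so that \(\PP U = \mathrm V(x_{k+1}, \ldots, x_n)\); then by \parref{basics-equations} the \(q\)-bic polynomial \(f_\beta = \sum_{i,j=0}^n a_{ij} x_i^q x_j\) pulls back along \(\iota\) to \(\sum_{i,j=0}^k a_{ij} x_i^q x_j = f_{\beta_U}\), so the ideal of \(X \cap \PP U\) in \(\PP U\) is generated by \(f_{\beta_U}\), which cuts out exactly \(X_{\beta_U}\).

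I do not anticipate any real obstacle. The one point that merits care is that the identification be scheme-theoretic rather than merely set-theoretic, and this comes down to the functoriality of the Euler inclusion, namely \(\mathrm{eu}_{\PP U} = \iota^* \mathrm{eu}_{\PP V}\) composed with \(U_{\PP U} \hookrightarrow V_{\PP U}\), together with the compatibility of Frobenius twists with the pullback along \(\iota\) --- both of which are standard.
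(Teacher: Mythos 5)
Your argument is correct and is exactly the verification the paper leaves implicit: the lemma is stated with no proof (marked \(\blacksquare\) as an immediate consequence of the moduli description in \parref{basics-definition} and the equation in \parref{basics-equations}), and both of your routes --- the functorial identification of subfunctors of \(\PP U\) and the restriction of \(f_\beta\) to coordinates adapted to \(U\) --- are precisely what that implicit appeal amounts to. Nothing is missing.
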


\subsectiondash{Fano schemes}\label{basics-fano-schemes}
For each \(0 \leq r \leq n\), let \(\GG \coloneqq \GG(r+1,V)\) denote the
Grassmannian parameterizing \((r+1)\)-dimensional subspaces of \(V\), and let
\[
\FF \coloneqq
\FF_r(X) \coloneqq
\Set{[U] \in \GG | \PP U \subseteq X}
\]
denote the Fano scheme parameterizing \(r\)-planes of \(\PP V\) contained in
\(X\); see, for example, \cite{AK:Fano} or
\cite[Section V.4]{Kollar:RationalCurves} for generalities. Comparing
\parref{basics-definition} and \parref{basics-hyperplane-section} shows that
a \(q\)-bic form \((V,\beta)\) defining \(X\) endows \(\FF\) with
the alternative moduli interpretation as the moduli of \((r+1)\)-dimensional
isotropic subspaces for \(\beta\); in other words, \(\FF\) represents
the functor \(\mathrm{Sch}^{\mathrm{opp}}_\kk \to \mathrm{Set}\) given by
\[
S \mapsto
\Set{\mathcal{V}' \subset V_S \;\text{a subbundle of rank \(r+1\) isotropic for \(\beta\)}}.
\]
In particular, this implies that the equations of \(\FF\) in
\(\GG\) may be given in terms of the
tautological subbundle \(\mathcal{S} \subseteq V_{\GG}\) of
rank \(r+1\) as follows:

\begin{Lemma}\label{basics-fano-equations}
Let \(X\) be the \(q\)-bic hypersurface associated with a \(q\)-bic form
\((V,\beta)\). Then its Fano scheme \(\FF\) of \(r\)-planes is the
vanishing locus in \(\GG\) of the \(q\)-bic form
\[
\beta_{\mathcal{S}} \colon
\mathcal{S}^{[1]} \otimes_{\sO_{\GG}} \mathcal{S} \to
\sO_{\GG}
\]
obtained by restricting \(\beta\) to \(\mathcal{S}\). In particular,
\(\dim\FF \geq (r+1)(n-2r-1)\) whenever it is nonempty.
\end{Lemma}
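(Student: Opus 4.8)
The plan is to establish the scheme-theoretic identity $\FF = \mathrm{V}(\beta_{\mathcal{S}})$ by comparing functors of points on $\GG$-schemes, and then to deduce the dimension estimate from the general principle that the zero locus of a section of a rank-$e$ vector bundle has codimension at most $e$. For the first part, recall from \parref{basics-fano-schemes} that a morphism $T \to \GG$ amounts to a rank-$(r+1)$ subbundle $\mathcal{V}' \subseteq V_T$, namely the pullback of $\mathcal{S}$, and that this morphism factors through $\FF$ exactly when $\PP\mathcal{V}' \subseteq X_T$ inside $\PP V_T$. By the evident base-changed version of \parref{basics-hyperplane-section}, the latter holds if and only if the $q$-bic polynomial $\beta_{\mathcal{V}'}(\mathrm{eu}^{[1]},\mathrm{eu})$ of the restricted form $\beta_{\mathcal{V}'} \colon {\mathcal{V}'}^{[1]} \otimes_{\sO_T} \mathcal{V}' \to \sO_T$ vanishes identically on $\PP\mathcal{V}'$, whereas $T \to \GG$ factors through $\mathrm{V}(\beta_{\mathcal{S}})$ if and only if the form $\beta_{\mathcal{V}'}$ is itself zero. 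So everything comes down to the claim that \emph{a $q$-bic form over an arbitrary ring has identically zero $q$-bic polynomial if and only if it is the zero form}.

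This is where $q \geq 2$ is used. Picking a local basis, a $q$-bic form with Gram matrix $(a_{ij})_{0 \le i,j \le r}$ has $q$-bic polynomial $\sum_{i,j} a_{ij}\,x_i^q x_j$, and for $q \geq 2$ the monomials $x_i^q x_j$ occurring here are pairwise distinct: an off-diagonal monomial $x_i^q x_j$ with $i \neq j$ has a unique exponent equal to $q$, recovering $i$, and a unique exponent equal to $1$, recovering $j$, while the diagonal monomials $x_i^{q+1}$ are distinct from one another and from all off-diagonal ones. Hence $\sum_{i,j} a_{ij}\,x_i^q x_j = 0$ forces every $a_{ij} = 0$; the converse is trivial. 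Equivalently, the canonical map $\mathcal{S}^{[1],\vee} \otimes_{\sO_\GG} \mathcal{S}^\vee \to \Sym^{q+1}\mathcal{S}^\vee$ is a subbundle inclusion exactly because $q \geq 2$, so that the coefficients of $f_\beta$ restricted along $\PP\mathcal{S}$ recover precisely the form $\beta_{\mathcal{S}}$. By Yoneda, $\FF$ and $\mathrm{V}(\beta_{\mathcal{S}})$ represent the same subfunctor of $\GG$ and therefore coincide as closed subschemes; I emphasize that the monomial argument gives equality of scheme structures, not merely of point sets, which is why one prefers it to a polarization argument carried out over $\kk$.

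For the dimension bound, $\mathrm{V}(\beta_{\mathcal{S}})$ is the vanishing locus of a global section of the vector bundle $(\mathcal{S}^{[1]} \otimes_{\sO_\GG} \mathcal{S})^\vee$, which has rank $(r+1)^2$, on the irreducible variety $\GG = \GG(r+1,V)$ of dimension $(r+1)(n-r)$. Locally this locus is cut out by $(r+1)^2$ equations, so by Krull's height theorem every nonempty irreducible component has codimension at most $(r+1)^2$; hence, if $\FF \neq \emptyset$,
\[
\dim\FF \;\geq\; (r+1)(n-r) - (r+1)^2 \;=\; (r+1)(n-2r-1).
\]
There is no substantial obstacle in this argument; the one delicate point is the equivalence between $\PP\mathcal{V}' \subseteq X_T$ and $\beta_{\mathcal{V}'} = 0$ discussed above, and even there, once the monomial distinctness for $q \geq 2$ is observed, the verification is entirely routine.
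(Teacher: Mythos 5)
Your proof is correct and follows essentially the same route as the paper: the paper likewise deduces the identity $\FF = \mathrm{V}(\beta_{\mathcal{S}})$ from the functor-of-points description of the Grassmannian and the moduli interpretation of \parref{basics-fano-schemes}, and obtains the dimension bound from the same rank count $\dim\GG - (r+1)^2$. The one substantive point you spell out that the paper leaves implicit is the injectivity of $\beta_{\mathcal{V}'} \mapsto f_{\beta_{\mathcal{V}'}}$, i.e.\ that $\mathcal{S}^{[1],\vee}\otimes\mathcal{S}^\vee \to \Sym^{q+1}\mathcal{S}^\vee$ is a split injection because the monomials $x_i^qx_j$ are pairwise distinct for $q\geq 2$ — a worthwhile detail, correctly handled.
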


\begin{proof}
This follows from the moduli interpretation of the Grassmannian together
the discussion of \parref{basics-fano-schemes}. The dimension estimate is
then:
\[
\dim\FF
\geq
\dim\GG -
\rank_{\sO_{\GG}}(\mathcal{S}^{[1]} \otimes_{\sO_{\GG}} \mathcal{S})
= (r+1)(n-r) - (r+1)^2
= (r+1)(n-2r-1).
\qedhere
\]
\end{proof}

Perhaps the most striking feature of this estimate is that it does not depend
on \(q\), or equivalently, the degree of \(X\). For comparison, the
Fano scheme of \(r\)-planes of a general hypersurface \(Y \subset \PP V\) of
degree \(d\) has codimension \(\binom{d+r}{r}\), see
\cite[Th\'eor\`eme 2.1]{DM:Fano} for example. Even when \(d = q+1\) is small,
the estimate above gives a larger Fano scheme than usually expected. Thus
\parref{basics-fano-equations} might be read as a systematic explanation to the
old observation that \(q\)-bic hypersurfaces contain many more linear subspaces
than usual; compare with \cite[Example 1.27]{Collino} and
\cite[pp.51--52]{Debarre:HDAG}.

\medskip
Consider the parameter space of \(q\)-bic hypersurfaces in \(\PP V\):
\[
\qbics_{\PP V}
\coloneqq \Set{[X] | X \subset \PP V\;\text{a \(q\)-bic hypersurface}}
\coloneqq \PP(V^{[1]} \otimes_\kk V)^\vee.
\]
The following shows that the Fano schemes are nonempty in a certain range:

\begin{Lemma}\label{hypersurfaces-nonempty-fano}
If \(0 \leq r < \frac{n}{2}\), then the Fano scheme \(\FF\) is nonempty.
\end{Lemma}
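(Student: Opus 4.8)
The plan is to reduce to exhibiting one isotropic subspace of the right dimension, and to read it off from the normal form of \parref{basics-classification}.

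Since $\FF = \FF_r(X)$ is a scheme over $\kk$, it is nonempty exactly when its base change to an algebraic closure is, so I may assume $\kk$ algebraically closed. By the moduli description of \parref{basics-fano-schemes}, a $\kk$-point of $\FF$ is precisely an $(r+1)$-dimensional subspace of $V$ isotropic for $\beta$, so it suffices to produce one such subspace. Note that $r < \frac{n}{2}$ forces $n \geq 2r+1$, hence $\lceil n/2 \rceil \geq r+1$ since $r$ is an integer.

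By \parref{basics-classification} there is a basis of $V$ realizing $\Gram(\beta) = \mathbf{1}^{\oplus a} \oplus \bigoplus_{m \geq 1} \mathbf{N}_m^{\oplus b_m}$ with $a + \sum_m m b_m = n+1$; equivalently, $V$ decomposes as an orthogonal direct sum $V = V_{\mathbf{1}} \oplus \bigoplus_m V_m^{\oplus b_m}$ of $q$-bic forms, meaning $\beta(v^{[1]},w) = 0$ whenever $v$ and $w$ lie in distinct summands. I will find an isotropic subspace of dimension $\lfloor a/2 \rfloor$ in $V_{\mathbf{1}}$ and one of dimension $\lceil m/2 \rceil$ in each copy of $V_m$; their direct sum $U$ is then isotropic, because for $v,w \in U$ the quantity $\beta(v^{[1]},w)$ is the sum of the within-summand pairings, which vanish by isotropy of each piece, and the cross-summand pairings, which vanish by orthogonality. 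Its dimension is $\lfloor a/2\rfloor + \sum_m \lceil m/2 \rceil b_m \geq \frac{1}{2}\big((a-1) + \sum_m m b_m\big) = \frac{n}{2}$, hence $\geq \lceil n/2 \rceil \geq r+1$, which finishes the argument.

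The block computations are elementary. For $V_m = \langle e_1, \ldots, e_m \rangle$ with $\beta(e_i^{[1]}, e_j) = 1$ if $j = i+1$ and $0$ otherwise, the span of the odd-indexed basis vectors is isotropic --- the pairing of two of them vanishes as their indices differ by an even integer --- and has dimension $\lceil m/2 \rceil$. For $V_{\mathbf{1}} = \langle e_1, \ldots, e_a \rangle$ with $\beta(e_i^{[1]}, e_j) = \delta_{ij}$, fix $\zeta \in \kk$ with $\zeta^{q+1} = -1$; then $\beta\big((e_{2k-1} + \zeta e_{2k})^{[1]}, e_{2l-1} + \zeta e_{2l}\big) = (1 + \zeta^{q+1})\delta_{kl} = 0$, so the span of $e_{2k-1} + \zeta e_{2k}$ for $1 \leq k \leq \lfloor a/2 \rfloor$ is isotropic of dimension $\lfloor a/2 \rfloor$. (This reflects that $V_{\mathbf{1}}$ carries the split Hermitian form over $\FF_{q^2}$, of Witt index $\lfloor a/2 \rfloor$; compare \parref{hermitian-structures}.)

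The point worth flagging is the contribution $\lfloor a/2 \rfloor$ from the anisotropic summand $\mathbf{1}^{\oplus a}$, which is nonzero whenever $X$ is smooth: there no coordinate subspace is isotropic, so the twist by $\zeta$ is genuinely needed, and a naive induction stripping off one isotropic vector at a time loses too much dimension to reach the boundary case $n = 2r+1$. One could instead argue that $\FF$ is the zero scheme in $\GG(r+1,V)$ of the section $\beta_{\mathcal{S}}$ of $(\mathcal{S}^{[1]} \otimes \mathcal{S})^\vee$, a bundle of rank $(r+1)^2 \leq \dim \GG$, and deduce nonemptiness from nonvanishing of its top Chern class $\pm \prod_{i,j=1}^{r+1}(q\alpha_i + \alpha_j)$ in $H^*(\GG)$, with $\alpha_i$ the Chern roots of $\mathcal{S}$; but verifying that nonvanishing is essentially the same bookkeeping, so the normal-form argument above is the cleanest route.
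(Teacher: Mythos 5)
Your proof is correct, but it takes a genuinely different route from the paper. The paper argues by degeneration: it forms the incidence correspondence \(\mathbf{Inc} \subset \GG \times \qbics_{\PP V}\), notes that the projection to \(\qbics_{\PP V}\) is proper (since \(\mathbf{Inc}\) is a projective bundle over \(\GG\)), and then only needs dominance, which follows because the \emph{generic} form is nonsingular and a nonsingular form, having an identity Gram matrix by \citeForms{2.7}, visibly admits an isotropic subspace of dimension \(\lfloor\tfrac{n+1}{2}\rfloor\) -- essentially your \(\zeta\)-twist construction, but applied only in the split case. You instead invoke the full classification of (possibly degenerate) forms from \parref{basics-classification} and build an isotropic subspace block-by-block, getting \(\lfloor a/2\rfloor\) from the \(\mathbf{1}^{\oplus a}\) part and \(\lceil m/2\rceil\) from each \(\mathbf{N}_m\); your dimension count \(\lfloor a/2\rfloor + \sum_m \lceil m/2\rceil b_m \geq n/2\), hence \(\geq \lceil n/2\rceil \geq r+1\), is right, and the two block computations check out (including the characteristic-\(2\) reading of \(\zeta^{q+1}=-1\)). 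The trade-off: your argument is more explicit and uniform -- it exhibits an actual \((r+1)\)-dimensional isotropic subspace in every \(q\)-bic hypersurface, with no properness or genericity input -- but it leans on the full normal-form theorem, whereas the paper's specialization argument needs only the nonsingular normal form together with genericity of nonsingular forms, and has the side benefit that the same incidence correspondence is reused verbatim in the connectedness proof \parref{hypersurfaces-fano-connected}.
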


\begin{proof}
This is a geometric question, so assume that \(\kk\) is algebraically closed.
Consider the incidence correspondence
\[
\mathbf{Inc} \coloneqq
\Set{([U], [X])
\in \GG \times \qbics_{\PP V} |
\PP U \subseteq X}.
\]
The task is to show that the second projection
\(\pr_2 \colon \mathbf{Inc} \to \qbics_{\PP V}\) is surjective, since its fibre
over a point \([X]\) is the Fano scheme of \(X\). Since \(\mathbf{Inc}\) is a
projective space bundle over \(\Gr\) via \(\pr_1\), it is proper and so it
suffices to show that \(\pr_2\) is dominant.
% Observe that
% \(\mathbf{Inc}\) is proper: \(\Gr\) is so, and the fibre of the first
% projection \(\pr_1 \colon \mathbf{Inc} \to \Gr\) over a point \([U]\)
% is the projective space
% \[
% \pr_1^{-1}([U]) =
% \PP\big(\ker\big((V^{[1]} \otimes_\kk V)^\vee \to (U^{[1]} \otimes_\kk U)^\vee\big)\big)
% \]
% parameterizing \(q\)-bic equations vanishing on \(U\).
% So it suffices to show that \(\pr_2\) is dominant.
This is the case since the general member of \(\qbics_{\PP V}\) corresponds to a
\(q\)-bic hypersurface whose underlying \(q\)-bic form is nonsingular by
\cite[\href{https://arxiv.org/pdf/2301.09929.pdf\#section.6}{\S\textbf{6}}]{qbic-forms},
and a nonsingular \(q\)-bic form \((V,\beta)\) of dimension \(n+1\) contains an
isotropic subspace of dimension \(\lfloor \frac{n+1}{2} \rfloor\), for
instance, by explicit construction upon choosing a basis of \(V\) in which
\(\beta\) is simply the identity matrix, possible by \citeForms{2.7}.
\end{proof}

\subsectiondash{Numerical invariants}\label{basics-numerics}
It follows from \parref{basics-fano-equations} that, when \(\FF\) is of the
expected codimension \((r+1)^2\) in \(\GG\), its class in the Chow ring of the
Grassmannian is given by the top Chern class
\[
[\FF] = c_{(r+1)^2}\big((\mathcal{S}^{[1]} \otimes_{\sO_{\GG}} \mathcal{S})^\vee\big)
\in \mathrm{CH}^{(r+1)^2}(\GG).
\]
Numerical invariants of \(\FF\) may then by computed via Schubert calculus, as
in \cite[p.271]{Fulton}, at least for small \(r\). For instance, adapting the
argument of \cite[Th\'eor\`eme 4.3]{DM:Fano} gives:

\begin{Proposition}\label{basics-numerics-polynomial}
If \(\FF\) is of expected dimension \((r+1)(n-2r-1)\), then its degree in
its Pl\"ucker embedding is the coefficient of \(x_0^n x_1^n \cdots x_r^{n-r}\)
in
\[
\pushQED{\qed}
(x_0 + x_1 + \cdots + x_r)^{(r+1)(n-2r-1)}  \cdot
\Big(\prod\nolimits_{i,j = 0}^r (x_i + q x_j)\Big) \cdot
\Big(\prod\nolimits_{0 \leq i < j \leq r} (x_i - x_j)\Big).
\qedhere
\popQED
\]
\end{Proposition}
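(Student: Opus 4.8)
The plan is to carry out the Schubert-calculus computation in the manner of \cite[Th\'eor\`eme 4.3]{DM:Fano}, with the bundle $(\mathcal{S}^{[1]}\otimes_{\sO_\GG}\mathcal{S})^\vee$ cutting out $\FF$, as in \parref{basics-fano-equations}, playing the role that $\Sym^{q+1}\mathcal{S}^\vee$ would play for an ordinary degree $q+1$ hypersurface. First I would record, following \parref{basics-numerics}, that when $\FF$ has the expected codimension $(r+1)^2 = \rank(\mathcal{S}^{[1]}\otimes\mathcal{S})$ in the Cohen--Macaulay variety $\GG \coloneqq \GG(r+1,V)$, with $\dim V = n+1$, the defining $q$-bic polynomial $\beta_{\mathcal{S}} \in H^0\big(\GG, (\mathcal{S}^{[1]}\otimes\mathcal{S})^\vee\big)$ is a regular section; its Koszul complex then resolves $\sO_\FF$, so that $[\FF] = c_{(r+1)^2}\big((\mathcal{S}^{[1]}\otimes\mathcal{S})^\vee\big)$ in $\CH^{(r+1)^2}(\GG)$. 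Since the Pl\"ucker polarisation is $\sO_\GG(1) = \det\mathcal{S}^\vee$, the degree in question equals $\int_\GG c_1(\mathcal{S}^\vee)^{(r+1)(n-2r-1)}\cap[\FF]$.

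Next I would pass to the complete flag bundle $\mathrm{Fl}(\mathcal{S}) \to \GG$ and write $x_0,\ldots,x_r$ for the Chern roots of $\mathcal{S}^\vee$, so that $c_1(\sO_\GG(1)) = x_0 + \cdots + x_r$. The one point requiring care is the effect of the Frobenius twist on Chern classes: since twisting sends a line bundle to its $q$-th tensor power, the splitting principle gives $c_i(\mathcal{S}^{[1]}) = q^i\, c_i(\mathcal{S})$, so $\mathcal{S}^{[1],\vee}$ has Chern roots $qx_0,\ldots,qx_r$. Hence $(\mathcal{S}^{[1]}\otimes\mathcal{S})^\vee = \mathcal{S}^{[1],\vee}\otimes\mathcal{S}^\vee$ has Chern roots $qx_i + x_j$ for $0 \le i,j \le r$, and therefore $[\FF] = \prod_{i,j=0}^r (qx_i + x_j) = \prod_{i,j=0}^r (x_i + qx_j)$ after relabelling the summation indices.

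It then remains to evaluate, against the fundamental class of $\GG$, the symmetric polynomial
\[
P(x_0,\ldots,x_r) \coloneqq (x_0 + \cdots + x_r)^{(r+1)(n-2r-1)} \cdot \prod\nolimits_{i,j=0}^r (x_i + qx_j),
\]
which has degree $(r+1)(n-2r-1) + (r+1)^2 = (r+1)(n-r) = \dim\GG$, as it should. For this I would invoke the standard formula for integrating a symmetric function of $\mathcal{S}^\vee$ over the Grassmannian (compare \cite[p.~271]{Fulton}): for symmetric $P$ of degree $\dim\GG$, the integral $\int_\GG P$ is the coefficient of $x_0^n x_1^{n-1}\cdots x_r^{n-r}$ in $P \cdot \prod_{0 \le i < j \le r}(x_i - x_j)$. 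This follows from Giambelli's determinantal formula, which expresses $P$ in the Schubert basis, together with the vanishing of every Schubert class but the top one on a point; equivalently, it is the pushforward formula along $\mathrm{Fl}(\mathcal{S}) \to \GG$. Substituting the displayed $P$ yields exactly the asserted expression.

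The step most likely to need attention, rather than a genuine obstacle, is the bookkeeping underlying the two inputs above: the regularity of $\beta_{\mathcal{S}}$ under the expected-dimension hypothesis, which rules out an excess-intersection correction to $[\FF]$, and the Frobenius-twist scaling of Chern roots, which is the only place where the degree $q+1$ of $X$ enters the formula. Once these are secured, everything is formal. If anything the computation is lighter than its model in \cite[Th\'eor\`eme 4.3]{DM:Fano}, since $(\mathcal{S}^{[1]}\otimes\mathcal{S})^\vee$ already factors into the linear forms $x_i + qx_j$, in place of the symmetric-power contributions that occur for $\Sym^{q+1}\mathcal{S}^\vee$.
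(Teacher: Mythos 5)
Your argument is correct and is essentially the paper's own route: the paper only sketches the proof by citing \parref{basics-numerics} (the identity \([\FF] = c_{(r+1)^2}\big((\mathcal{S}^{[1]}\otimes_{\sO_\GG}\mathcal{S})^\vee\big)\)) and the Schubert-calculus evaluation adapted from \cite[Th\'eor\`eme 4.3]{DM:Fano}, and your write-up supplies exactly these steps, with the two genuinely relevant verifications (regularity of \(\beta_{\mathcal{S}}\) under the expected-dimension hypothesis, and the scaling of Chern roots by \(q\) under Frobenius twist) handled correctly. Note also that your monomial \(x_0^n x_1^{n-1}\cdots x_r^{n-r}\) is the correct one for the coefficient-extraction formula; the exponent on \(x_1\) in the Proposition as printed appears to be a typo.
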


In general, this is rather unwieldy, but there are at least three cases beyond
the \(r = 0\) case for which the degree \(\deg(\FF)\) of the Fano scheme with respect to its
Pl\"ucker line bundle \(\sO_\FF(1)\) has a neat expression. This is summarized
in the following; proofs of the latter two statements are given later:

\begin{Proposition}
If the scheme \(\FF_1\) of lines in \(X\) is of dimension \(2n-6\), then
\[
\deg\FF_1 =
\frac{(2n-6)!}{(n-1)!(n-3)!} (q+1)^2\big((n-1)q^2 + (2n-8)q + (n-1)\big).
\]
For the scheme \(\FF_m\) of half-dimensional planes in \(X\),
\[
\deg\FF_m =
\begin{dcases*}
\prod\nolimits_{i = 0}^r (q^{2i+1}+1) & if \(n = 2m+1\) and \(\dim\FF_m = 0\), and \\
\prod\nolimits_{i = 0}^r \frac{q^{2i+2} - 1}{q-1} & if \(n = 2m+2\) and \(\dim\FF_m = m+1\).
\end{dcases*}
\]
\end{Proposition}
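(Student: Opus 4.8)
The plan is to handle the scheme \(\FF_1\) of lines by a direct Schubert-calculus computation from \parref{basics-numerics-polynomial}, and to reduce the two half-dimensional statements to structural results established later in the paper.

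For \(\FF_1\), under the hypothesis \(\dim\FF_1 = 2n-6\) --- which is exactly the expected dimension \((r+1)(n-2r-1)\) when \(r = 1\), so that \parref{basics-numerics-polynomial} applies --- I would read off that \(\deg\FF_1\) is the coefficient of \(x_0^n x_1^{n-1}\) in
\[
(x_0 + x_1)^{2n-6}\cdot\prod\nolimits_{i,j=0}^1(x_i + q x_j)\cdot(x_0 - x_1).
\]
Since \(\prod_{i,j=0}^1(x_i + q x_j) = (1+q)^2\,x_0 x_1\,(x_0+qx_1)(qx_0+x_1)\) and
\[
(x_0+qx_1)(qx_0+x_1)(x_0-x_1) = q x_0^3 + (1-q+q^2)x_0^2 x_1 - (1-q+q^2)x_0 x_1^2 - q x_1^3,
\]
cancelling the common factor \(x_0 x_1\) reduces this to the coefficient of \(x_0^{n-1} x_1^{n-2}\) in \((1+q)^2(x_0+x_1)^{2n-6}\) times that cubic. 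Picking out the relevant term of \((x_0+x_1)^{2n-6}\) against each monomial \(x_0^a x_1^{3-a}\) presents the answer as
\[
(1+q)^2\Bigl(q\binom{2n-6}{n-4} + (1-q+q^2)\binom{2n-6}{n-3} - (1-q+q^2)\binom{2n-6}{n-2} - q\binom{2n-6}{n-1}\Bigr),
\]
and then, using \(\binom{N}{k} = \binom{N}{N-k}\) together with \(\binom{N}{k}\big/\binom{N}{k+1} = (k+1)/(N-k)\) to rewrite every term as a rational multiple of the central binomial \(\binom{2n-6}{n-3}\) and collecting by powers of \(q\), I expect to arrive at
\[
\deg\FF_1 = (1+q)^2\binom{2n-6}{n-3}\frac{(n-1)q^2 + (2n-8)q + (n-1)}{(n-1)(n-2)},
\]
which is the asserted formula since \(\binom{2n-6}{n-3}\big/\bigl((n-1)(n-2)\bigr) = (2n-6)!\big/\bigl((n-1)!\,(n-3)!\bigr)\). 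The only point that needs care here is the bookkeeping of those four binomial terms; everything else is mechanical.

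For \(\FF_m\) with \(n = 2m+1\), the dimension formula gives \(\dim\FF_m = 0\), so by \parref{basics-numerics} its Pl\"ucker degree is the number \(\int_{\GG}c_{(m+1)^2}\bigl((\mathcal{S}^{[1]}\otimes_{\sO_{\GG}}\mathcal{S})^\vee\bigr)\), which depends only on \(q\) and not on \(X\); this is evaluated later in \parref{hermitian-maximal-count} --- for instance by a transversal count in the smooth case, where the \(m\)-planes on \(X\) are counted through the Hermitian geometry of the nonsingular \(q\)-bic form --- as \(\prod_{i=0}^m(q^{2i+1}+1)\). For \(\FF_m\) with \(n = 2m+2\) one has \(\dim\FF_m = m+1\), and here a direct attack through \parref{basics-numerics-polynomial} becomes impractical, since the Vandermonde factor and the \((m+1)^2\) linear factors \(x_i + q x_j\) no longer decouple for general \(m\); instead the formula is obtained later in \parref{cyclic-maximal}, from the structural description built in \S\parref{section-hermitian}--\S\parref{section-dl} --- either by extracting the Pl\"ucker degree from the Deligne--Lusztig presentation of \(\FF_m\) (of type \({}^2\mathrm{A}_{n+1}\)), whose cohomology is computed in \parref{dl-betti} from Lusztig's results, or from the purely inseparable cyclic covering \(Z \dashrightarrow \FF_m\) of \parref{cyclic-resolve}--\parref{hermitian-filtration}, which exhibits \(\FF_m\) as a degree \(q^{m(m+1)}\) purely inseparable quotient of the explicit complete intersection \(Z \subset \PP^{2m+2}\) of multidegree \((q^{2i+1}+1)_{i=0}^m\), via \([v] \mapsto [\,v \wedge \phi(v) \wedge \cdots \wedge \phi^m(v)\,]\).

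The main obstacle lies wholly in this last case. Along the covering-map route the difficulty is that the Moore-determinant map \([v] \mapsto [\,v \wedge \phi(v) \wedge \cdots \wedge \phi^m(v)\,]\) has a positive-dimensional base locus on \(Z\) (already for \(m = 1\) the naive combination of \(\deg q^{m(m+1)}\) with the multidegree of \(Z\) fails to produce a polynomial), so \(\pi^*\sO_{\FF_m}(1)\) is not merely a Serre twist of \(\sO_Z\) and that correction must be computed; along the Deligne--Lusztig route one needs the intersection theory of \S\parref{section-dl}. Either way the required input only becomes available later, which is why the proof of this case is deferred. As a consistency check, taking \(m = 1\) lands in the \(q\)-bic threefold case \(n = 4\), \(r = 1\), and the first formula already gives \(\deg\FF_1 = (q+1)^2(q^2+1) = \prod_{i=0}^1\frac{q^{2i+2}-1}{q-1}\), which for \(q = 2\) recovers the degree \(45\) of the Fano surface of a cubic threefold.
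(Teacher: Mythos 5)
Your treatment of the lines case is correct and is exactly the paper's route: the paper merely says the formula ``can be obtained directly from \parref{basics-numerics-polynomial}'', and your explicit extraction of the coefficient --- the factorization \(\prod_{i,j=0}^1(x_i+qx_j)=(1+q)^2x_0x_1(x_0+qx_1)(qx_0+x_1)\), the four binomial terms, and the collapse to \((n-1)q^2+(2n-8)q+(n-1)\) over \((n-1)(n-2)\) --- checks out. For \(n=2m+1\) you also agree with the paper, which invokes Theorem \parref{theorem-fano-schemes} together with \parref{hermitian-maximal-count}; your observation that the degree equals the Chern number \(\int_{\GG}c_{(m+1)^2}\bigl((\mathcal{S}^{[1]}\otimes_{\sO_{\GG}}\mathcal{S})^\vee\bigr)\), hence depends only on \(q\), is a slightly cleaner substitute for the paper's reduction of the possibly singular case to the smooth one by smoothing in flat families (and the same remark would dispose of the non-smooth case for \(n=2m+2\), which you leave implicit). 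The one real divergence is where you send the reader for \(n=2m+2\): the paper does not extract the degree from \parref{cyclic-maximal} or from the Deligne--Lusztig computation \parref{dl-betti} (Betti numbers alone do not determine the Pl\"ucker degree, and you correctly flag the base-locus obstruction for the covering \(Z\dashrightarrow\FF_m\)). It instead proves the formula in \parref{cgaj-plucker-degree} by a different mechanism: for a Hermitian \(m\)-plane \(P\) the incidence divisor decomposes as \(D_P=\sum_{x\in P\cap X_{\mathrm{Herm}}}D_x\) by \parref{cgaj-hermitian-m-plane}, with \((q+1)D_P\) algebraically equivalent to \(c_1(\sO_\FF(1))\) and each \(D_x\) isomorphic to \(\FF_{m-1}\) of a smooth \(q\)-bic \((2m-1)\)-fold, so the degree follows by induction on \(m\) down to the plane curve of degree \(q+1\). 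So your plan is sound in outline --- and your consistency check at \(m=1\), \(q=2\) giving \(45\) is right --- but the deferred case is ultimately settled by the incidence-divisor geometry of \S\parref{section-cgaj}, not by the machinery you point to.
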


\begin{proof}
The case of lines can be obtained directly from
\parref{basics-numerics-polynomial}. The half-dimensional case follows in the
smooth case via Theorem \parref{theorem-fano-schemes} together with
\parref{hermitian-maximal-count} when \(n = 2m+1\), and
\parref{cgaj-plucker-degree} when \(n = 2m+2\); the general case then follows
by smoothing the \(q\)-bic hypersurface, taking Fano schemes in families, and
using the fact that degrees are invariant in flat families.
\end{proof}

\section{Smoothness and connectedness}\label{section-smoothness-and-connectedness}
This Section is concerned with smoothness and connectedness properties of the
Fano scheme \(\FF\) of \(r\)-planes of a \(q\)-bic hypersurface \(X\)
associated with a \(q\)-bic form \((V,\beta)\) over \(\kk\). The tangent sheaf
of \(\FF\) is described in \parref{differential-identify} and smooth points of
the expected dimension are given a moduli-theoretic description in
\parref{differential-smooth-points}; see also
\parref{differential-singular-locus} for a geometric interpretation. Finally,
\parref{hypersurfaces-fano-connected} shows that \(\FF\) is connected whenever
\(n \geq 2r+2\) by considering the relative Fano scheme of the universal family
of \(q\)-bic hypersurfaces.

In the following, let \(\mathcal{S}\) and \(\mathcal{Q}\) be the tautological
sub- and quotient bundles of the Grassmannian \(\GG\), and \(\mathcal{S}_\FF\)
and \(\mathcal{Q}_\FF\) their restrictions to \(\FF\). Write
\(V_R \coloneqq V \otimes_\kk R\) for any \(\kk\)-algebra \(R\), and let
\(\beta_R\) be the \(q\)-bic form over \(R\) obtained by extension of scalars.
Write \(\kk[\epsilon]\) for the ring of dual numbers over \(\kk\).

\subsectiondash{Tangent sheaves}\label{differential-conormal}
Describe the tangent sheaf \(\mathcal{T}_\FF\) of the Fano scheme by
considering first-order deformations of the underlying moduli problem
\parref{basics-fano-schemes} as follows: First, since the ideal of \(\FF\) in
the Grassmannian \(\GG\) is the image of
\(\beta_{\mathcal{S}} \colon \mathcal{S}^{[1]} \otimes_{\sO_{\GG}} \mathcal{S} \to \sO_{\GG}\)
by \parref{basics-fano-equations}, dualizing the conormal sequence induces an
exact sequence of sheaves of \(\sO_\FF\)-modules given by
\begin{equation*}\label{differential-tangent-sequence}
\tag{\(\star\)}
0 \to
\mathcal{T}_\FF \to
\mathcal{T}_{\GG}\rvert_{\FF} \to
(\mathcal{S}_{\FF}^{[1]} \otimes_{\sO_\FF} \mathcal{S}_{\FF})^\vee.
\end{equation*}
Second, the moduli description of \(\GG\) yields an isomorphism
\(\HomSheaf_{\sO_\GG}(\mathcal{S},\mathcal{Q}) \to \mathcal{T}_\GG\), where
at a point with residue field \(\kappa\) corresponding to an
\((r+1)\)-dimensional subspace \(U\) of \(V_\kappa\), a \(\kappa\)-linear map
\(\varphi \colon U \to V_\kappa/U\) is identified with the
\(\kappa[\epsilon]\)-submodule \(\tilde{U}\) of \(V_{\kappa[\epsilon]}\)
generated by \(\set{u + \epsilon \tilde{\varphi}(u) | u \in U}\), where
\(\tilde{\varphi}(u)\) is any lift of \(\varphi(u)\) to \(V_{\kappa}\).
Third, note that the map
\(
V_\FF \to
V^{[1],\vee}_\FF \twoheadrightarrow
\mathcal{S}_\FF^{[1],\vee}
\)
obtained by composing the adjoint to \(\beta\) with the restriction vanishes on
\(\mathcal{S}_{\FF}\), whence it descends to the quotient to yield a map
\(\nu \colon \mathcal{Q}_\FF \to \mathcal{S}_\FF^{[1],\vee}\). This fits
into an exact sequence of \(\sO_\FF\)-modules given by
\begin{equation*}\label{differential-globalize}
\tag{\(\star\star\)}
0 \to
\mathcal{S}^{[1],\perp}_\FF/\mathcal{S}_\FF \to
\mathcal{Q}_\FF \xrightarrow{\nu}
\mathcal{S}^{[1],\vee}_\FF
\end{equation*}
where \(\mathcal{S}^{[1],\perp}_\FF \subseteq V_\FF\) is the orthogonal with
respect to \(\beta\), as in \parref{basics-qbic-forms}.

Put together, these remarks give the following description of
\(\mathcal{T}_\FF\):

\begin{Proposition}\label{differential-identify}
The first-order deformation \(\tilde{U}\) is isotropic for
\(\beta_{\kappa[\epsilon]}\) if and only if the corresponding linear map
\(\varphi \colon U \to V/U\) factors through \(U^{[1],\perp}/U\). Thus there is
a canonical isomorphism
\[
\mathcal{T}_\FF \otimes_\kk \kappa([\PP U])
\cong \Hom_\kappa(U,U^{[1],\perp}/U),
\]
and the tangent sequence \eqref{differential-tangent-sequence} is
canonically identified with the exact sequence
\(\HomSheaf_{\sO_{\FF}}(\mathcal{S}_{\FF}, \eqref{differential-globalize})\):
\[
0 \to
\HomSheaf_{\sO_{\FF}}(\mathcal{S}_{\FF},\mathcal{S}_{\FF}^{[1],\perp}/\mathcal{S}_{\FF}) \to
\HomSheaf_{\sO_{\FF}}(\mathcal{S}_{\FF},\mathcal{Q}_{\FF}) \xrightarrow{\nu_*}
\HomSheaf_{\sO_{\FF}}(\mathcal{S}_{\FF},\mathcal{S}_{\FF}^{[1],\vee}).
\]
In particular, this canonically identifies \(\mathcal{T}_\FF \cong
\HomSheaf_{\sO_{\FF}}(\mathcal{S}_{\FF},\mathcal{S}_{\FF}^{[1],\perp}/\mathcal{S}_{\FF})\).
\end{Proposition}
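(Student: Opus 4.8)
First I would set up the overall strategy: the plan is to reduce the whole statement to one first-order computation at a point $[\PP U] \in \FF$ and then to observe that this computation is natural enough to sheafify against the sequences \eqref{differential-tangent-sequence} and \eqref{differential-globalize} already in place. The conceptual point, which I would flag at the outset, is that $q > 1$ forces $\epsilon^q = 0$ in the dual numbers $\kappa[\epsilon]$, so that Frobenius twisting annihilates every first-order term; this is exactly what linearizes the deformation problem and makes the answer as clean as stated.

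First I would establish the infinitesimal criterion. Let $U \subseteq V_\kappa$ be $(r+1)$-dimensional and isotropic, let $\varphi\colon U \to V_\kappa/U$ be a tangent vector to $\GG$ at $[\PP U]$, choose a lift $\tilde\varphi\colon U \to V_\kappa$ of it, and set $\tilde u \coloneqq u + \epsilon\tilde\varphi(u)$, so that $\tilde U$ is the $\kappa[\epsilon]$-span of the $\tilde u$. Since $\epsilon^q = 0$ one has $\tilde u^{[1]} = u^{[1]}$ in $V_{\kappa[\epsilon]}^{[1]}$, hence for $u, u' \in U$
\[
\beta_{\kappa[\epsilon]}(\tilde u^{[1]}, \tilde u') = \beta(u^{[1]}, u') + \epsilon\,\beta(u^{[1]}, \tilde\varphi(u')) = \epsilon\,\beta(u^{[1]}, \tilde\varphi(u')),
\]
the last equality using that $U$ is isotropic. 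So $\tilde U$ is isotropic for $\beta_{\kappa[\epsilon]}$ exactly when $\tilde\varphi(U) \subseteq U^{[1],\perp}$; and since $U \subseteq U^{[1],\perp}$, again by isotropy of $U$, this is independent of the chosen lift and amounts to $\varphi$ factoring through $U^{[1],\perp}/U \subseteq V_\kappa/U$. Combined with the moduli description \parref{basics-fano-schemes} of $\FF$, whose first-order deformations over $[\PP U]$ are precisely the isotropic $\tilde U$, this is the asserted criterion and identifies $\mathcal{T}_\FF \otimes_\kk \kappa([\PP U])$ with $\Hom_\kappa(U, U^{[1],\perp}/U)$.

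Second I would sheafify the criterion. Under the moduli isomorphism $\mathcal{T}_\GG|_\FF \cong \HomSheaf_{\sO_\FF}(\mathcal{S}_\FF,\mathcal{Q}_\FF)$ and the canonical isomorphism $(\mathcal{S}_\FF^{[1]} \otimes_{\sO_\FF}\mathcal{S}_\FF)^\vee \cong \HomSheaf_{\sO_\FF}(\mathcal{S}_\FF, \mathcal{S}_\FF^{[1],\vee})$, the computation above — now read functorially in the deforming direction $\varphi$ — shows that the last arrow of \eqref{differential-tangent-sequence} carries a local section $\varphi$ of $\HomSheaf_{\sO_\FF}(\mathcal{S}_\FF,\mathcal{Q}_\FF)$ to $\nu \circ \varphi$: the conormal map sends $s^{[1]}\otimes s'$ to the first-order part $\beta(s^{[1]},\varphi(s'))$ of the defining section $\beta_{\mathcal{S}}(s^{[1]} \otimes s')$, and $\bar v \mapsto \bigl(s^{[1]} \mapsto \beta(s^{[1]},v)\bigr)$ is by construction the map $\nu\colon\mathcal{Q}_\FF\to\mathcal{S}_\FF^{[1],\vee}$ of \eqref{differential-globalize}. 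Hence \eqref{differential-tangent-sequence} is obtained from \eqref{differential-globalize} by applying the left-exact functor $\HomSheaf_{\sO_\FF}(\mathcal{S}_\FF,-)$, and comparing kernels yields the canonical isomorphism $\mathcal{T}_\FF \cong \HomSheaf_{\sO_\FF}(\mathcal{S}_\FF,\mathcal{S}_\FF^{[1],\perp}/\mathcal{S}_\FF)$.

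The step I expect to be the real work is not the displayed calculation, which is two lines once $\epsilon^q = 0$ is used, but the bookkeeping in the second step: verifying that the Grassmannian conormal map is identified with $\nu_*$ under the canonical dualities with all Frobenius twists and double-duals kept straight, and doing so compatibly with base change so that the fibrewise picture genuinely upgrades to an identification of $\sO_\FF$-modules rather than merely agreeing at points.
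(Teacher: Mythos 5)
Your proposal is correct and follows essentially the same route as the paper: the same first-order computation (with Frobenius twisting killing the \(\epsilon\)-term, so isotropy of \(\tilde{U}\) amounts to \(\beta(u^{[1]},\tilde\varphi(u'))=0\), i.e.\ \(\varphi\) factoring through \(U^{[1],\perp}/U\)), followed by identifying the tangent sequence \eqref{differential-tangent-sequence} with \(\HomSheaf_{\sO_{\FF}}(\mathcal{S}_{\FF},\eqref{differential-globalize})\) and varying over points. Your extra care in matching the conormal map with \(\nu_*\) is exactly the "comparing with the descriptions in \parref{differential-conormal}" step that the paper leaves terse, so there is nothing missing.
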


\begin{proof}
Since \(\tilde{U}\) is generated by elements \(u + \epsilon \tilde\varphi(u)\)
as \(u\) ranges over \(U\), it is isotropic for \(\beta_{\kappa[\epsilon]}\) if
and only if for every \(u,u' \in U\),
\[
0
= \beta_{\kappa[\epsilon]}\big((u + \epsilon\tilde\varphi(u))^{[1]}, u' + \epsilon\tilde\varphi(u')\big)
= \epsilon\beta_{\kappa[\epsilon]}\big(u^{[1]}, \tilde\varphi(u')\big).
\]
Since \(U^{[1]}\) is spanned by elements of the form \(u^{[1]}\) for
\(u \in U\), this is equivalent to the vanishing of the linear map
\(\nu \circ \varphi \colon U \to U^{[1],\vee}/U\). Thus \(\varphi\) factors
through \(U^{[1],\perp}/U\), yielding the first statement. Comparing with the
descriptions in \parref{differential-conormal}, this identifies the fibre at
\(U\) of the tangent sequence \eqref{differential-tangent-sequence} with that
of the sequence in the statement. Varying over points of \(\FF\) now gives the
second statement.
\end{proof}

Comparing the statement and proof of \parref{differential-identify} with the
sequence \eqref{differential-globalize} gives:

\begin{Corollary}\label{differential-smooth-points}
The following statements are equivalent for the point \([\PP U]\) of \(\FF\):
\begin{enumerate}
\item\label{differential-smooth-points.x}
\([\PP U]\) is a smooth point of \(\mathbf{F}\) of expected
dimension \((r+1)(n-2r-1)\);
\item\label{differential-smooth-points.surjective}
\(\nu \colon \mathcal{Q}_{\mathbf{F}} \to \mathcal{S}^{[1],\vee}_{\mathbf{F}}\)
is surjective on the fibre at \([\PP U]\);
\item\label{differential-smooth-points.injective}
\(\nu^\vee \colon \mathcal{S}^{[1]}_{\mathbf{F}} \to \mathcal{Q}_{\mathbf{F}}^\vee\)
is injective on the fibre at \([\PP U]\); and
\item\label{differential-smooth-points.disjoint}
the subspaces \(U^{[1]}\) and \(V^\perp_\kappa\) of \(V^{[1]}_\kappa\) are
linearly disjoint, that is
\(U^{[1]} \cap V^\perp_{\kappa} = \{0\}\). \qed
\end{enumerate}
\end{Corollary}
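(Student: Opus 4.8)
The plan is to pin down the fibres of $\nu$ and $\nu^\vee$ at $[\PP U]$ and feed them into the tangent space computation of \parref{differential-identify}; after that everything is linear algebra over the residue field $\kappa$ of $[\PP U]$. Write $\bar\beta \colon V_\kappa \to U^{[1],\vee}$ for the composite of the adjoint $\beta \colon V_\kappa \to V_\kappa^{[1],\vee}$ with the restriction $V_\kappa^{[1],\vee} \twoheadrightarrow U^{[1],\vee}$. Since $U$ is isotropic, $\bar\beta$ vanishes on $U$ and so descends to $\bar\nu \colon V_\kappa/U \to U^{[1],\vee}$, which is exactly the fibre at $[\PP U]$ of $\nu \colon \mathcal{Q}_\FF \to \mathcal{S}_\FF^{[1],\vee}$ from \eqref{differential-globalize}; moreover $\ker\bar\beta = U^{[1],\perp}$ and hence $\ker\bar\nu = U^{[1],\perp}/U$ by the definition of the orthogonal in \parref{basics-qbic-forms}.

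For \ref{differential-smooth-points.x} $\Leftrightarrow$ \ref{differential-smooth-points.surjective} I would argue by dimension count. Because $\FF$ is cut out in the smooth variety $\GG$ by the $(r+1)^2$ entries of $\beta_{\mathcal{S}}$ (\parref{basics-fano-equations}), every component of $\FF$ through $[\PP U]$ has dimension at least $(r+1)(n-2r-1)$, and $[\PP U]$ is a smooth point of $\FF$ of that dimension precisely when $\dim_\kappa(\mathcal{T}_\FF \otimes \kappa) = (r+1)(n-2r-1)$. By \parref{differential-identify}, $\mathcal{T}_\FF \otimes \kappa \cong \Hom_\kappa(U, U^{[1],\perp}/U)$, which has dimension $(r+1)\dim_\kappa(U^{[1],\perp}/U)$; since $U \subseteq U^{[1],\perp} = \ker\bar\beta$,
\[
\dim_\kappa(U^{[1],\perp}/U) = (n-r) - \rank\bar\beta \geq (n-r) - (r+1) = n - 2r - 1,
\]
with equality exactly when $\bar\beta$, equivalently $\bar\nu$, equivalently the fibre of $\nu$ at $[\PP U]$, is surjective.

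The equivalence \ref{differential-smooth-points.surjective} $\Leftrightarrow$ \ref{differential-smooth-points.injective} is then just the fact that a linear map of finite-dimensional $\kappa$-vector spaces is surjective if and only if its transpose is injective, applied to the fibre maps at $[\PP U]$. For \ref{differential-smooth-points.injective} $\Leftrightarrow$ \ref{differential-smooth-points.disjoint} I would dualize the construction of $\nu$ recalled in \parref{differential-conormal}: this identifies $\nu^\vee \colon \mathcal{S}_\FF^{[1]} \to \mathcal{Q}_\FF^\vee$, composed with the inclusion $\mathcal{Q}_\FF^\vee \hookrightarrow V_\FF^\vee$ dual to $V_\FF \twoheadrightarrow \mathcal{Q}_\FF$, with the restriction of the adjoint $\beta^\vee \colon V_\FF^{[1]} \to V_\FF^\vee$ to $\mathcal{S}_\FF^{[1]}$. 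At $[\PP U]$ this is $\beta^\vee\rvert_{U^{[1]}} \colon U^{[1]} \to V_\kappa^\vee$, which is injective if and only if $U^{[1]} \cap \ker(\beta^\vee \colon V_\kappa^{[1]} \to V_\kappa^\vee) = \{0\}$; and $\ker(\beta^\vee \colon V_\kappa^{[1]} \to V_\kappa^\vee)$ is by definition the kernel $V_\kappa^\perp$ of $\beta$ in the sense of \parref{basics-qbic-forms}. This is \ref{differential-smooth-points.disjoint}, closing the loop.

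There is no genuine obstacle here: the real work, the identification of $\mathcal{T}_\FF$, has already been done in \parref{differential-identify}, and what remains is bookkeeping with adjoints and their transposes. The one place calling for a little care is the very first reduction --- that being a smooth point of the expected dimension is equivalent to the tangent space there having the expected dimension --- which uses that $\FF \hookrightarrow \GG$ is locally cut out by $(r+1)^2$ equations together with the standard fact that a point of a finite-type $\kk$-scheme at which the tangent space attains the local dimension is a smooth point of that dimension.
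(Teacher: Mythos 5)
Your argument is correct and is exactly the comparison the paper leaves implicit: it identifies the fibres of \(\nu\) and \(\nu^\vee\) at \([\PP U]\), uses the tangent-space description \(\mathcal{T}_\FF \otimes \kappa \cong \Hom_\kappa(U,U^{[1],\perp}/U)\) from \parref{differential-identify} together with the lower bound from \parref{basics-fano-equations}, and closes the loop with transpose duality and the definition of \(V^\perp\). This matches the paper's (essentially one-line) proof, with the bookkeeping written out.
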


Applying the equivalence
\parref{differential-smooth-points}\ref{differential-smooth-points.x}
\(\Leftrightarrow\)
\parref{differential-smooth-points}\ref{differential-smooth-points.injective}
to each generic point of \(\FF\) gives a neat characterization of when the Fano
scheme is generically smooth of the expected dimension:

\begin{Corollary}\label{differential-generic-smoothness}
The following statements regarding the Fano scheme \(\mathbf{F}\) are equivalent:
\begin{enumerate}
\item \(\mathbf{F}\) is generically smooth of the expected dimension
\((r+1)(n-2r-1)\); and
\item the map \(\nu^\vee \colon \mathcal{S}^{[1]}_{\FF} \to \mathcal{Q}_{\FF}^\vee\)
of locally free \(\sO_\FF\)-modules is injective.
\end{enumerate}
In this case, \(\mathbf{F}\) is a local complete intersection scheme, its
tangent sheaf sequence \eqref{differential-tangent-sequence} is exact on the
right, and its dualizing sheaf is a power of its Pl\"ucker line bundle given by
\[
\omega_{\FF}
\cong \sO_{\FF}\big((q+1)(r+1) - (n+1)\big).
% \otimes_\kk \det(V^\vee)^{\otimes r + 1}.
\]
\end{Corollary}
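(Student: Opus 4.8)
\noindent\emph{Proof proposal.}\ The plan is to deduce the equivalence $(\mathrm{i})\Leftrightarrow(\mathrm{ii})$ from the pointwise smoothness criterion \parref{differential-smooth-points}, applied at the generic points of $\FF$, and then to read off the supplementary assertions from the complete intersection structure that $(\mathrm{i})$ imposes. As a preliminary: by \parref{basics-fano-equations}, $\FF$ is the zero scheme in the smooth Grassmannian $\GG=\GG(r+1,V)$ of the section $\beta_{\mathcal{S}}$ of the locally free sheaf $(\mathcal{S}^{[1]}\otimes_{\sO_\GG}\mathcal{S})^\vee$ of rank $(r+1)^2$, so every irreducible component of $\FF$ has codimension at most $(r+1)^2$ in $\GG$, i.e.\ dimension at least the expected value $(r+1)(n-2r-1)$. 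If $(\mathrm{i})$ holds, then each component meets the open dense locus where $\FF$ is smooth of the expected dimension, hence has dimension \emph{exactly} $(r+1)(n-2r-1)$; therefore $\beta_{\mathcal{S}}$ cuts $\FF$ out of the Cohen--Macaulay scheme $\GG$ in the expected codimension, $\FF$ is locally defined by a regular sequence of length $(r+1)^2$, is a local complete intersection, and in particular is Cohen--Macaulay, so its associated points are exactly the generic points of its irreducible components.

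Granting this, $(\mathrm{i})\Rightarrow(\mathrm{ii})$: by \parref{differential-smooth-points} the fibre map $\nu^\vee(\eta)$ is injective at each generic point $\eta$ of $\FF$, and a map of locally free $\sO_\FF$-modules that is injective on a fibre is a split injection on the corresponding stalk; hence $\ker(\nu^\vee)\subseteq\mathcal{S}^{[1]}_\FF$ has vanishing stalk at every generic point. Since $\ker(\nu^\vee)$ is a subsheaf of a locally free sheaf, its associated points lie among those of $\sO_\FF$ — the generic points, by Cohen--Macaulayness — so $\ker(\nu^\vee)=0$, which is $(\mathrm{ii})$. Conversely, for $(\mathrm{ii})\Rightarrow(\mathrm{i})$, fix a generic point $\eta$ and set $A\coloneqq\sO_{\FF,\eta}$, an Artinian local ring with maximal ideal $\mathfrak{m}$; since $\nu^\vee$ is injective as a map of sheaves, the stalk $\nu^\vee_\eta\colon\mathcal{S}^{[1]}_{\FF,\eta}\to\mathcal{Q}^\vee_{\FF,\eta}$ is an injective map of finite free $A$-modules. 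I would then invoke the elementary fact that such a map stays injective after $\otimes_A(A/\mathfrak{m})$: picking a nonzero $t$ in the socle of $A$ (so $t\mathfrak{m}=0$) and a lift $x\notin\mathfrak{m}\mathcal{S}^{[1]}_{\FF,\eta}$ of a nonzero element of $\ker(\nu^\vee_\eta\otimes_A A/\mathfrak{m})$, one has $tx\neq0$ but $\nu^\vee_\eta(tx)=t\,\nu^\vee_\eta(x)\in t\mathfrak{m}\,\mathcal{Q}^\vee_{\FF,\eta}=0$, contradicting injectivity. Hence $\nu^\vee(\eta)$ is injective, so by \parref{differential-smooth-points} $\eta$ is a smooth point of $\FF$ of the expected dimension; this holds at every generic point, so $(\mathrm{i})$ follows.

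Assume now the equivalent conditions. By the preliminary, $\FF\hookrightarrow\GG$ is a regular immersion of codimension $(r+1)^2$, so $\FF$ is a local complete intersection scheme; consequently the surjection $\mathcal{S}^{[1]}_\FF\otimes_{\sO_\FF}\mathcal{S}_\FF\twoheadrightarrow\mathcal{N}^\vee_{\FF/\GG}$ induced by $\beta_{\mathcal{S}}$ is an isomorphism of locally free sheaves of rank $(r+1)^2$, and the remaining assertions are formal. Indeed, this identification makes $(\mathcal{S}^{[1]}_\FF\otimes\mathcal{S}_\FF)^\vee$ the normal bundle $\mathcal{N}_{\FF/\GG}\cong\mathcal{S}^{[1],\vee}_\FF\otimes\mathcal{S}^\vee_\FF$, so that \eqref{differential-tangent-sequence}, via \parref{differential-identify}, is the tangent sequence of the regular immersion and is exact on the right; and adjunction for the immersion into the smooth $\GG$ gives $\omega_\FF\cong\omega_\GG|_\FF\otimes\det\mathcal{N}_{\FF/\GG}$. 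Plugging in $\omega_\GG\cong\sO_\GG(-(n+1))$ in the Pl\"ucker polarization, $\det\mathcal{S}^\vee_\FF=\sO_\FF(1)$, and $\det\mathcal{S}^{[1],\vee}_\FF=\sO_\FF(q)$ — the Frobenius twist of a line bundle being its $q$-th tensor power — one computes $\det\mathcal{N}_{\FF/\GG}=(\det\mathcal{S}^{[1],\vee}_\FF)^{\otimes(r+1)}\otimes(\det\mathcal{S}^\vee_\FF)^{\otimes(r+1)}=\sO_\FF(q(r+1))\otimes\sO_\FF(r+1)=\sO_\FF((q+1)(r+1))$, whence $\omega_\FF\cong\sO_\FF((q+1)(r+1)-(n+1))$.

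The step I expect to be the main obstacle is the passage between the fibrewise statements of \parref{differential-smooth-points} and the sheaf-theoretic injectivity in $(\mathrm{ii})$: one direction needs the complete intersection analysis precisely in order to pin down the associated points of $\FF$ (so that a subsheaf of a locally free sheaf is detected at the generic points), and the other relies on the elementary but slightly unusual fact that injectivity of a map of free modules over an Artinian local ring is visible on residue fields. Everything past that — the regular-sequence/Cohen--Macaulay argument, the normal bundle identification, and the Pl\"ucker and Frobenius-twist bookkeeping giving $\omega_\FF$ — is routine.
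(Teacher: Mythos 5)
Your proof of the equivalence (i) \(\Leftrightarrow\) (ii) follows the same route as the paper --- apply the pointwise criterion \parref{differential-smooth-points} at the generic points of \(\FF\) --- and the extra care you take in passing between fibrewise and sheaf-level statements is correct and fills in what the paper leaves implicit: deriving the local complete intersection, hence Cohen--Macaulay, structure from (i) so that \(\ker(\nu^\vee)\), a subsheaf of a locally free sheaf, is detected at the generic points; and, for the converse, the socle argument showing that an injective map of free modules over the Artinian local ring at a generic point stays injective on the residue field. Your computation of \(\omega_\FF\) by adjunction for the regular immersion \(\FF\subset\GG\), with \(\det\mathcal{N}_{\FF/\GG}\cong\sO_\FF((q+1)(r+1))\) and \(\omega_\GG\cong\sO_\GG(-n-1)\), is in substance the paper's determinant computation, and is arguably the cleaner formulation since it never writes \(\omega_\FF\cong\det\Omega^1_\FF\) at nonsmooth points.

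The one step that fails is your justification of right-exactness of \eqref{differential-tangent-sequence}. It is not true that the tangent sequence of a regular immersion into a smooth variety is exact on the right: the map \(\mathcal{T}_\GG\rvert_\FF\to\mathcal{N}_{\FF/\GG}\) is surjective precisely at the points where \(\FF\) is smooth of the expected dimension (a nodal plane curve already gives a counterexample). In the present setting this is visible from the paper's own results: by \parref{differential-identify} the third map of \eqref{differential-tangent-sequence} is \(\nu_*\), and by \parref{differential-smooth-points} surjectivity of \(\nu\) on the fibre at a point is equivalent to that point being a smooth point of the expected dimension; hence right-exactness of the sequence is equivalent to \(\FF\) being smooth of the expected dimension at \emph{every} point, which hypothesis (i) alone does not give --- for instance in the corank-\(1\), non-conical case of \parref{hypersurfaces-fano-corank-1} one has (i) while \(\Sing\FF\neq\varnothing\). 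So either the claim must be read only generically (or under the additional hypothesis, via \parref{differential-singular-locus}, that \(X\) is smooth), or a different argument is needed; as written, your appeal to the regular immersion does not prove it. To be fair, the paper's own proof is silent on this clause as well --- it only carries out the \(\omega_\FF\) computation --- so the imprecision is inherited from the statement; but the general fact you invoke is false and cannot be the missing argument.
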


\begin{proof}
It remains to compute \(\omega_\FF\). Taking determinants of the cotangent
sequence dual to \eqref{differential-tangent-sequence} gives:
\[
\omega_{\FF} \cong
\det(\Omega^1_{\FF}) \cong
\det(\mathcal{S}^{[1]}_\FF \otimes_{\sO_\FF} \mathcal{S}_\FF)^\vee  \otimes_{\sO_\FF}
\det(\Omega^1_{\GG}\rvert_\FF)
\cong
\sO_\FF\big((q+1)(r+1) - (n+1)\big).
\qedhere
\]
\end{proof}

\subsectiondash{Nonsmooth locus}\label{differential-nonsmooth}
Define the \emph{nonsmooth locus} of \(\FF\) as the closed subscheme given by
the Fitting ideal of its sheaf of differentials in degree \((r+1)(n-2r-1)\),
the expected dimension of \(\FF\):
\[
\Sing\FF \coloneqq
\Fitt_{(r+1)(n-2r-1)}(\Omega^1_{\FF/\mathbf{k}}).
\]
Thus the nonsmooth locus is supported on the points which are either
of larger than expected dimension, or nonsmooth of the expected dimension, see
\citeSP{0C3H}. Dualizing the sequence in \parref{differential-identify}
gives a presentation of \(\Omega^1_{\FF/\kk}\), identifying the Fitting
subscheme as the top degeneracy locus of \(\nu_*^\vee\):
\[
\Sing\FF
=
\mathrm{Degen}_{(r+1)^2 - 1}\big(\nu_*^\vee \colon
  \mathcal{S}^{[1]}_{\FF} \otimes_{\sO_{\FF}} \mathcal{S}_{\FF} \to
  \mathcal{Q}_{\FF}^\vee \otimes_{\sO_{\FF}} \mathcal{S}_{\FF}
\big).
\]
Comparing the construction of \(\nu\) from \parref{differential-conormal}
with
\parref{differential-smooth-points}\ref{differential-smooth-points.disjoint}
shows that \(\nu_*^\vee\) drops rank precisely at points where
\(\mathcal{S}^{[1]}_{\FF}\) intersects \(V_{\FF}^\perp\), and the Cauchy--Binet
formula implies that this holds scheme-theoretically:
\[
\Sing\FF =
\mathrm{Degen}_{(r+1)^2-1}\big(
  \mathcal{S}^{[1]}_{\FF} \otimes_{\sO_{\FF}} \mathcal{S}_{\FF} \to
  (V^{[1]}/V^\perp)_{\FF} \otimes_{\sO_{\FF}} \mathcal{S}_\FF
\big).
\]
When \(r = 0\), so that \(\FF = X\) is simply the hypersurface, this yields a
clean moduli theoretic description of the singular locus of \(X\), and
furthermore gives a criterion for smoothness over \(\kk\):

\begin{Corollary}\label{differential-smoothness-X}
The nonsmooth locus of a \(q\)-bic hypersurface \(X\) is the closed subscheme
\[
\Sing X =
\mathrm{V}\big(
  \sO_X(-q) \xrightarrow{\mathrm{eu}^{[1]}}
  V^{[1]}_X \twoheadrightarrow
  (V^{[1]}/V^\perp)_X
\big)
\]
corresponding to the linear space \(\PP V^\perp\) in \(\PP V^{[1]}\). In
particular, the following are equivalent:
\begin{enumerate}
\item\label{differential-smoothness-X.smooth}
\(X\) is smooth over \(\kk\);
\item\label{differential-smoothness-X.beta}
\(\beta\) is nonsingular; and
\item\label{differential-smoothness-X.Gram}
\(\Gram(\beta;e_0,\ldots,e_n)\) is invertible for any basis \(V = \langle e_0,\ldots,e_n \rangle\).
\end{enumerate}
Moreover, \(X\) is regular if and only if \(V^{\perp}\) has trivial
Frobenius descent to \(V\) over \(\kk\).
\end{Corollary}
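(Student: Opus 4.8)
The statement to prove is the "Moreover" clause of \parref{differential-smoothness-X}: that $X$ is regular if and only if $V^\perp$ has trivial Frobenius descent to $V$ over $\kk$. Since the equivalence of smoothness over $\kk$ with nonsingularity of $\beta$ has already been settled, and smoothness over $\kk$ is a stronger condition than regularity only when $\kk$ is imperfect, the content is in analyzing the imperfect case. The idea is that $X$ fails to be regular exactly at its non-smooth points that happen to be $\kk$-rational in a suitably problematic way — concretely, at points of $\PP V^\perp \cap X$ that persist after base change. So the plan is to reduce regularity of $X$ to a statement about the $\kk$-points of the linear subspace $\PP V^\perp \subseteq \PP V^{[1]}$ cut out in \parref{differential-smoothness-X}, and then translate "$\PP V^\perp$ has no $\kk$-point lying on $X$" into the descent condition on $V^\perp$.

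**Key steps.** First I would recall that a scheme of finite type over a field $\kk$ is regular if and only if it is smooth over $\kk$ away from a closed subset of codimension... — no, more precisely, I would use that $X$ is regular if and only if $X_{\kk^{\mathrm{perf}}}$ is regular, combined with the fact that $X_{\kk^{\mathrm{perf}}}$ is regular iff it is smooth over $\kk^{\mathrm{perf}}$ at every point where it is regular, which by the first part is controlled by $V^\perp \otimes_\kk \kk^{\mathrm{perf}}$. Actually the cleanest route: a point $x \in X$ is a regular point iff $\sO_{X,x}$ is a regular local ring; by \citeSP{0B8X} (or the standard fact on geometrically regular vs.\ regular), if the residue field at $x$ is separable over $\kk$ then $x$ is regular iff $X$ is smooth at $x$; the only way $X$ can be regular but not smooth is along points with inseparable residue field. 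Second, from the description in \parref{differential-smoothness-X}, $\Sing X = \PP V^\perp$ viewed inside $\PP V$ via the identification of $V$ with $V^{[1]}$ as sets (the Frobenius twist being a bijection on $\kk$-points after base change to a perfect field). So $X$ is regular iff $\PP V^\perp$, as a closed subscheme of $\PP V$, contains no point at which $\sO_{X}$ fails to be regular — and since $\PP V^\perp$ is itself a (reduced, smooth) linear space, the only obstruction to regularity of $X$ at a point $x \in \PP V^\perp \cap X$ is that $x$ be $\kk^{\mathrm{perf}}$-rational, i.e.\ that the linear space $\PP(V^\perp)^{[1],\perp,[1]}$ survive base change. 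Third, I would unwind: $V^\perp \subseteq V^{[1]}$ has trivial Frobenius descent to $V$ over $\kk$ precisely when $(V^\perp)^{[-1]}$ — the unique subspace $W \subseteq V$ with $W^{[1]} = V^\perp$, when it exists — is defined over $\kk$, equivalently when $\sigma_\beta$-type considerations make $\Sing X$ "already rational." Matching this against step two: $X$ is regular iff every $\kk$-point of $\Sing X_{\kk^{\mathrm{perf}}}$ is already a $\kk$-point of $\Sing X$, which happens iff $V^\perp$ descends.

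**Main obstacle.** The delicate part is pinning down the precise statement "$V^\perp$ has trivial Frobenius descent to $V$ over $\kk$" and verifying it is equivalent to the geometric condition, rather than merely implied by it. The phrase presumably means: there exists a $\kk$-subspace $W \subseteq V$ such that the inclusion $V^\perp \hookrightarrow V^{[1]}$ is the Frobenius twist $W^{[1]} \hookrightarrow V^{[1]}$; equivalently, $V^\perp$ is spanned by elements of the form $w^{[1]}$ with $w \in V$. I expect the crux of the argument to be showing that when this fails, there is genuinely a non-regular point of $X$ — one must produce, using imperfection of $\kk$, an element of $V^\perp$ not of twisted form and show the corresponding point of $\PP V \cap \Sing X$ (over $\kk^{\mathrm{perf}}$) descends to a non-regular but non-smooth closed point of $X$ over $\kk$; this is where one uses that $\PP V^\perp \subseteq X$ (which holds because $V^\perp$ is automatically isotropic: $\beta$ kills $\mathcal{S}^{[1],\perp}$ against $\mathcal{S}$, hence in particular $V^\perp$ against itself after the relevant twist, cf.\ the exact sequence in \parref{basics-qbic-forms}). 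The converse — regular implies descent — should then follow formally from the fact that a regular local ring essentially of finite type over $\kk$ with inseparable residue field cannot occur on a linear space that is smooth, forcing $\Sing X$ to have no such points, hence to be defined over $\kk$ on the nose.
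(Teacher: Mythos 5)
Your reading of the descent condition is inverted, and with it the equivalence you set out to prove. In the paper, ``\(V^\perp\) has trivial Frobenius descent to \(V\) over \(\kk\)'' means that the largest \(\kk\)-subspace \(W \subseteq V\) with \(W^{[1]} \subseteq V^\perp\) is zero; you instead take it to mean that \(V^\perp\) is spanned by twisted vectors \(w^{[1]}\) with \(w \in V\), and you accordingly conclude ``\(X\) is regular iff \(V^\perp\) descends.'' That statement is false: take \(\beta\) with Gram matrix \(\mathbf{N}_2 \oplus \mathbf{1}^{\oplus (n-1)}\), so that \(f_\beta = x_0^q x_1 + x_2^{q+1} + \cdots + x_n^{q+1}\) and \(V^\perp = \langle e_1^{[1]} \rangle\) is the twist of a \(\kk\)-line; your condition holds, yet on the chart \(x_1 = 1\) the local equation \(x_0^q + x_2^{q+1} + \cdots + x_n^{q+1}\) lies in the square of the maximal ideal at \([e_1]\), so \(X\) is not regular there. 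The correct orientation is the opposite: any nonzero \(w\) with \(w^{[1]} \in V^\perp\) gives a \(\kk\)-rational point of the nonsmooth locus, and at a rational point non-smooth forces non-regular, so a nontrivial descent destroys regularity. Your intermediate criterion ``\(X\) is regular iff every point of \(\Sing X_{\kk^{\mathrm{perf}}}\) is already a \(\kk\)-point of \(\Sing X\)'' is inverted for the same reason. Part of the confusion is the identification of \(\Sing X\) with ``\(\PP V^\perp\), a reduced smooth linear space in \(\PP V\)'': the nonsmooth locus is the preimage in \(\PP V\) of \(\PP V^\perp \subset \PP V^{[1]}\) under the \(q\)-power map, cut out by \(q\)-linear equations, and over an imperfect field it is typically non-reduced with inseparable closed points, not a \(\kk\)-linear subspace of \(\PP V\).

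Even after reorienting, the substantive half --- trivial descent implies \(X\) regular --- is not argued. You assert that ``the only obstruction to regularity at a point of \(\Sing X\) is that the point be \(\kk^{\mathrm{perf}}\)-rational,'' but this is not a general fact: a non-smooth point with inseparable residue field can perfectly well be non-regular, so one must actually check that, when the descent is zero, the equation \(f_\beta\) does not lie in the square of the maximal ideal at the (inseparable) closed points of \(\Sing X\); this is the content one has to extract from the explicit description of \(\Sing X\), which the paper obtains by specializing \parref{differential-nonsmooth} to \(r = 0\). Relatedly, your proposal treats only the ``Moreover'' clause: the scheme-theoretic identification of \(\Sing X\) and the equivalences (i)--(iii) are part of the statement and are exactly what the paper establishes first (from \parref{differential-nonsmooth} at \(r=0\), together with the isotropy of \(V^\perp\) for the implication singular \(\beta\) \(\Rightarrow\) non-smooth \(X\)); they cannot be taken as already settled.
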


\begin{proof}
The first part specializes the conclusion from
\parref{differential-nonsmooth} to the case \(r = 0\). This implies the
third part and the equivalence between
\ref{differential-smoothness-X.smooth} and
\ref{differential-smoothness-X.beta} since \(V^\perp\) is always isotropic for
\(\beta\). Finally, the equivalence of \ref{differential-smoothness-X.beta} and
\ref{differential-smoothness-X.Gram} is straightforward, see \citeForms{1.2}.
\end{proof}

This gives a geometric interpretation of the characterization
\parref{differential-smooth-points}\ref{differential-smooth-points.x}:

\begin{Corollary}\label{differential-singular-locus}
A point of \(\FF\) is smooth of expected dimension if and only if the
corresponding \(r\)-plane is disjoint from the nonsmooth locus of \(X\). In
other words,
\[
\Sing\mathbf{F} =
\Set{[\PP U] \in \mathbf{F} | \PP U \cap \Sing X \neq \varnothing}.
\]
In particular, \(\FF\) is smooth of the expected dimension if and only if \(X\)
is. \qed
\end{Corollary}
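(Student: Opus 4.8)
The plan is to match two descriptions of the failure locus that are already in hand. On one side, \parref{differential-nonsmooth} --- equivalently the equivalence \parref{differential-smooth-points}\ref{differential-smooth-points.x}$\,\Leftrightarrow\,$\ref{differential-smooth-points.disjoint} --- realizes $\Sing\FF$ as the degeneracy locus where $\mathcal{S}^{[1]}_\FF$ fails to inject into $(V^{[1]}/V^\perp)_\FF$; concretely, a point $[\PP U]$ with residue field $\kappa$ lies in $\Sing\FF$ precisely when $U^{[1]}\cap V^\perp_\kappa\neq\{0\}$ inside $V^{[1]}_\kappa$. On the other side, \parref{differential-smoothness-X} exhibits $\Sing X$ as the scheme-theoretic preimage $\Phi^{-1}(\PP V^\perp)$, where $\Phi\colon\PP V\to\PP V^{[1]}$, $[v]\mapsto[v^{[1]}]$, is the relative $q$-power Frobenius and $\PP V^\perp\subseteq\PP V^{[1]}$ is the indicated linear subspace. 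So all that remains is to see that, for an $r$-plane $\PP U\subseteq X$, the conditions ``$\PP U\cap\Sing X\neq\varnothing$'' and ``$U^{[1]}\cap V^\perp\neq\{0\}$'' coincide.

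First I would restrict $\Phi$ to the linear subvariety $\PP U\subseteq\PP V$: this recovers the relative Frobenius $\Phi|_{\PP U}\colon\PP U\to\PP U^{[1]}\subseteq\PP V^{[1]}$, and hence, as closed subschemes of $\PP U$,
\[
\PP U\cap\Sing X
= \Phi^{-1}(\PP V^\perp)\cap\PP U
= (\Phi|_{\PP U})^{-1}\big(\PP V^\perp\cap\PP U^{[1]}\big)
= (\Phi|_{\PP U})^{-1}\big(\PP(V^\perp\cap U^{[1]})\big).
\]
Since $\Phi|_{\PP U}$ is a finite surjective universal homeomorphism, this preimage is empty exactly when $\PP(V^\perp\cap U^{[1]})$ is, that is, exactly when $V^\perp\cap U^{[1]}=\{0\}$. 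Combined with the first paragraph this yields the displayed equality of sets and the reformulation via smooth points; that the two scheme structures on $\Sing\FF$ agree is automatic, since both sides are cut out by the same degeneracy locus, by the Cauchy--Binet identity already used in \parref{differential-nonsmooth}.

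For the ``in particular'' clause: if $X$ is smooth then $\Sing X=\varnothing$, so the displayed formula forces $\Sing\FF=\varnothing$ and $\FF$ is smooth of the expected dimension. For the converse I would argue contrapositively. Suppose $X$ is singular and fix $p=[v_0]\in\Sing X$, so $v_0$ is isotropic and $v_0^{[1]}\in V^\perp$, meaning $\beta(v_0^{[1]},u)=0$ for all $u\in V$. The $r$-planes of $X$ through $p$ form a closed subscheme of the sub-Grassmannian $\GG'\cong\GG(r,V/\langle v_0\rangle)$ of all $r$-planes through $p$, of dimension $r(n-r)$; it is cut out by the $q$-bic form obtained by restricting $\beta$ to the tautological $(r{+}1)$-plane containing $v_0$, and of the $(r+1)^2$ component equations the $r+1$ lying in the row indexed by $v_0$ vanish identically because $v_0$ is isotropic and $v_0^{[1]}\in V^\perp$, leaving $r(r+1)$ equations. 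Since $r<\tfrac n2$ gives $r(r+1)\le r(n-r)$, a count of equations --- equivalently, running the incidence-variety argument of \parref{hypersurfaces-nonempty-fano} over the linear subspace of \(q\)-bic hypersurfaces singular at $p$ --- shows this locus is nonempty. Hence some $r$-plane $\PP U\subseteq X$ passes through $p$, so $[\PP U]\in\Sing\FF$ and $\FF$ is not smooth of the expected dimension.

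The conceptual content here is light: both sides have already been identified with natural degeneracy conditions, and $\Phi|_{\PP U}$ being a universal homeomorphism is standard. The one place I expect to need genuine care --- and the only place the hypothesis $r<\tfrac n2$ re-enters --- is the nonemptiness of the scheme of $r$-planes of $X$ through a singular point, used in the converse of the last clause, which I would handle exactly as in the proof of \parref{hypersurfaces-nonempty-fano}.
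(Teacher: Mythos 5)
The main displayed equivalence is handled correctly and by the same route the paper intends: combining \parref{differential-smooth-points}\ref{differential-smooth-points.disjoint} with \parref{differential-smoothness-X}, and your observation that the relative Frobenius \(\PP U \to \PP U^{[1]}\) is a universal homeomorphism is a perfectly good way to translate ``\(U^{[1]} \cap V^\perp \neq \{0\}\)'' into ``\(\PP U \cap \Sing X \neq \varnothing\)''. (The remark about matching scheme structures is vacuous, since the right-hand side of the display is only a set of points, but it does no harm.)

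The gap is in the converse of the ``in particular'' clause, i.e.\ the claim that a singular \(X\) always contains an \(r\)-plane through a singular point. Your justification is ``a count of equations \dots shows this locus is nonempty'', but a dimension count never yields nonemptiness: the zero locus of a section of a vector bundle of rank at most the dimension of the base can perfectly well be empty, so reducing to \(r(r+1)\) equations on the \(r(n-r)\)-dimensional Grassmannian \(\GG(r,V/\langle v_0\rangle)\) proves nothing by itself. The proposed alternative, ``running the incidence-variety argument of \parref{hypersurfaces-nonempty-fano} over the linear subspace of \(q\)-bics singular at \(p\)'', is not equivalent to the count and is itself incomplete: properness of the restricted incidence correspondence is fine, but the dominance step now requires knowing that the general \(q\)-bic singular at \(p\) contains an \(r\)-plane through \(p\) (or exhibiting one member over which the fibre is nonempty of the expected dimension), which is essentially the statement you are trying to prove and is not supplied --- in \parref{hypersurfaces-nonempty-fano} this input was the explicit isotropic subspace of a nonsingular form. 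The cleanest repair is direct: for a singular point \(p = [v_0]\), one has \(\beta(v_0^{[1]},-) = 0\), so the subspace \(W = \set{u \in V | \beta(u^{[1]},v_0) = 0}\) has codimension at most \(1\) and contains \(v_0\) in the radical of \(\beta|_W\); hence \(X \cap \PP W\) is a cone with vertex \(p\) over a \(q\)-bic in a projective space of dimension at least \(n-2\), which contains an \((r-1)\)-plane by \parref{hypersurfaces-nonempty-fano} because \(r-1 < \tfrac{n-2}{2}\) follows from \(r < \tfrac{n}{2}\); the cone over it is the required \(r\)-plane through \(p\). (Alternatively, one can quote the normal forms of \parref{basics-classification} to write such an isotropic subspace down directly.) With that replacement the argument is complete.
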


Generally, \(\Sing\FF\) is complicated. But at least when
\(X\) has an isolated singularity---equivalently by
\parref{differential-smoothness-X}, is of corank \(1\)---and is geometrically
not a cone, its dimension can be determined as follows:

\begin{Lemma}\label{hypersurfaces-fano-corank-1}
Assume \(X\) is of corank \(1\) and geometrically not a cone.
If \(n \geq 2r+1\), then
\[ \dim \Sing \FF = r(n-2r-1). \]
In particular, \(\FF\) is of expected dimension if \(n \geq 2r+1\), whence
generically smooth if \(n \geq 2r+2\).
\end{Lemma}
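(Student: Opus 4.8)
The plan is to combine the moduli-theoretic description of $\Sing\FF$ from \parref{differential-singular-locus} with an inductive analysis of the linear spaces in $X$ through its singular point.

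First I would reduce to $\kk = \bar\kk$, since dimension is a geometric invariant, and use \parref{basics-classification} to fix a basis in which the Gram matrix of $\beta$ is $\mathbf{1}^{\oplus a}\oplus\mathbf{N}_m$ with $a+m = n+1$; corank $1$ and not a cone means there is exactly one nilpotent block, of size $m\ge 2$. By \parref{differential-smoothness-X} the kernel $V^\perp\subset V^{[1]}$ is a line, so $\Sing X$ is supported at a single point $p = [\ell]$, where $\ell\subset V$ is the isotropic line with $\ell^{[1]} = V^\perp$; and \parref{differential-singular-locus} identifies $\Sing\FF$ set-theoretically with the locus of $[\PP U]\in\FF$ for which $p\in\PP U$, that is, with the family of isotropic $(r+1)$-dimensional subspaces $U\subseteq V$ containing $\ell$. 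The case $r=0$, where $\FF = X$ and $\Sing\FF = \Sing X$ is a point, is immediate, so assume $r\ge 1$.

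Next I would cut down the ambient space. If $U$ is isotropic and $\ell\subseteq U$, then $\beta(u^{[1]},\ell)=0$ for all $u\in U$, which forces $U\subseteq W$ where $W\subset V$ is the hyperplane with $W^{[1]} = \ell^\perp$; conversely, computing from the normal form, $\ell$ lies in the radical of $\beta|_W$, so $\beta|_W$ descends to a $q$-bic form $\bar\beta$ on $W/\ell$, a space of dimension $n-1$. Sending $U$ to $U/\ell$ then gives a dimension-preserving bijection
\[
\Sing\FF\;\longleftrightarrow\;\FF_{r-1}(X_{\bar\beta}),\qquad X_{\bar\beta}\subseteq\PP(W/\ell)=\PP^{\,n-2},
\]
and one reads off from its Gram matrix that $\bar\beta$ again has corank at most $1$, and is nonsingular exactly when $m=2$. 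It then remains to show $\dim\FF_{r-1}(X_{\bar\beta}) = r\bigl((n-2)-2(r-1)-1\bigr) = r(n-2r-1)$: when $\bar\beta$ is nonsingular this is Theorem \parref{theorem-fano-schemes}, the hypothesis $n\ge 2r+1$ ensuring $r-1<\tfrac{n-2}{2}$; otherwise one argues by induction on $n$, applying the lemma (or Theorem \parref{theorem-fano-schemes}) together with the lower bound of \parref{basics-fano-equations} to $X_{\bar\beta}$, which lives in $\PP^{n-2}$, has corank at most $1$, and satisfies $(n-2)\ge 2(r-1)+1$. The ``in particular'' statements then follow formally: $\dim\FF\ge(r+1)(n-2r-1)$ by \parref{basics-fano-equations}, while any component of $\FF$ of excess dimension would be contained in $\Sing\FF$; since $\dim\Sing\FF = r(n-2r-1)$, no such component exists once $n\ge 2r+1$, so $\FF$ has the expected dimension, and $\Sing\FF$ is then a proper subscheme, hence $\FF$ is generically smooth, as soon as $n\ge 2r+2$.

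The step I expect to be delicate is the dimension count for $\FF_{r-1}(X_{\bar\beta})$ when $\bar\beta$ is singular: one must verify that the residual $q$-bic acquires no extra linear spaces — for instance, in families through a vertex, should $X_{\bar\beta}$ turn out to be a cone — beyond what the inductive hypothesis controls, so the bookkeeping on the corank and cone-type of $\bar\beta$ produced by the reduction is the crux. Everything else is a matter of unwinding the isotropy conditions and invoking Theorem \parref{theorem-fano-schemes}.
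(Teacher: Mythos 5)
Your reduction is the same as the paper's: both identify \(\Sing\FF\) with the isotropic \((r+1)\)-spaces containing the line \(\ell\) under the unique singular point, observe that these all lie in the hyperplane \(W\) with \(W^{[1]}=\ell^\perp\), pass to the induced form \(\bar\beta\) on \(W/\ell\) to obtain \(\Sing\FF\cong\FF_{r-1}(X_{\bar\beta})\) with \(X_{\bar\beta}\subset\PP^{n-2}\), and then induct (the paper on \(r\), you on \(n\); immaterial). The difference is that the paper closes the induction by asserting that \(\beta|_W\) has radical exactly \(\ell\), so that \(\bar\beta\) has trivial radical and \(X_{\bar\beta}\) is again non-conical of corank at most \(1\) --- precisely the ``bookkeeping on the corank and cone-type'' that you defer. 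That step is not a formality: it is the entire content of the inductive step, and since your proposal does not contain it, there is a genuine gap exactly where you flag one.

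The gap is moreover not closable in the stated generality, because the fact you need is false. With Gram matrix \(\mathbf{1}^{\oplus a}\oplus\mathbf{N}_m\) (where \(m\ge 2\) since \(X\) is not a cone), one computes \(\bar\beta\cong\mathbf{1}^{\oplus a}\oplus\mathbf{N}_{m-2}\): this is non-conical for \(m=2\) and \(m\ge 4\), but for \(m=3\) it is \(\mathbf{1}^{\oplus a}\oplus\mathbf{N}_1\), a cone, so neither Theorem \parref{theorem-fano-schemes} nor the inductive hypothesis applies. And the conclusion genuinely fails there: take \(n=5\), \(r=2\), and \(\beta\) of type \(\mathbf{1}^{\oplus 3}\oplus\mathbf{N}_3\), so that \(X=\mathrm{V}(x_0^{q+1}+x_1^{q+1}+x_2^{q+1}+y_1^qy_2+y_2^qy_3)\subset\PP^5\) has corank \(1\) and is not a cone; the subspaces \(\langle c_0e_0+c_1e_1+c_2e_2,\,f_1,\,f_3\rangle\) with \(c_0^{q+1}+c_1^{q+1}+c_2^{q+1}=0\) are isotropic and contain the singular point, giving \(\dim\Sing\FF_2\ge 1\) while \(r(n-2r-1)=0\) (and \(\FF_2\) exceeds its expected dimension \(0\)). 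A correct argument must therefore constrain the type further or treat conical quotients separately; note that the one place the lemma is invoked, \parref{hypersurfaces-fano-connected}, only needs the \emph{generic} corank-\(1\) type \(\mathbf{1}^{\oplus n-1}\oplus\mathbf{N}_2\), for which \(\bar\beta\) is nonsingular and Theorem \parref{theorem-fano-schemes} finishes in one step.
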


\begin{proof}
This is a geometric statement, so assume that \(\kk\) is algebraically closed.
Let \(L \subset V\) be the \(1\)-dimensional subspace descending
\(V^\perp \subset V^{[1]}\) that underlies the unique singular point
\(x \in X\) by \parref{differential-smoothness-X}. Proceed by induction on
\(r \geq 0\), wherein the base case \(r = 0\) follows from the fact that \(X\)
has the unique singular point \(x\). Assume \(r \geq 1\). The facts from
\parref{basics-classification} show that \(X\) is not a cone if and only if
\(L \neq V^{[1],\perp}\). Therefore \(\beta(-,L)\) is nonzero on \(V^{[1]}\),
and so the linear space
\[
L^\perp
\coloneqq \ker(\beta^\vee \colon V^{[1]} \to V^\vee \twoheadrightarrow L^\vee)
\]
is of codimension \(1\) in \(V^{[1]}\). Its Frobenius descent \(L^\dagger\) to
\(V\) contains \(L\), and so the restriction of \(\beta\) thereon is of corank
at most \(2\) and has radical \(L\); thus there is an induced \(q\)-bic form
\(\bar{\beta}\) of corank at most \(1\) and no radical on \(L^\dagger/L\);
in other words, \(X \cap \PP L^\dagger\) is a cone with vertex \(x\) over the
non-conical \(q\)-bic \((n-3)\)-fold \(\bar{X}\) of corank at most \(1\) in
\(\PP(L^\dagger/L)\). Since any \(r\)-plane in \(X\) through \(x\) is a cone
over an \((r-1)\)-plane in \(\bar{X}\), this gives the first equality in
\[
\dim \Sing \FF =
\dim\FF_{r-1}(\bar{X}) =
r(n-2r-1).
\]
If the corank of \(\bar{X}\) is \(0\), then it is smooth by
\parref{differential-smoothness-X} and \(\FF_{r-1}(\bar{X})\) has its expected
dimension by \parref{differential-generic-smoothness}. If the corank of
\(\bar{X}\) is \(1\), then since \(n-2 \geq 2(r-1) + 1\) is equivalent to the
inequality in the hypothesis of the Lemma, induction applies to
\(\FF_{r-1}(\bar{X})\) to show it is of expected dimension. In either case,
this gives the second equality above, and whence the result.
\end{proof}

Adapting the argument of \cite[Theorem 6]{BVV:Fano}---see also
\cite[p.544]{DM:Fano}---shows that the Fano schemes are connected whenever they
are positive dimensional:

\begin{Proposition}\label{hypersurfaces-fano-connected}
If \(n \geq 2r+2\), then \(\FF\) is connected.
\end{Proposition}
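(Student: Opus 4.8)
The plan is to deduce connectedness of $\FF = \FF_r(X)$ from connectedness of a single universal object: the relative Fano scheme over the parameter space $\qbics_{\PP V}$. Reconsider the incidence correspondence
\[
\mathbf{Inc} \coloneqq \Set{([U],[X]) \in \GG \times \qbics_{\PP V} | \PP U \subseteq X}
\]
from the proof of \parref{hypersurfaces-nonempty-fano}. Applying \parref{basics-fano-equations} in the family over $\GG$ identifies $\mathbf{Inc}$, via the first projection, with the projectivization of the subbundle of $(V^{[1]} \otimes_\kk V)^\vee_{\GG}$ consisting of $q$-bic forms vanishing on the tautological subbundle $\mathcal{S}$; hence $\mathbf{Inc}$ is a projective space bundle over the smooth irreducible Grassmannian $\GG$, so it is smooth and irreducible. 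The second projection $\pr_2 \colon \mathbf{Inc} \to \qbics_{\PP V}$ is projective, hence proper, and since $n \geq 2r+2$ forces $r < \tfrac{n}{2}$, it is surjective by \parref{hypersurfaces-nonempty-fano}; its fibre over $[X]$ is precisely $\FF_r(X)$.

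Next I would pass to the Stein factorization $\mathbf{Inc} \xrightarrow{h} W \xrightarrow{g} \qbics_{\PP V}$, with $g$ finite and $h$ proper with geometrically connected fibres. Since $\FF_r(X) = \pr_2^{-1}([X])$ surjects onto $g^{-1}([X])$ with connected fibres, $\FF_r(X)$ is connected if and only if $g^{-1}([X])$ is, so it suffices to show that $g$ is a universal homeomorphism. As $\qbics_{\PP V}$ is a projective space, hence normal, and $g$ is finite, the maximal subextension $K(\qbics_{\PP V}) \subseteq K_{\mathrm{sep}} \subseteq K(W)$ separable over $K(\qbics_{\PP V})$ yields an intermediate finite separable morphism which, being birational onto a normal variety as soon as the geometric generic fibre of $\pr_2$ is connected, is then an isomorphism; in that case $g$ itself is finite and purely inseparable, hence a universal homeomorphism, and we are done. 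The geometric generic fibre of $\pr_2$ is the Fano scheme $\FF_r(X_0)$ of a $q$-bic hypersurface $X_0$ over an algebraically closed field whose associated form is nonsingular (the general $q$-bic form is nonsingular, cf. \parref{hypersurfaces-nonempty-fano}), so $\FF_r(X_0)$ is smooth by \parref{differential-singular-locus}; for such a smooth scheme "connected" and "irreducible" coincide.

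The remaining, essential point is therefore that the Fano scheme of a smooth $q$-bic hypersurface $X_0 \subset \PP^n$ with $n \geq 2r+2$ is connected. For $n \geq 2r+3$ I would adapt the dimension-count argument of \cite[Theorem 6]{BVV:Fano} (see also \cite[p.544]{DM:Fano}): a nontrivial disconnection $\FF_r(X_0) = \FF' \sqcup \FF''$ would let a general pair of planes $([U'],[U'']) \in \FF' \times \FF''$ span a plane $\PP(U'+U'')$ along which, by \parref{basics-hyperplane-section}, $X_0$ cuts out a low-dimensional $q$-bic hypersurface containing both $\PP U'$ and $\PP U''$; bounding the dimension of the variety of such configurations — using $\dim \FF = (r+1)(n-2r-1)$ from \parref{basics-fano-equations} and the corank-one estimate \parref{hypersurfaces-fano-corank-1} to control the non-generic strata — against the $2\dim\FF$ available contradicts the disconnection. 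The extremal case $n = 2r+2$, where $\FF_r(X_0)$ parameterizes maximal isotropic subspaces, is not reached by this induction; there one instead uses the purely inseparable covering $Z \dashrightarrow \FF_r(X_0)$ by a complete intersection $Z$ constructed in \parref{cyclic-resolve}, which is connected by the Lefschetz hyperplane theorem, so its image $\FF_r(X_0)$ is connected too.

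I expect the main obstacle to be running the Barth--Van de Ven dimension count in the $q$-bic setting: the numerology differs from the classical hypersurface case because the Fano schemes of $q$-bics are anomalously large — their dimension $(r+1)(n-2r-1)$ is independent of the degree $q+1$ — so the dimensions of the relevant incidence loci of pairs of planes, and the singular strata thereof, must be recomputed from scratch and the inequalities checked to close; the corank-one analysis \parref{hypersurfaces-fano-corank-1} is exactly what keeps those singular strata under control. A secondary point requiring care, due to positive characteristic, is the Stein-factorization step: one cannot assume the finite cover $g$ is generically étale, so the argument must be phrased in terms of universal homeomorphisms and purely inseparable extensions rather than birationality.
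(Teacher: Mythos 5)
Your setup — the incidence correspondence $\mathbf{Inc}$ as a projective bundle over $\GG$, properness and surjectivity of $\pr_2$, and the Stein factorization — matches the paper's, but after that point your argument diverges and leaves the essential content unproven. Your reduction only shows: \emph{if} the Fano scheme of the geometric generic (hence smooth) $q$-bic hypersurface is connected, then all fibres of $\pr_2$ are connected. For $n \geq 2r+3$ you then propose to prove that generic-fibre connectedness by ``adapting the dimension count of Barth--Van de Ven with pairs of incident planes,'' but you do not carry this out, and you yourself concede that the numerology must be recomputed from scratch; as it stands this branch is a plan, not a proof. The paper never needs a separate argument for the smooth case at all: it bounds the non-smooth locus $Z$ of $\pr_2$ by $\codim(Z \subseteq \mathbf{Inc}) \geq n-2r \geq 2$, using \parref{differential-singular-locus} over the smooth locus of $\qbics_{\PP V}$ and the corank-one estimate \parref{hypersurfaces-fano-corank-1} over the generic point of the discriminant (a corank-one, non-conical hypersurface), deduces that the branch locus of the finite part $\mathbf{Inc}' \to \qbics_{\PP V}$ has codimension at least $2$, and concludes by purity of the branch locus plus simple connectedness of the projective space $\qbics_{\PP V}$ that $\mathbf{Inc}' \to \qbics_{\PP V}$ is an isomorphism. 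That single global step is what replaces the dimension count you defer, and it also disposes of your worry about inseparability, since purity applies to a finite morphism from a normal to a regular scheme that is étale in codimension one.

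The extremal case $n = 2r+2$ of your argument is moreover circular in the paper's logical order. The covering by the complete intersection $Z$ in \parref{cyclic-resolve} is a priori only a dominant map onto $\tilde{\FF}_{m,\mathrm{cyc}}$, hence onto (the closure of) the cyclic locus $\FF_{m,\mathrm{cyc}}$, which is one irreducible closed subset of $\FF_m$; the identification $\FF_{m,\mathrm{cyc}} = \FF_m$ used in \parref{cyclic-maximal} is obtained in the paper by comparing with Theorem \parref{theorem-fano-schemes}, whose irreducibility statement rests precisely on the connectedness proposition you are trying to prove. Without that identification, connectedness of $Z$ tells you only that one irreducible component of $\FF_m$ is connected (which is automatic) and says nothing about possible further components consisting of non-cyclic $m$-planes; so this branch either begs the question or has a genuine hole that would need an independent argument that every $m$-plane in a smooth $q$-bic $(2m+1)$-fold is cyclic or a limit of cyclic planes.
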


\begin{proof}
This is a geometric question, so assume that \(\kk\) is algebraically closed.
Consider once again, as in the proof of \parref{hypersurfaces-nonempty-fano},
the incidence correspondence
\[
\mathbf{Inc} \coloneqq
\Set{([U], [X]) \in \mathbf{G} \times \qbics_{\PP V} |
\PP U \subseteq X}.
\]
The locus \(Z \subset \mathbf{Inc}\) over which \(\pr_2 \colon \mathbf{Inc} \to \qbics_{\PP V}\) is
not smooth has codimension at least \(n-2r\):
Indeed, \(\pr_2\) is smooth above the open subset of \(\qbics_{\PP V}\)
parameterizing smooth \(q\)-bic hypersurfaces by
\parref{differential-singular-locus}. A general point of its codimension \(1\)
complement corresponds to a \(q\)-bic hypersurface \(X\) of corank \(1\) which
is not a cone by
\cite[\S\href{https://chngr.github.io/assets/qbic-forms.pdf\#section.6}{\textbf{6}}]{qbic-forms}.
Over such a point, \(\FF = \pr_2^{-1}([X])\) is of expected dimension \((r+1)(n-2r-1)\)
and \(\Sing\FF\) has dimension \(r(n-2r-1)\) by
\parref{hypersurfaces-fano-corank-1}, so
\[
\codim(Z \subseteq \mathbf{Inc}) \geq 1 +
\codim(\Sing\FF \subseteq \FF) = n-2r.
\]

Let
\(\pr_2 \colon \mathbf{Inc} \to \mathbf{Inc}' \to \qbics_{\PP V}\)
be the Stein factorization of the second projection. Then \(Z\) contains the
preimage of the branch locus of \(\mathbf{Inc}' \to \qbics_{\PP V}\). Since
\(n \geq 2r+2\), the codimension estimate implies that the branch locus has
codimension at least \(2\). Purity of the Branch Locus, as in
\citeSP{0BMB}, then implies that \(\mathbf{Inc}' \to \qbics_{\PP V}\) is
\'etale, and hence an isomorphism since projective space is simply connected.
Therefore the fibres of \(\pr_2\) are connected, meaning that each of the Fano
schemes \(\FF\) is connected.
\end{proof}

Finally, general considerations also show that the Fano schemes are
often simply connected, in the sense that there are no nontrivial connected
\'etale covers:

\begin{Proposition}\label{hypersurfaces-simply-connected}
If \(n \geq 3r+3\) and \(\FF\) is of expected dimension, then \(\FF\) is
simply connected.
\end{Proposition}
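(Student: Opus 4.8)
The plan is an induction on \(r\), reducing the simple connectedness of \(\FF_r(X)\) to that of \(\FF_{r-1}(X)\) by means of the tautological flag correspondence. Simple connectedness is geometric, so I would assume \(\kk\) algebraically closed. The base case is \(r = 0\): then \(\FF_0(X) = X\) is, since \(n \geq 3\), an effective ample divisor of dimension \(n - 1 \geq 2\) in the simply connected variety \(\PP^n\), so the Lefschetz theorem for fundamental groups — in the form valid for a possibly singular ample divisor, i.e. using that \(\PP^n \setminus X\) is affine of dimension \(\geq 3\) — gives \(\pi_1(X) \cong \pi_1(\PP^n) = 1\).

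For \(r \geq 1\) I would form the flag scheme
\[
\mathbf{Fl} \coloneqq \Set{(W,U) | \dim_\kk W = r,\ \dim_\kk U = r+1,\ W \subseteq U,\ \PP U \subseteq X}
\]
with projections \(p \colon \mathbf{Fl} \to \FF_r(X)\), \((W,U) \mapsto U\), and \(\pi \colon \mathbf{Fl} \to \FF_{r-1}(X)\), \((W,U) \mapsto W\). Since \(p\) is the projectivization of the dual of the rank-\((r+1)\) tautological bundle on \(\FF_r(X)\), it is a \(\PP^r\)-bundle; as \(\FF_r(X)\) is connected by \parref{hypersurfaces-fano-connected} (using \(n \geq 2r+2\)), this gives \(\pi_1(\mathbf{Fl}) \cong \pi_1(\FF_r(X))\). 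On the other hand, the fibre of \(\pi\) over \([\PP W]\) is the locus of isotropic lines of the \(q\)-bic form induced by \(\beta\) on the two-sided orthogonal \(W^{\perp_2}/W\), where \(W^{\perp_2} \coloneqq \Set{v \in V | \beta(v^{[1]},w) = \beta(w^{[1]},v) = 0 \text{ for all } w \in W}\); that is, it is again a \(q\)-bic hypersurface \(\bar X_W\) inside \(\PP(W^{\perp_2}/W)\). Because \(W^{\perp_2}\) has codimension \(\leq 2r\) in \(V\) and contains the isotropic subspace \(W\), the ambient projective space has dimension \(\geq n - 3r \geq 3\); so each fibre of \(\pi\) is a \(q\)-bic hypersurface in a \(\PP^{\geq 3}\) (possibly the whole of it), hence nonempty, connected, and simply connected by the argument of the base case. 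It is precisely this step that forces \(n \geq 3r + 3\).

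Since \(\pi\) is surjective with fibres of dimension \(\geq n - 3r - 1\), and \(\dim \mathbf{Fl} = \dim \FF_r(X) + r = (r+1)(n-2r-1) + r\) by the expected-dimension hypothesis and the \(\PP^r\)-bundle structure, I get \(\dim \FF_{r-1}(X) \leq r(n-2r+1)\), which together with \parref{basics-fano-equations} shows \(\FF_{r-1}(X)\) is itself of expected dimension. The inductive hypothesis — applicable as \(n \geq 3r+3 \geq 3(r-1)+3\) — then gives \(\pi_1(\FF_{r-1}(X)) = 1\), and the homotopy exact sequence of the proper morphism \(\pi\),
\[
\pi_1(\bar X_W) \longrightarrow \pi_1(\mathbf{Fl}) \longrightarrow \pi_1(\FF_{r-1}(X)) \longrightarrow 1,
\]
has trivial outer terms, forcing \(\pi_1(\mathbf{Fl}) = 1\) and hence \(\pi_1(\FF_r(X)) \cong \pi_1(\mathbf{Fl}) = 1\).

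The main obstacle is justifying the homotopy exact sequence for \(\pi\): it is in general not flat — the fibres \(\bar X_W\) jump in dimension where \(W\) degenerates relative to \(\beta\) — and when \(X\) is singular neither \(\mathbf{Fl}\) nor \(\FF_{r-1}(X)\) need be normal, nor the fibres reduced. I would handle this either by invoking a form of Grothendieck's sequence valid for a proper morphism with geometrically connected fibres over a geometrically unibranch base (after passing to normalizations and checking these remain simply connected), or else by sidestepping it entirely: every \(q\)-bic hypersurface specializes, inside \(\qbics_{\PP V}\), from a smooth one — whose Fano scheme is automatically of expected dimension — and the specialization homomorphism on étale fundamental groups is surjective for the proper relative Fano scheme, so the smooth case, handled by the induction above with all normality issues absent, implies the general one. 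A secondary point is to fix precisely what "of expected dimension" means, and to confirm it is exactly what makes the dimension count above exact and descends along the induction.
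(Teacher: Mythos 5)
Your inductive skeleton is appealing and the bookkeeping is right: \(p\) is a \(\PP^r\)-bundle so \(\pi_1(\mathbf{Fl}) \cong \pi_1(\FF_r)\), the reduced fibres of \(\pi\) are \(q\)-bic hypersurfaces (or all) of a \(\PP^{\geq 3}\), hence simply connected by SGA2, and your dimension count does show \(\FF_{r-1}\) has expected dimension. But the one step that actually proves anything --- deducing \(\pi_1(\mathbf{Fl}) = 1\) from simple connectedness of the base and fibres of \(\pi\) --- is exactly the step you flag and do not close, and neither proposed repair works. The homotopy exact sequence, exact at the middle term (which is what you need), is available for proper \emph{flat} morphisms with geometrically connected and \emph{reduced} fibres; your \(\pi\) is not flat even when \(X\) is smooth, since by \parref{basics-incidences-nondegenerate} the fibre dimension jumps (up to \(n-2r-1\) over Hermitian \((r-1)\)-planes, down to \(n-3r-1\) generically), so the claim that ``in the smooth case all issues are absent'' is false --- the obstacle was never only normality. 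The unibranch/normalization variant goes the wrong way: finite \'etale covers pull back to the normalization, so simple connectedness of the normalization of \(\FF_{r-1}\) gives no upper bound on \(\pi_1(\FF_{r-1})\) itself (a nodal cubic has normalization \(\PP^1\) yet \(\pi_1 \cong \widehat{\mathbf{Z}}\)), and there is no citable exact sequence at the middle term for a non-flat proper morphism of this kind.

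The specialization workaround also leaves an unfilled hypothesis. The relative Fano scheme over a trait with smooth generic member is indeed flat under your expected-dimension assumption (every component dominates the trait, and it is an lci, hence without embedded components) and has geometrically connected fibres by \parref{hypersurfaces-fano-connected}; but surjectivity of the specialization map \(\pi_1(\FF_{\bar\eta}) \to \pi_1(\FF)\) requires in addition cohomological flatness in degree zero, in practice geometric reducedness of the special fibre, and ``expected dimension'' only gives Cohen--Macaulay, not generic reducedness --- if every \(r\)-plane of \(X\) meets \(\Sing X\), then by \parref{differential-singular-locus} \(\FF\) is nowhere smooth and may well be non-reduced. Moreover this reduction still needs the smooth case as input, which, as just explained, your induction does not actually establish. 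For contrast, the paper's proof is a one-line citation of Debarre's connectedness theorem (Corollaire 7.4 of the cited work): \(\FF\), being the zero locus of a section of a Frobenius-twisted bundle on \(\GG\), is simply connected as soon as the top Chern class of \((\mathcal{S}^{[1]} \otimes_{\sO_{\GG}} \mathcal{S})^\vee \oplus (\mathcal{S}^{[1]} \otimes_{\sO_{\GG}} \mathcal{S})^\vee \oplus \mathcal{S}^\vee\), of rank \((r+1)(2r+3)\), is nonzero, which is exactly the condition \(n \geq 3r+3\); no fibration, induction, or smoothness of \(X\) enters. If you want to salvage your approach, the missing ingredient is a reducedness/flatness statement strong enough to run either the exact sequence for \(\pi\) over a suitable open locus or the specialization argument, and that is genuine additional work.
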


\begin{proof}
This follows from \cite[Corollaire 7.4]{Debarre:Connectivity}, which says that
\(\FF\), being the zero locus of a vector bundle in a Grassmannian, is simply
connected if the top Chern class of the vector bundle
\[
(\mathcal{S}^{[1]} \otimes_{\sO_{\mathbf{G}}} \mathcal{S})^\vee \oplus
(\mathcal{S}^{[1]} \otimes_{\sO_{\mathbf{G}}} \mathcal{S})^\vee \oplus
\mathcal{S}^\vee
\]
of rank \((r+1)(2r+3)\) on \(\GG\) is nonzero, and this happens when
\(n \geq 3r+3\).
\end{proof}

As is noted in \cite[Exemple 3.3]{DM:Fano}, this is not optimal: for instance,
this does not apply to the fourfold of lines on a cubic fourfold. Note,
however, that \parref{hypersurfaces-simply-connected} does apply to all the
Fano schemes of \(q\)-bic hypersurfaces with trivial canonical bundle so
as long as \(q > 2\).

\section{Fano correspondences}\label{section-correspondences}
Let \(0 \leq k < r < n\). This Section is concerned with the varieties
\[
\mathbf{F}_{r,k} \coloneqq
\mathbf{F}_{r,k}(X) \coloneqq
\Set{(\PP W \supset \PP U) | X \supseteq \PP W \supset \PP U, \PP W \cong \PP^r, \PP U \cong \PP^k}
\]
parameterizing nested pairs of \(k\)- and \(r\)-planes in \(X\). The projections
\[
\begin{tikzcd}
& \mathbf{F}_{r,k} \ar[dl,"\pr_r"'] \ar[dr,"\pr_k"] \\
\FF_r && \FF_k
\end{tikzcd}
\]
exhibit this as a correspondence between the Fano schemes of \(k\)- and \(r\)-planes
in \(X\); any such incidence correspondence is referred to as a \emph{Fano
correspondence}. The projection \(\pr_r \colon \FF_{r,k} \to \FF_r\) identifies
the Fano correspondence as the Grassmannian bundle
\(\mathbf{G}(k+1,\mathcal{S}_{\FF_r})\) on the tautological subbundle of
\(\FF_r\). Therefore
\[
\dim\FF_{r,k} \geq
(r+1)(n-2r-1) + (k+1)(r-k)
\]
with equality if and only if the Fano scheme \(\FF_r\) itself has its expected
dimension \((r+1)(n-2r-1)\).

The fibres of \(\pr_k \colon \FF_{r,k} \to \FF_k\), up to nilpotents, look like
Fano schemes of lower dimensional \(q\)-bic hypersurfaces. To give a precise
statement, consider a point of \(\FF_k\) with residue field \(\kappa\),
corresponding to an \((k+1)\)-dimensional isotropic subspace
\(U \subseteq V_\kappa\). Let
\[
U^\dagger \coloneqq U^{[1],\perp} \cap U^{\perp,[-1]}
\]
be the intersection of the two orthogonals of \(U\). The restriction of
\(\beta\) to \(U^\dagger\) has \(U\) in its radical, so there is an induced
\(q\)-bic form on the quotient \(U^\dagger/U\); let \(\bar{X}\) be the
associated \(q\)-bic hypersurface.

\begin{Lemma}\label{incidences-containing-a-plane}
The reduced scheme parameterizing \(r\)-planes in \(X\) containing a fixed
\(k\)-plane is canonically isomorphic to the reduced Fano scheme of \((r-k-1)\)-planes
in the \(q\)-bic hypersurface \(\bar{X}\):
\[
\pr^{-1}_k([\PP U])_{\mathrm{red}} \cong
\mathbf{F}_{r-k-1}(\bar{X})_{\mathrm{red}}.
\]
\end{Lemma}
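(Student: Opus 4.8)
The plan is to realize the fibre $\pr_k^{-1}([\PP U])$ as the zero locus of $\beta$ restricted to a rank-$(r+1)$ tautological bundle on the Grassmannian $\GG(r-k, V_\kappa/U)$, which parameterizes the $(r+1)$-dimensional subspaces of $V_\kappa$ containing $U$; to observe that every isotropic such subspace is forced to lie inside $U^\dagger$; and then to recognize that, on the resulting sub-Grassmannian $\GG(r-k, U^\dagger/U)$, these equations become exactly the ones cutting out $\FF_{r-k-1}(\bar X)$ via \parref{basics-fano-equations}. Since the assertion concerns reduced schemes, I would work over the algebraic closure of $\kappa$ throughout.

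\textbf{Step 1.} Unwinding the definition of $\FF_{r,k}$ and of $\pr_k$, a point of $\pr_k^{-1}([\PP U])$ is an $(r+1)$-dimensional subspace $W \subseteq V_\kappa$ with $U \subseteq W$ and $\beta|_W = 0$. Writing $\mathcal{W} \subseteq (V_\kappa/U)\otimes_\kappa\sO$ for the tautological rank-$(r-k)$ subbundle on $\GG(r-k, V_\kappa/U)$ and $\tilde{\mathcal{W}} \subseteq V_\kappa\otimes_\kappa\sO$ for its rank-$(r+1)$ preimage, which contains $U\otimes_\kappa\sO$, the fibre is, just as in \parref{basics-fano-equations}, the closed subscheme $\mathrm{V}(\beta_{\tilde{\mathcal{W}}})$ cut out by the restriction $\beta_{\tilde{\mathcal{W}}}\colon \tilde{\mathcal{W}}^{[1]} \otimes_\sO \tilde{\mathcal{W}} \to \sO$. \textbf{Step 2.} Next I would show that any isotropic $W \supseteq U$ lies in $U^\dagger$: from $\beta|_W = 0$ one reads off $\beta(u^{[1]},w) = \beta(w^{[1]},u) = 0$ for all $u \in U$, $w \in W$, i.e.\ $W \subseteq U^{[1],\perp}$ and $W^{[1]} \subseteq U^\perp$. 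At the level of schemes, the components of $\beta_{\tilde{\mathcal{W}}}$ obtained by $\sO$-linear restriction to $U^{[1]}\otimes\tilde{\mathcal{W}}$ and to $\tilde{\mathcal{W}}^{[1]}\otimes U$ have zero loci containing $\mathrm{V}(\beta_{\tilde{\mathcal{W}}})$, and these loci impose $\tilde{\mathcal{W}} \subseteq U^{[1],\perp}$ and $\tilde{\mathcal{W}}^{[1]} \subseteq U^\perp$; hence, after reduction, $\pr_k^{-1}([\PP U])$ is a closed subscheme of $\GG(r-k, U^\dagger/U) \subseteq \GG(r-k, V_\kappa/U)$.

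\textbf{Step 3.} Since $U \subseteq \rad(\beta|_{U^\dagger})$ — a consequence of the same two relations — the form $\beta|_{U^\dagger}$ descends to the form $\bar\beta$ on $U^\dagger/U$ defining $\bar X$. Over $\GG(r-k, U^\dagger/U)$ one has $\tilde{\mathcal{W}} \subseteq U^\dagger\otimes\sO$, so the two mixed components of $\beta_{\tilde{\mathcal{W}}}$ vanish identically and $\beta_{\tilde{\mathcal{W}}}$ reduces to its component on $(\tilde{\mathcal{W}}/U)^{[1]} \otimes (\tilde{\mathcal{W}}/U) = \mathcal{W}^{[1]}\otimes\mathcal{W}$, which is precisely $\bar\beta_{\mathcal{W}}$; by \parref{basics-fano-equations} its zero locus is $\FF_{r-k-1}(\bar X)$, noting $\dim\PP(W/U) = (r+1)-(k+1)-1 = r-k-1$. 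Thus $\pr_k^{-1}([\PP U])$ and $\FF_{r-k-1}(\bar X)$ are closed subschemes of $\GG(r-k, U^\dagger/U)$ with the same points — via the mutually inverse assignments $W \mapsto W/U$ and $\bar W \mapsto$ (its preimage in $U^\dagger$), which trade isotropy for $\beta$ for isotropy for $\bar\beta$ — and so have the same reduced structure, which is the assertion.

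The hard part will be the scheme-theoretic bookkeeping in Step 2: the condition $\tilde{\mathcal{W}}^{[1]} \subseteq U^\perp$ is Frobenius-twisted, so its zero scheme is the sub-Grassmannian $\GG(r-k, U^{\perp,[-1]}/U)$ pulled back along the Frobenius of the ambient Grassmannian, hence a priori nonreduced; and when $\kappa$ is imperfect $U^\perp \subseteq V_\kappa^{[1]}$ need not even descend to $V_\kappa$, so $U^\dagger$ must be read as $\Set{v \in V_\kappa | \beta(u^{[1]},v) = \beta(v^{[1]},u) = 0 \text{ for all } u \in U}$ rather than as a literal intersection of orthogonals. This is precisely why the statement is phrased for reduced schemes: passing to reductions kills these thickenings, after which it suffices to match the two loci on points over an algebraically closed field, using that both sit as reduced closed subschemes of the common ambient Grassmannian $\GG(r-k, V_\kappa/U)$, so that agreement of supports forces agreement of schemes.
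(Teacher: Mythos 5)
Your proposal is correct and follows essentially the same route as the paper's proof: identify the fibre as the zero scheme of $\beta$ restricted to the tautological bundle on $\GG(r-k,V/U)$, use the two mixed components to force geometric points into the sub-Grassmannian over $U^\dagger/U$, and recognize the remaining component as $\bar\beta$ cutting out $\FF_{r-k-1}(\bar X)$ via \parref{basics-fano-equations}, concluding at the level of reduced schemes. The only cosmetic difference is that the paper chooses a splitting $V \cong U \oplus V/U$ to decompose the restricted form into four blocks, whereas you restrict components of the preimage bundle directly; your closing remarks on the Frobenius-twisted (a priori nonreduced) condition mirror the paper's restriction of the containment claim to geometric points.
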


\begin{proof}
This is a geometric statement, so assume that \(\kappa = \kk\).
The fibre \(\pr^{-1}_k([\PP U])\) projects isomorphically along \(\pr_r\) onto
the scheme of \(r\)-planes in \(X\) containing \(\PP U\). In other words,
setting \(\mathbf{G} \coloneqq \mathbf{G}(r+1,V)\), this gives an
identification
\[
\pr_k^{-1}([\PP U]) \cong
\Set{[W] \in \mathbf{G} | U \subseteq W \;\text{isotropic for}\;\beta} =
\FF_r \cap \bar{\mathbf{G}},
\]
where \(\bar{\mathbf{G}}\) is the subvariety of the Grassmannian
parameterizing subspaces containing \(U\). Proceed by describing \(\pr_k^{-1}([\PP U])\)
as a subscheme of \(\bar{\mathbf{G}}\) as follows:

Taking images and preimages along the quotient map \(V \to V/U\) provides an
isomorphism between \(\bar{\mathbf{G}}\) and the Grassmannian
\(\mathbf{G}(r-k,V/U)\). Let \(\bar{\mathcal{S}}\) be the tautological
subbundle of rank \(r-k\) in \(V/U\) over \(\bar{\mathbf{G}}\). A choice of
splitting \(V \cong U \oplus V/U\) induces a splitting
\(\mathcal{S}\rvert_{\bar{\mathbf{G}}} \cong U_{\bar{\mathbf{G}}} \oplus \bar{\mathcal{S}}\)
of the tautological subbundle \(\mathcal{S}\) of \(\mathbf{G}\) restricted to
\(\bar{\mathbf{G}}\). The restriction of \(\beta\) to
\(\mathcal{S}\rvert_{\bar{\mathbf{G}}}\) now splits into four components:
the---vanishing!---restriction to \(U\), and the three maps
\[
\bar{\beta} \colon
\bar{\mathcal{S}}^{[1]}
  \otimes_{\sO_{\bar{\mathbf{G}}}} \bar{\mathcal{S}}
  \to \sO_{\bar{\mathbf{G}}},
\qquad
\bar{\beta}_1 \colon
U_{\bar{\mathbf{G}}}^{[1]}
  \otimes_{\sO_{\bar{\mathbf{G}}}} \bar{\mathcal{S}}
  \to \sO_{\bar{\mathbf{G}}},
\qquad
\bar{\beta}_2 \colon
\bar{\mathcal{S}}^{[1]}
  \otimes_{\sO_{\bar{\mathbf{G}}}} U_{\bar{\mathbf{G}}}
  \to \sO_{\bar{\mathbf{G}}}.
\]
Then \(\pr^{-1}_k([\PP U])\) is defined in \(\bar{\mathbf{G}}\) by the
vanishing of these maps. Vanishing of \(\bar{\beta}_1\) and \(\bar{\beta}_2\)
mean that
\[
\bar{\mathcal{S}} \subseteq U_{\bar{\mathbf{G}}}^{[1],\perp}
\qquad\text{and}\qquad
\bar{\mathcal{S}}^{[1]} \subseteq U_{\bar{\mathbf{G}}}^\perp
\]
respectively, whence any geometric point of \(\pr^{-1}_k([\PP U])\) lies in the
subvariety of \(\bar{\mathbf{G}}\) parameterizing subspaces contained in
\(U^\dagger/U\). Vanishing of \(\bar\beta\) means that the corresponding
subspace is isotropic for the \(q\)-bic form induced by \(\beta\) thereon, as
required.
\end{proof}

Since \(U\) is of dimension \(k+1\) in the \((n+1)\)-dimensional
space \(V\), the quotient \(U^\dagger/U\) has dimension at least \(n-3k-2\).
Combined with the dimension estimate of \parref{basics-fano-equations}, the
identification of \parref{incidences-containing-a-plane} implies:

\begin{Corollary}\label{incidences-fibre-dimension-estimate}
The morphism \(\pr_k \colon \FF_{r,k} \to \FF_k\) is surjective whenever
\(n \geq 2r+k+2\). \qed
\end{Corollary}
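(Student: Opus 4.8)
The plan is to verify surjectivity of $\pr_k$ fibrewise. Since $\FF_{r,k}$ and $\FF_k$ are of finite type over $\kk$, the morphism is surjective if and only if every geometric fibre is nonempty, so I would pass to an algebraic closure and fix a point $[\PP U] \in \FF_k$, corresponding to a $(k+1)$-dimensional isotropic subspace $U \subseteq V$. The key input is \parref{incidences-containing-a-plane}, which identifies the reduced fibre $\pr_k^{-1}([\PP U])_{\mathrm{red}}$ with the reduced Fano scheme $\FF_{r-k-1}(\bar X)_{\mathrm{red}}$, where $\bar X \subseteq \PP(U^\dagger/U)$ is the $q$-bic hypersurface cut out by the $q$-bic form $\bar\beta$ induced by $\beta$ on $U^\dagger/U$. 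So the fibre over $[\PP U]$ is nonempty precisely when $\bar X$ contains an $(r-k-1)$-plane.

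Next I would estimate the dimension of the ambient space of $\bar X$. Each of the orthogonals $U^{[1],\perp}$ and $U^{\perp,[-1]}$ is, up to a Frobenius twist, the kernel of a linear map from an $(n+1)$-dimensional space to a $(k+1)$-dimensional one, hence of dimension at least $n-k$; and each contains $U$ since $\beta$ restricted to $U$ vanishes. Thus $\dim U^\dagger \geq 2(n-k) - (n+1) = n-2k-1$, and setting $D \coloneqq \dim(U^\dagger/U)$ this gives $D \geq n-3k-2$, so $\bar X \subseteq \PP(U^\dagger/U) \cong \PP^{D-1}$. The hypothesis $n \geq 2r+k+2$ then forces $D \geq 2r+k+2-3k-2 = 2(r-k)$, which for integers is exactly the inequality $r-k-1 < \tfrac{1}{2}(D-1)$. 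Together with $r-k-1 \geq 0$, coming from $k < r$, \parref{hypersurfaces-nonempty-fano} applies to $\bar X$ and produces an $(r-k-1)$-plane whenever $\bar\beta \neq 0$; and if $\bar\beta = 0$, then $\bar X$ is all of $\PP(U^\dagger/U)$ and certainly contains $(r-k-1)$-planes, as $r-k-1 \leq D-1$ by the same bound. In either case the fibre over $[\PP U]$ is nonempty, which is the assertion.

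The real content of the argument is carried entirely by the two results quoted: the moduli-theoretic fibre description \parref{incidences-containing-a-plane}, which replaces the fibre by the Fano scheme of a genuinely lower-dimensional $q$-bic hypersurface, and the existence result \parref{hypersurfaces-nonempty-fano}. Given those, I expect no genuine obstacle; the only points demanding attention are the dimension bookkeeping — tracking the codimension-$(k+1)$ loss incurred at each of the two orthogonals together with the further loss of $k+1$ upon quotienting by $U$, and checking that $n \geq 2r+k+2$ is precisely the bound placing $\FF_{r-k-1}(\bar X)$ within the range of \parref{hypersurfaces-nonempty-fano} — and the brief, trivial treatment of the degenerate case $\bar\beta = 0$, which falls outside the hypotheses of that lemma.
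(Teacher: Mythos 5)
Your proposal is correct and follows essentially the same route as the paper: identify the (reduced) fibre over \([\PP U]\) with \(\FF_{r-k-1}(\bar X)\) via \parref{incidences-containing-a-plane}, bound \(\dim(U^\dagger/U) \geq n-3k-2\), and check that \(n \geq 2r+k+2\) puts \(r-k-1\) in the nonemptiness range for Fano schemes. Your explicit appeal to \parref{hypersurfaces-nonempty-fano} (and the brief treatment of the degenerate case \(\bar\beta = 0\)) is exactly the right way to fill in the nonemptiness step that the paper leaves implicit.
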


This implies that \(\FF_k\) is covered by Grassmannians, whence
uniruled, when \(n \geq 2r+k+2\). Taking \(k = 0\), this implies that a
\(q\)-bic hypersurface is covered in \(r\)-planes when \(n \geq 2r+2\).

When the form is nonsingular, the analysis of
\parref{incidences-containing-a-plane} gives more:

\begin{Corollary}\label{basics-incidences-nondegenerate}
Assume moreover that \(\beta\) is nonsingular. Then, for every closed point
\([\PP U] \in \FF_k\),
\[
\dim\pr_k^{-1}([\PP U]) \leq (r-k)(n-2r-1),
\]
with equality if and only if \(U\) is Hermitian, in which case
\(\pr_k^{-1}([\PP U])\) is furthermore smooth.
\end{Corollary}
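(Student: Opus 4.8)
\emph{Proof proposal.} We may assume $n \geq 2r+1$, as otherwise $(V,\beta)$ admits no $(r+1)$-dimensional isotropic subspace and every fibre of $\pr_k$ is empty. The inequality and the smoothness claim will come from a direct first-order deformation computation on the fibre, while the equality criterion will be read off from \parref{incidences-containing-a-plane}. Fix a point $[\PP W]$ of $\pr_k^{-1}([\PP U])$, so $U \subseteq W$ are isotropic with $\dim U = k+1$ and $\dim W = r+1$; recall that nonsingularity of $\beta$ makes both its adjoints isomorphisms.

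\emph{Tangent spaces and the inequality.} Arguing as in \parref{differential-identify}, a first-order deformation of $W$ fixing $U$ is a homomorphism $\psi \colon W/U \to V/W$; expanding $\beta_{\kk[\epsilon]}$ on the deformed subspace and using $\epsilon^q = 0$ (as $q \geq 2$) shows it remains isotropic precisely when $\psi$ factors through $(U^{[1],\perp}\cap W^{[1],\perp})/W = W^{[1],\perp}/W$, the equality holding because $U \subseteq W$. Thus the tangent space to $\pr_k^{-1}([\PP U])$ at $[\PP W]$ is canonically $\Hom(W/U, W^{[1],\perp}/W)$. Since $\beta$ is nonsingular, $W^{[1],\perp}$ has codimension $r+1$ in $V$ and contains the isotropic subspace $W$, so $\dim(W^{[1],\perp}/W) = n-2r-1$; hence \emph{every} tangent space of the fibre has dimension $(r-k)(n-2r-1)$. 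It follows that $\dim\pr_k^{-1}([\PP U]) \leq (r-k)(n-2r-1)$, with equality exactly when the fibre is nonempty of that dimension, in which case equality of local and tangent dimensions at each point makes it smooth.

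\emph{The Hermitian case.} By \parref{incidences-containing-a-plane} the reduced fibre is $\FF_{r-k-1}(\bar X)_{\mathrm{red}}$ for the $q$-bic hypersurface $\bar X \subseteq \PP(U^\dagger/U)$ cut out by the form induced from $\beta$ on $U^\dagger = U^{[1],\perp}\cap U^{\perp,[-1]}$; only $U^\dagger$ and that induced form matter. Put $W_0 \coloneqq U^{[1],\perp}$, of dimension $n-k$. From \parref{hermitian-phi-properties} one obtains $U^{\perp,[-1]} = \phi^{-1}(W_0)$ together with the identity $\phi(U^{[1],\perp}) = (\phi(U))^{[1],\perp}$; since the Frobenius twist and the (nonsingular) orthogonality operator are injective on subspaces, the latter forces $\phi(W_0)=W_0$ if and only if $\phi(U)=U$. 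Hence $U^\dagger = W_0 \cap \phi^{-1}(W_0)$ has dimension $n-k$ — equivalently $\dim(U^\dagger/U) = n-2k-1$, its maximum — if and only if $U$ is Hermitian. When $U$ is Hermitian, $U^\dagger = W_0$, and an orthogonality computation as in \parref{hermitian-subspaces} gives that the radical of $\beta|_{W_0}$ equals $U$ and its corank equals $k+1$; so the induced form on $W_0/U$ is nonsingular, and $\bar X$ is a smooth $q$-bic hypersurface in $\PP^{n-2k-2}$. By \parref{differential-singular-locus} its Fano scheme $\FF_{r-k-1}(\bar X)$ is then smooth of the expected dimension $(r-k)\big((n-2k-2)-2(r-k-1)-1\big) = (r-k)(n-2r-1)$, and it is nonempty by \parref{hypersurfaces-nonempty-fano} because $n \geq 2r+1$. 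With the previous step, the fibre has dimension $(r-k)(n-2r-1)$ and is smooth, which proves one implication.

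\emph{The non-Hermitian case --- the main obstacle.} Conversely, if $U$ is not Hermitian then $\dim(U^\dagger/U) \leq n-2k-2$, so $\bar X$ sits in some $\PP^m$ with $m \leq n-2k-3$, and one must show $\dim\FF_{r-k-1}(\bar X) < (r-k)(n-2r-1)$. When $\bar X$ is smooth this is immediate from \parref{differential-generic-smoothness}, since then $\dim\FF_{r-k-1}(\bar X) = (r-k)(m-2r+2k+1) \leq (r-k)(n-2r-2)$. The genuinely delicate case is when $\bar X$ is singular: here I would compute the corank and radical of the induced form from the orthogonality relations among $U^\dagger$, $U^{[1],\perp}$ and $U^{\perp,[-1]}$ (again via $\phi$), exhibit $\bar X$ as a cone over a lower-dimensional $q$-bic hypersurface of strictly smaller corank, and bound $\dim\FF_{r-k-1}(\bar X)$ by stratifying the $(r-k-1)$-planes according to their intersection with the vertex --- invoking \parref{differential-generic-smoothness} on the smooth strata and \parref{hypersurfaces-fano-corank-1} together with an induction on the corank on the others. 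The main difficulty is in verifying that this estimate stays strictly below $(r-k)(n-2r-1)$ as soon as $\dim(U^\dagger/U) < n-2k-1$; granting that, the contrapositive is complete, and with it the Corollary.
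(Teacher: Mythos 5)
Your upper bound and your ``Hermitian \(\Rightarrow\) equality and smoothness'' direction are essentially correct. The tangent-space identification \(T_{[\PP W]}\pr_k^{-1}([\PP U]) \cong \Hom(W/U, W^{[1],\perp}/W)\), of dimension exactly \((r-k)(n-2r-1)\) by nonsingularity of \(\beta\), does give the inequality, and in the Hermitian case the identification of the reduced fibre with \(\FF_{r-k-1}(\bar{X})\) for the nonsingular induced form on \(U^\dagger/U\), together with \parref{hypersurfaces-nonempty-fano} and \parref{differential-singular-locus}, gives equality; combining with the tangent computation even yields smoothness of the scheme-theoretic fibre, where the paper instead observes that the equations \(\bar{\beta}_2\) are \(q\)-th powers of \(\bar{\beta}_1\), so the fibre \emph{is} the Fano scheme of the smooth hypersurface \(\bar{X}\). (Your parenthetical ``equality of local and tangent dimensions at each point makes it smooth'' is not justified by the inequality alone---a priori there could be lower-dimensional components---but you do not actually use it, since the Hermitian case supplies equidimensionality.)

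The genuine gap is the converse: you never prove that equality forces \(U\) to be Hermitian, and this is where the content of the statement lies. Your own computation shows tangent spaces cannot distinguish the two cases, and the crude reduction ``\(\dim U^\dagger/U \leq n-2k-2\), hence the Fano scheme of \(\bar{X}\) is smaller'' does not work because \(\bar{X}\) may be singular (the induced form can have positive corank) and Fano schemes of singular \(q\)-bic hypersurfaces can exceed their expected dimension. Worse, the bookkeeping you propose cannot yield strictness by itself: the codimension of \(U^\dagger\) plus the corank of \(\bar{\beta}\) only gives the \emph{non-strict} bound \((r-k)(n-2r-1)\) again, and indeed at the boundary \(n = 2r+1\) equality genuinely occurs at non-Hermitian \(U\) (a non-Hermitian point lying on an \(r\)-plane of \(X\) has a nonempty finite fibre), so any proof of the strict inequality must exploit more than the quantities you assemble --- in effect the range \(n \geq 2r+2\) together with a finer analysis of how isotropic subspaces of \(U^\dagger/U\) meet the radical of \(\bar{\beta}\). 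The paper derives the dimension statements from the equivalence, via \parref{hermitian-subspaces}, that the two orthogonals of \(U\) coincide exactly when \(U\) is Hermitian, i.e.\ from \(\dim_\kk U^\dagger/U \leq n-2k-1\) with equality iff \(U\) is Hermitian, applied to the reduced-fibre identification of \parref{incidences-containing-a-plane}; your sketched cone-and-stratification induction is a plausible route to the missing strict inequality, but as written it is an announcement of the needed claim rather than a proof, so the ``only if'' half of the Corollary remains unestablished.
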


\begin{proof}
This is a geometric statement, so assume the residue field of \(U\)
is \(\kk\). Then
\(\dim_\kk U^\dagger/U \leq n-2k-1\) with equality if and only if the two
orthogonals of \(U\) coincide which, by \parref{hermitian-subspaces}, is
equivalent to \(U\) being Hermitian. This gives the statements regarding
dimension.

It remains to see that the fibre is smooth when \(U\) is Hermitian. Consider
again its equations given in the proof of
\parref{incidences-containing-a-plane}: First, that \(U\) is Hermitian means
that the equations \(\bar{\beta}_2\) are \(q\)-powers of the linear equations
\(\bar{\beta}_1\), and are thus redundant. Second, the \(q\)-bic form
\(\bar\beta\) induced by \(\beta\) on \(U^\dagger/U\) is nonsingular: Since
\(\beta\) is nonsingular, \(U^\dagger\) has codimension \(k+1\) in \(V\) and
the restriction of \(\beta\) thereon has corank at most \(k+1\). Since \(U\) is
contained in the radical of \(\beta\) restricted to \(U^\dagger\), it follows
that \(\bar\beta\) is nonsingular. Therefore \(\pr^{-1}_k([\PP U])\) is
isomorphic to the Fano scheme of a smooth \(q\)-bic hypersurface \(\bar{X}\) in
the Grassmannian \(\mathbf{G}(r-k,U^\dagger/U)\), and is thus smooth by
\parref{differential-singular-locus}.
\end{proof}

\subsection{Action of the Fano correspondence}\label{cgaj-action}
Suppose now that \(X\) is smooth, and consider the Fano correspondence
\(\mathbf{L} \coloneqq \mathbf{F}_{r,0}\) going between \(X\) and its
scheme \(\FF\) of \(r\)-planes. The scheme \(\mathbf{L}\) is
naturally a closed subscheme of \(\FF \times X\), and may therefore be viewed
as a correspondence of degree \(-r\) from \(\FF\) to \(X\). This acts on Chow
groups
\[
\mathbf{L}_* \colon \mathrm{CH}_*(\FF) \to \mathrm{CH}_{* + r}(X)
\quad\text{and}\quad
\mathbf{L}^* \colon \mathrm{CH}^*(X) \to \mathrm{CH}^{*-r}(\FF)
\]
via \(\mathbf{L}_*(\alpha) \coloneqq \pr_{X,*}(\pr_{\FF}^*(\alpha) \cdot \mathbf{L})\)
and \(\mathbf{L}^*(\beta) \coloneqq \pr_{\FF,*}(\pr_X^*(\beta) \cdot \mathbf{L})\),
where the dot denotes the intersection product of cycles on \(\FF \times X\).

The next statement shows that \(\mathbf{L}\) relates, in particular, the
first Chern classes
\[
h \coloneqq c_1(\sO_X(1)) \in \mathrm{CH}^1(X)
\quad\text{and}\quad
g \coloneqq c_1(\sO_{\FF}(1)) \in \mathrm{CH}^1(\FF)
\]
of the standard polarization of \(X\) as a hypersurface, and the Pl\"ucker
polarization of \(\FF\):

\begin{Lemma}\label{cgaj-correspondence-polarization}
If \(n \geq 2r+2\), then for every \(r+1 \leq k \leq n-1\),
\[
\mathbf{L}^*(h^k) = c_{k-r}(\mathcal{Q}) \in
\mathrm{CH}^{k-r}(\FF)
\]
and is nonzero. In particular, \(\mathbf{L}^*(h^{r+1}) = g\) in
\(\mathrm{CH}^1(\FF)\).
\end{Lemma}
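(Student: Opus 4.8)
The plan is to reduce the identity to the projective-bundle structure of $\mathbf{L}$ over $\FF$, and to deduce nonvanishing from the fact that $X$ is swept out by its $r$-planes. Recall from \S\parref{section-correspondences} that the projection $\pr_r \colon \mathbf{L} = \mathbf{F}_{r,0} \to \FF$ exhibits $\mathbf{L}$ as the projective bundle $\PP(\mathcal{S}_\FF)$ of lines in the tautological rank-$(r+1)$ subbundle $\mathcal{S}_\FF$; write $\pi \coloneqq \pr_r$ for this structure morphism and $\xi \coloneqq c_1(\sO_{\PP(\mathcal{S}_\FF)}(1))$ for its relative hyperplane class. First I would observe that the other projection $j \coloneqq \pr_0 \colon \mathbf{L} \to X \subseteq \PP V$ is exactly the tautological morphism attached to the line subbundle $\sO_{\PP(\mathcal{S}_\FF)}(-1) \hookrightarrow \pi^*\mathcal{S}_\FF \hookrightarrow V_{\mathbf{L}}$, so that $j^*\sO_{\PP V}(1) = \sO_{\PP(\mathcal{S}_\FF)}(1)$ and hence $j^*h = \xi$. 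Since $\mathbf{L}$ is a closed subscheme of $\FF \times X$ with $\pr_\FF\rvert_{\mathbf{L}} = \pi$ and $\pr_X\rvert_{\mathbf{L}} = j$, the projection formula gives $\mathbf{L}^*(\gamma) = \pi_*(j^*\gamma)$ for every $\gamma \in \mathrm{CH}^*(X)$; taking $\gamma = h^k$ yields $\mathbf{L}^*(h^k) = \pi_*(\xi^k)$.

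Next I would evaluate $\pi_*(\xi^k)$ using the Grothendieck relation for the $\PP^r$-bundle $\pi$. Since $\mathcal{S}_\FF$ has rank $r+1$, the standard projective-bundle pushforward formula gives $\pi_*(\xi^{r+i}) = s_i(\mathcal{S}_\FF)$ for $i \geq 0$ and $\pi_*(\xi^j) = 0$ for $0 \leq j \leq r-1$, where the Segre classes are normalized by $c(\mathcal{S}_\FF)\,s(\mathcal{S}_\FF) = 1$. As $k \geq r+1$ this reads $\mathbf{L}^*(h^k) = s_{k-r}(\mathcal{S}_\FF)$. Now the tautological sequence $0 \to \mathcal{S} \to V_\GG \to \mathcal{Q} \to 0$ on $\GG$ and the triviality of $V_\GG$ force $c(\mathcal{Q}) = c(\mathcal{S})^{-1} = s(\mathcal{S})$, so $c_i(\mathcal{Q}) = s_i(\mathcal{S})$ for all $i$; restricting to $\FF$ and combining gives $\mathbf{L}^*(h^k) = c_{k-r}(\mathcal{Q})$. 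Specializing $k = r+1$ and using $c_1(\mathcal{Q}) = c_1(\det\mathcal{S}^\vee) = c_1(\sO_\FF(1)) = g$ yields the final assertion.

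For nonvanishing I would argue geometrically, so may assume $\kk$ algebraically closed; by the standing hypothesis $X$ is smooth, whence by Theorem \parref{theorem-fano-schemes} the scheme $\FF$ is a smooth projective irreducible variety of dimension $(r+1)(n-2r-1) \geq n-r-1 \geq k-r$. The class $c_{k-r}(\mathcal{Q})$ is the restriction to $\FF$ of the special Schubert class $c_{k-r}(\mathcal{Q}_\GG) \in \mathrm{CH}^{k-r}(\GG)$, represented by $\Sigma_\Lambda \coloneqq \Set{[U] \in \GG | \PP U \cap \Lambda \neq \varnothing}$ for a linear subspace $\Lambda \subseteq \PP V$ of dimension $n-k$. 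Since $n \geq 2r+2$, the morphism $\pr_0 \colon \mathbf{L} \to X$ is surjective by \parref{incidences-fibre-dimension-estimate}, so every point of $X$ lies on an $r$-plane contained in $X$; as $X$ is a hypersurface and $\dim\Lambda = n-k \geq 1$, the linear section $X \cap \Lambda$ is nonempty, and any of its points lies on an $r$-plane $\PP U \subseteq X$ meeting $\Lambda$, so that $[\PP U] \in \FF \cap \Sigma_\Lambda$. Thus $\FF \cap \Sigma_\Lambda$ is nonempty for every such $\Lambda$; for general $\Lambda$ it is moreover of pure codimension $k-r$ in $\FF$ by Kleiman's transversality theorem, hence represents the class $c_{k-r}(\mathcal{Q})$. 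So $c_{k-r}(\mathcal{Q})$ is a nonzero effective class: its product with a suitable power of the ample Plücker class $g$ has positive degree on $\FF$. In particular $c_{k-r}(\mathcal{Q}) \neq 0$, and \emph{a fortiori} $g = \mathbf{L}^*(h^{r+1}) \neq 0$.

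The formal part of the argument is only bookkeeping with the projective bundle and the tautological sequence; the real content, and the main obstacle, is the nonvanishing, where one must both know that the Schubert locus on $\FF$ genuinely represents $c_{k-r}(\mathcal{Q})$ (this is where Kleiman's moving lemma enters) and use the geometric input \parref{incidences-fibre-dimension-estimate} that $X$ is covered by $r$-planes. I would also take care that the Segre-class normalization $s(\mathcal{S}) = c(\mathcal{S})^{-1}$ is compatible with the orientation of the relative $\sO(1)$ used to define $\xi$.
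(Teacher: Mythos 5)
Your proposal is correct, and its overall strategy matches the paper's: identify \(\mathbf{L}^*(h^k)\) with \(c_{k-r}(\mathcal{Q})\), then get nonvanishing from the fact that \(X\) is covered by \(r\)-planes (\parref{incidences-fibre-dimension-estimate}). The difference is in how the identity is obtained. The paper argues with cycle representatives: \(h^k\) is a general \((n-k)\)-plane section, so \(\mathbf{L}^*(h^k)\) is the locus of \(r\)-planes meeting a general \((n-k)\)-plane, i.e.\ the restriction of a special Schubert class, which is \(c_{k-r}(\mathcal{Q})\) by \cite[Example 14.7.3]{Fulton}. You instead exploit the \(\PP^r\)-bundle structure \(\pr_r \colon \mathbf{L} = \PP(\mathcal{S}_\FF) \to \FF\): since \(\pr_0^*h\) is the relative hyperplane class, the push--pull and Segre-class formulas give \(\mathbf{L}^*(h^k) = \pi_*(\xi^k) = s_{k-r}(\mathcal{S}_\FF) = c_{k-r}(\mathcal{Q}_\FF)\), with no genericity or moving-lemma input at all; the Schubert description then enters only for the nonvanishing. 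This buys a cleaner separation (formal identity versus geometric positivity) and makes explicit the details the paper leaves implicit: nonemptiness of \(\FF \cap \Sigma_\Lambda\) for every \(\Lambda\) of dimension \(n-k\), proper dimension for general \(\Lambda\), positivity of the resulting effective class against the ample Pl\"ucker class. One small caution: in positive characteristic Kleiman's theorem only yields the dimension statement for a general translate, not transversality; that is all you need, since a dimensionally proper intersection on the smooth \(\FF\) is an effective cycle with positive multiplicities, but you should phrase it that way rather than as ``transversality.'' Your normalizations (lines convention for \(\PP(\mathcal{S}_\FF)\), \(s(\mathcal{S}) = c(\mathcal{S})^{-1} = c(\mathcal{Q})\), \(c_1(\mathcal{Q}) = g\)) are consistent with the paper's conventions, so the computation goes through as written.
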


\begin{proof}
Since \(h^k\) represents a general \((n-k)\)-plane section of \(X\),
\(\mathbf{L}^*(h^k)\) represents the scheme of \(r\)-planes contained in
\(X\) which are incident with a fixed, but general, \((n-k)\)-plane
section; in other words, \(\mathbf{L}^*(h^k)\) represents the restriction to
\(\FF\) of the Schubert variety corresponding to incidence with a general
\((n-k)\)-plane, which by \cite[Example 14.7.3]{Fulton} is given by
\(c_{k-r}(\mathcal{Q})\). Since \(X\) is covered by \(r\)-planes by
\parref{incidences-fibre-dimension-estimate}, it follows that this class is
nonzero on \(\FF\).
\end{proof}

\subsection{Incidence schemes}\label{correspondence-incidence}
Continuing with \(X\) smooth, suppose further that \(n \geq 2r+2\), so that,
by \parref{incidences-fibre-dimension-estimate}, \(X\) is covered by
\(r\)-planes. Fix an \(r\)-plane \(P\) in \(X\) and consider the schemes of
\(k\)-planes incident with \(P\) for varying \(k\). Namely, for each \(0 \leq k
\leq r\), consider the \((k(n-2k-2)+r)\)-cycle
\[
[D_{k,P}] \coloneqq \FF_{k,0}^*([P])
\in \mathrm{CH}^{n-r-k-1}(\FF_k)
\]
obtained by applying the Fano correspondence \(\FF_{k,0}\) between \(\FF_k\)
and \(X\) to the class of \(P\) in \(\mathrm{CH}^{n-r-1}(X)\). The cycle
is supported on the closure of the locus
\[
\Set{[P'] \in \FF_k | P' \neq P \;\text{and}\; P' \cap P \neq \varnothing}
\]
of \(k\)-planes in \(X\) incident with \(P\). The basic property of these cycles
is:

\begin{Lemma}\label{correspondence-plucker}
The \(D_{k,P}\) are algebraically equivalent for varying \(r\)-planes
\(P\), and
\[ (q+1)[D_{k,P}] \sim_{\mathrm{alg}} c_{n-r-k-1}(\mathcal{Q}). \]
\end{Lemma}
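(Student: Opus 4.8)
The plan is to reduce both assertions to a single relation in $\mathrm{CH}^{n-r-1}(X)$ and to prove that relation by an explicit linear‑section computation.

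First recall that $[D_{k,P}] = \mathbf{F}_{k,0}^*([P])$ by definition. Applying \parref{cgaj-correspondence-polarization} with $k$ in place of $r$ — legitimate since $n \geq 2r+2 \geq 2k+2$ and $k+1 \leq n-r-1$ — identifies $c_{n-r-k-1}(\mathcal{Q}) = \mathbf{F}_{k,0}^*(h^{n-r-1})$. Since the correspondence action $\mathbf{F}_{k,0}^*$ preserves algebraic equivalence, both assertions follow from the claim that for every $r$-plane $P \subset X$,
\[ (q+1)[P] \sim_{\mathrm{alg}} h^{n-r-1} \quad\text{in } \mathrm{CH}^{n-r-1}(X). \]
Indeed this gives the second assertion at once, and the first because $[D_{k,P}] - [D_{k,P'}] = \mathbf{F}_{k,0}^*([P]-[P'])$ while $[P] \sim_{\mathrm{alg}} [P']$ for any two $r$-planes in $X$: the Fano scheme $\mathbf{F}_r$ is smooth by \parref{differential-singular-locus} since $X$ is, connected by \parref{hypersurfaces-fano-connected} since $n\geq 2r+2$, hence irreducible, and it carries the universal family $\mathbf{F}_{r,0}\to\mathbf{F}_r$ of $r$-planes contained in $X$.

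To prove the displayed relation — a geometric statement, so I assume $\kk$ algebraically closed — fix $P = \mathbf{P}U$. Since $\beta$ is nonsingular, $U^{\perp,[-1]} := \Set{v \in V | v^{[1]} \in U^\perp}$ has dimension $n-r$, which exceeds $\lfloor \tfrac{n+1}{2}\rfloor$ and hence the maximal dimension of an isotropic subspace; so $U^{\perp,[-1]}$ contains a non-isotropic vector $v$. Put $W := \langle U, v\rangle$ and $\Pi := \mathbf{P}W \cong \mathbf{P}^{r+1}$; then $\Pi \not\subseteq X$, as it contains the non-isotropic point $[v]$. Choosing a basis $e_0,\ldots,e_r$ of $U$ together with $e_{r+1} := v$, the Gram matrix of $\beta_W$ has all entries zero except $a_{i,r+1} = \beta(e_i^{[1]},v)$ for $i\leq r$ and $a_{r+1,r+1} = \beta(v^{[1]},v) \neq 0$: the entries with $i,j\leq r$ vanish because $U$ is isotropic, and those $a_{r+1,j}$ with $j\leq r$ vanish because $v^{[1]}\in U^\perp$. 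Hence, by \parref{basics-hyperplane-section} and \parref{basics-equations},
\[ f_{\beta_W} = x_{r+1}\Big(\sum\nolimits_{i\leq r} a_{i,r+1}x_i^q + a_{r+1,r+1}x_{r+1}^q\Big) = x_{r+1}\,\tilde{\ell}^{\,q}, \qquad \tilde{\ell} := \sum\nolimits_{i\leq r} a_{i,r+1}^{1/q}x_i + a_{r+1,r+1}^{1/q}x_{r+1}, \]
a nonzero linear form. Thus, as cycles on $X$, $[X\cap\Pi] = [P] + q[\tilde{P}]$, where $\tilde{P} := \mathrm{V}(\tilde{\ell})\subset\Pi$ is an $r$-plane contained in $X\cap\Pi$. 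Finally $[X\cap\Pi] \sim_{\mathrm{alg}} h^{n-r-1}$: the traces $X\cap\Pi'$ over the connected locus of $(r+1)$-planes $\Pi'\not\subseteq X$ form a flat family of $r$-cycles on $X$, and a general member represents $h^{n-r-1}$. Combining with $[\tilde{P}] \sim_{\mathrm{alg}} [P]$ from the previous paragraph gives $h^{n-r-1} \sim_{\mathrm{alg}} [P] + q[P] = (q+1)[P]$.

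The computation in the third paragraph is the crux: nonsingularity of $\beta$ together with $n\geq 2r+2$ forces an $(r+1)$-plane $\Pi\supset P$ whose trace on $X$ is, apart from the hyperplane $P$ itself, a $q$-fold $r$-plane — this is exactly where the $q$-bic structure enters (the residual equation being a perfect $q$-th power) and where the factor $q+1$ comes from. The remaining ingredients — that correspondences preserve algebraic equivalence, that $\mathbf{F}_r$ is irreducible, and that linear sections form flat families — are routine, so I expect no further obstacle.
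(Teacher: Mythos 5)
Your proof is correct, and its first half (connectedness of \(\FF_r\) plus the fact that the correspondence action preserves algebraic equivalence) together with the reduction of the second assertion through \parref{cgaj-correspondence-polarization} is exactly the paper's argument. Where you genuinely diverge is in producing the degenerate \((r+1)\)-plane section: the paper cuts down by general hyperplane sections to the half-dimensional case \(n = 2r+2\) and invokes \parref{incidences-hermitian-perp}, which says that for a \emph{Hermitian} \(r\)-plane \(\PP U\) the section \(X \cap \PP U^\dagger\) is \(\PP U\) with multiplicity \(q+1\); you instead work directly with the given, arbitrary plane \(P = \PP U\), choose a non-isotropic \(v \in U^{\perp,[-1]}\) --- legitimate since \(\dim U^{\perp,[-1]} = n-r\) exceeds the bound \(\lfloor (n+1)/2 \rfloor\) on isotropic subspaces of a nonsingular form defining a smooth \(X\) --- and read off from the Gram matrix that \(X \cap \PP\langle U, v\rangle = P + q\tilde{P}\) for some \(r\)-plane \(\tilde{P} \subset X\), after which one more appeal to the algebraic equivalence of all \(r\)-planes yields \((q+1)[P] \sim_{\mathrm{alg}} h^{n-r-1}\). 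Your route is more self-contained: it uses no Hermitian structure from \S\parref{section-hermitian} and no reduction to \(n = 2r+2\), at the cost of redoing by hand a rank-one-restriction computation that the paper gets for free from \parref{incidences-hermitian-perp} (a lemma it needs later anyway, e.g.\ in \parref{cgaj-multiplication} and \parref{cgaj-curves-P0}); note also that your section is only \(P + q\tilde{P}\) rather than \((q+1)\) times a single plane, which is precisely why the extra use of algebraic equivalence of \(r\)-planes is required --- and it is harmless, since that equivalence was already established for the first assertion.
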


\begin{proof}
Since the Fano scheme of \(r\)-planes in \(X\) is connected when
\(n \geq 2r+2\) by \parref{hypersurfaces-fano-connected}, the classes \([P]\)
of \(r\)-planes in \(X\) are algebraically equivalent for varying \(P\), whence
by \cite[Proposition 10.3]{Fulton}, the \([D_{k,P}]\) are also algebraically
equivalent for varying \(P\).

To now show that \((q+1)[D_{k,P}]\) is algebraically equivalent with
\(c_{n-r-k-1}(\mathcal{Q})\), it suffices by
\parref{cgaj-correspondence-polarization} to show that there exists an
\((r+1)\)-plane section of \(X\) supported on an \(r\)-plane. By first taking
general hyperplane sections, it suffices to consider the case \(n = 2r+2\), and
this follows from \parref{incidences-hermitian-perp} below.
\end{proof}

The following shows that the Chow class of a Hermitian plane is, up to
multiplication by \(q+1\), a power of the hyperplane class. Of particular note
is the case of a smooth \(q\)-bic curve \(C\), whereupon this shows that
\(q+1\) times a Hermitian point coincides with the standard planar polarization
on \(C\).

\begin{Lemma}\label{incidences-hermitian-perp}
Let \(X\) be a smooth \(q\)-bic hypersurface of dimension \(2r+1\) containing
a Hermitian \(r\)-plane \(\PP U\). Then the \((r+1)\)-plane section
\(X \cap \PP U^\dagger\) is supported on \(\PP U\) with multiplicity \(q+1\).
\end{Lemma}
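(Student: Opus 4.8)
The plan is to compute the section \(X \cap \PP U^\dagger\) in explicit coordinates and observe that the \(q\)-bic form it carries degenerates so drastically that only one variable survives. First I would pin down the dimensions involved. Since \(X\) is smooth, \parref{differential-smoothness-X} gives that \(\beta\) is nonsingular, so the adjoint \(\beta\colon V \to V^{[1],\vee}\) is an isomorphism and \(U^{[1],\perp}\) has dimension \((2r+3)-(r+1)=r+2\); and since \(\PP U\) is Hermitian, the equivalence \parref{hermitian-subspaces}\ref{hermitian-subspaces.perps} gives \(U^{[1],\perp,[1]} = U^\perp\), i.e.\ \(U^{[1],\perp}=U^{\perp,[-1]}\), so \(U^\dagger = U^{[1],\perp}\) is \((r+2)\)-dimensional and \(\PP U^\dagger\) is genuinely an \((r+1)\)-plane, containing \(\PP U\) as a hyperplane. (Both the nonsingularity of \(\beta\) and the Hermitian hypothesis on \(U\) enter here to force \(\dim U^\dagger\) to be exactly \(r+2\).)

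Next I would identify the section via \parref{basics-hyperplane-section}: \(X \cap \PP U^\dagger\) is the \(q\)-bic hypersurface attached to the restricted form \(\beta_{U^\dagger}\). The containments \(U^\dagger \subseteq U^{[1],\perp}\) and \(U^{\dagger,[1]} \subseteq U^\perp\) show that every \(u \in U\) satisfies \(\beta(u^{[1]},m)=\beta(m^{[1]},u)=0\) for all \(m \in U^\dagger\), i.e.\ \(U\) lies in the radical of \(\beta_{U^\dagger}\). Extending a basis \(u_0,\dots,u_r\) of \(U\) by a single vector \(u_{r+1}\) to a basis of \(U^\dagger\), the Gram matrix of \(\beta_{U^\dagger}\) in this basis therefore has all entries zero except the bottom-right one, which is \(a \coloneqq \beta(u_{r+1}^{[1]},u_{r+1})\); hence by \parref{basics-equations} the defining polynomial of \(X \cap \PP U^\dagger\) in the homogeneous coordinates \((x_0:\cdots:x_{r+1})\) dual to this basis is \(a\,x_{r+1}^{q+1}\). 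Since \(\PP U = \mathrm{V}(x_{r+1})\) inside \(\PP U^\dagger\), this exhibits \(X \cap \PP U^\dagger\) as the divisor \(q+1\) times \(\PP U\) the moment one knows \(a \neq 0\).

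The remaining point is \(a \neq 0\). Otherwise the polynomial vanishes identically and \(\PP U^\dagger \subseteq X\), so the \((r+2)\)-dimensional subspace \(U^\dagger\) would be isotropic for \(\beta\); but any isotropic subspace \(W\) of the nonsingular \((2r+3)\)-dimensional form satisfies \(W \subseteq W^{[1],\perp}\), hence \(\dim W \leq (2r+3)-\dim W\), forcing \(\dim W \leq r+1\) --- a contradiction. Equivalently, \(a \neq 0\) is exactly the nonsingularity of the induced \(q\)-bic form on the line \(U^\dagger/U\), which is part of \parref{basics-incidences-nondegenerate}. I do not expect a genuine obstacle here: the entire content is in the two dimension counts and in the non-vanishing of \(a\), with the rest being bookkeeping with orthogonals and Frobenius twists.
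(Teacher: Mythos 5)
Your proposal is correct and follows essentially the same route as the paper: both identify \(U^\dagger = U^{[1],\perp}\) as an \((r+2)\)-dimensional space (using nonsingularity of \(\beta\) and the Hermitian hypothesis), observe that the restriction of \(\beta\) to it has \(U\) as radical and rank one, and conclude via \parref{basics-hyperplane-section}. Your explicit Gram-matrix computation and the isotropic-dimension bound for the nonvanishing of \(a\) merely spell out what the paper delegates to the proof of \parref{basics-incidences-nondegenerate}.
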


\begin{proof}
Indeed, as in the proof of \parref{basics-incidences-nondegenerate}, the
orthogonal \(U^\dagger = U^{[1],\perp}\) is an \((r+2)\)-dimensional subspace
in \(V\) such that the restriction of \(\beta\) thereon is of rank \(1\) with
\(U\) as its radical. This means that, by \parref{basics-hyperplane-section},
the \((r+1)\)-plane section \(X \cap \PP U^\dagger\) is but \(\PP U\) with
multiplicity \(q+1\).
\end{proof}

\section{Hermitian structures}\label{section-hermitian}
This Section is concerned with the geometry associated with the canonical
\(\FF_{q^2}\)-structure on \(V\) provided by a nonsingular \(q\)-bic form
\(\beta\) as in \parref{hermitian-structures}. Namely, let
\(\sigma_\beta \colon V^{[2]} \to V\) and \(\phi \colon V \to V\) be the
\(\FF_{q^2}\)-descent datum and corresponding absolute Frobenius morphism
associated with \(\beta\). Embedding the Frobenius-twisted subbundle
\(\mathcal{S}^{[2]}\) of the Grassmannian \(\mathbf{G} \coloneqq
\mathbf{G}(r+1,V)\) into \(V\) via \(\sigma_\beta\) induces a
\(\kk\)-endomorphism \(\phi_\GG \colon [U] \mapsto [\phi(U)]\). Since \(\phi\)
preserves isotropic vectors for \(\beta\) by
\parref{hermitian-phi-properties}\ref{hermitian-phi-properties.isotropic}, this
restricts to a \(\kk\)-endomorphism
\[
\phi_\FF \colon \FF \to \FF
\]
on the Fano scheme \(\FF\) of \(r\)-planes in the \(q\)-bic hypersurface
\(X\) associated with \(\beta\). For instance, when \(r = 0\), the endomorphism
\(\phi_X \colon X \to X\) can be described geometrically as follows: The
intersection of \(X\) with the embedded tangent space at a point \(x\) is a
corank \(1\) \(q\)-bic hypersurface \(X'\), and \(\phi_X(x)\) is the special
point of \(X'\) corresponding to the right-kernel of the underlying \(q\)-bic
form. For example, when \(X\) is a \(q\)-bic curve, the tangent line at \(x\)
has multiplicity \(q\) at \(x\), and the residual point of intersection is
\(\phi_X(x)\); see \citeThesis{2.9.9} for more details.

\subsectiondash{Dynamical filtration}\label{hermitian-filtration}
Dynamics of \(\phi_\FF\) distinguish special loci in the Fano schemes.
Of particular interest here is the case \(r = 0\), regarding the hypersurface
itself: for each \(0 \leq k \leq n/2\), let
\[
X^k \coloneqq
\Set{[L] \in \PP V |
\phi^{[0,k]}(L) \coloneqq \langle L, \phi(L), \ldots, \phi^k(L)\rangle
\;\text{is isotropic for \(\beta\)}}
\]
be the locus of lines in \(V\) whose \(k\)-th cyclic subspace generated by
\(\phi\) is isotropic; of course, this description can be made functorial as in
\parref{basics-definition}. These fit into a descending sequence
\[
X^{\smallbullet} \colon
X
= X^0
\supset X^1
\supset \cdots
\supset X^{\lfloor n/2 \rfloor}.
\]
Each piece of the filtration is given by a complete intersection:

\begin{Proposition}\label{hermitian-complete-intersection}
The locus \(X^k\) is the complete intersection in \(\PP V\) given by the
vanishing of
\[
\beta(\phi_{\PP V}^{*,i}(\mathrm{eu})^{[1]}, \mathrm{eu}) \colon
\sO_{\PP V} \to
\sO_{\PP V}(q^{2i+1}+1)
\quad\text{for}\;0 \leq i \leq k.
\]
The singular locus of \(X^k\) is supported on the union of the Hermitian
\((k-1)\)-planes in \(X\).
\end{Proposition}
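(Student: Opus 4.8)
\textit{Strategy.} The plan is in three steps: pin down the defining equations of $X^k$; compute its nonsmooth locus (in the Fitting-ideal sense of \parref{differential-nonsmooth}) by a short differential argument; and then read the complete-intersection statement off the shape of that locus.

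\textit{Step 1: the equations.} By its functorial definition, $X^k$ is cut out in $\PP V$ by the vanishing of the sections $\beta(\phi_{\PP V}^{*,a}(\mathrm{eu})^{[1]}, \phi_{\PP V}^{*,b}(\mathrm{eu}))$ for $0 \le a, b \le k$ --- the pullbacks of the conditions ``$\beta(\phi^a(\ell)^{[1]},\phi^b(\ell)) = 0$'' expressing isotropy of the cyclic subsheaf on a local generator $\ell$ of the tautological line. Write $g_i \coloneqq \beta(\phi_{\PP V}^{*,i}(\mathrm{eu})^{[1]}, \mathrm{eu})$, a section of $\sO_{\PP V}(q^{2i+1}+1)$ since $\phi^i$ is $q^{2i}$-linear and $(-)^{[1]}$ contributes a further factor of $q$. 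Using \parref{hermitian-phi-properties} --- iterating \parref{hermitian-phi-properties}\ref{hermitian-phi-properties.double} to peel off matched powers of $\phi$, then the basic identity there to rewrite $\beta(\mathrm{eu}^{[1]}, \phi^j(\mathrm{eu}))$ as $g_{j-1}^q$ --- one finds that each $\beta(\phi_{\PP V}^{*,a}(\mathrm{eu})^{[1]}, \phi_{\PP V}^{*,b}(\mathrm{eu}))$ is literally a $q$-power $g_j^{q^m}$ of one of $g_0, \dots, g_k$. Since $g_0$ (take $a=b=0$) and $g_j$ for $1 \le j \le k$ (take $a=j$, $b=0$) occur among these sections with $m = 0$, the ideal sheaf they generate is exactly $(g_0, \dots, g_k)$; hence $X^k = \mathrm{V}(g_0, \dots, g_k)$ scheme-theoretically, with the asserted equations and degrees.

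\textit{Step 2: the nonsmooth locus.} Because $\phi^i$ is $q^{2i}$-linear, the coefficient functionals of $g_i$, viewed as a linear form in the last copy of $\mathrm{eu}$, are $q^{2i+1}$-th powers and so have vanishing differential; hence at a point $[L]$ with $L = \langle v\rangle$ the differential $dg_i$ is the linear form $v' \mapsto \beta(\phi^i(v)^{[1]}, v') = \beta^\vee(\phi^i(v)^{[1]})(v')$, which descends to $T_{[L]}\PP V = V/L$ because $g_i(v) = 0$. Since $\beta$ is nonsingular, $\beta^\vee\colon V^{[1]} \xrightarrow{\ \sim\ } V^\vee$, and $w \mapsto w^{[1]}$ preserves dimensions of spans, so the nonsmooth locus $\Sing X^k = \{[L] \in X^k : dg_0, \dots, dg_k \text{ linearly dependent}\}$ is precisely $\{[L] \in X^k : v, \phi(v), \dots, \phi^k(v) \text{ linearly dependent}\}$. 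For such an $[L]$, \parref{hermitian-minimal} identifies $W \coloneqq \sum_{i \ge 0}\phi^i(L)$ as the smallest Hermitian subspace containing $L$; linear dependence of $v, \dots, \phi^k(v)$ forces $\dim W \le k$, while $W \subseteq \langle v, \phi(v), \dots, \phi^k(v)\rangle$ is isotropic since $[L] \in X^k$, so $\PP W$ is a Hermitian plane of dimension $\le k-1$ contained in $X$. Conversely, if $\PP U \subseteq X$ is Hermitian with $\dim U \le k$, then $\phi^i(v) \in U$ for every $v \in U$ and every $i$, so $\PP U \subseteq \Sing X^k$. Finally, the isotropic Hermitian subspaces of $V$ are exactly the $\FF_{q^2}$-subspaces of $V^\phi$ isotropic for the induced Hermitian form, which is nondegenerate as $\beta$ is nonsingular; since $k \le \lfloor n/2\rfloor$ does not exceed the Witt index $\lfloor (n+1)/2\rfloor$, standard Hermitian Witt theory extends any such $W$ with $\dim W < k$ to an isotropic Hermitian subspace of dimension $k$, i.e. to a Hermitian $(k-1)$-plane of $X$ through $[L]$. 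Hence $\Sing X^k$ is supported on the union of the Hermitian $(k-1)$-planes in $X$.

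\textit{Step 3: complete intersection, and the main obstacle.} The Hermitian $(k-1)$-planes of $X$ are the $\FF_{q^2}$-points of an $\FF_{q^2}$-Grassmannian, hence finite in number, so $\Sing X^k$ has dimension $\le k-1$. On the other hand $X^k = \mathrm{V}(g_0,\dots,g_k)$ is cut out by $k+1$ equations, so every component has dimension $\ge n-1-k$ by Krull; a component of dimension $> n-1-k$ would lie generically in $\Sing X^k$ (excess-dimension points lie in the Fitting locus, as in \parref{differential-nonsmooth}), contradicting $\dim\Sing X^k \le k-1 \le n-1-k$ --- and in the borderline case $n$ even, $k = n/2$, where $k-1 = n-1-k$, such a component is still of the expected dimension. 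Thus $X^k$ is pure of dimension $n-1-k$, i.e. a complete intersection. I expect Step 2 to be the technical heart: the differential computation identifying $\Sing X^k$ with the $\phi$-linear-dependence locus, followed by the passage through \parref{hermitian-minimal} and Witt's extension theorem to recognize that locus as a union of Hermitian $(k-1)$-planes. Step 1 is routine but slightly fiddly bookkeeping with Frobenius twists via \parref{hermitian-phi-properties}, and Step 3 is formal.
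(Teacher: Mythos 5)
Your proposal is correct and follows essentially the same route as the paper: the equations are reduced from the $(k+1)^2$ isotropy conditions to $g_0,\dots,g_k$ via the identities of \parref{hermitian-phi-properties}, and the nonsmooth locus is identified with the locus where $v,\phi(v),\dots,\phi^k(v)$ drop rank, which \parref{hermitian-minimal} converts into containment in a small Hermitian isotropic subspace. If anything your write-up supplies details the paper leaves implicit --- the explicit Jacobian-criterion computation in place of the appeal to the tangent-space calculation of \parref{differential-identify}, the Witt-extension step promoting a Hermitian plane of dimension $\le k-1$ to an honest Hermitian $(k-1)$-plane, and the dimension count in Step 3 deducing the complete-intersection property from the bound $\dim\Sing X^k\le k-1$ --- all of which are sound.
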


\begin{proof}
The space \(\phi^{[0,k]}(L)\) is isotropic if and only if
\(\beta(\phi^i(L)^{[1]}, \phi^j(L)) = 0\) for each \(0 \leq i, j \leq k\).
The first statement now follows upon successively applying the identities of
\parref{hermitian-phi-properties}, since
\[
\beta(\phi^i(L)^{[1]},\phi^j(L)) =
\begin{dcases*}
\beta(\phi^{i-j}(L)^{[1]},L)^{q^{2j}} & if \(i \geq j\), and \\
\beta(\phi^{j-i-1}(L)^{[1]},L)^{q^{2i+1}} & if \(i < j\).
\end{dcases*}
\]

Computing as in \parref{differential-identify} shows that the tangent space to
\(X^k\) at a point \(x = [L]\) is the vector space
\[
\mathcal{T}_{X^k} \otimes_{\sO_{X^k}} \kappa(x) \cong
\Hom_{\kappa(x)}(L, \phi^{[0,k]}(L)^{[1],\perp}/L).
\]
This has dimension \(n - 1 - \dim_{\kappa(x)} \phi^{[0,k]}(L)\) since
\(\beta\) is nondegenerate. Thus \(x\) is a singular point if and only if
\(\dim_{\kappa(x)}\phi^{[0,k]}(L) \leq k\); by \parref{hermitian-minimal}, this
occurs if and only if \(L\) lies in a Hermitian subspace of dimension \(k\),
yielding the second statement.
\end{proof}

\subsectiondash{}\label{hermitian-coordinates}
The equations of \(X^k\) are particularly simple upon choosing a
basis \(V = \langle e_0,\ldots,e_n\rangle\) consisting of Hermitian vectors.
It follows from its definition in \parref{hermitian-structures} and
\parref{hermitian-phi-properties}\ref{hermitian-phi-properties.fixed} that the
endomorphism \(\phi\) is the \(\kk\)-linear \(q^2\)-power Frobenius in the
corresponding coordinates:
\[
\phi_{\PP^n} \colon \PP^n \to \PP^n
\qquad (x_0:\cdots:x_n) \mapsto (x_0^{q^2}: \cdots: x_n^{q^2}).
\]
In particular, the Hermitian subspaces of \(X\) coincide with its
\(\FF_{q^2}\)-rational ones, and
\[
X^k
= \bigcap\nolimits_{s = 0}^k
\mathrm{V}\Big(\sum\nolimits_{i,j = 0}^n a_{ij} x_i^{q^{2s+1}} x_j\Big)
\subset \PP^n
\]
where \((a_{ij})_{i,j = 0}^n \coloneqq \Gram(\beta;e_0,\ldots,e_n)\) as
in \parref{basics-equations}.
When \(X\) is given by the Fermat equation, this filtration is seen to coincide
with that defined by Lusztig in \cite[Definition 2]{Lusztig:Green}.

\medskip
The final piece of \(X^{\smallbullet}\) is the union of the maximal Hermitian
isotropic subspaces of \(V\):

\begin{Lemma}\label{hermitian-maximal}
The scheme \(X^{\lfloor n/2 \rfloor}\) is supported on the union of the maximal
Hermitian subspaces contained in \(X\). Moreover, the sections
\[
\beta(\phi^{*,i}_{\PP V}(\mathrm{eu})^{[1]}, \mathrm{eu}) \colon
\sO_{\PP V} \to \sO_{\PP V}(q^{2i+1}+1)
\]
vanish on \(X^{\lfloor n/2 \rfloor}\) for all \(i \geq 0\).
\end{Lemma}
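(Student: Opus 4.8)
The plan is to show that every closed point of $X^{\lfloor n/2\rfloor}$ lies on a Hermitian isotropic subspace of $V$, and then to read off both assertions from this. Since the statement is geometric one may assume $\kk$ algebraically closed; write $m \coloneqq \lfloor n/2\rfloor$, and for $[L] \in \PP V$ set $W \coloneqq \phi^{[0,m]}(L) = \langle L, \phi(L), \ldots, \phi^m(L)\rangle$. By \parref{hermitian-complete-intersection} --- more precisely, by the identities of \parref{hermitian-phi-properties} assembled in its proof --- the point $[L]$ lies on $X^m$ exactly when $W$ is isotropic for $\beta$, equivalently when $\beta(\phi^k(L)^{[1]}, L) = 0$ for every $0 \leq k \leq m$. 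As $\beta$ is nonsingular, any isotropic subspace $U$ has $U^{[1]} \subseteq U^\perp$ inside $V^{[1]}$ and hence $\dim U \leq \tfrac{1}{2}\dim V$; thus $1 \leq \dim W \leq m+1$, with $\dim W = m+1$ possible only when $n = 2m+1$.

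The heart of the argument is to upgrade ``$W$ isotropic'' to ``$W$ Hermitian'', i.e.\ $\phi(W) = W$. Since $\phi$ is a bijective semilinear map, $\phi(W) = \langle \phi(L), \ldots, \phi^{m+1}(L)\rangle$ has the same dimension as $W$, so it suffices to show $\phi^{m+1}(L) \subseteq W$: this forces $\phi(W) \subseteq W$, hence equality. I would treat two cases. If $\dim W \leq m$, then the ascending chain $\phi^{[0,0]}(L) \subseteq \phi^{[0,1]}(L) \subseteq \cdots$, which grows by at most one dimension per step, must already be stationary by step $m$; so $W = \sum_{i \geq 0}\phi^i(L)$, which is the minimal Hermitian subspace containing $L$ by \parref{hermitian-minimal}, and in particular is $\phi$-invariant. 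If instead $\dim W = m+1$, then $n = 2m+1$ and $W$ is a maximal isotropic subspace, so the inclusion $W \subseteq W^{[1],\perp}$ coming from isotropy is an equality for dimension reasons. Now $\phi^{m+1}(L) \subseteq W = W^{[1],\perp}$ amounts to $\beta(\phi^i(L)^{[1]}, \phi^{m+1}(L)) = 0$ for $0 \leq i \leq m$, and the identity in the proof of \parref{hermitian-complete-intersection} for the range $i < j$ rewrites the left-hand side as $\beta(\phi^{m-i}(L)^{[1]}, L)^{q^{2i+1}}$, which vanishes because $W$ is isotropic. So $W$ is Hermitian in both cases.

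Granting that $W$ is a Hermitian isotropic subspace through $L$, the two conclusions follow quickly. For the displayed vanishing: $\phi(W) = W$ gives $\phi^a(L) \subseteq W$ for all $a \geq 0$, whence isotropy of $W$ forces $\beta(\phi^a(L)^{[1]}, L) = 0$; that is, the section $\beta(\phi^{*,a}_{\PP V}(\mathrm{eu})^{[1]}, \mathrm{eu})$ vanishes at $[L]$, and as $[L] \in X^m$ was arbitrary, on all of $X^m$, for every $a \geq 0$. For the first assertion, every point of $X^m$ lies in the Hermitian isotropic subspace $W$, hence in a maximal such subspace of $X$; conversely, if $H$ is any Hermitian isotropic subspace then $\phi^{[0,m]}(L) \subseteq H$ is isotropic for every line $[L] \subseteq \PP H$, so $\PP H \subseteq X^m$. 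Hence $X^m$ is supported on the union of the maximal Hermitian subspaces contained in $X$.

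The only genuinely nonformal step is the case $\dim W = m+1$, which arises only for odd $n$: a priori $W$ is merely a maximal isotropic subspace with no evident $\phi$-invariance, and the idea is to exploit the self-duality $W = W^{[1],\perp}$ of maximal isotropics for a nonsingular form together with the symmetry identities for $\phi$ in \parref{hermitian-phi-properties} to check $\phi^{m+1}(L) \in W$. Everything else is either routine manipulation of cyclic subspaces or an immediate consequence of \parref{hermitian-complete-intersection} and \parref{hermitian-minimal}.
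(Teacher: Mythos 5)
Your proposal is correct and takes essentially the same route as the paper: one shows every point of \(X^{\lfloor n/2\rfloor}\) lies on a Hermitian isotropic subspace, using stabilization of the cyclic subspace together with \parref{hermitian-minimal} when that subspace is small, and the special structure of half-dimensional isotropic subspaces when \(n\) is odd, after which both assertions follow as you say. The only difference is cosmetic: in the half-dimensional case the paper deduces Hermitianness of a maximal isotropic subspace from the criterion \parref{hermitian-subspaces}\ref{hermitian-subspaces.perps}, whereas you verify directly that \(\phi^{m+1}(L)\) lies in the cyclic subspace via the identities from \parref{hermitian-complete-intersection}; both verifications are valid.
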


\begin{proof}
Writing \(n\) as \(2m+1\) or \(2m+2\), it suffices to show that each
\(x \in X^{\lfloor n/2 \rfloor}\) is contained in a Hermitian \(m\)-plane.
When \(n = 2m+1\), any \(x \in X^m\) lies in some \(m\)-plane
\(\PP U \subset X\) by the description in \parref{hermitian-filtration}; since
\(U\) is isotropic and half-dimensional in \(V\), \(U^{[1],\perp} = U\)
and \(U^\perp = U^{[1]}\), and so \(U\) is Hermitian by
\parref{hermitian-subspaces}\ref{hermitian-subspaces.perps}.
When \(n = 2m+2\), any \(x \in X^{m+1}\) satisfies
\[
\langle x, \phi(x), \ldots, \phi^m(x), \phi^{m+1}(x) \rangle =
\langle x, \phi(x), \ldots, \phi^m(x) \rangle
\]
since any linear subvariety of \(X\) has dimension at most \(m\); therefore
\(x\) lies in a Hermitian \(m\)-plane of \(X\) by \parref{hermitian-minimal}.
The second statement now follows from \parref{hermitian-complete-intersection}
since for every \(v \in V\) contained in an isotropic Hermitian subspace of
dimension \(m+1\) and every \(i > m\),
\(\phi^i(v) = \sum\nolimits_{j = 0}^m a_j \phi^j(v)\) for some \(a_j \in \kk\).
\end{proof}

% \subsectiondash{Locus of Hermitian \(r\)-planes}
% Since \(V\) has only finitely many Hermitian vectors by \citeForms{2.5},
% there are only finitely many Hermitian subspaces of every dimension. Therefore,
% for each \(0 \leq r \leq n/2\), the locus
% \[
% \mathbf{F}_r(X)_{\mathrm{Herm}} \coloneqq
% \Set{[\PP U] \in \mathbf{F}_r(X) | U \;\text{is a Hermitian subspace}}
% \]
% of isotropic \(r\)-planes whose underlying linear space is
% Hermitian---briefly, the locus of \emph{Hermitian \(r\)-planes} in
% \(X\)---is a finite set of points. This is the vanishing locus of a map
% of vector bundles:

% \begin{Corollary}\label{hermitian-equations}
% \(\mathbf{F}_r(X)_{\mathrm{Herm}} =
% \mathrm{V}\big(
%   \beta^\vee \colon
%   (\mathcal{S}^{[1],\perp}/\mathcal{S})^{[1]} \to
%   \mathcal{S}^\vee
% \big)\).
% \end{Corollary}

% \begin{proof}
% \parref{hermitian-subspaces}\ref{hermitian-subspaces.perps} implies that
% a subspace \(U \subseteq V\) is Hermitian if and only if the composition
% \[
% U^{[1],\perp,[1]} \subseteq
% V^{[1]} \xrightarrow{\beta^\vee}
% V^\vee \twoheadrightarrow
% U^\vee
% \]
% is the zero map. If \(U\) is furthermore isotropic, then \(U^{[1]}\) lies in
% the source, and the map descends to the quotient.\todo{Also show that this
% really gives the set of points with its reduced structure.}
% \end{proof}

For example, combined with \parref{hermitian-coordinates}, this implies that
the union of the lines contained in the Fermat \(q\)-bic surface \(X\) is the
complete intersection in \(\PP^3\) given by
\[
x_0^{q+1} + x_1^{q+1} + x_2^{q+1} + x_3^{q+1} =
x_0^{q^3+1} + x_1^{q^3+1} + x_2^{q^3+1} + x_3^{q^3+1} = 0.
\]

Generally, let \(0 \leq k \leq n/2\), and let
\[
\FF_k(X)_{\mathrm{Herm}} \coloneqq
\Set{[\PP U] \in \FF_k(X) | U\;\text{is a Hermitian subspace}}
\]
be the locus of \(k\)-planes in \(X\) whose underlying space is Hermitian.
This is a finite set of points since there are only finitely many Hermitian
vectors by \citeForms{2.5}. Analyzing the schematic structure of
\(X^{\lfloor n/2 \rfloor}\) gives a geometric method to count the number of
maximal isotropic Hermitian subspaces contained in a smooth \(q\)-bic
hypersurface \(X\). The following count is classical: see
\cite[n.32]{Segre:Hermitian} and \cite[Theorem 9.2]{BC:Hermitian}; see also
\cite[Corollary 2.22]{Shimada:Lattices}.

\begin{Corollary}\label{hermitian-maximal-count}
\(\displaystyle
\#\FF_m(X)_{\mathrm{Herm}} =
\begin{dcases*}
\prod\nolimits_{i = 0}^m (q^{2i+1} + 1) & if \(\dim X = 2m\) is even, and \\
\prod\nolimits_{i = 0}^m (q^{2i+3} + 1) & if \(\dim X = 2m+1\) is odd.
\end{dcases*}
\)
\end{Corollary}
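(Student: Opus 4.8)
The plan is to compute $\#\FF_m(X)_{\mathrm{Herm}}$ as the degree of the complete intersection $X^{\lfloor n/2\rfloor}$ divided by the multiplicity with which each maximal Hermitian $m$-plane occurs in it. Write $\dim X=n-1$, so that $\lfloor n/2\rfloor$ equals $m$ if $n=2m+1$ and $m+1$ if $n=2m+2$; in either case a maximal Hermitian isotropic subspace of $(V,\beta)$ has dimension $m+1$, hence corresponds to an $m$-plane of $\PP V$. By \parref{hermitian-complete-intersection}, $X^{\lfloor n/2\rfloor}$ is the complete intersection in $\PP V$ cut out by the $\lfloor n/2\rfloor+1$ sections $\beta(\phi_{\PP V}^{*,i}(\mathrm{eu})^{[1]},\mathrm{eu})$ of degrees $q^{2i+1}+1$, $0\le i\le\lfloor n/2\rfloor$; being a complete intersection it is Cohen--Macaulay, hence pure of dimension $n-1-\lfloor n/2\rfloor=m$ and without embedded components, with degree $\prod_{i=0}^{\lfloor n/2\rfloor}(q^{2i+1}+1)$ with respect to $\sO_{\PP V}(1)$. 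By \parref{hermitian-maximal}, its support is the finite union of the maximal Hermitian $m$-planes $\PP U$, each of dimension $m$ and of degree $1$. Hence
\[
\bigl[X^{\lfloor n/2\rfloor}\bigr]=\sum\nolimits_{[\PP U]\in\FF_m(X)_{\mathrm{Herm}}}\mu_U\,[\PP U]
\]
as cycles, where $\mu_U\ge 1$ is the length of $X^{\lfloor n/2\rfloor}$ at the generic point of $\PP U$; taking degrees gives $\sum_U\mu_U=\prod_{i=0}^{\lfloor n/2\rfloor}(q^{2i+1}+1)$. It thus suffices to show that $\mu_U=1$ for all $U$ when $n=2m+1$, and $\mu_U=q+1$ for all $U$ when $n=2m+2$.

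For $n=2m+1$: fix $\PP U\in\FF_m(X)_{\mathrm{Herm}}$ and, after enlarging $\kk$ if necessary, choose a line $L\subset U$ whose coordinates in a Hermitian basis of $U$ are linearly independent over $\FF_{q^2}$. Then the Moore matrix governing $\phi^{[0,m]}(L)=\langle L,\phi(L),\dots,\phi^m(L)\rangle$ is invertible, so $\phi^{[0,m]}(L)=U$, of dimension $m+1$. By the tangent-space computation in the proof of \parref{hermitian-complete-intersection}, the tangent space to $X^m$ at $[L]$ is $\Hom(L,U^{[1],\perp}/L)$, of dimension $\dim V-\dim U-1=m=\dim X^m$; since $\PP U$ is an irreducible component of the pure $m$-dimensional scheme $X^m$, it follows that $X^m$ is smooth --- in particular reduced --- at the generic point of $\PP U$, so $\mu_U=1$. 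Summing over $U$ gives $\#\FF_m(X)_{\mathrm{Herm}}=\prod_{i=0}^m(q^{2i+1}+1)$.

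For $n=2m+2$: the same computation now gives tangent dimension $\dim V-\dim U-1=m+1>m=\dim X^{m+1}$ at a general point of $\PP U$, so $X^{m+1}$ is non-reduced along each $\PP U$ and $\mu_U>1$; the claim is $\mu_U=q+1$. The key input is \parref{incidences-hermitian-perp}: since $\dim X=2m+1$ and $\PP U$ is a Hermitian $m$-plane, the subspace $U^\dagger=U^{[1],\perp}$ has dimension $m+2$, the plane $P:=\PP U$ is a hyperplane in $\PP U^\dagger\cong\PP^{m+1}$, and $X\cap\PP U^\dagger=(q+1)\,P$ scheme-theoretically. Localize at the generic point $\eta$ of $P$: since $X$ is smooth, its equation $f_\beta$ vanishes to order exactly $1$ along $P$, so $\sO_{X^{m+1},\eta}$ is the quotient of the regular local ring $\sO_{X,\eta}$ (of dimension $m+1$) by the images of the $m+1$ remaining $q$-bic polynomials, and $\mu_U$ is the colength of that ideal. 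The plan is then to use that $\PP U^\dagger$ meets $\Spec\sO_{X,\eta}$ in a regular curve germ along which $f_\beta$ vanishes to order $q+1$, and that the $m+1$ linear forms defining $\PP U^\dagger$ together with those $m+1$ remaining $q$-bic polynomials form a system of parameters of $\sO_{X,\eta}$ realizing the ideal of the $(q+1)$-fold thickening of $P$ inside $\PP U^\dagger$; this would force $\mu_U=q+1$, whence $\#\FF_m(X)_{\mathrm{Herm}}=(q+1)^{-1}\prod_{i=0}^{m+1}(q^{2i+1}+1)=\prod_{i=0}^m(q^{2i+3}+1)$.

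The main obstacle is precisely this last local computation. Since $\PP U^\dagger$ is not a transverse slice of $X^{m+1}$ along $P$, the identity $X\cap\PP U^\dagger=(q+1)P$ does not by itself pin down $\mu_U$: one must rule out that the extra $q$-bic polynomials decrease the length below $q+1$ --- which should follow from the fact, recorded in \parref{hermitian-maximal}, that all the sections $\beta(\phi_{\PP V}^{*,i}(\mathrm{eu})^{[1]},\mathrm{eu})$ vanish on $X^{m+1}$, so that $X^{m+1}\cap\PP U^\dagger$ remains supported to order exactly $q+1$ on $P$ --- and also that they do not push it above $q+1$, which requires control of their initial forms at $\eta$. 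A cleaner route that avoids the local analysis is to invoke \parref{hermitian-coordinates} and \parref{hermitian-subspaces} to identify $\FF_m(X)_{\mathrm{Herm}}$ with the set of maximal isotropic subspaces of the nondegenerate Hermitian form induced by $\beta$ on the $\FF_{q^2}$-descent of $V$, and to count these by the classical recursion $N_d=(q^{d-1}+1)\,N_{d-2}$ with $N_1=1$ and $N_2=q+1$; this recovers both formulae at once and is the substance of \cite{Segre:Hermitian,BC:Hermitian}.
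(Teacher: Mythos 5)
Your overall strategy is the same as the paper's: identify \(X^{\lfloor n/2\rfloor}\) as a complete intersection supported on the union of the Hermitian \(m\)-planes and divide its degree by the generic multiplicity along each plane. Your even-dimensional case is complete and agrees with the paper (generic smoothness along each \(\PP U\) follows from the singular-locus statement in \parref{hermitian-complete-intersection}, so \(\mu_U=1\)). The genuine gap is exactly where you say it is: the claim \(\mu_U=q+1\) when \(n=2m+2\) is never proved. The identity \(X\cap\PP U^\dagger=(q+1)\PP U\) from \parref{incidences-hermitian-perp} does not determine the length of \(\sO_{X^{m+1},\eta}\), since \(\PP U^\dagger\) is not a transverse slice and the remaining equations could a priori raise or lower the length; your sketch of "controlling initial forms" is precisely the missing content. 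The paper closes this by a direct local computation: choose a Hermitian basis \(u_0,\ldots,u_m\) of \(U\), vectors \(v_0,\ldots,v_m\) with \(\beta(u_i^{[1]},v_j)=\beta(v_j^{[1]},u_i)=\delta_{ij}\), and a Hermitian basis of the orthogonal complement of all of these; in the induced coordinates \((x_0:\cdots:x_m:y_0:\cdots:y_m:z)\) the \(m+2\) equations of \(X^{m+1}\) become
\[
z^{q^{2i+1}+1}+y_0+y_0^{q^{2i+1}+1}+\sum\nolimits_{j=1}^m\big(x_j^{q^{2i+1}}y_j+x_jy_j^{q^{2i+1}}\big),\qquad 0\le i\le m+1,
\]
in the completed local ring at the generic point \(\eta\) of \(\PP U\), and solving for the \(y_j\) shows this ring is \(\kk(x_1,\ldots,x_m)[\!\![z]\!\!]/(\epsilon z^{q+1})\) for a unit \(\epsilon\), whence \(\mu_U=q+1\). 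Some computation of this kind is indispensable for your main route.

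Your fallback route does not repair this, because the recursion you state is wrong for odd-dimensional Hermitian spaces. Writing \(N_d\) for the number of maximal totally isotropic subspaces of a nondegenerate Hermitian form on \(\FF_{q^2}^d\), the double count of flags (isotropic point \(\subset\) maximal isotropic), using that the number of isotropic points is \((q^d-(-1)^d)(q^{d-1}-(-1)^{d-1})/(q^2-1)\), gives \(N_d=(q^{d-1}+1)N_{d-2}\) only for \(d\) even, and \(N_d=(q^{d}+1)N_{d-2}\) for \(d\) odd; for instance \(N_3=q^3+1\), not \(q^2+1\). With your uniform recursion \(N_{2m+3}\) comes out as \(\prod_{i=1}^{m+1}(q^{2i}+1)\), which is not the claimed \(\prod_{i=0}^m(q^{2i+3}+1)\), so as written the odd case of the Corollary is not recovered. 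Appealing to the classical count in \cite{Segre:Hermitian,BC:Hermitian} (as the paper itself notes one may) would be legitimate, but then the classical statement must be quoted correctly; alternatively, fix the recursion by the flag double count above, which does yield both formulae.
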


\begin{proof}
When \(\dim X = 2m\) is even, \parref{hermitian-maximal} implies that \(X^m\)
is the union of the Hermitian \(m\)-planes in \(X\), and the second statement
of \parref{hermitian-complete-intersection} shows that each plane is generically
reduced. Therefore
\[
\#\FF_m(X)_{\mathrm{Herm}} =
\deg(X^m) =
\prod\nolimits_{i = 0}^m (q^{2i+1}+1).
\]

When \(\dim X = 2m+1\) is odd, \parref{hermitian-maximal} still implies
that \(X^{m+1}\) is the union of the Hermitian \(m\)-planes in \(X\), but
\parref{hermitian-complete-intersection} now shows that each component is
generically nonreduced. Thus it remains to show that each component
\(P \subseteq X^{m+1}\) arises with multiplicity \(q+1\). Choose a
Hermitian basis \(U = \langle u_0,\ldots,u_m \rangle\) of the linear space
underlying \(P\). Since \(\beta\) is nondegenerate, there exists linearly
independent vectors \(v_0,\ldots,v_m \in V\) such that
\[
\beta(u_i^{[1]}, v_j) = \beta(v_j^{[1]}, u_i) = \delta_{ij}.
\]
Complete this to a basis of \(V\) with a Hermitian basis \(w\) of the orthogonal
complement of these vectors. Let \((x_0:\cdots:x_m:y_0:\cdots:y_m:z)\)
be the coordinates of \(\PP V = \PP^{2m+2}\) associated with this basis.
The generic point \(\eta\) of \(P\) is contained in the affine open where
\(x_0 \neq 0\), thus the completed local ring of \(X^{m+1}\) at \(\eta\) is
isomorphic to quotient of the power series ring
\(\kk(x_1,\ldots,x_m)[\!\![ y_0,\ldots,y_m,z ]\!\!]\)
by the ideal generated by the polynomials
\[
z^{q^{2i + 1}+1} + y_0 + y_0^{q^{2i+1}+1} +
\sum\nolimits_{j = 1}^m (x_j^{q^{2i+1}} y_j + x_j y_j^{q^{2i+1}})
\quad\text{for}\;
0 \leq i \leq m+1,
\]
compare with \parref{hermitian-coordinates}.
Solving for \(y_i\) with the \(i\)-th polynomial shows that the completed
local ring of \(X^{r+1}\) at \(\eta\) is isomorphic to
\(\kk(x_1,\ldots,x_m)[\!\![z]\!\!]/(\epsilon z^{q+1})\) for a unit
\(\epsilon\), concluding the proof.
\end{proof}

Double-counting now determines the number of isotropic Hermitian \(k\)-planes.
The result is best expressed in terms of Gaussian integers. The parameter
here will be taken to be \(\bar{q} \coloneqq -q\), following the Ennola duality
principle \cite{Ennola}, which says that representation-theoretic
information about the finite unitary group \(\mathrm{GU}_{n+1}(q)\) can be
obtained from that of the finite general linear group \(\mathrm{GL}_{n+1}(q)\)
via the change of parameters \(q \mapsto \bar{q}\). To set notation, given
positive integers \(n\) and \(k\), write
\[
[n]_{\bar q} \coloneqq \frac{1 - \bar{q}^n}{1 - \bar{q}},
\qquad
[n]_{\bar q}! \coloneqq \prod\nolimits_{i = 1}^n [i]_{\bar q},
\qquad
\stirlingI{n}{k}_{\bar q} \coloneqq
\frac{[n]_{\bar q}!}{[k]_{\bar q}! [n-k]_{\bar q}!},
\]
and
\([2k+1]_{\bar q}!! \coloneqq \prod\nolimits_{i = 0}^k [2i+1]_{\bar q}\). Then
for each \(0 \leq k < n/2\), the result is as follows:

\begin{Proposition}\label{hermitian-planes-count}
\(\displaystyle
\#\mathbf{F}_k(X)_{\mathrm{Herm}} =
(1 - \bar{q})^{k+1} [2k+1]_{\bar q}!! \stirlingI{n+1}{2k+2}_{\bar q}
\).
\end{Proposition}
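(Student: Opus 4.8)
The plan is to reduce the statement to a count in a finite Hermitian geometry and then settle it by a double count against the maximal‑plane formula of \parref{hermitian-maximal-count}.

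First I would translate into finite linear algebra. By \parref{hermitian-coordinates} a basis of Hermitian vectors turns \(\phi\) into the coordinatewise \(q^2\)-power Frobenius, so the Hermitian vectors form an \(\FF_{q^2}\)-structure \(V_0 \subset V\) of dimension \(n+1\) on which \(\beta\) restricts to a nondegenerate Hermitian form \(h(u,v) \coloneqq \beta(u^{[1]},v)\) (values in \(\FF_{q^2}\), with \(h(v,u) = h(u,v)^q\), as follows from the definition of Hermitian vectors). A subspace \(U \subseteq V\) is Hermitian if and only if it is defined over \(\FF_{q^2}\), and then \(U\) is isotropic for \(\beta\) if and only if \(U_0 \coloneqq U \cap V_0\) is totally isotropic for \(h\); since \(\dim_\kk U = \dim_{\FF_{q^2}} U_0\), this gives a bijection between \(\FF_k(X)_{\mathrm{Herm}}\) and the set of totally isotropic \((k+1)\)-dimensional \(\FF_{q^2}\)-subspaces of \((V_0,h)\). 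Write \(\mathcal{I}_j(N)\) for the number of totally isotropic \(j\)-dimensional subspaces of a nondegenerate Hermitian space of dimension \(N\) over \(\FF_{q^2}\) and \(\ell \coloneqq \lfloor (n+1)/2\rfloor\) for the maximal isotropic dimension; the goal is to compute \(\mathcal{I}_{k+1}(n+1)\), and \parref{hermitian-maximal-count} already evaluates \(\mathcal{I}_\ell(N)\) for every \(N\) (it is intrinsic to the Hermitian space, consistent with all such forms of a given dimension being equivalent).

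Next comes the double count. Count pairs \((U_0 \subseteq W_0)\) with \(U_0\) totally isotropic of dimension \(k+1\) and \(W_0\) maximal totally isotropic, in two ways. Choosing \(W_0\) first: there are \(\mathcal{I}_\ell(n+1)\) choices, and then every \((k+1)\)-dimensional \(\FF_{q^2}\)-subspace of \(W_0\) is automatically totally isotropic, so \(\binom{\ell}{k+1}_{q^2}\) choices of \(U_0\). Choosing \(U_0\) first: there are \(\mathcal{I}_{k+1}(n+1)\) choices, and then, by Witt extension, the maximal totally isotropic \(W_0 \supseteq U_0\) correspond under \(W_0 \mapsto W_0/U_0\) to the maximal totally isotropic subspaces of the nondegenerate Hermitian space \(U_0^\perp/U_0\) of dimension \(n+1-2(k+1)\); on the geometric side this is the fibre \(\pr_k^{-1}([\PP U])\) over a Hermitian point, identified in \parref{incidences-containing-a-plane} and \parref{basics-incidences-nondegenerate} with the scheme of maximal planes of a smaller smooth \(q\)-bic hypersurface, so either way its cardinality is \(\mathcal{I}_{\ell-k-1}(n-2k-1)\), again supplied by \parref{hermitian-maximal-count}. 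Equating gives
\[
\#\FF_k(X)_{\mathrm{Herm}} \;=\; \mathcal{I}_{k+1}(n+1) \;=\; \frac{\mathcal{I}_\ell(n+1)}{\mathcal{I}_{\ell-k-1}(n-2k-1)}\,\binom{\ell}{k+1}_{q^2}.
\]

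Finally I would do the bookkeeping. Substituting the products of \parref{hermitian-maximal-count} and rewriting with the parameter \(\bar q = -q\) via \(q^{2i-1}+1 = 1-\bar q^{2i-1} = (1-\bar q)[2i-1]_{\bar q}\) and \(\frac{q^{2a}-1}{q^{2b}-1} = \frac{\bar q^{2a}-1}{\bar q^{2b}-1} = \frac{[2a]_{\bar q}}{[2b]_{\bar q}}\) (using \(\bar q^{2i}=q^{2i}\) and \(\bar q^{2i-1}=-q^{2i-1}\), so signs cancel in every ratio that occurs), the quotient \(\mathcal{I}_\ell(n+1)/\mathcal{I}_{\ell-k-1}(n-2k-1)\) telescopes to \((1-\bar q)^{k+1}\) times a product of \(k+1\) consecutive odd factors \([\,\cdot\,]_{\bar q}\) ending in \([n+1]_{\bar q}\) or \([n]_{\bar q}\) according to the parity of \(n\); multiplying by \(\binom{\ell}{k+1}_{q^2}\), which is the \(\bar q^2\)-Gaussian binomial, interleaves these with the matching even factors to produce exactly \([n+1]_{\bar q}!/[n-2k-1]_{\bar q}!\) in both parity cases, and since \([2k+2]_{\bar q}! = [2k+1]_{\bar q}!!\prod_{i=1}^{k+1}[2i]_{\bar q}\) the result collapses to \((1-\bar q)^{k+1}[2k+1]_{\bar q}!!\,\stirlingI{n+1}{2k+2}_{\bar q}\). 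The main obstacle is precisely this last step: the maximal counts of \parref{hermitian-maximal-count} have slightly different exponent patterns depending on whether \(n+1 = 2\ell\) or \(2\ell+1\), and one must verify that the two telescopes nonetheless merge into the same \(\bar q\)-expression; carrying \(\bar q = -q\) through from the outset is what makes the two cases uniform and keeps the index juggling honest.
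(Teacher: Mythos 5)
Your proposal is correct and follows essentially the same route as the paper: the paper also double counts nested pairs of a Hermitian isotropic \(k\)-plane inside a Hermitian maximal plane, evaluating one side by the Grassmannian count \(\#\mathbf{G}(k+1,m+1)(\FF_{q^2})\) and the other by \parref{incidences-containing-a-plane} and \parref{basics-incidences-nondegenerate} reducing to \parref{hermitian-maximal-count} for a smaller smooth \(q\)-bic hypersurface, before the same \(\bar q = -q\) bookkeeping. Your explicit translation to the finite Hermitian space over \(\FF_{q^2}\) is just a rephrasing of the paper's use of \parref{hermitian-coordinates} and the compatibility of Hermitian vectors with orthogonal complements.
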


\begin{proof}
Write \(n\) as either \(2m+1\) or \(2m+2\), and consider the set
\[
\Lambda \coloneqq
\Set{(\PP U \subseteq \PP W) |
[\PP U] \in \mathbf{F}_k(X)_{\mathrm{Herm}}
\;\text{and}\;
[\PP W] \in \mathbf{F}_m(X)_{\mathrm{Herm}}}
\]
consisting of nested isotropic Hermitian \(k\)- and \(m\)-planes. Fixing
either \(\PP U\) or \(\PP W\) yields bijections
\[
\Lambda \simeq
\mathbf{F}_k(X)_{\mathrm{Herm}} \times \mathbf{F}_{m-k-1}(\bar{X})_{\mathrm{Herm}}
\quad\text{and}\quad
\Lambda
\simeq
\mathbf{F}_m(X)_{\mathrm{Herm}} \times \mathbf{G}(k+1,m+1)(\mathbf{F}_{q^2}),
\]
respectively; here, \(\bar{X}\) is a smooth \(q\)-bic hypersurface of dimension
\(n-2k-3\). These work as follow:

For the first, by \parref{incidences-containing-a-plane} and
\parref{basics-incidences-nondegenerate} together with the compatibility
of taking Hermitian vectors with taking orthogonal complements from
\citeForms{2.2}, the set of Hermitian \(m\)-planes in
\(X\) containing a fixed Hermitian \(k\)-plane \(\PP U\) is in bijection with
the set of Hermitian \((m-k-1)\)-planes in the \(q\)-bic hypersurface in
\(\PP(U^\dagger/U)\) defined by the nondegenerate \(q\)-bic form induced by
\(\beta\); choosing an isomorphism with the fixed smooth \(q\)-bic
\((n-2k-3)\)-fold \(\bar{X}\) gives the first bijection.

For the second, the discussion of \parref{hermitian-coordinates} implies that
the set of Hermitian \(k\)-planes contained in the fixed Hermitian \(m\)-plane
\(\PP W\) is in bijection with the set of \(\mathbf{F}_{q^2}\)-rational
\(k\)-planes in a projective \(m\)-space. Thus a choice of identification with
a fixed \(\PP^m\) gives the second bijection.

Taking cardinality of \(\Lambda\) and rearranging now gives an expression
for \(\#\mathbf{F}_k(X)_{\mathrm{Herm}}\) in terms of maximal isotropic Hermitian
subspaces, counted by \parref{hermitian-maximal-count}, and
\[
\#\mathbf{G}(k+1,m+1)(\mathbf{F}_{q^2})
= \stirlingI{m+1}{k+1}_{q^2}
= \frac{\prod_{i = 1}^{m+1}(1-\bar{q}^{2i})}
  {\prod_{i = 1}^{k+1} (1-\bar{q}^{2i}) \prod_{i = 1}^{m-k} (1-\bar{q}^{2i})},
\]
see \cite[Proposition 1.7.2]{Stanley:ECI}. A straightforward
computation now gives the result.
\end{proof}

\subsectiondash{Cyclic planes}\label{cyclic-planes}
Let \(0 \leq k < n/2\). Consider the closed subscheme of the Fano scheme
\[
\FF_{k,\mathrm{cyc}} \coloneqq
\Set{[P] \in \FF_k | \dim P \cap \phi_X(P) \geq k - 1}
% = \mathrm{Degen}_{r+2}(\mathcal{S} \oplus \phi^*\mathcal{S} \to V_{\FF_r})
\]
parameterizing \(k\)-planes in \(X\) whose intersection with its
\(\phi\)-translate is at least a \((k-1)\)-plane. The geometric construction
of the dynamical filtration from \parref{hermitian-filtration} provides a
natural rational map
\[
\phi^{[0,k]} \colon X^k \dashrightarrow \FF_{k,\mathrm{cyc}}
\qquad
x \mapsto \langle x, \phi_X(x), \ldots, \phi_X^k(x) \rangle
\]
which sends a general point of \(X^k\) to the cyclic \(k\)-plane it generates
via \(\phi\); this is well-defined since, by \parref{hermitian-minimal}, its
indeterminacy locus is the union of the Hermitian \((k-1)\)-planes of \(X\).

Resolve \(\phi^{[0,k]}\) as follows: For each \(0 \leq r \leq k\), consider
the following closed subschemes of partial flag varieties associated with \(V\):
\begin{align*}
X^k_r & \coloneqq
\Set{(L_0 \subset \cdots \subset L_r) |
\phi(L_i) \subset L_{i+1},
\;\text{and}\;
\langle L_r, \phi(L_r), \ldots, \phi^{k-r}(L_r)\rangle\;\text{is isotropic}}, \\
\FF_k^{k-r} & \coloneqq
\Set{(L_r \subset \cdots \subset L_k) |
L_i \subset \phi(L_{i+1}),\;
L_k\;\text{is isotropic},
\;\text{and}\;
\dim L_k \cap \phi(L_k) \geq k},
\end{align*}
where \(L_i\) denotes an \((i+1)\)-dimensional linear subspace of \(V\).
Then \(X^k = X_0^k\) and \(\FF_{k,\mathrm{cyc}} = \FF_k^0\). Set
\(\tilde{X}^k \coloneqq X^k_k\) and
\(\tilde{\FF}_{k,\mathrm{cyc}} \coloneqq \FF_k^k\).
Forgetting the last or first piece of the flag provides a series of morphisms
\[
\tilde{X}^k \to X^k_{k-1} \to \cdots \to X^k_1 \to X^k
\quad\text{and}\quad
\tilde{\FF}_{k,\mathrm{cyc}} \to \FF_k^{k-1} \to \cdots \to \FF_k^1 \to \FF_{k,\mathrm{cyc}}.
\]
Denote by \(\pi_{\smallbullet} \colon \tilde{X}^k \to X^k\) and
\(\pi^{\smallbullet} \colon \tilde{\FF}_{k,\mathrm{cyc}} \to \FF_{k,\mathrm{cyc}}\)
the projections from the top of the tower to the bottom. Finally,
the moduli description of the final pieces provides morphisms
\(\phi^- \colon \tilde{X}^k \to \tilde{\FF}_{k,\mathrm{cyc}}\)
and
\(\phi^+ \colon \tilde{\FF}_{k,\mathrm{cyc}} \to \tilde{X}^k\)
that send a flag \((L_i)_{i = 0}^k\) to \((\phi^{k-i}(L_i))_{i = 0}^k\) and
\((\phi^i(L_i))_{i = 0}^k\), respectively.

The result is summarized in the following. The construction is perhaps a
unitary analogue of the wonderful compactification of Drinfeld's upper half
space, as explained in \cite[\S\S7--9]{Langer:Drinfeld}. Compare also with
Ekedahl's analysis in \cite[Proposition 2.4]{Ekedahl} of Hirokado's variety
from \cite{Hirokado}.

\begin{Theorem}\label{cyclic-resolve}
Let \(0 \leq k < \frac{n}{2}\). The morphisms
\[
\pi_{\smallbullet} \colon \tilde{X}^k \to X^k
\quad\text{and}\quad
\pi^{\smallbullet} \colon \tilde{\FF}_{k,\mathrm{cyc}} \to \FF_{k,\mathrm{cyc}}
\]
are proper and birational from smooth irreducible schemes of dimension
\(n-k-1\); the morphisms
\[
\phi^- \colon \tilde{X}^k \to \tilde{\FF}_{k,\mathrm{cyc}}
\quad\text{and}\quad
\phi^+ \colon \tilde{\FF}_{k,\mathrm{cyc}} \to \tilde{X}^k
\]
are finite, flat, and purely inseparable of degrees
\(q^{k(k+1)}\) and \(q^{k(2n-3k-3)}\), respectively; and the diagram
\[
\begin{tikzcd}[column sep=3em]
\tilde{X}^k
  \rar["\phi^-"']
  \dar["\pi_{\smallbullet}^{\phantom{\smallbullet}}"'] &
\tilde{\FF}_{k,\mathrm{cyc}}
  \rar["\phi^+"']
  \dar["\pi_{\phantom{\smallbullet}}^{\smallbullet}"'] &
\tilde{X}^k
  \dar["\pi_{\smallbullet}"] \\
X^k
  \rar[dashed,"\phi^{[0,k]}"] &
\FF_{k,\mathrm{cyc}}
  \rar[dashed,"\phi^{(0 | k)}"] &
X^k
\end{tikzcd}
\]
is commutative,
where \(\phi^{(0|k)} \colon \FF_{k,\mathrm{cyc}} \dashrightarrow X^k\)
is the dominant rational map given by \(P \mapsto P \cap \phi^k(P)\).
\end{Theorem}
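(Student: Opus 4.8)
The plan is to verify the listed assertions in turn, the organizing idea being that recording an entire flag of cyclic subspaces separates out the degenerations of the span \(\langle L, \phi(L), \dots, \phi^k(L)\rangle\) that are responsible both for the singularities of \(X^k\) and for the indeterminacy of \(\phi^{[0,k]}\). Properness of every morphism in the statement is immediate, since each \(X^k_r\) and \(\FF_k^{k-r}\) is a closed subscheme of a projective partial flag variety of \(V\). For dimension and smoothness I would compute the Zariski tangent space of \(\tilde{X}^k = X^k_k\) at a point corresponding to a flag \(L_{\smallbullet} = (L_0 \subset \dots \subset L_k)\) by the deformation-theoretic method of \parref{differential-conormal}--\parref{differential-identify}: a first-order deformation inside the ambient flag variety is a compatible system \((\varphi_i \in \Hom_{\kappa}(L_i, V/L_i))_{i=0}^k\), and the equations defining \(\tilde{X}^k\) impose, through the identity of \parref{hermitian-phi-properties}, that each relation \(\phi(L_i) \subseteq L_{i+1}\) persists and that \(\varphi_k\) factors through \(L_k^{[1],\perp}/L_k\), just as in \parref{hermitian-complete-intersection}. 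The point to extract is that, because \(\dim_{\kappa} L_k = k+1\) is now constant along \(\tilde{X}^k\) and \(\beta\) is nonsingular, this tangent space has dimension \(n-k-1\) at every point; this is exactly where the flag data rigidifies the cyclic span and renders \(\tilde{X}^k\) smooth although \(X^k\) is not. The identical computation, with the relations \(L_i \subseteq \phi(L_{i+1})\) in place of \(\phi(L_i) \subseteq L_{i+1}\), applies to \(\tilde{\FF}_{k,\mathrm{cyc}} = \FF_k^k\).

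For birationality, over the open locus \(U \subseteq X^k\) where \(\phi^{[0,k]}(L)\) attains its maximal dimension \(k+1\) --- the complement of the union of Hermitian \((k-1)\)-planes by \parref{hermitian-minimal}, which is dense since that union has dimension \(k-1 < n-k-1\), and which equals the smooth locus of \(X^k\) by \parref{hermitian-complete-intersection} --- the only flag over \([L]\) contained in \(\tilde{X}^k\) is \(L \subset \langle L, \phi(L)\rangle \subset \dots \subset \phi^{[0,k]}(L)\), each step being forced as \(L_i + \phi(L_i)\) is then \((i+2)\)-dimensional, hence equals \(L_{i+1}\); so \(\pi_{\smallbullet}\) restricts to an isomorphism over \(U\), and likewise \(\pi^{\smallbullet}\) over the locus of \(\FF_{k,\mathrm{cyc}}\) where \(\dim(L_k \cap \phi(L_k)) = k\). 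Since \(\tilde{X}^k\) is now smooth of pure dimension \(n-k-1\), to get irreducibility it suffices to prove it connected, which I would do by induction along the tower \(\tilde{X}^k = X^k_k \to \dots \to X^k_0 = X^k\): the base \(X^k\) is a connected positive-dimensional complete intersection by \parref{hermitian-complete-intersection}, and each step \(X^k_r \to X^k_{r-1}\) has geometrically connected fibres --- a reduced point over the locus where \(L_{r-1} + \phi(L_{r-1})\) is \((r+1)\)-dimensional, and a Fano scheme of a lower-dimensional \(q\)-bic hypersurface (connected by \parref{incidences-containing-a-plane} and \parref{hypersurfaces-fano-connected}) over the complementary Hermitian locus. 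The same argument, or the purely inseparable surjectivity of \(\phi^-\), handles \(\tilde{\FF}_{k,\mathrm{cyc}}\).

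That \(\phi^-\) and \(\phi^+\) land in the stated targets follows by checking the defining conditions: for \((L_i) \in \tilde{X}^k\) one has \(\phi(L_{k-1}) \subseteq L_k \cap \phi(L_k)\), forcing \(\dim(L_k \cap \phi(L_k)) \geq k\), and the rest is formal, symmetrically for \(\phi^+\). Because \(\phi\) is the absolute Frobenius of the \(\FF_{q^2}\)-structure \(\sigma_\beta\), the composite \(\phi^+ \circ \phi^-\) sends \((L_i)_i\) to \((\phi^k(L_i))_i\), which is the relative \(q^{2k}\)-power Frobenius of \(\tilde{X}^k\); hence \(\phi^+ \circ \phi^-\), and likewise \(\phi^- \circ \phi^+\), is finite, flat, and purely inseparable of degree \(q^{2k(n-k-1)}\). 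It follows that \(\phi^-\) and \(\phi^+\) are finite and purely inseparable, flat because they are finite morphisms between smooth varieties of the same dimension, and that \(\deg\phi^- \cdot \deg\phi^+ = q^{2k(n-k-1)} = q^{k(k+1)} \cdot q^{k(2n-3k-3)}\); a bookkeeping of the \(q^2\)-power-Frobenius fibres that arise along the tower when one solves \(\phi^{k-i}(L_i) = M_i\) for \(L_i\) among the hyperplanes of \(L_{i+1}\) then pins down \(\deg\phi^- = q^{k(k+1)}\), and the other degree is forced. Finally, commutativity of the two squares is checked on the dense opens where the rational maps are regular: \(\pi^{\smallbullet} \circ \phi^-\) sends \((L_i)_i \mapsto L_k\), which over \(U\) equals \(\langle L_0, \phi(L_0), \dots, \phi^k(L_0)\rangle = \phi^{[0,k]}(\pi_{\smallbullet}(L_{\smallbullet}))\), while \(\pi_{\smallbullet} \circ \phi^+\) sends \((M_i)_i \mapsto M_0\), which generically equals \(M_k \cap \phi^k(M_k) = \phi^{(0|k)}(\pi^{\smallbullet}(M_{\smallbullet}))\) since the inclusions \(M_0 \subseteq M_k\) and \(M_0 \subseteq \phi(M_1) \subseteq \dots \subseteq \phi^k(M_k)\) force an equality of lines generically.

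The step I expect to be the main obstacle is the tangent-space computation establishing smoothness of \(\tilde{X}^k\) everywhere: it is the conceptual heart of the theorem, being the precise sense in which the flag resolves the degeneracies of the cyclic span, and it is delicate because the corresponding computation for \(X^k\) itself does not give a constant-dimensional tangent space. The degree computation for \(\phi^\pm\) is the second delicate point, though the multiplicativity relation \(\deg\phi^- \cdot \deg\phi^+ = q^{2k(n-k-1)}\) renders it essentially self-checking.
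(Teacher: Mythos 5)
Your outline reproduces the paper's proof in all essentials: properness from closedness in partial flag varieties; smoothness via the first-order deformation computation at an arbitrary flag (this is \parref{cyclic-smoothness}, and you correctly isolate the key mechanism --- each condition \(\phi(\tilde L_{i-1}) \subset \tilde L_i\) forces \(\varphi_i\) to vanish on the \emph{fixed} subspace \(\phi(L_{i-1})\), since \(\phi\) kills \(\epsilon\), so each of the first \(k\) graded pieces contributes exactly \(1\) and the last contributes \(\dim L_k^{[1],\perp}/L_k = n-2k-1\), for a constant total of \(n-k-1\)); birationality from uniqueness of the flag over the locus where the cyclic span is \((k+1)\)-dimensional; finiteness, pure inseparability, and miracle flatness of \(\phi^\pm\) from the factorization \(\phi^+\circ\phi^- = \phi^k\); and commutativity checked on dense opens. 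The only methodological divergence is irreducibility, where you induct up the tower using connectedness of fibres while the paper argues that \(\tilde X^k\) is proper birational to the normal connected complete intersection \(X^k\); either route is serviceable, though your special fibres over the Hermitian locus are really pieces \(\bar X^{k-r}\) of the dynamical filtration of a lower-dimensional \(q\)-bic rather than Fano schemes (these are again connected complete intersections, so the argument survives).

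The one assertion you do not actually prove is \(\deg\phi^- = q^{k(k+1)}\). As you note, \(\deg\phi^-\cdot\deg\phi^+ = q^{2k(n-k-1)}\) is forced, but this relation cannot separate the two factors, so ``a bookkeeping of the \(q^2\)-power-Frobenius fibres'' is exactly the content that must be supplied rather than a self-check. Set-theoretically the fibre of \(\phi^-\) is a single point (\(\phi\) is bijective), so the entire degree lives in the non-reduced scheme structure, and its length requires a genuine computation. The paper (in \parref{cyclic-proof}) exploits flatness to compute the scheme-theoretic fibre over one convenient point, namely a flag \((L_i)_{i=0}^k\) of isotropic \emph{Hermitian} subspaces: the fibre is the Artinian scheme of flags \(\tilde L_{\smallbullet}\) with \(\phi^{k-i}(\tilde L_i) = L_i\otimes_\kappa A\), which in a Hermitian basis is cut out by explicit nilpotency conditions on the \(k(k+1)/2\) deformation parameters and has length \(q^{k(k+1)}\). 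You should write out this fibre computation; everything else in your proposal is the paper's argument.
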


This is proved in parts through the remainder of this Section.

\begin{Lemma}\label{cyclic-blowups}
Let \(0 \leq r \leq k\). The map \(\pi_r \colon X^k_{r+1} \to X^k_r\) is the
blowup along the strict transform of the Hermitian \(r\)-planes of \(X\), and
its exceptional divisor is
\[
E_r \coloneqq
\Set{(L_0 \subset \cdots \subset L_{r+1}) \in X^k_{r+1}
| L_r \;\text{is Hermitian}} =
\mathrm{V}
\big(
\phi^*(\mathcal{L}_r/\mathcal{L}_{r-1}) \to
\mathcal{L}_{r+1}/\mathcal{L}_r\big).
\]
The map
\(\pi^{k-r} \colon \FF_{k,\mathrm{cyc}}^{k-r+1} \to \FF_{k,\mathrm{cyc}}^{k-r}\)
is the blowup along the strict transform of the locus
of \(k\)-planes containing a Hermitian \(r\)-plane
in \(\FF_{k,\mathrm{cyc}}\), and its exceptional divisor is
\[
E^{k-r} \coloneqq
\Set{(L_{r-1} \subset \cdots \subset L_k) \in \FF_{k,\mathrm{cyc}}^{k-r+1} |
L_r\;\text{is Hermitian}} =
\mathrm{V}\big(
\mathcal{L}_r/\mathcal{L}_{r-1} \to
\phi^*(\mathcal{L}_{r+1}/\mathcal{L}_r)
\big).
\]
\end{Lemma}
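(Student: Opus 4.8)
The conditions $\phi(L_i)\subseteq L_{i+1}$ defining $X^k_{\bullet}$ and $L_i\subseteq\phi(L_{i+1})$ defining $\FF^{\bullet}_{k,\mathrm{cyc}}$ are mutually dual, and systematically interchanging sub- and quotient bundles of $V$ turns the argument that follows for $\pi_r\colon X^k_{r+1}\to X^k_r$ into one for $\pi^{k-r}\colon\FF_{k,\mathrm{cyc}}^{k-r+1}\to\FF_{k,\mathrm{cyc}}^{k-r}$ word for word, with the morphism $\psi_r$ introduced below replaced by its transpose $\mathcal{L}_r/\mathcal{L}_{r-1}\to\phi^*(\mathcal{L}_{r+1}/\mathcal{L}_r)$; so I treat only $X^k_{\bullet}$. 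On $X^k_r$ the inclusion $\phi(\mathcal{L}_{r-1})\subseteq\mathcal{L}_r$ (with $\mathcal{L}_{-1}\coloneqq 0$) lets the composite $\phi(\mathcal{L}_r)\hookrightarrow V_{X^k_r}\twoheadrightarrow V_{X^k_r}/\mathcal{L}_r$ descend to a morphism $\psi_r\colon\phi^*(\mathcal{L}_r/\mathcal{L}_{r-1})\to V_{X^k_r}/\mathcal{L}_r$ out of a line bundle; its zero scheme is the locus where $\phi(L_r)\subseteq L_r$, equivalently where $L_r$ is $\phi$-stable, equivalently Hermitian by \parref{hermitian-subspaces}. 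The same construction on $X^k_{r+1}$, where in addition $\phi(\mathcal{L}_r)\subseteq\mathcal{L}_{r+1}$, produces the morphism of \emph{line bundles} $\delta_r\colon\phi^*(\mathcal{L}_r/\mathcal{L}_{r-1})\to\mathcal{L}_{r+1}/\mathcal{L}_r$ of the statement, so that $E_r=\{L_r\ \text{Hermitian}\}=\mathrm{V}(\delta_r)$; it is an effective Cartier divisor on $X^k_{r+1}$ once its codimension is confirmed to be $1$, which the next step does.

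The plan is to recognise $\pi_r$ as the blowup of $\mathrm{V}(\psi_r)$ by a closure-of-the-graph argument. Over the open locus of $X^k_r$ where $L_r$ is not Hermitian, the rank-$(r+2)$ subbundle $\mathcal{L}_r+\phi(\mathcal{L}_r)$ is the unique admissible value of $\mathcal{L}_{r+1}$, and the residual isotropy of $\langle\mathcal{L}_{r+1},\phi(\mathcal{L}_{r+1}),\ldots,\phi^{k-r-1}(\mathcal{L}_{r+1})\rangle$ holds automatically, being squeezed between $\mathcal{L}_r$ and the isotropic $\langle\mathcal{L}_r,\phi(\mathcal{L}_r),\ldots,\phi^{k-r}(\mathcal{L}_r)\rangle$; thus $\pi_r$ is an isomorphism there, under which $\mathcal{L}_{r+1}/\mathcal{L}_r=\image(\psi_r)$. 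Hence the line subbundle $\mathcal{L}_{r+1}/\mathcal{L}_r$ of $\pi_r^*(V_{X^k_r}/\mathcal{L}_r)$ defines a morphism $X^k_{r+1}\to\PP(V_{X^k_r}/\mathcal{L}_r)$ over $X^k_r$ which over that dense open is the graph of the rational map $X^k_r\dashrightarrow\PP(V_{X^k_r}/\mathcal{L}_r)$ attached to $\psi_r$; so $X^k_{r+1}$ is a closed subscheme containing the closure of that graph, which is precisely $\mathrm{Bl}_{\mathrm{V}(\psi_r)}X^k_r$. By \parref{incidences-containing-a-plane} the fibre of $\pi_r$ over a point of $\mathrm{V}(\psi_r)$ is the $q$-bic locus $\bar{X}^{k-r-1}$ of the residual nonsingular form on $W^\dagger/W$, $W$ the Hermitian $r$-plane in question, of dimension $\codim(\mathrm{V}(\psi_r)\subseteq X^k_r)-1$; therefore $\pi_r^{-1}(\mathrm{V}(\psi_r))$ has dimension strictly below $\dim\mathrm{Bl}_{\mathrm{V}(\psi_r)}X^k_r$, so every component of $X^k_{r+1}$ dominates $X^k_r$. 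Granting the irreducibility and reducedness of $X^k_{r+1}$ from Theorem \parref{cyclic-resolve}, proved in tandem with this Lemma, this forces $X^k_{r+1}=\mathrm{Bl}_{\mathrm{V}(\psi_r)}X^k_r$, whereupon its exceptional divisor is the vanishing of the tautological lift, namely $\mathrm{V}(\delta_r)=E_r$.

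What is left is to identify $\mathrm{V}(\psi_r)\subseteq X^k_r$ with the strict transform of the union of the Hermitian $r$-planes of $X$ --- and, if one wishes to avoid importing integrality of $X^k_{r+1}$ from Theorem \parref{cyclic-resolve}, to establish the blowup identity scheme-theoretically. I would do both by a local computation at a closed point of $\mathrm{V}(\psi_r)$, which lies over a Hermitian $r$-plane $\PP W\subseteq X$: extending a Hermitian basis of $W$ to a $\phi$-adapted Hermitian basis of $V$ as in \parref{hermitian-coordinates} makes the equations of $X^k_r$ explicit, presents the ideal of $\mathrm{V}(\psi_r)$ by the lowest-order parts of the $q$-bic equations transverse to $W$, and exhibits $\pi_r$ as the blowup of that ideal; an induction on $r$ with base case $\mathrm{V}(\psi_0)=\FF_0(X)_{\mathrm{Herm}}$, together with the $\phi$-stability of a Hermitian plane --- so that its strict transform at each earlier stage is exactly the locus of $\phi$-compatible flags it contains --- then gives the strict-transform description. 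This local analysis is the main obstacle: since $r<k$ the scheme $X^k_r$ is singular along $\mathrm{V}(\psi_r)$ --- the fibre of $\pi_r$ over a Hermitian $r$-plane is the nontrivial $q$-bic locus $\bar{X}^{k-r-1}$, not a projective space --- so one cannot simply cite ``blowup of a smooth centre in a smooth scheme'', and one must either push through those coordinates or else transport the iterated blowup structure from the wonderful-type compactification of Drinfeld's half-space as in \cite[\S\S7--9]{Langer:Drinfeld}, having first checked that $X^k$ embeds in it with the scheme structure predicted here along every successive centre.
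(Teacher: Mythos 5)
Your opening two constructions --- that \(\pi_r\) is an isomorphism away from the Hermitian locus because \(L_{r+1}\) is forced to equal \(L_r + \phi(L_r)\) there, and that \(E_r\) is the vanishing of the induced map of line bundles \(\phi^*(\mathcal{L}_r/\mathcal{L}_{r-1}) \to \mathcal{L}_{r+1}/\mathcal{L}_r\) (with the transposed map for \(\FF^{\smallbullet}_{k,\mathrm{cyc}}\)) --- are exactly the paper's proof, and the paper's proof in fact stops there: it does not verify the universal property of the blowup, pin down the centre scheme-theoretically, or check that the section cutting out \(E_r\) is a nonzerodivisor. Everything after that in your proposal is your own attempt to supply what the paper leaves implicit, and your diagnosis is sound: the closure-of-the-graph description of \(\mathrm{Bl}_{\mathrm{V}(\psi_r)}X^k_r\) is the right mechanism, your fibre-dimension count over the Hermitian locus is correct (\(\bar{X}^{k-r-1}\) has dimension exactly one less than the codimension of the centre), and the circularity you flag is genuine --- \parref{cyclic-smooth-irreducible} deduces irreducibility of \(\tilde{X}^k\) from this Lemma, so you cannot import it here. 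Two caveats if you carry the argument out: the step ``every component of \(X^k_{r+1}\) dominates \(X^k_r\)'' silently needs the lower bound \(\dim \geq n-k-1\) on every component of \(X^k_{r+1}\), which you should extract from its presentation as the zero locus of sections of vector bundles on a flag bundle, in the spirit of \parref{basics-fano-equations}; and it is worth noting that the downstream results only ever use properness, birationality, the effective Cartier structure of the \(E_r\), and the tangent-space computation of \parref{cyclic-smoothness}, so the full scheme-theoretic blowup identity whose verification you rightly identify as the hard part is never actually invoked later --- which is presumably why the paper is content to leave it at the level you reach in your first two paragraphs.
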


\begin{proof}
Let \(x\) be a point of \(X^k_{r+1}\), corresponding to a flag
\((L_i)_{i = 0}^{r+1}\). If \(x\) lies away from \(E_r\), then \(L_{r+1}\)
is spanned by \(L_r\) and \(\phi(L_r)\), whence \(\pi_r\) is an isomorphism
away from \(E_r\). An equation for \(E_r\) is obtained by noting that
\(L_r\) is Hermitian if and only if the canonical projection \(\phi(L_r) \to L_{r+1}/L_r\)
vanishes; since \(\phi(L_{r-1}) \subset L_r\), this projection descends
to the quotient and vanishes if and only if the induced map
\[
\phi(L_r)/\phi(L_{r-1}) \to L_{r+1}/L_r
\]
vanishes. Passing to the universal subbundles exhibits \(E_r\) as the
claimed effective Cartier divisor.

Let \(x\) now be a point of \(\FF^{k-r+1}_{k,\mathrm{cyc}}\), corresponding
to a flag \((L_i)_{i = r-1}^k\). This time, \(L_{r-1}\) is the intersection of
\(L_r\) and \(\phi(L_r)\) as long as \(x\) lies away from \(E^{k-r}\), whence
\(\pi^{k-r}\) is an isomorphism away from \(E^{k-r}\). Since
\(L_r \subset \phi(L_{r+1})\) and \(L_{r-1} \subset \phi(L_r)\), arguing
analogously shows that \(L_r\) is Hermitian if and only if the canonically
induced map
\[ L_r/L_{r-1} \to \phi(L_{r+1})/\phi(L_r) \]
vanishes, thereby establishing that \(E^{k-r}\) is the stated effective Cartier
divisor.
\end{proof}

The next statement identifies the tangent space to either \(\tilde{X}_k\) or
\(\tilde{\FF}_{k,\mathrm{cyc}}\) at a point corresponding to a flag
\(L_{\smallbullet} \coloneqq (L_i)_{i = 0}^k\) with residue field \(\kappa\):

\begin{Lemma}\label{cyclic-smoothness}
The tangent space of \(\tilde{X}^k\) at \(L_{\smallbullet}\) is isomorphic to
the vector space
\[
\Big(\bigoplus\nolimits_{i = 0}^{k-1} \Hom_\kappa(L_i/\phi(L_{i-1}), L_{i+1}/L_i)\Big)
\oplus \Hom_\kappa(L_k/\phi(L_{k-1}), L_k^{[1],\perp}/L_k).
\]
The tangent space of \(\tilde{\FF}_{k,\mathrm{cyc}}\) at a flag
\(L_{\smallbullet}\) is isomorphic to the vector space
\[
\Big(\bigoplus\nolimits_{i = 0}^{k-1}
\Hom_\kappa(L_i/L_{i-1}, \phi(L_{i+1})/L_i)\Big)
\oplus
\Hom_\kappa(L_k/L_{k-1}, L_k^{[1],\perp}/L_k).
\]
\end{Lemma}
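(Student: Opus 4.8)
The plan is to realize both $\tilde{X}^k = X^k_k$ and $\tilde{\FF}_{k,\mathrm{cyc}} = \FF_k^k$ as closed subschemes of the flag variety $\mathrm{Fl}$ parameterizing chains $L_0 \subset L_1 \subset \cdots \subset L_k$ of subspaces of $V$ of dimensions $1, 2, \ldots, k+1$, and to compute their Zariski tangent spaces as spaces of first-order deformations over $\kappa[\epsilon]$. Iterating the moduli description of the Grassmannian recalled in \parref{differential-conormal}, a tangent vector to $\mathrm{Fl}$ at $L_{\smallbullet}$ is a tuple of $\kappa$-linear maps $(\varphi_i \colon L_i \to V_\kappa/L_i)_{i=0}^{k}$ that is \emph{compatible}, in the sense that $\varphi_i$ and $\varphi_{i+1}$ induce the same map $L_i \to V_\kappa/L_{i+1}$; the associated deformation has $i$-th step $\tilde{L}_i$ spanned over $\kappa[\epsilon]$ by the elements $u + \epsilon\tilde{\varphi}_i(u)$ for $u \in L_i$, where $\tilde{\varphi}_i \colon L_i \to V_\kappa$ is any lift of $\varphi_i$.

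The crucial point concerns the behaviour of the endomorphism $\phi_\GG$ of \parref{section-hermitian} on first-order deformations: since $\phi$ is $q^2$-semilinear and $\epsilon^{q^2} = 0$ in $\kappa[\epsilon]$, the morphism $\phi_\GG$ has vanishing differential, so $\phi_\GG(\tilde{L}_i)$ is the \emph{constant} deformation $\phi(L_i) \otimes_\kappa \kappa[\epsilon]$. Using the defining relation $\phi(L_i) \subseteq L_{i+1}$ of $X^k_k$, the condition $\phi(\tilde{L}_i) \subseteq \tilde{L}_{i+1}$ then becomes the linear condition that $\varphi_{i+1}$ vanish on $\phi(L_i)$; and using the defining relation $L_i \subseteq \phi(L_{i+1})$ of $\FF_k^k$, the condition $\tilde{L}_i \subseteq \phi(\tilde{L}_{i+1})$ becomes the condition that $\varphi_i$ factor through $\phi(L_{i+1})/L_i \subseteq V_\kappa/L_i$ (the further requirement $\dim L_k \cap \phi(L_k) \geq k$ defining $\FF_k^k$ being automatic, since $L_{k-1} \subseteq L_k \cap \phi(L_k)$, and so imposing nothing on deformations). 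Finally, by the computation of \parref{differential-identify} applied to $L_k$, isotropy of $\tilde{L}_k$ for $\beta_{\kappa[\epsilon]}$ is equivalent to $\varphi_k$ factoring through $L_k^{[1],\perp}/L_k$.

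It remains to solve these linear systems recursively. For $\tilde{X}^k$ I would induct downward from $i = k$: the relation above at level $k-1$ together with isotropy constrains $\varphi_k$ exactly to a map $L_k/\phi(L_{k-1}) \to L_k^{[1],\perp}/L_k$; and once $\varphi_{i+1}$ has been chosen, compatibility determines the induced map $L_i \to V_\kappa/L_{i+1}$, which already vanishes on $\phi(L_{i-1})$ since $\varphi_{i+1}$ vanishes on $\phi(L_i) \supseteq \phi(L_{i-1})$, so the remaining freedom in $\varphi_i$ is precisely a map $L_i/\phi(L_{i-1}) \to L_{i+1}/L_i$. For $\tilde{\FF}_{k,\mathrm{cyc}}$ the same argument runs upward from $i = 0$: the relation above constrains $\varphi_0$ to a map $L_0 \to \phi(L_1)/L_0$; and once $\varphi_{i-1}$ has been chosen, compatibility forces the map $L_{i-1} \to V_\kappa/L_i$ induced by $\varphi_i$, whose image lies in $\phi(L_{i+1})/L_i$ because $\varphi_{i-1}$ already factors through $\phi(L_i)/L_{i-1}$ and $L_i \subseteq \phi(L_{i+1})$ (respectively in $L_k^{[1],\perp}/L_k$ when $i = k$), so the remaining freedom is a map $L_i/L_{i-1} \to \phi(L_{i+1})/L_i$ for $i < k$ and a map $L_k/L_{k-1} \to L_k^{[1],\perp}/L_k$ for $i = k$. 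A choice of splitting of the flag then assembles these freedoms into the asserted direct sum decompositions, with the conventions $L_{-1} = \phi(L_{-1}) = 0$.

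The only step that is not bookkeeping is the inclusion $\phi(L_k) \subseteq L_k^{[1],\perp}$, which is what makes the map $L_{k-1} \to V_\kappa/L_k$ induced by $\varphi_{k-1}$ land in $L_k^{[1],\perp}/L_k$ at the top of the $\tilde{\FF}_{k,\mathrm{cyc}}$ recursion: it follows from \parref{hermitian-phi-properties}, which gives $\beta(v^{[1]}, \phi(w)) = \beta(w^{[1]}, v)^q$ for all $v, w \in V$, together with the isotropy of $L_k$. I expect the main obstacle to be exactly this dual-number bookkeeping around $\phi$: keeping straight that $\phi$ applied to a first-order deformation is constant, and verifying at each stage of the recursion that the subquotients $L_i/\phi(L_{i-1})$ and $\phi(L_{i+1})/L_i$ that appear as sources and targets are genuinely well defined — which is precisely where the defining incidence relations of $X^k_k$ and $\FF_k^k$ come in.
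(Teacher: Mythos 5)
Your proposal is correct and follows essentially the same route as the paper: identify tangent vectors with first-order deformations of the flag, use that $\phi$ applied to a deformation over $\kappa[\epsilon]$ is the constant deformation (by $q^2$-semilinearity), translate the incidence and isotropy conditions into linear conditions via \parref{differential-identify}, and assemble the resulting freedoms along the flag. The only differences are cosmetic — you run a compatible-tuple recursion where the paper parameterizes the flag-variety tangent space by successive quotients and determines each $\tilde{L}_i$ from $\tilde{L}_{i-1}$ — and you make explicit the inclusion $\phi(L_k) \subseteq L_k^{[1],\perp}$ that the paper leaves implicit.
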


\begin{proof}
Identify the tangent space to \(\tilde{X}^k\) at \(L_{\smallbullet}\) as a
subspace of the tangent space to the ambient partial flag variety, given by
\[
\Big(\bigoplus\nolimits_{i = 0}^{k - 1} \Hom_\kappa(L_i, L_{i+1}/L_i)\Big) \oplus
\Hom_\kappa(L_k,V/L_k).
\]
Let \(\varphi_{\smallbullet} \coloneqq (\varphi_i)_{i = 0}^k\) be a tangent
vector of the flag variety, and let
\(\tilde{L}_{\smallbullet} \coloneqq (\tilde{L}_i)_{i = 0}^k\) be the
corresponding first-order deformation. Then \(\varphi_{\smallbullet}\) is
tangent to \(\tilde{X}^k\) if and only if
\[
\tilde{L}_i \supset \phi(\tilde{L}_{i-1}) =
\phi(L_{i-1}) \otimes_\kappa \kappa[\epsilon]
\;\;\text{for}\; 1 \leq i \leq k,
\]
and \(L_k\) remains isotropic. The first set of conditions means that
the linear maps \(\varphi_i \colon L_i \to L_{i+1}/L_i\) vanish on
\(\phi(L_{i-1}) \subset L_i\); and the second condition means that,
as in \parref{differential-identify}, \(\varphi_k\) factors through
\(L_k^{[1],\perp}/L_k\). This identifies the tangent space to \(\tilde{X}^k\)
as the first subspace in the statement.

Identify the tangent space to \(\tilde{\FF}_{k,\mathrm{cyc}}\) at
\(L_{\smallbullet}\) directly: A first-order deformation
\(\tilde{L}_{\smallbullet}\) satisfies
\[
\tilde{L}_i \subset
\phi(\tilde{L}_{i+1}) =
\phi(L_{i+1}) \otimes_\kappa \kappa[\epsilon]
\;\;\text{for}\; 0 \leq i \leq k-1,
\]
and \(\tilde{L}_k\) is isotropic for \(\beta\). For each \(0 \leq i \leq k\),
encode \(\tilde{L}_i\) in the usual way as the \(\kappa[\epsilon]\)-module
spanned in \(\phi(L_{i+1}) \otimes_\kappa \kappa[\epsilon]\)
by an arbitrary lift of the image of a linear map
\(L_i \to \phi(L_{i+1})/L_i\); here, write \(L_{k+1} = V\).
The flag condition \(\tilde{L}_{i-1} \subset \tilde{L}_i\)
means that this linear map is determined on the subspace \(L_{i-1} \subset L_i\),
and so \(\tilde{L}_i\) is completely determined from \(\tilde{L}_{i-1}\) by a
linear map \(\varphi_i \colon L_i/L_{i-1} \to \phi(L_{i+1})/L_i\). When
\(i = k\), the condition that \(\tilde{L}_k\) is isotropic
means that \(\varphi_k\) further factors through the subspace
\(L_k^{[1],\perp}/L_k\). This identifies the set of first-order deformations of
\(L_{\smallbullet}\) in \(\tilde{\FF}_{k,\mathrm{cyc}}\) with the second
vector space in the statement.
\end{proof}

\begin{Corollary}\label{cyclic-smooth-irreducible}
The schemes \(\tilde{X}^k\) and \(\tilde{\mathbf{F}}_{k,\mathrm{cyc}}\) are
smooth and irreducible of dimension \(n-k-1\).
\end{Corollary}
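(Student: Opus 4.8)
The plan is to read off smoothness and the dimension from the tangent space computations of \parref{cyclic-smoothness}, and to fix the dimension and prove irreducibility through the blowup towers of \parref{cyclic-blowups}. For smoothness, consider a point of $\tilde{X}^k$ given by a flag $L_{\smallbullet} = (L_i)_{i=0}^k$ with residue field $\kappa$. The inclusions $\phi(L_{i-1}) \subseteq L_i \subseteq L_{i+1}$ built into the definition of $X^k_k$ force the quotients $L_i/\phi(L_{i-1})$ and $L_{i+1}/L_i$ to be one-dimensional, so each of the summands $\Hom_\kappa(L_i/\phi(L_{i-1}), L_{i+1}/L_i)$ with $0 \le i \le k-1$ is one-dimensional; and since $\beta$ is nonsingular and $L_k$ is isotropic, $L_k \subseteq L_k^{[1],\perp}$ with $\dim_\kappa L_k^{[1],\perp}/L_k = (n-k) - (k+1) = n-2k-1$, so the final summand has dimension $n-2k-1$. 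Hence by \parref{cyclic-smoothness} the tangent space of $\tilde{X}^k$ has dimension $k + (n-2k-1) = n-k-1$ at every point, and the identical count with the second formula there gives the same for $\tilde{\FF}_{k,\mathrm{cyc}}$; in particular $\dim_x \le n-k-1$ everywhere on both.

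For the matching lower bound, note that by \parref{cyclic-blowups} the morphism $\pi_{\smallbullet} \colon \tilde{X}^k \to X^k$ is a composition of blowups along nowhere-dense centers, hence is proper, birational, and surjective. The base $X^k$ is a complete intersection of dimension $n-k-1$ by \parref{hermitian-complete-intersection}, and it is reduced: its singular locus lies on the Hermitian $(k-1)$-planes, of dimension $k-1 < n-k-1$ since $k < n/2$, so $X^k$ is generically smooth, hence reduced since Cohen--Macaulay. Blowing up a reduced equidimensional scheme along a nowhere-dense center preserves both properties, so climbing the tower of \parref{cyclic-blowups} shows every component of $\tilde{X}^k$ has dimension $n-k-1$; together with the tangent computation this makes $\tilde{X}^k$ regular, hence smooth over $\kk$, of pure dimension $n-k-1$. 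For $\tilde{\FF}_{k,\mathrm{cyc}}$ one repeats this with $\pi^{\smallbullet}$, transferring the dimension across the maps $\phi^- \colon \tilde{X}^k \to \tilde{\FF}_{k,\mathrm{cyc}}$ and $\phi^+ \colon \tilde{\FF}_{k,\mathrm{cyc}} \to \tilde{X}^k$, which are proper --- both schemes being projective over $\kk$ --- and universally injective, as $\phi$ is bijective on $V$, hence finite; their composites act bijectively on points, hence are surjective, so $\phi^\pm$ are finite and surjective and force $\dim \tilde{\FF}_{k,\mathrm{cyc}} = \dim \tilde{X}^k = n-k-1$ on each component.

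It remains to prove irreducibility. Since both schemes are smooth, their irreducible components are their connected components, and it suffices to prove connectedness. The open subscheme of $\tilde{X}^k$ on which $\pi_{\smallbullet}$ is an isomorphism is dense, because $\tilde{X}^k$ has pure dimension $n-k-1$ while the exceptional loci $E_r$ of \parref{cyclic-blowups} are divisors, and under $\pi_{\smallbullet}$ it is identified with the locus in $X^k$ of those $[L_0]$ whose cyclic span $\langle L_0, \phi(L_0), \ldots, \phi^k(L_0)\rangle$ has the maximal dimension $k+1$; so it would suffice to prove that locus irreducible, and likewise for $\tilde{\FF}_{k,\mathrm{cyc}}$. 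I expect this to be the crux: connectedness is not automatic, since a tower of blowups of a connected scheme --- here the complete intersection $X^k$ itself, which is connected as it is of positive dimension --- can fall apart when one blows up a point at which several branches meet. To handle it I would induct on $k$ and use the moduli-theoretic description of $X$ and the inductive structure of the Hermitian filtration (as in \parref{hermitian-filtration} and \parref{incidences-containing-a-plane}), together with the hypothesis $k < n/2$ and the irreducibility of the Fano schemes $\FF_r(X)$ from \parref{theorem-fano-schemes}.
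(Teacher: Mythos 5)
Your treatment of smoothness and of the dimension is essentially the paper's own route: the pointwise tangent-space count from \parref{cyclic-smoothness} giving exactly \(n-k-1\), the matching lower bound coming from the blowup tower of \parref{cyclic-blowups} over the complete intersection \(X^k\) of \parref{hermitian-complete-intersection}, and the transfer to \(\tilde{\FF}_{k,\mathrm{cyc}}\) via \(\phi^{\pm}\) (the paper phrases this transfer by noting that \(\phi^{+}\circ\phi^{-}\) and \(\phi^{-}\circ\phi^{+}\) are the finite purely inseparable endomorphism \(\phi^{2}\), so that \(\tilde{X}^k\) and \(\tilde{\FF}_{k,\mathrm{cyc}}\) are universally homeomorphic and all topological statements need only be checked on \(\tilde{X}^k\)). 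That part of your argument is correct and complete.

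The genuine gap is irreducibility, which you explicitly leave as an unexecuted plan (``I would induct on \(k\) and use\ldots''). The paper closes it with one observation you did not make: \(X^k\) is \emph{normal}. It is Cohen--Macaulay, being a complete intersection, and by \parref{hermitian-complete-intersection} its singular locus is supported on the union of the Hermitian \((k-1)\)-planes, hence has codimension \(n-2k\geq 2\) in \(X^k\); Serre's criterion gives normality, and a positive-dimensional complete intersection in \(\PP V\) is connected, so \(X^k\) is irreducible. Connectedness of \(\tilde{X}^k\) then follows at once from the fact that \(\pi_{\smallbullet}\) is proper and birational onto the normal connected \(X^k\): by Stein factorization and Zariski's main theorem its fibres are connected, so \(\tilde{X}^k\) is connected, hence irreducible because it is smooth; the universal homeomorphism \(\phi^{\pm}\) transfers this to \(\tilde{\FF}_{k,\mathrm{cyc}}\). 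Your worry that a tower of blowups of a connected scheme ``can fall apart when one blows up a point at which several branches meet'' is precisely the worry that normality dispels, and this is also exactly where the codimension bound on the singular locus enters (equivalently, that the Hermitian \((k-1)\)-planes have codimension at least two in \(X^k\)): in the boundary case \(n=2k+1\) the scheme \(X^k\) is a union of Hermitian \(k\)-planes and is neither normal nor irreducible, so no argument avoiding this input can work uniformly. In short, the missing idea is concrete --- normality of \(X^k\) plus connectedness of fibres of proper birational morphisms onto normal schemes --- rather than an induction on \(k\) through the Fano schemes; connectedness of \(X^k\) alone, which is all you invoke, does not suffice.
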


\begin{proof}
The morphisms \(\phi^\pm\) defined in
\parref{cyclic-planes} compose to the finite purely inseparable endomorphism
\(\phi^2\) on either \(\tilde{X}^k\) and \(\tilde{\FF}_{k,\mathrm{cyc}}\).
Whence the \(\phi^\pm\) share the same properties, and in particular,
\(\tilde{X}^k\) and \(\tilde{\FF}_{k,\mathrm{cyc}}\) are universally
homeomorphic. Thus it suffices to establish the topological properties
for \(\tilde{X}^k\).

The dimension statement follows from \parref{cyclic-blowups} and
\parref{hermitian-complete-intersection}, which together imply that
\(\tilde{X}^k\) is a blowup of the codimension \(k+1\) complete intersection
\(X^k\). Combined with \parref{cyclic-smoothness}, this implies that \(\tilde{X}^k\)
and \(\tilde{\FF}_{k,\mathrm{cyc}}\) are smooth. Finally, for irreducibility,
it suffices to show that \(\tilde{X}^k\) is connected, and this follows
from the fact that it is proper birational to the normal connected scheme
\(X^k\).
\end{proof}

\subsectiondash{}\label{cyclic-proof}
To complete the proof of \parref{cyclic-resolve}, it remains to establish
the statements about \(\phi^\pm\).

First, it follows from the proof of \parref{cyclic-smooth-irreducible} that
the rational map \(\phi^{[0,k]} \colon X^k \dashrightarrow \FF_{k,\mathrm{cyc}}\)
is dominant: in other words, the general point of \(\FF_{k,\mathrm{cyc}}\)
parameterizes a cyclic \(k\)-plane. Therefore there exists a rational map
\(\phi^{(0|k)} \colon \FF_{k,\mathrm{cyc}} \dashrightarrow X^k\) that takes
a cyclic \(k\)-plane \(P\) to the intersection \(P \cap \phi^k(P)\) with
its \(k\)-th \(\phi\)-translate. It now follows from the moduli descriptions
that the diagram of \parref{cyclic-resolve} commutes.

Second, that the \(\phi^\pm\) are finite purely inseparable was already
explaind in the proof of \parref{cyclic-smooth-irreducible}. That they are
flat follows from miracle flatness, \citeSP{00R4}, since \(\tilde{X}^k\) and
\(\tilde{\mathbf{F}}_{k,\mathrm{cyc}}\) are smooth by
\parref{cyclic-smoothness}.

Finally, it remains to determine the degrees of \(\phi^\pm\). By flatness, it
suffices to compute the degree of the fibre over any given point. Consider
a point of \(\tilde{\FF}_{k,\mathrm{cyc}}\) corresponding to a flag
\((L_i)_{i = 0}^k\) of isotropic Hermitian subspaces and let
\(\kappa\) be its residue field. Then its preimage along
\(\phi^- \colon \tilde{X}^k \to \tilde{\FF}_{k,\mathrm{cyc}}\)
is the scheme representing the deformation functor
whose value on an Artinian local \(\kappa\)-algebra \(A\) is the set of flags
in \(V_\kappa \otimes_\kappa A\) of the form
\[
\Set{
(\tilde{L}_0 \subset \tilde{L}_1 \subset \cdots \subset \tilde{L}_k) |
\tilde{L}_i \otimes_A \kappa = L_i
\;\text{and}\;
\phi^{k-i}(\tilde{L}_i) = L_i \otimes_\kappa A\;
\text{for each}\; 0 \leq i \leq k
}.
\]
Since \(A\) is local, each \(\tilde{L}_i\) is free; moreover, the second condition
for \(i = k\) means that \(\tilde{L}_k = L_k \otimes_\kappa A\). So if
\(L_k = \langle v_0,\ldots,v_k\rangle\) is a basis of Hermitian vectors adapted
to the filtration, then a deformation is of the form:
\[
\tilde{L}_{k-r} =
\Big\langle v_i + \sum\nolimits_{j = 1}^r a_{ij} v_{k+1-j}: 0 \leq i \leq r \Big\rangle
\quad\text{where}\;
\phi^j(a_{ij}) = a_{ij}^{q^{2j}} = 0.
\]
This identifies the fibre over \((L_i)_{i = 0}^k\) along \(\phi^-\) with the
affine scheme
\[
\Spec\big(
\kappa[a_{ij} : 0 \leq i \leq k-1, 1 \leq j \leq k-i]/(a_{ij}^{q^{2j}})\big),
\]
and so \(\deg(\phi^-) = q^{k(k+1)}\). Finally,
\[
\deg(\phi^+) =
\deg(\phi^+ \circ \phi^-) \cdot
\deg(\phi^-)^{-1} =
\deg(\phi^k_{\tilde{X}^k}) \cdot \deg(\phi^-)^{-1} =
q^{k(2n-3k-3)}.
\]
This completes the proof of \parref{cyclic-resolve}.
\qed

\medskip

Consider the case \(n = 2m+2\) and \(k = m\), that is, the case of maximal
isotropic subspaces in an odd-dimensional \(q\)-bic hypersurface. By
\parref{cyclic-blowups} and \parref{cyclic-smooth-irreducible}, the scheme
\(\FF_{m,\mathrm{cyc}}\) is irreducible of dimension \(m+1\); comparing with
Theorem \parref{theorem-fano-schemes} now implies that
\(\FF_{m,\mathrm{cyc}} = \FF_m\). Therefore \parref{cyclic-resolve} implies:

\begin{Corollary}\label{cyclic-maximal}
Let \(n = 2m+2\). Then \(\FF_m\) is dominated via a purely inseparable
rational map of degree \(q^{m(m+1)}\) by a complete intersection geometrically
isomorphic to
\[
\pushQED{\qed}
\Set{(x_0:x_1: \cdots : x_{2m+2}) \in \PP^{2m+2} |
x_0^{q^{2i+1}+1} + x_1^{q^{2i+1}+1} + \cdots + x_{2m+2}^{q^{2i+1}+1} = 0\;\;
\text{for}\; 0 \leq i \leq m}.
\qedhere
\popQED
\]
\end{Corollary}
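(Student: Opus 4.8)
The plan is to read the Corollary off Theorem~\parref{cyclic-resolve} with $k = m$, once the target $\FF_{m,\mathrm{cyc}}$ has been identified with $\FF_m$ and the source $X^m$ has been made explicit. Since the assertion is geometric I would first base change to the algebraic closure of $\kk$. As recorded immediately before the statement, $\FF_{m,\mathrm{cyc}}$ is irreducible of dimension $m+1$ and sits inside the smooth, irreducible, $(m+1)$-dimensional variety $\FF_m$ of Theorem~\parref{theorem-fano-schemes}, so $\FF_{m,\mathrm{cyc}} = \FF_m$; hence the rational map $\phi^{[0,m]}\colon X^m \dashrightarrow \FF_{m,\mathrm{cyc}}$ constructed in \parref{cyclic-planes} is a dominant rational map $X^m \dashrightarrow \FF_m$, dominance being part of the proof of \parref{cyclic-resolve}. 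Here the source $X^m$ is the irreducible complete intersection of \parref{hermitian-complete-intersection}.

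Next I would extract the generic degree and the inseparability of $\phi^{[0,m]}$ from the commutative square of \parref{cyclic-resolve}. There, $\phi^-\colon \tilde{X}^m \to \tilde{\FF}_{m,\mathrm{cyc}}$ is finite, flat, and purely inseparable of degree $q^{m(m+1)}$, while the vertical maps $\pi_{\smallbullet}\colon \tilde{X}^m \to X^m$ and $\pi^{\smallbullet}\colon \tilde{\FF}_{m,\mathrm{cyc}} \to \FF_{m,\mathrm{cyc}}$ are proper and birational, hence induce isomorphisms on function fields. Therefore the extension $\kk(X^m)/\kk(\FF_m)$ determined by $\phi^{[0,m]}$ is identified with $\kk(\tilde{X}^m)/\kk(\tilde{\FF}_{m,\mathrm{cyc}})$, which is purely inseparable of degree $q^{m(m+1)}$; this is exactly the claim that $\phi^{[0,m]}$ is a purely inseparable rational map of degree $q^{m(m+1)}$.

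It then remains to recognize $X^m$ as the displayed complete intersection. Since $X$ is smooth, $\beta$ is nonsingular by \parref{differential-smoothness-X}, so by \citeForms{2.7} there is a basis $V = \langle e_0,\ldots,e_{2m+2}\rangle$ with $\Gram(\beta;e_0,\ldots,e_{2m+2})$ the identity matrix. For such a basis one has $\beta(e_j^{[1]},e_i) = \delta_{ij} = \delta_{ij}^q = \beta(e_i^{[1]},e_j)^q$ for all $i,j$, so each $e_i$ is Hermitian and the basis is a Hermitian basis; hence \parref{hermitian-coordinates} applies, showing that in the associated coordinates $\phi$ is the coordinatewise $q^2$-power Frobenius and
\[
X^m = \bigcap\nolimits_{i=0}^{m} \mathrm{V}\Big(\sum\nolimits_{j=0}^{2m+2} x_j^{q^{2i+1}+1}\Big),
\]
which is precisely the complete intersection of the statement. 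Combining this identification with the two preceding paragraphs gives the Corollary.

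There is no serious obstacle: the Corollary is essentially a repackaging of \parref{cyclic-resolve}. The two points that warrant care are the bookkeeping of the generic degree and pure inseparability of $\phi^{[0,m]}$ through the birational resolution $(\pi_{\smallbullet},\pi^{\smallbullet})$, and the small observation that an identity-Gram-matrix basis consists of Hermitian vectors, which is what permits the passage to the explicit Fermat-type equations via \parref{hermitian-coordinates}.
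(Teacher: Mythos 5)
Your proposal is correct and follows the paper's own route: the paper likewise deduces the Corollary by identifying \(\FF_{m,\mathrm{cyc}} = \FF_m\) via irreducibility and dimension (comparing \parref{cyclic-blowups}, \parref{cyclic-smooth-irreducible} with Theorem \parref{theorem-fano-schemes}) and then invoking \parref{cyclic-resolve}, with the explicit Fermat-type equations for \(X^m\) coming from a Hermitian (identity Gram matrix) basis as in \parref{hermitian-coordinates}. Your extra bookkeeping—transporting degree and pure inseparability through the birational maps \(\pi_{\smallbullet}, \pi^{\smallbullet}\)—is exactly the implicit content of the paper's appeal to \parref{cyclic-resolve}.
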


\section{Deligne--Lusztig stratification}\label{section-dl}
As described in \parref{hermitian-structures}, a nonsingular \(q\)-bic form
\(\beta\) on \(V\) induces an isogeny
\(F \colon \mathbf{GL}_V \to \mathbf{GL}_V\) which squares to the Frobenius
endomorphism associated with the \(\mathbf{F}_{q^2}\)-rational structure
\(\sigma_\beta\) on \(V\). This isogeny acts on the variety \(\mathbf{Bor}\)
of Borel subgroups in \(\mathbf{GL}_V\); intersecting its graph with
the orbits of \(\mathbf{GL}_V\) in \(\mathbf{Bor} \times_\kk \mathbf{Bor}\)
gives rise to the Deligne--Lusztig varieties from \cite{DL} in type
\({}^2\mathrm{A}_n\), those associated with the finite unitary group
\(\mathrm{U}(V,\beta)\). The aim of this Section is to relate the Fano
schemes \(\FF\) of \(r\)-planes in the smooth \(q\)-bic hypersurface
\(X\) associated with \(\beta\) with (generalized) Deligne--Lusztig varieties,
and to use this relation to access the \'etale cohomology of the Fano schemes:
see \parref{dl-betti}.

\subsectiondash{Deligne--Lusztig varieties}\label{dl-classical}
Let \(\mathbf{Bor}\) be the smooth projective variety parameterizing Borel
subgroups of \(\mathbf{GL}_V\). Diagonally acting via conjugation by
\(\mathbf{GL}_V\) decomposes the self-product
\[
\mathbf{Bor} \times_\kk \mathbf{Bor} =
\bigsqcup\nolimits_{w \in \mathrm{W}} \mathbf{orb}_w
\]
into orbits indexed by the Weyl group \(\mathrm{W}\). Let \(F \colon
\mathbf{Bor} \to \mathbf{Bor}\) be the morphism of \(\kk\)-varieties induced by
the isogeny \(F\). The \emph{Deligne--Lusztig variety} indexed by \(w\) is the
scheme-theoretic intersection
\[
\mathbf{DL}_w \coloneqq \mathbf{orb}_w \cap \Gamma_F
\]
of the \(w\)-orbit and the graph of the morphism \(F\). Each \(\mathbf{DL}_w\)
can be identified with a locally closed subscheme of \(\mathbf{Bor}\) via the
first projection, is smooth, and is of dimension the length of \(w\) in
\(\mathrm{W}\). The closure relations amongst the \(\mathbf{DL}_w\) are
given by the Bruhat order: see \cite[\S1]{DL} for this and more.

More generally, let \(\mathrm{S}\) be the set of simple reflections in
\(\mathrm{W}\), and for each \(I \subseteq \mathrm{S}\), let \(\mathrm{W}_I\) be
the subgroup generated by \(I\) and fix a set \({}^I\mathrm{W}\) of minimal
length representatives of the cosets \(\mathrm{W}_I\backslash \mathrm{W}\). Let
\(\mathbf{Par}_I\) be the set of parabolic subgroups of type \(I\).
Lusztig defines in \cite[\S4]{Lusztig:PartialFlag} a stratification
\[
\mathbf{Par}_I = \bigsqcup\nolimits_{w \in {}^I\mathrm{W}} \mathbf{DL}_{I,w}
\]
generalizing the classical Deligne--Lusztig stratification when
\(\mathbf{Par}_\varnothing = \mathbf{Bor}\); see also \cite{Bedard,
He:PartialFlag}. By \cite[Theorem 3.1]{He:PartialFlag}, the
\emph{generalized Deligne--Lusztig variety} \(\mathbf{DL}_{I,w}\) can be
obtained as the image of \(\mathbf{DL}_w\) under the projection
\(\mathbf{Bor} \to \mathbf{Par}_I\).

\subsectiondash{}\label{dl-flag-variety}
Describe the \(\mathbf{DL}_w\) geometrically by identifying \(\mathbf{Bor}\)
with the complete flag variety \(\mathbf{Flag}_V\): Fix an \(F\)-stable Borel
subgroup \(\mathbf{B}\) of \(\mathbf{GL}_V\), and let
\(V_{\smallbullet} \coloneqq (V_i)_{i = 0}^{n+1}\) be the complete flag it
stabilizes. Since \(F^2 = \phi\) on \(\mathbf{GL}_V\), each \(V_i\) is fixed by
\(\phi\) and is thus Hermitian in the sense of \parref{hermitian-subspaces}.
A computation analogous to the one in \parref{dl-F}
below shows that
\[
V_{n+1-i} = V_i^{[1],\perp}
\;\;
\text{for each}\;
0 \leq i \leq n+1,
\]
so that the first half of the flag is isotropic.
Identifying the \(g\)-conjugate of \(\mathbf{B}\) with the \(g\)-translate of
\(V_{\smallbullet}\) gives an isomorphism
\(\mathbf{Bor} \cong \mathbf{Flag}_V\). Abusing notation, write \(F\) for the
induced endomorphism of \(\mathbf{Flag}_V\). Then \(\mathbf{DL}_w\) can be
described as the locally closed subscheme of the flag variety given by
\[
\mathbf{DL}_w =
\Set{[W_{\smallbullet}] \in \mathbf{Flag}_V |
\gr^{F(W_{\smallbullet})}_{w(i)} \gr^{W_{\smallbullet}}_i(V) \neq \{0\}}
\]
where the Weyl group \(\mathrm{W}\) is viewed as the symmetric group on
\(\{1,\ldots,n+1\}\), and
\(\gr_i^{W_{\smallbullet}}\) is the \(i\)-th graded piece associated with
the filtration \(W_{\smallbullet}\): see, for example, \cite[\S2.1]{DL}.
A simple but useful instance of what this means is:
\(w(\set{1,\ldots,i}) \subseteq \set{1,\ldots,j}\) if and only if
\(W_i \subseteq F(W_{\smallbullet})_j\).

The action of the endomorphism \(F\) on a flag \(W_{\smallbullet}\) is
described as follows:

\begin{Lemma}\label{dl-F}
\(F\big(\{0\} \subsetneq W_1 \subsetneq \cdots \subsetneq W_n \subsetneq V\big) =
\big(\{0\} \subsetneq W_n^{[1],\perp} \subsetneq \cdots \subsetneq W_1^{[1],\perp} \subsetneq V\big)\).
\end{Lemma}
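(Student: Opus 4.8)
The plan is to compute the endomorphism $F$ of $\mathbf{Flag}_V$ through stabilizers. Under the identification $\mathbf{Bor}\cong\mathbf{Flag}_V$ of \parref{dl-flag-variety}, a flag $W_{\smallbullet}$ corresponds to the Borel subgroup $\mathrm{Stab}_{\mathbf{GL}_V}(W_{\smallbullet})$, and this bijection intertwines the conjugation action of $\mathbf{GL}_V$ on $\mathbf{Bor}$ with its natural action on $\mathbf{Flag}_V$; hence the induced endomorphism $F$ of $\mathbf{Flag}_V$ is characterized by
\[
\mathrm{Stab}_{\mathbf{GL}_V}\big(F(W_{\smallbullet})\big) = F\big(\mathrm{Stab}_{\mathbf{GL}_V}(W_{\smallbullet})\big),
\]
so it suffices to identify the Borel subgroup on the right.

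To do this I would track a general $g \in \mathrm{Stab}_{\mathbf{GL}_V}(W_{\smallbullet})$ through the definition $F(g) = \beta^{-1}\circ g^{[1],\vee,-1}\circ\beta$. Since $g$ stabilizes each $W_i$, the Frobenius twist $g^{[1]}$ stabilizes each $W_i^{[1]}\subseteq V^{[1]}$; passing to linear duals, $g^{[1],\vee}$, and hence its inverse $g^{[1],\vee,-1}$, stabilizes the flag in $V^{[1],\vee}$ whose $i$-th step is the annihilator of $W_{n+1-i}^{[1]}$; finally, conjugating by the adjoint isomorphism $\beta\colon V\xrightarrow{\ \sim\ }V^{[1],\vee}$ — an isomorphism by the standing nonsingularity of $\beta$, \parref{basics-qbic-forms} — shows that $F(g)$ stabilizes the flag in $V$ whose $i$-th step is the $\beta^{-1}$-preimage of that annihilator. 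The one small computation is to recognize this preimage: $v$ lies in $\beta^{-1}\big(\mathrm{Ann}(W_j^{[1]})\big)$ if and only if $\beta(a,v)=0$ for all $a\in W_j^{[1]}$, which by the definition of orthogonals in \parref{basics-qbic-forms} is exactly $v\in W_j^{[1],\perp}$, a subspace of dimension $n+1-\dim W_j$.

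Thus every $F(g)$, for $g$ in the Borel $\mathrm{Stab}_{\mathbf{GL}_V}(W_{\smallbullet})$, stabilizes the complete flag $\{0\}\subsetneq W_n^{[1],\perp}\subsetneq\cdots\subsetneq W_1^{[1],\perp}\subsetneq V$; since the image of a Borel under the isogeny $F$ is again a Borel, $F\big(\mathrm{Stab}_{\mathbf{GL}_V}(W_{\smallbullet})\big)$ is the full stabilizer of this flag, and the displayed characterization of $F$ on $\mathbf{Flag}_V$ yields the asserted formula; taking $W_{\smallbullet}=V_{\smallbullet}$ to be the $F$-stable flag recovers the relation $V_{n+1-i}=V_i^{[1],\perp}$ used in \parref{dl-flag-variety}. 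I do not expect a serious obstacle: the only delicate point is the bookkeeping of the three operations — Frobenius twist, linear dual, and the adjoint isomorphisms of $\beta$ — which together produce the reversal of the flag, together with checking that the orthogonal that appears is the one recorded in \parref{basics-qbic-forms} and not its mirror obtained by swapping the two arguments of $\beta$. (An alternative is to verify directly that $F(g)\big(W^{[1],\perp}\big) = (gW)^{[1],\perp}$ for all $g$ and all subspaces $W$, a one-line pairing computation using $\langle g^{[1],\vee,-1}\xi,\, g^{[1]}a\rangle = \langle\xi,a\rangle$, and then apply it with $W_{\smallbullet}=gV_{\smallbullet}$; but this route presupposes $V_{n+1-i}=V_i^{[1],\perp}$, so the stabilizer computation above is cleaner.)
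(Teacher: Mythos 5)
Your argument is correct, and it is organized differently enough from the paper's to be worth comparing. The paper writes \(W_{\smallbullet} = g \cdot V_{\smallbullet}\) and computes \(F(W_i) = F(g)\cdot V_i = (\beta^{-1}\circ g^{[1],\vee,-1}\circ\beta)(V_i)\) directly, which requires knowing \(\beta(V_i) = (V/V_{n+1-i})^{[1],\vee}\); that step leans on the special properties of the reference flag, namely that each \(V_i\) is Hermitian (via \parref{hermitian-subspaces}) and the self-duality relation \(V_{n+1-i} = V_i^{[1],\perp}\) recorded in \parref{dl-flag-variety}. You instead characterize the induced endomorphism of \(\mathbf{Flag}_V\) by stabilizers and push a general \(g \in \mathrm{Stab}(W_{\smallbullet})\) through \(F(g) = \beta^{-1}\circ g^{[1],\vee,-1}\circ\beta\), using only the definition of \(F\), the identification \(\beta^{-1}(\mathrm{Ann}\,W_j^{[1]}) = W_j^{[1],\perp}\) from \parref{basics-qbic-forms} (which you verify on the correct side of the pairing), and the standard facts that the image of a Borel under the isogeny is a Borel and that a Borel contained in a Borel is equal to it. The same core manipulation (twist, dualize, conjugate by the adjoint of \(\beta\)) underlies both proofs, but your packaging makes no use of the Hermitian structure of \(V_{\smallbullet}\) and, as you note, recovers \(V_{n+1-i} = V_i^{[1],\perp}\) as a byproduct rather than consuming it --- which tidies the slightly circular-sounding cross-reference in \parref{dl-flag-variety}, where that relation is attributed to ``a computation analogous to'' this Lemma. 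The trade-off is that the paper's proof is a shorter direct computation given the facts already on record, while yours imports a bit more general theory of Borel subgroups; your closing parenthetical correctly identifies the paper's route and its dependence on the self-dual flag.
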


\begin{proof}
Write \(W_{\smallbullet} = g \cdot V_{\smallbullet}\) for \(g \in \mathbf{GL}_V\).
Then \(F(W_i) = F(g) \cdot V_i = (\beta^{-1} \circ g^{[1],\vee,-1} \circ \beta)(V_i)\).
First,
\[
\beta(V_i) =
(V^{[1]}/V_i^\perp)^\vee
= (V/V_i^{[1],\perp})^{[1],\vee}
= (V/V_{n+1-i})^{[1],\vee}
\]
with the middle equality due to
\parref{hermitian-subspaces}\ref{hermitian-subspaces.perps}, since \(V_i\)
is Hermitian. Therefore
\[
F(W_i)
= (\beta^{-1} \circ g^{[1],\vee,-1})\big((V/V_{n+1-i})^{[1],\vee}\big)
= \beta^{-1}\big((V/W_{n+1-i})^{[1],\vee}\big)
= W_{n+1-i}^{[1],\perp}.
\qedhere
\]
\end{proof}

More generally, given a subset \(I \subseteq \mathrm{S}\) of simple
reflections, let \(\mathbf{P}_I\) be the unique parabolic subgroup of type
\(I\) containing \(\mathbf{B}\), and use this to identify
\(\mathbf{Par}_I\) with the partial flag variety \(\mathbf{Flag}_{I,V}\) of
type \(I\). Then the generalized Deligne--Lusztig varieties give a partition
\[
\mathbf{Flag}_{I,V}
= \bigsqcup\nolimits_{w \in {}^I\mathrm{W}} \mathbf{DL}_{I,w}.
\]
The schemes \(\mathbf{DL}_{I,w}\) can sometimes be described in terms of
intersection patterns between a partial flag and its orthogonal by writing it as
the image of a Deligne--Lusztig variety from the complete flag variety. Compare
the following with \cite[\S2.3]{HLZ}:

% \begin{Lemma}\label{dl-containment-criterion}
% Let \(w \in S_{n+1}\) and assume that
% \(V_{\smallbullet}' \xrightarrow{w} V_{\smallbullet}''\).
% Given \(1 \leq i, j \leq n+1\),
% \[
% W_i \subseteq W_j'
% \quad
% \iff
% \quad
% w(\set{1,\ldots,i}) \subseteq
% \set{1,\ldots,j}.
% \]
% \end{Lemma}

\begin{Lemma}\label{dl-fano-schemes-are-dl}
The Fano scheme \(\mathbf{F}\) is a union of generalized Deligne--Lusztig varieties of type
\({}^2\mathrm{A}_{n+1}\).
\end{Lemma}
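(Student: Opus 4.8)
The plan is to present the ambient Grassmannian \(\mathbf{G} \coloneqq \mathbf{G}(r+1,V)\) of \(\mathbf{F}\) as the partial flag variety \(\mathbf{Par}_I\), for \(I \subseteq \mathrm{S}\) the subset of simple reflections that omits the one corresponding to the \((r+1)\)-st step of a flag, and then to exhibit \(\mathbf{F}\) as a union of the generalized Deligne--Lusztig strata \(\mathbf{DL}_{I,w}\), \(w \in {}^I\mathrm{W}\), partitioning \(\mathbf{Par}_I\). Fixing the identification \(\mathbf{Bor} \cong \mathbf{Flag}_V\) of \parref{dl-flag-variety}, with projection \(\pi \colon \mathbf{Flag}_V \to \mathbf{G}\) sending a flag \(W_{\smallbullet}\) to its \((r+1)\)-st member \(W_{r+1}\): by \parref{dl-F} the \((n-r)\)-th member of \(F(W_{\smallbullet})\) is \(W_{r+1}^{[1],\perp}\), so the morphism \(\mathbf{G}(r+1,V) \to \mathbf{G}(n-r,V)\) induced by \(F\) sends \([U]\) to \([U^{[1],\perp}]\). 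Since \(\mathbf{F}\) parameterizes the \((r+1)\)-dimensional isotropic subspaces by \parref{basics-fano-schemes}, and such a \(U\) is isotropic precisely when \(U \subseteq U^{[1],\perp}\), I obtain the clean description
\[
\mathbf{F} = \Set{[U] \in \mathbf{G} | U \subseteq U^{[1],\perp}} = \Set{[U] \in \mathbf{G} | \dim_\kk\big(U \cap U^{[1],\perp}\big) = r+1}.
\]

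The key step is then the observation that the generalized Deligne--Lusztig strata of \(\mathbf{Par}_I\) refine the coarser stratification of \(\mathbf{G}\) by the relative position of \(U\) and \(U^{[1],\perp}\) in \(V\) --- that is, by \(d([U]) \coloneqq \dim_\kk(U \cap U^{[1],\perp})\). Indeed, by \cite[Theorem 3.1]{He:PartialFlag} each \(\mathbf{DL}_{I,w}\) is the image \(\pi(\mathbf{DL}_w)\) of a classical Deligne--Lusztig variety \(\mathbf{DL}_w = \mathbf{orb}_w \cap \Gamma_F \subseteq \mathbf{Flag}_V\), and on \(\mathbf{DL}_w\) the relative position of \(W_{\smallbullet}\) with \(F(W_{\smallbullet})\) is constantly \(w\); hence every intersection number \(\dim_\kk(W_i \cap F(W_{\smallbullet})_j)\) --- in particular \(\dim_\kk(W_{r+1} \cap W_{r+1}^{[1],\perp}) = d(W_{r+1})\) --- is a function of \(w\) alone, so \(d\) is constant along \(\mathbf{DL}_{I,w}\). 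As the \(\mathbf{DL}_{I,w}\) partition \(\mathbf{G}\) and \(\mathbf{F}\) is carved out by the single numerical condition \(d = r+1\), it follows that \(\mathbf{F} = \bigcup \mathbf{DL}_{I,w}\), the union over those \(w\) for which \(d \equiv r+1\) on \(\mathbf{DL}_{I,w}\). The combinatorial dictionary of \parref{dl-flag-variety} --- namely \(w(\{1,\ldots,i\}) \subseteq \{1,\ldots,j\}\) if and only if \(W_i \subseteq F(W_{\smallbullet})_j\) --- identifies these as the strata indexed by the \(w \in {}^I\mathrm{W}\) with \(w(\{1,\ldots,r+1\}) \subseteq \{1,\ldots,n-r\}\), giving the explicit list of strata needed for the \'etale cohomology computation of \parref{dl-betti}.

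I expect the main technical nuisance to lie not in this argument but in pinning down the \emph{scheme} structure: the assertion is that \(\mathbf{F}\), with its structure from \parref{basics-fano-equations}, equals the union of the strata \(\mathbf{DL}_{I,w}\) with the structure induced from \(\mathbf{orb}_w \cap \Gamma_F\). Since \(\beta\) is nonsingular throughout this Section, \(X\) is smooth by \parref{differential-smoothness-X}, so \(\mathbf{F}\) is smooth, hence reduced, by \parref{differential-singular-locus}; being reduced, \(\mathbf{F}\) therefore coincides with the reduced closed subscheme of \(\mathbf{G}\) supported on \(\bigcup_{w} \mathbf{DL}_{I,w}\), since the two have the same underlying set and a reduced closed subscheme is determined by its support. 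One could also transport the scheme structures directly through \(\pi\) and the scheme-theoretic intersections \(\mathbf{orb}_w \cap \Gamma_F\), but the reducedness argument is the shortest; and for the Betti-number application only the set-theoretic statement is in fact needed.
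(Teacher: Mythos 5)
Your argument is correct and is essentially the paper's own proof: both use \parref{dl-F} to rewrite \(W_{r+1}^{[1],\perp}\) as \(F(W_{\smallbullet})_{n-r}\), the dictionary of \parref{dl-flag-variety} to see that isotropy of \(W_{r+1}\) on \(\mathbf{DL}_w\) is equivalent to \(w(\{1,\ldots,r+1\}) \subseteq \{1,\ldots,n-r\}\), and He's theorem that the \(\mathbf{DL}_{I,w}\) are images of the \(\mathbf{DL}_w\), so that \(\FF\) is the union of the strata indexed by such \(w\). Your added remarks --- constancy of \(d\) on strata and the reducedness/scheme-structure point (harmless here since \(\beta\) is nonsingular, so \(\FF\) is smooth) --- are just elaborations of the same argument.
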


\begin{proof}
Since \(W_{r+1}^{[1],\perp} = F(W_{n-r})\) by \parref{dl-F}, the description of
\(\mathbf{DL}_w\) from \parref{dl-flag-variety} implies that
\[
W_{r+1} \subseteq W_{r+1}^{[1],\perp}
\;
\iff
w(\set{1,\ldots,r+1}) \subseteq \set{1,\ldots,n-r}.
\]
In other words, whether or not \(W_{r+1}\) is isotropic is determined by the
Weyl group element \(w\). Since the \(\mathbf{DL}_{I,w}\) are images of the
\(\mathbf{DL}_w\) under the projection
\(\mathbf{Flag}_V \to \mathbf{G}\), it follows that \(\mathbf{F}\)
is the union of generalized Deligne--Lusztig varieties associated with \(\mathbf{G}\).
\end{proof}

Two cases for which the Deligne--Lusztig stratification of \(\FF\) is particularly
simple and useful are: first, the hypersurface \(X\) itself; second, and more
interestingly, the Fano scheme of maximal isotropic subspaces. In the
following, denote by \(\mathrm{S} = \set{s_1,\ldots,s_n}\) the usual set of
simple transpositions which generates the Weyl group, identified with the
symmetric group on \(n+1\) elements as above.

\subsectiondash{Hypersurface}\label{dl-hypersurface}
Consider the stratification associated with the hypersurface \(X\).
Projective space is the partial flag variety associated with the set
\(I \coloneqq \set{s_2,\ldots,s_n}\) of simple reflections. A set of minimal
length representatives for \(\mathrm{W}_I \backslash \mathrm{W}\) can be taken
to be
\[
{}^I\mathrm{W} \coloneqq
\set{
\id,\;
s_1,\;
s_1s_2,\;
\ldots,\;
s_1s_2 \cdots s_n}.
\]
View the strata \(\mathbf{DL}_{I, w}\) as images of the \(\mathbf{DL}_w\)
under the projection \(\mathbf{Flag}_V \to \PP V\). Then a computation using
the descriptions from \parref{dl-flag-variety} and \parref{dl-F} relates
the generalized Deligne--Lusztig varieties in this case with the pieces
of the dynamical filtration from \parref{hermitian-filtration}: for
\(0 \leq k \leq n\),
\[
\mathbf{DL}_{I,s_1 \cdots s_k}
= X^{n-k-1} \setminus X^{n-k}
\;\;
\]
where \(X^{-1} \coloneqq \PP V\), and \(X^{n-k}\) is the union of the Hermitian
\(k\)-planes in \(X\) for \(0 \leq k \leq n/2\). This decomposition can
be combined with the methods of \cite{DL, Lusztig:Unipotent} to determine the
\'etale cohomology of \(X\) as a representation for the projective unitary
group. The following statement was first observed by Tate and Thompson in
\cite[p.102]{Tate:Conjecture}, and later made more precise by \cite{HM:TT-Lemma}:

\begin{Theorem}\label{dl-hypersurface-cohomology}
Assume \(\kk\) is separably closed. The middle primitive \'etale cohomology
\(\mathrm{H}^{n-1}_{\mathrm{\acute{e}t}}(X,\mathbf{Q}_\ell)_{\mathrm{prim}}\) is
an irreducible representation for \(\mathrm{U}_{n+1}(q)\) of dimension
\((-1)^n \bar{q} [n]_{\bar{q}}\).
\qed
\end{Theorem}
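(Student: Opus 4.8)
The plan is to reduce the assertion to a computation in the Grothendieck group $R(\mathrm{U}_{n+1}(q))$ of $\mathbf{Q}_\ell$-representations of the finite unitary group, and to carry that computation out using the Deligne--Lusztig stratification of \parref{dl-hypersurface}. First, since $X$ is smooth and projective of dimension $n-1$, weak Lefschetz together with Poincar\'e duality identifies $\mathrm{H}^i_{\mathrm{\acute{e}t}}(X,\mathbf{Q}_\ell)$ with a Tate twist of $\mathrm{H}^i_{\mathrm{\acute{e}t}}(\PP V,\mathbf{Q}_\ell)$ for every $i \neq n-1$; in particular these groups carry the trivial $\mathrm{U}_{n+1}(q)$-action, and a bookkeeping of Betti numbers of $X$ and $\PP V$ shows that
\[
\big[\mathrm{H}^{n-1}_{\mathrm{\acute{e}t}}(X,\mathbf{Q}_\ell)_{\mathrm{prim}}\big]
= (-1)^{n-1}\Big(\sum\nolimits_i (-1)^i \big[\mathrm{H}^i_{\mathrm{\acute{e}t}}(X,\mathbf{Q}_\ell)\big] - n\,[\mathbf{1}]\Big)
\]
in $R(\mathrm{U}_{n+1}(q))$, where $[\mathbf{1}]$ denotes the trivial representation. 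So it suffices to compute the alternating sum on the right and to verify that, once the copies of $[\mathbf{1}]$ are removed, the remainder is $\pm$ a single irreducible character of dimension $(-1)^n \bar{q}\,[n]_{\bar{q}}$.

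Second, I would use that every $g \in \mathrm{U}_{n+1}(q) = \mathbf{GL}_V^F$ preserves $\beta$ and commutes with $\phi = F^2$, so the pieces $X^k$ of the dynamical filtration, and hence the stratification $X = \bigsqcup_k \mathbf{DL}_{I,s_1\cdots s_k}$ of \parref{dl-hypersurface} by generalized Deligne--Lusztig varieties, are $\mathrm{U}_{n+1}(q)$-stable. Additivity of compactly supported cohomology in the equivariant Grothendieck group then gives $\sum_i(-1)^i[\mathrm{H}^i_{\mathrm{\acute{e}t}}(X,\mathbf{Q}_\ell)] = \sum_k [\mathrm{H}^*_c(\mathbf{DL}_{I,s_1\cdots s_k},\mathbf{Q}_\ell)]$. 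Each summand is a virtual unipotent character of $\mathrm{U}_{n+1}(q)$: realizing $\mathbf{DL}_{I,w}$ as the image of the classical Deligne--Lusztig variety $\mathbf{DL}_w \subset \mathbf{Flag}_V$ under the projection to $\PP V$ and using that, for $w \in {}^I\mathrm{W}$, this projection behaves cohomologically on $\mathbf{DL}_w$ like an affine fibration, one identifies $[\mathrm{H}^*_c(\mathbf{DL}_{I,w})]$ with $\pm R_w$ up to a Tate twist, where $R_w$ is the Deligne--Lusztig virtual character of \cite{DL} attached to $w$. For the elements $w = s_1\cdots s_k$ occurring here, the $R_w$ and their decompositions into unipotent characters --- governed by hook partitions, transported to the unitary group by Ennola duality --- can be pinned down by the methods of \cite{DL, Lusztig:Unipotent}; substituting these and collecting terms, the constituents cancel in pairs except for $n$ copies of $[\mathbf{1}]$ and a single unipotent character $\rho$. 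Comparing with the first paragraph gives $\mathrm{H}^{n-1}_{\mathrm{\acute{e}t}}(X,\mathbf{Q}_\ell)_{\mathrm{prim}} \cong \rho$, which in particular is irreducible.

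Finally, the dimension of $\rho$ can be read off from Lusztig's degree formula; alternatively, and more elementarily, the first paragraph together with the Euler characteristic $\chi(X) = \tfrac{1}{d}\big((1-d)^{n+1} - 1\big) + n + 1$ of a smooth degree-$d = q+1$ hypersurface in $\PP V = \PP^n$ gives
\[
\dim \mathrm{H}^{n-1}_{\mathrm{\acute{e}t}}(X,\mathbf{Q}_\ell)_{\mathrm{prim}}
= (-1)^{n}\big(n - \chi(X)\big)
= \frac{q^{n+1} - (-1)^n q}{q+1}
= (-1)^n \bar{q}\,[n]_{\bar{q}}.
\]
The substantive step is the second paragraph: matching the partial-flag Deligne--Lusztig varieties $\mathbf{DL}_{I,w}$ with the classical ones while controlling Tate twists and fibre dimensions, and then checking that the alternating sum of the $R_{s_1\cdots s_k}$ genuinely collapses to a single irreducible rather than to a proper virtual combination. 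This collapse is exactly the content of the observation of Tate and Thompson in \cite[p.102]{Tate:Conjecture}, refined in \cite{HM:TT-Lemma}, and carrying it out carefully --- rather than merely at the level of dimensions --- is the main obstacle.
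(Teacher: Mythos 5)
The paper does not actually prove this statement: the \(\qedsymbol\) is attached to the statement itself, and the result is quoted from the observation of Tate and Thompson \cite[p.102]{Tate:Conjecture} as refined in \cite{HM:TT-Lemma}, the discussion in \parref{dl-hypersurface} only indicating that the Deligne--Lusztig stratification combined with the methods of \cite{DL, Lusztig:Unipotent} yields it. Your outline follows exactly that indicated route, and the parts you carry out are correct: the reduction of the primitive middle cohomology to an alternating sum in the representation ring of \(\mathrm{U}_{n+1}(q)\) via weak Lefschetz and Poincar\'e duality is fine, and the Euler-characteristic computation of the dimension does give \((-1)^n\bar{q}[n]_{\bar{q}}\), in agreement with the stated formula.

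As a standalone proof, however, your proposal has a gap at precisely the substantive point, and you concede as much: the collapse of the alternating sum of the strata contributions to \(n\) trivial summands plus a single irreducible is asserted, not established, and is deferred to the very references the paper cites. Two things would need to be done to close it. First, the identification \([\mathrm{H}^*_c(\mathbf{DL}_{I,w},\mathbf{Q}_\ell)] = \pm R_w\) up to Tate twist is not automatic from an ``affine fibration'' heuristic: for \(w \in {}^I\mathrm{W}\) the fibres of \(\mathbf{DL}_w \to \mathbf{DL}_{I,w}\) are Deligne--Lusztig varieties for a Levi subgroup rather than affine spaces, so one must either verify the required cohomological triviality for the specific elements \(w = s_1\cdots s_k\) occurring here, or argue by equivariant point counting as Tate--Thompson do. Second, the decomposition of the \(R_{s_1\cdots s_k}\) into unipotent constituents and the verification that everything cancels except the trivial characters and one surviving irreducible is the actual content of the theorem; irreducibility cannot be read off from the dimension count in your last paragraph. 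Since the paper itself treats the statement as known, finishing by citing \cite{Tate:Conjecture, HM:TT-Lemma} is legitimate, but then the argument should be presented as a citation rather than as a proof whose main step is acknowledged as an unresolved obstacle.
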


\subsectiondash{Maximal isotropic subspaces}\label{dl-maximal-isotropics}
Let \(n = 2m+1\) and let \(\FF\) be the Fano scheme of \(m\)-planes in \(X\).
Then Theorem \parref{theorem-fano-schemes} shows that \(\FF\) is a finite set
of reduced points; its dimension can also be obtained from
\parref{dl-fano-schemes-are-dl} by showing there is exactly one generalized
Deligne--Lusztig variety contained in \(\FF\).

The remainder of this Section is concerned with the case \(n = 2m+2\) and the
Deligne--Lusztig decomposition of the Fano scheme \(\FF\) of \(m\)-planes in
\(X\). The ambient Grassmannian \(\mathbf{G}(m+1,2m+3)\) is the partial
flag variety associated with the set
\[
I
= \set{
s_1,
\ldots,\;
s_m,\;
s_{m+2},
\ldots,\;
s_{2m+2}}
= \mathrm{S} \setminus \{s_{m+1}\}.
\]
As in \parref{dl-fano-schemes-are-dl}, the stratum \(\mathbf{DL}_{I,w}\) is
contained in \(\FF\) if and only if
\(w(\set{1,\ldots,m+1}) \subseteq \set{1,\ldots,m+2}\). Since \(\mathrm{W}_I\)
contains all permutations on the first \(m+1\) elements, a straightforward
computation shows that a set \({}^I\mathrm{W}\) of minimal length representatives
for the coset space \(\mathrm{W}_I\backslash \mathrm{W}\) satisfying this
condition consists of the following \(m+2\) cyclic permutations:
\(w_0 \coloneqq \id\) and, for \(1 \leq k \leq m+1\),
\[
w_k
\coloneqq s_{m+1} s_m \cdots s_{m+2-k}
= (m+2-k,\, m+3-k,\ldots,\,m+1,\,m+2).
\]

The next statement gives a moduli description of
\(\overline{\mathbf{DL}}_{I,w_k}\), and may be compared with \cite[Theorem
2.15]{Vollaard}. Note already that, since the \(w_k\) are linearly ordered in
the Bruhat order, the closure relations of the associated Deligne--Lusztig
varieties are also linear, giving the first identification in:

\begin{Proposition}\label{dl-description}
\(
\displaystyle
\overline{\mathbf{DL}}_{I,w_k} =
\bigsqcup\nolimits_{i = 0}^k \mathbf{DL}_{I,w_i} =
\Set{P \in \mathbf{F} |
\text{\(P\) contains a Hermitian \((m-k)\)-plane}}
\).
\end{Proposition}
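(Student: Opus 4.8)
The plan is as follows. As elsewhere in the paper this concerns closed subsets, so I would assume $\kk$ algebraically closed; both sides are reduced --- the left by the standard properties of Deligne--Lusztig varieties recalled in \parref{dl-classical}, the right being a finite union of Schubert-type subvarieties of $\mathbf{G}$ --- so it suffices to match $\kk$-points. For the first equality I would use that $\mathbf{F}$ is the disjoint union of the $\mathbf{DL}_{I,w_i}$, $0 \le i \le m+1$, that $\ell(w_k) = k$, and that the $w_k$ are totally ordered in the Bruhat order: by the closure relations for generalized Deligne--Lusztig varieties, $\overline{\mathbf{DL}}_{I,w_k}$ is the union of the strata $\mathbf{DL}_{I,w'}$ with $w' \in {}^I\mathrm{W}$ and $w' \le w_k$; each such stratum lies in $\overline{\mathbf{DL}}_{I,w_k} \subseteq \mathbf{F}$, hence is one of the $\mathbf{DL}_{I,w_i}$, and $w' \le w_k$ forces $\ell(w') \le k$, so the contributing $w'$ are exactly $w_0, \dots, w_k$.

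For the second equality the idea is to pin down, for each $[\PP W] \in \mathbf{F}$, the unique stratum containing it via the invariant $\kappa(W) \coloneqq \dim_\kk W - \dim_\kk H(W)$, where $H(W) \coloneqq \bigcap_{i \ge 0}\phi^i(W)$ is the maximal Hermitian subspace contained in $W$ by \parref{hermitian-minimal}. I would show $[\PP W] \in \mathbf{DL}_{I,w_{\kappa(W)}}$; since the $\mathbf{DL}_{I,w_k}$ partition $\mathbf{F}$, this yields $\mathbf{DL}_{I,w_k} = \{[\PP W] \in \mathbf{F} : \kappa(W) = k\}$ and hence $\bigsqcup_{i=0}^k \mathbf{DL}_{I,w_i} = \{[\PP W] : \kappa(W) \le k\}$. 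It then remains to note that $\kappa(W) \le k$ is equivalent to $\dim_\kk H(W) \ge m+1-k$, which --- since $H(W)$ is the maximal Hermitian subspace of $W$ and a Hermitian space contains Hermitian subspaces of every smaller dimension --- is equivalent to $W$ containing a Hermitian subspace of dimension $m+1-k$, that is, to $P$ containing a Hermitian $(m-k)$-plane.

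To locate the stratum of $[\PP W]$ I would use that $\mathbf{DL}_{I,w_k}$ is the image of $\mathbf{DL}_{w_k}$ under $\mathbf{Flag}_V \to \mathbf{G}$ (\parref{dl-classical}), so it suffices to exhibit a complete flag $W_{\smallbullet}$ refining $W$ whose relative position with $F(W_{\smallbullet})$, in the sense of \parref{dl-flag-variety}, is exactly $w_{\kappa(W)}$. I would take $W_{\smallbullet}$ \emph{self-orthogonal}: its lower half is the chain $W_{(\kappa(W))} \subset W_{(\kappa(W)-1)} \subset \cdots \subset W_{(0)} = W$, where $W_{(j)} \coloneqq \bigcap_{i=0}^j \phi^i(W)$, prolonged downward by a complete flag of Hermitian subspaces of $H(W) = W_{(\kappa(W))}$, while its upper half is $W_{m+1+s} \coloneqq W_{m+2-s}^{[1],\perp}$ for $1 \le s \le m+1$. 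Isotropy of $W$ makes this a genuine flag, and from the identity $(U^{[1],\perp})^{[1],\perp} = \phi(U)$ --- a consequence of the displayed formula in \parref{hermitian-phi-properties} --- one computes $F(W_{\smallbullet})_j = \phi(W_j)$ for $j \le m+1$ and $F(W_{\smallbullet})_j = W_j$ for $j \ge m+2$. The relative position is then read off from the numbers $\dim_\kk(W_i \cap \phi(W_j))$, and the remaining work is the routine if lengthy check that these match the intersection pattern imposed by the cyclic permutation $w_{\kappa(W)}$; this is where \cite[Theorem 2.15]{Vollaard} is the natural comparison.

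The hard part, on which the whole argument turns, is the structural input used above: that $H(W) = W_{(\kappa(W))}$ and, more precisely, that the chain $W = W_{(0)} \supsetneq W_{(1)} \supsetneq \cdots$ drops in dimension by exactly $1$ at each step until it stabilizes at $H(W)$, so $\dim_\kk W_{(j)} = m+1 - \min(j,\kappa(W))$ --- it is precisely this rigidity that forces the relative position of the self-orthogonal flag to depend on $\kappa(W)$ alone. I would deduce it from two facts: first, $\phi$ induces injections $W_{(j)}/W_{(j+1)} \hookrightarrow W_{(j-1)}/W_{(j)}$ --- using $W_{(j)} \subseteq \phi(W_{(j-1)})$ and $W_{(j)} \cap \phi(W_{(j)}) = W_{(j+1)}$ --- so the successive drops are weakly decreasing; second, the first drop $\dim_\kk W - \dim_\kk(W \cap \phi(W))$ is at most $1$, because $W + \phi(W) \subseteq W^{[1],\perp}$ --- a reformulation of isotropy of $W$ via \parref{hermitian-phi-properties} --- while $W^{[1],\perp}$ has dimension $m+2 = \dim_\kk W + 1$ as $\beta$ is nonsingular. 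Since each drop is then at most $1$ and at least $1$ until the chain stabilizes, each equals $1$; with this in hand the flag computation goes through and both equalities follow.
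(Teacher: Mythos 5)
Your argument is correct, and the first equality is handled exactly as in the paper (linear Bruhat order on the $w_k$ plus the closure relations); but for the second equality you run the containments in the opposite direction from the paper, and it is worth noting what each route costs. The paper never lifts an arbitrary point of $\FF$: it unwinds the conditions that $w_k$ imposes on a complete flag via \parref{dl-flag-variety} and \parref{dl-F}, showing that $\mathbf{DL}_{w_k}$ consists precisely of isotropic flags with $W_i = \phi(W_i)$ for $i \leq m+1-k$ and $W_i \cap \phi(W_i) = W_{i-1}$ for $m+2-k \leq i \leq m+1$, deduces that every plane in the stratum $\mathbf{DL}_{I,w_k}$ has maximal Hermitian subspace of dimension exactly $m+1-k$, and then concludes because both decompositions partition $\FF$ --- so it only needs one containment per stratum and never needs to know how fast the chain $W \supseteq W \cap \phi(W) \supseteq \cdots$ drops. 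You instead prove the reverse containment: for each $[\PP W]$ you build an explicit self-orthogonal flag lift and show its relative position is $w_{\kappa(W)}$, which forces you to prove the rigidity lemma that the chain $W_{(j)} = \bigcap_{i=0}^{j}\phi^i(W)$ drops by exactly one until it reaches the maximal Hermitian subspace; your proof of that lemma is sound (isotropy gives $W + \phi(W) \subseteq W^{[1],\perp}$, of dimension $m+2$ by nonsingularity, so the first drop is at most one, and the maps induced by $\phi^{-1}$ give $W_{(j)}/W_{(j+1)} \hookrightarrow W_{(j-1)}/W_{(j)}$, so drops are monotone), and it is an input the paper's route avoids entirely. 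Be aware, though, that the ``routine if lengthy check'' you defer is not incidental: the conditions your constructed flag satisfies (isotropic upper half, Hermitian bottom $m+1-\kappa$ steps, $W_i \cap \phi(W_i) = W_{i-1}$ above that) are exactly the paper's characterization of $\mathbf{DL}_{w_\kappa}$, and verifying that they correspond to the cyclic permutation $w_\kappa$ is precisely the main computation of the paper's proof, so in a complete write-up that translation would still have to be carried out; with it included, your argument closes and gives a slightly more constructive proof at the price of the extra lemma.
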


\begin{proof}
Proceed by showing the more refined statement that the classical
Deligne--Lusztig variety \(\mathbf{DL}_{w_k}\) parameterizes complete flags
\((W_i)_{i = 0}^{n+1}\) satisfying:
\begin{enumerate}
\item\label{dl-description.isotropic}
\(W_i = W_{n+1-i}^{[1],\perp}\)
for \(m+2 \leq i \leq n+1\),
\item\label{dl-description.hermitian}
\(W_i = \phi(W_i)\)
for \(0 \leq i \leq m+1-k\), and
\item\label{dl-description.translate}
\(W_i \cap \phi(W_i) = W_{i-1}\)
for \(m+2-k \leq i \leq m+1\).
\end{enumerate}
In other words, \(\mathbf{DL}_{w_k}\) parameterizes complete isotropic
flags whose first \(m+1-k\) terms are Hermitian, and the remaining terms
can be determined by \(W_{m+1}\) and \(\phi\); in fact,
\ref{dl-description.translate} implies that
\[
W_i = \bigcap\nolimits_{j = 0}^{m+1-i} \phi^j(W_{m+1})
\quad\text{for}\; m+2-k \leq i \leq m+1.
\]
With \ref{dl-description.hermitian}, this gives
\(W_{m+1-k} = \bigcap\nolimits_{j \geq 0} \phi^j(W_{m+1})\) meaning this
is the maximal Hermitian subspace of \(W_{m+1}\) by \parref{hermitian-minimal}.
Projecting down to the Grassmannian would then show that \(\mathbf{DL}_{I,w_k}\)
parameterizes \(m\)-planes in \(X\) that contain a Hermitian \((m-k)\)-plane
but no \((m-k+1)\)-plane, implying the statement.

Describe \(\mathbf{DL}_{w_k}\) now by translating the equations imposed by
\(w_k\) to conditions on flags via \parref{dl-flag-variety}
and \parref{dl-F}: That the cycle \(w_k\) stabilizes the intervals
\(\set{1,\ldots,i}\) for \(m+2 \leq i \leq n+1\) is equivalent to condition
\ref{dl-description.isotropic}. Since \(F^2 = \phi\), this implies that
\[
F(W_{\smallbullet})_i
= (W_i^{[1],\perp})^{[1],\perp}
= \phi(W_i)
\;\;\text{for}\;0 \leq i \leq m+1.
\]
With this, that \(w_k\) acts trivially on \(\set{1,\ldots,i}\) with
\(0 \leq i \leq m+1-k\) is equivalent to \ref{dl-description.hermitian}.
Finally, that \(w_k(i) = i+1\) for \(m+2-k \leq i \leq m+1\) means that
\[
\gr^{F(W_{\smallbullet})}_{i+1}
\gr^{W_{\smallbullet}}_i(V) \neq \set{0}
\]
which, in light of what has been established so far, amounts to the two
conditions \(W_i \neq \phi(W_i)\), and
\(W_i \subset \phi(W_{i+1})\) when \(i \neq m+1\) and that \(W_{m+1}\) is
isotropic. Taken altogether, these conditions are equivalent to those in
\ref{dl-description.translate}. This now dispenses with all the equations
of \(\mathbf{DL}_{w_k}\).
\end{proof}

\subsectiondash{}\label{dl-irred-comps}
Combined with \parref{incidences-containing-a-plane} and Theorem
\parref{theorem-fano-schemes}, the description of \parref{dl-description}
implies that the irreducible
components of \(\overline{\mathbf{DL}}_{I,w_k}\) are indexed by the Hermitian
\((m-k)\)-planes in \(X\), and each component is isomorphic to the Fano scheme
of \(k\)-planes contained in a smooth \(q\)-bic hypersurface of dimension
\(2k+1\). Moreover, the irreducible component indexed by the isotropic
\(k\)-plane \(P\) is contained exactly in the irreducible components of
\(\overline{\mathbf{DL}}_{I,w_{k+1}}\) indexed by the isotropic
\((k+1)\)-planes containing \(P\).

\medskip

For each \(k \geq 0\), write \(\mathbf{DL}^{\mathrm{Cox}}_k\) for the open
dense Deligne--Lusztig stratum of the Fano scheme of \(k\)-planes in a smooth
\(q\)-bic hypersurface of dimension \(2k+1\); this is \(\mathbf{DL}_{w_{m+1}}\)
when \(k = m\) as above. The indexing Weyl group element contains exactly one
simple reflection in each \(F\)-orbit, and so it is a \emph{Coxeter element} in
type \({}^2\mathrm{A}_{2k+2}\), whence the notation; see
\cite[(1.7)]{Lusztig:Frobenius}. The discussion of \parref{dl-irred-comps}
gives a useful stratification of the Fano scheme, phrased in terms of the
Grothendieck ring \(\mathrm{K}_0(\mathrm{Var}_\kk)\) of varieties in the
following statement; compare it with \cite[Proposition 2.5.1]{HLZ}:

\begin{Corollary}\label{dl-decomposition}
\(\displaystyle
[\mathbf{F}] =
[\mathbf{DL}_{m+1}^{\mathrm{Cox}}] +
\sum\nolimits_{k = 0}^m
\#\mathbf{F}_{m-k}(X)_{\mathrm{Herm}} \cdot [\mathbf{DL}_k^{\mathrm{Cox}}]
\)
in \(\mathrm{K}_0(\mathrm{Var}_\kk)\).
\qed
\end{Corollary}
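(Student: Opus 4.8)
The plan is to unwind the Deligne--Lusztig stratification of $\mathbf{F}$ recorded in \parref{dl-maximal-isotropics} and count. That stratification exhibits $\mathbf{F}$ as the disjoint union of the locally closed strata $\mathbf{DL}_{I,w_0}, \dots, \mathbf{DL}_{I,w_{m+1}}$, so by excision in $\mathrm{K}_0(\mathrm{Var}_\kk)$ — applied iteratively to the chain of closed subschemes $\overline{\mathbf{DL}}_{I,w_0} \subseteq \cdots \subseteq \overline{\mathbf{DL}}_{I,w_{m+1}} = \mathbf{F}$ — one obtains $[\mathbf{F}] = \sum_{k=0}^{m+1}[\mathbf{DL}_{I,w_k}]$ with no effort. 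Everything thus reduces to identifying the class of each individual stratum.

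The top stratum is immediate: by \parref{dl-description}, $\mathbf{DL}_{I,w_{m+1}}$ is the locus of $m$-planes in $X$ containing no nonzero Hermitian vector, which is the open Coxeter stratum and contributes $[\mathbf{DL}_{m+1}^{\mathrm{Cox}}]$ by definition of the latter. For $0 \le k \le m$ I would use the \emph{maximal Hermitian subspace} assignment. By \parref{dl-description} every $P$ in $\mathbf{DL}_{I,w_k}$ has maximal Hermitian sub-plane of dimension exactly $m-k$, so $U_P \mapsto \bigcap_{i \ge 0}\phi^i(U_P)$ is, being of locally constant rank there, a morphism $\mathbf{DL}_{I,w_k} \to \mathbf{F}_{m-k}(X)_{\mathrm{Herm}}$ onto the finite reduced scheme of Hermitian $(m-k)$-planes, whose cardinality is given by \parref{hermitian-planes-count}. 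Hence $\mathbf{DL}_{I,w_k}$ is the finite disjoint union, over Hermitian $(m-k)$-planes $Q$, of the fibres of this morphism. For a fixed such $Q$, \parref{incidences-containing-a-plane} identifies the $m$-planes of $X$ through $Q$ with the Fano scheme $\mathbf{F}_{k-1}(\bar X_Q)$ of the $(2k-1)$-dimensional smooth $q$-bic hypersurface $\bar X_Q$ cut out on $\PP(Q^\dagger/Q)$ by the induced form, which is nonsingular as in \parref{basics-incidences-nondegenerate}; under this identification the fibre over $Q$ becomes the locus on which no Hermitian line is contained, i.e. the open Coxeter stratum of $\mathbf{F}_{k-1}(\bar X_Q)$, and since over $\kk$ all such $\bar X_Q$ are abstractly isomorphic by \parref{basics-classification}, this fibre is a copy of $\mathbf{DL}_k^{\mathrm{Cox}}$. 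This is precisely the component analysis of \parref{dl-irred-comps}. Therefore $[\mathbf{DL}_{I,w_k}] = \#\mathbf{F}_{m-k}(X)_{\mathrm{Herm}} \cdot [\mathbf{DL}_k^{\mathrm{Cox}}]$ in $\mathrm{K}_0(\mathrm{Var}_\kk)$, and summing over $0 \le k \le m+1$ yields the stated identity.

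All the geometric content sits in \parref{dl-description}, \parref{incidences-containing-a-plane}, and \parref{dl-irred-comps}, the only further input being additivity of $\mathrm{K}_0(\mathrm{Var})$ along stratifications; so I do not expect a serious obstacle. What does require care is the index bookkeeping — matching the length-$k$ element $w_k$ with the dimension $2k-1$ of the residual hypersurface $\bar X_Q$ and with the index of $\mathbf{DL}_k^{\mathrm{Cox}}$ — together with the routine verification that the ``maximal Hermitian subspace'' decomposition of $\mathbf{DL}_{I,w_k}$ is a stratification by \emph{subschemes} and not merely a set-theoretic partition; this is automatic because $\mathbf{F}$, the residual Fano schemes (by \parref{basics-incidences-nondegenerate} and \parref{differential-singular-locus}), and the Deligne--Lusztig strata (by \parref{dl-classical}) are all smooth, hence reduced, so no nilpotents intervene.
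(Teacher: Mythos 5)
Your proposal is correct and is essentially the paper's own argument: the Corollary is stated without a separate proof precisely because it follows from the stratification \([\mathbf{F}] = \sum_{k}[\mathbf{DL}_{I,w_k}]\) of \parref{dl-maximal-isotropics}--\parref{dl-description} together with the component analysis of \parref{dl-irred-comps}, i.e.\ the decomposition of each stratum over the Hermitian \((m-k)\)-planes via \parref{incidences-containing-a-plane}, \parref{basics-incidences-nondegenerate} and \parref{hermitian-planes-count}, plus cut-and-paste additivity in \(\mathrm{K}_0(\mathrm{Var}_\kk)\) --- exactly what you write out. Your index bookkeeping (the fibre over a Hermitian \((m-k)\)-plane being the Coxeter stratum of the Fano scheme of \((k-1)\)-planes in a smooth \((2k-1)\)-dimensional \(q\)-bic, recorded as \(\mathbf{DL}_k^{\mathrm{Cox}}\), of dimension \(k\)) is the reading consistent with \parref{dl-zeta-coxeter} and with the displayed formula, so no correction is needed.
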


Notably, the zeta function of the Coxeter Deligne--Lusztig variety
\(\mathbf{DL}_k^{\mathrm{Cox}}\) with respect to their \(\FF_{q^2}\)-structure
\(\phi\) can be explicitly determined. In the following, notation is as in
\parref{hermitian-planes-count}:

\begin{Lemma}\label{dl-zeta-coxeter}
\(\displaystyle
\log \zeta(\mathbf{DL}_k^{\mathrm{Cox}}, t) =
\sum\nolimits_{i = 0}^{2k} (-1)^{i+1} \bar{q}^{\binom{2k+1-i}{2}}
\stirlingI{2k}{i}_{\bar{q}}
\log(1 - \bar{q}^it)
\).
\end{Lemma}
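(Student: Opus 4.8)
The variety $\mathbf{DL}_k^{\mathrm{Cox}}$ is, by the discussion preceding the statement, the Deligne--Lusztig variety attached to a Coxeter element for the finite unitary group associated with a nonsingular $q$-bic form; it is smooth and irreducible of dimension $k$, and carries the $\mathbf{F}_{q^2}$-structure induced by $\phi = F^2$. The Grothendieck--Lefschetz trace formula expresses its zeta function as
\[
\zeta(\mathbf{DL}_k^{\mathrm{Cox}}, t) =
\prod\nolimits_{j}
\det\big(1 - t\,\phi^* \mid \mathrm{H}^j_{\mathrm{c}}(\mathbf{DL}_k^{\mathrm{Cox}} \otimes_\kk \bar\kk, \overline{\mathbf{Q}}_\ell)\big)^{(-1)^{j+1}},
\]
so that
\(
\log\zeta(\mathbf{DL}_k^{\mathrm{Cox}}, t) = \sum_{j}(-1)^{j+1}\sum_{\alpha}\log(1 - \alpha t),
\)
the inner sum running over the eigenvalues $\alpha$ of $\phi^*$ on $\mathrm{H}^j_{\mathrm{c}}$ counted with multiplicity. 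The plan is to determine these eigenvalues and substitute.

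This is precisely the situation analyzed by Lusztig in \cite{Lusztig:Frobenius}: for a Coxeter element the compactly supported cohomology is as thin as possible. Concretely, $\mathrm{H}^j_{\mathrm{c}}(\mathbf{DL}_k^{\mathrm{Cox}})$ vanishes outside the range $k \leq j \leq 2k$, the $2k+1$ eigenvalues $\bar{q}^0, \bar{q}^1, \ldots, \bar{q}^{2k}$ of $\phi^* = (F^*)^2$ are pairwise distinct, and each occurs in a single cohomological degree. Combining the decomposition of $\sum_j(-1)^j[\mathrm{H}^j_{\mathrm{c}}]$ into the unipotent characters of the finite unitary group lying in the Coxeter series with the explicit eigenvalues of Frobenius on each such character --- both tabulated in \cite{Lusztig:Frobenius} --- and rewriting the resulting $\mathrm{GL}$-type formulas for the unitary group via the Ennola substitution $q \mapsto -q = \bar{q}$, as in \parref{hermitian-planes-count}, one finds that the block of $\mathrm{H}^*_{\mathrm{c}}$ on which $\phi^*$ acts by $\bar{q}^i$ contributes exactly
\[
(-1)^{i+1}\,
\bar{q}^{\binom{2k+1-i}{2}}\,
\stirlingI{2k}{i}_{\bar{q}}\,
\log(1 - \bar{q}^i t)
\]
to $\log\zeta(\mathbf{DL}_k^{\mathrm{Cox}}, t)$. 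As a sanity check, when $k = 1$ the variety $\mathbf{DL}_1^{\mathrm{Cox}}$ is the complement in a smooth $q$-bic curve $C$ of its $q^3 + 1$ Hermitian points (see \parref{hermitian-maximal-count}); since $C$ with its $\mathbf{F}_{q^2}$-structure is the Hermitian curve, hence maximal over $\mathbf{F}_{q^2}$, the excision sequence of the pair gives $\mathrm{H}^0_{\mathrm{c}} = 0$, an $\mathrm{H}^1_{\mathrm{c}}$ with a $q^3$-dimensional part on which $\phi^*$ is trivial and a $q(q-1)$-dimensional part on which $\phi^*$ acts by $-q$, and $\mathrm{H}^2_{\mathrm{c}} = \overline{\mathbf{Q}}_\ell(-1)$; substituting reproduces the claimed formula for $k = 1$.

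Summing the contributions over $0 \leq i \leq 2k$ yields the identity. The heart of the matter is the middle step: extracting from Lusztig's general results the cohomology degree by degree together with the Frobenius eigenvalues in the twisted type, and checking that the resulting signed multiplicities are the stated products of a power of $\bar{q}$ with a Gaussian binomial coefficient. One could in principle bypass the representation theory: the Fano scheme of $(k-1)$-planes in a smooth $q$-bic hypersurface of dimension $2k-1$ --- a maximal isotropic Grassmannian whose $\mathbf{F}_{q^{2r}}$-point counts are a $q$-bic analogue of the classical Hermitian formula --- decomposes, by an argument parallel to \parref{dl-description} and \parref{dl-decomposition}, into locally closed strata that are disjoint unions of the $\mathbf{DL}_j^{\mathrm{Cox}}$ for $0 \leq j \leq k$ with multiplicities the Hermitian plane counts of \parref{hermitian-planes-count}, and solving the resulting recursion recovers all the zeta functions; but identifying that point count and disentangling the recursion is itself a substantial computation, so the cohomological route through \cite{Lusztig:Frobenius} is the efficient one.
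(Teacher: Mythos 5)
Your overall strategy coincides with the paper's --- everything rests on Lusztig's analysis of Coxeter Deligne--Lusztig varieties in \cite{Lusztig:Frobenius} --- and your qualitative description of \(\mathrm{H}^\ast_{\mathrm{c}}(\mathbf{DL}_k^{\mathrm{Cox}})\) (concentration in degrees \(k\) through \(2k\), the \(2k+1\) pairwise distinct eigenvalues \(\bar{q}^0,\ldots,\bar{q}^{2k}\) of \(\phi^\ast = (F^\ast)^2\), each occurring in a single degree) is correct, as is your \(k=1\) sanity check against the Hermitian curve. But there is a genuine gap exactly where you yourself locate the heart of the matter: the claim that the \(\bar{q}^i\)-eigenblock contributes \((-1)^{i+1}\bar{q}^{\binom{2k+1-i}{2}}\stirlingI{2k}{i}_{\bar{q}}\log(1-\bar{q}^it)\) is introduced with ``one finds that'' and never derived. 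That identity \emph{is} the content of the lemma. Reducing it to ``read the unipotent character degrees in the Coxeter series off Lusztig's tables and apply the Ennola substitution'' leaves the actual work undone; in particular you never determine which cohomological degree \(j\) (hence which sign \((-1)^{j+1}\)) carries which eigenvalue, which you need in order to convert dimensions of unipotent representations into the stated signed coefficients.

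The paper avoids this character-by-character bookkeeping entirely: Lusztig's formula (6.1.2) gives the point-count generating function \(\sum_{s\geq 1}\#\mathbf{DL}_k^{\mathrm{Cox},\phi^s}\,t^s\) in closed form as \(\bar{q}^{k(2k+1)}(1-\bar{q})^{2k}[2k]_{\bar{q}}!\cdot t^{2k+1}\big/\prod_{i=0}^{2k}(1-\bar{q}^it)\), the only input from the tables being the order of the adjoint group \(\mathrm{PU}_{2k+1}(q)\). The signed multiplicities then fall out of a single explicit partial-fraction decomposition of that rational function, after which one integrates against \(dt/t\). So to complete your argument you should either carry out the unipotent-degree and eigenvalue-placement computation in type \({}^2\mathrm{A}_{2k}\) explicitly, or replace that step by the partial-fraction computation; as written, the formula to be proven is assumed.
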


\begin{proof}
Lusztig's calculation in \cite[(6.1.2)]{Lusztig:Frobenius} gives
\[
t\frac{d}{dt}\log\zeta(\mathbf{DL}_k^{\mathrm{Cox}},t) =
\sum_{s \geq 1} \#\mathbf{DL}_k^{\mathrm{Cox}, \phi^s} \cdot t^s =
\bar{q}^{k(2k+1)} (1 - \bar{q})^{2k} [2k]_{\bar{q}}!
\cdot
\frac{t^{2k+1}}{\prod\nolimits_{i = 0}^{2k} (1 - \bar{q}^i t)}
\]
upon comparing with the tables \cite[p.106 and p.147]{Lusztig:Frobenius} and
noting that the associated adjoint group has order
\(\#\mathrm{PU}_{2k+1}(q) = \bar{q}^{k(2k+1)} \prod\nolimits_{i = 2}^{2k+1}(\bar{q}^i - 1)\).
The partial fraction decomposition of the rational function is given by:
\[
\frac{t^{2k+1}}{\prod\nolimits_{i = 0}^{2k} (1 - \bar{q}^it)} =
\sum_{i = 0}^{2k}\Bigg(
(-1)^i \bar{q}^{\binom{i+1}{2}-2ik}(1 - \bar{q})^{-2k}
\frac{1}{[2k]_{\bar{q}}!}
\stirlingI{2k}{i}_{\bar{q}}
\frac{t}{1 - \bar{q}^i t}
\Bigg).
\]
Combining and applying the operator
\(\int \frac{dt}{t}\) then gives the result.
\end{proof}

These together give an explicit expression for the \'etale Betti numbers of the
Fano scheme:

\begin{Theorem}\label{dl-betti}
For each \(0 \leq k \leq 2m+2\),
\[
b_k(\mathbf{F}) =
\bar{q}^{\binom{2m+3-k}{2}} \stirlingI{2m+2}{k}_{\bar{q}} +
\sum_{i = 0}^{m - \lceil  k/2\rceil} (1-\bar{q})^{i+1}
\bar{q}^{\binom{2m+1-2i-k}{2}}
[2i+1]_{\bar{q}}!!
\stirlingI{2m+3}{2i+2}_{\bar{q}}
\stirlingI{2m-2i}{k}_{\bar{q}}.
\]
\end{Theorem}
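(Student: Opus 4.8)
The plan is to compute the zeta function $\zeta(\mathbf{F},t)$ of $\mathbf{F}$ with respect to its $\mathbf{F}_{q^2}$-structure $\phi$ from \parref{hermitian-structures}, and then read off the Betti numbers using that $\mathbf{F}$ is a smooth projective variety of dimension $m+1$ by Theorem \parref{theorem-fano-schemes}. Since $Y \mapsto \#Y(\mathbf{F}_{q^{2s}})$, and hence $Y \mapsto \log\zeta(Y,t)$, is additive on $\mathrm{K}_0(\mathrm{Var}_{\mathbf{F}_{q^2}})$, Corollary \parref{dl-decomposition} (an identity over $\mathbf{F}_{q^2}$ relative to $\phi$, all strata and identifications there being $\phi$-stable) gives
\[
\log\zeta(\mathbf{F},t) =
\log\zeta(\mathbf{DL}^{\mathrm{Cox}}_{m+1},t) +
\sum\nolimits_{\ell = 0}^{m}
\#\mathbf{F}_{m-\ell}(X)_{\mathrm{Herm}}\cdot\log\zeta(\mathbf{DL}^{\mathrm{Cox}}_\ell,t).
\]
Feeding in Lemma \parref{dl-zeta-coxeter} for each Coxeter zeta function and Proposition \parref{hermitian-planes-count} with $n = 2m+2$ for the Hermitian plane counts, the right side takes the form $\sum_{k\geq 0} c_k\log(1-\bar{q}^k t)$ with
\[
c_k = (-1)^{k+1}\Bigl(
\bar{q}^{\binom{2m+3-k}{2}}\stirlingI{2m+2}{k}_{\bar{q}} +
\sum\nolimits_{\ell = 0}^{m}\#\mathbf{F}_{m-\ell}(X)_{\mathrm{Herm}}\,
\bar{q}^{\binom{2\ell+1-k}{2}}\stirlingI{2\ell}{k}_{\bar{q}}
\Bigr) \in \mathbf{Z},
\]
the $\ell$-th inner summand vanishing once $k > 2\ell$ since $\stirlingI{2\ell}{k}_{\bar{q}} = 0$ there. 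Equivalently $\zeta(\mathbf{F},t) = \prod_{k\geq 0}(1-\bar{q}^k t)^{c_k}$.

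To extract $b_k(\mathbf{F})$: because $\mathbf{F}$ is smooth and projective, $\zeta(\mathbf{F},t) = \prod_{k=0}^{2m+2}P_k(t)^{(-1)^{k+1}}$ with $P_k = \det\!\bigl(1 - \phi\,t \mid \mathrm{H}^k_{\mathrm{\acute{e}t}}(\mathbf{F},\mathbf{Q}_\ell)\bigr)$ of degree $b_k(\mathbf{F})$, and by purity every reciprocal root of $P_k$ has absolute value $q^k$; in particular $\prod_{k\,\mathrm{odd}}P_k$ and $\prod_{k\,\mathrm{even}}P_k$ are coprime, as are the products obtained by collecting the factors $(1-\bar{q}^k t)^{c_k}$ with $c_k>0$ and with $c_k<0$. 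Since $\bar{q}^k = (-q)^k$ has absolute value $q^k$, comparing these two lowest-terms presentations of the rational function $\zeta(\mathbf{F},t)$ factor by factor, sorted by absolute value, forces $P_k = (1-\bar{q}^k t)^{(-1)^{k+1}c_k}$ — so $\phi$ acts on $\mathrm{H}^k_{\mathrm{\acute{e}t}}(\mathbf{F},\mathbf{Q}_\ell)$ as the scalar $\bar{q}^k$, the sign $(-1)^{k+1}$ of $c_k$ is imposed by which side of the fraction the factor sits on, and $b_k(\mathbf{F}) = (-1)^{k+1}c_k$.

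It remains to recognise $(-1)^{k+1}c_k$ as the asserted expression: its first term is already the leading term $\bar{q}^{\binom{2m+3-k}{2}}\stirlingI{2m+2}{k}_{\bar{q}}$, and in the remaining sum one reindexes by $i \coloneqq m-\ell$ and substitutes $\#\mathbf{F}_i(X)_{\mathrm{Herm}} = (1-\bar{q})^{i+1}[2i+1]_{\bar{q}}!!\stirlingI{2m+3}{2i+2}_{\bar{q}}$ from Proposition \parref{hermitian-planes-count}; the vanishing condition $k > 2\ell = 2(m-i)$ becomes $i > m - \lceil k/2\rceil$, cutting the sum off at exactly the stated bound, while $\bar{q}^{\binom{2\ell+1-k}{2}} = \bar{q}^{\binom{2m+1-2i-k}{2}}$ and $\stirlingI{2\ell}{k}_{\bar{q}} = \stirlingI{2m-2i}{k}_{\bar{q}}$ give the formula. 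I expect the real work to lie in this last step together with the derivation of the displayed formula for $c_k$ — aligning the indices of Lemma \parref{dl-zeta-coxeter} and Proposition \parref{hermitian-planes-count} with those occurring for $\mathbf{F}$, and tracking signs and the range of the reindexed sum — rather than in the cohomological step, which only uses smoothness, projectivity and purity once one notices that the zeta function of the first paragraph is visibly a product of integer powers of the $(1-\bar{q}^k t)$; the one thing to watch there is not to invoke more Tate-type structure than purity plus that factorisation already supply.
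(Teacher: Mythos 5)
Your proposal is correct and is essentially the paper's own argument: additivity of the logarithmic zeta function applied to the decomposition of \parref{dl-decomposition}, with the inputs \parref{dl-zeta-coxeter} and \parref{hermitian-planes-count}, then reading off \(b_k(\FF)\) from the resulting product of factors \((1-\bar{q}^k t)^{c_k}\) using smoothness, projectivity and purity. The paper states this in two lines; you have merely filled in the reindexing and the extraction of the \(P_k\), both of which check out.
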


\begin{proof}
The logarithmic zeta function is additive along disjoint unions, so this
follows from the motivic decomposition of \(\mathbf{F}\) in
\parref{dl-decomposition} together with the computations
\parref{hermitian-planes-count} and \parref{dl-zeta-coxeter}.
\end{proof}

Since \(\FF\) is smooth of dimension \(m+1\) by Theorem \parref{theorem-fano-schemes},
Poincar\'e duality gives \(b_k(\FF) = b_{2m+2-k}(\FF)\) for each
\(0 \leq k \leq 2m+2\). This yields a curious set of identities amongst
\(q\)-binomial coefficients. In particular, this gives a neat expression for
the first Betti number:

\begin{Corollary}\label{dl-first-betti}
\(b_1(\FF) = b_{2m+1}(\FF) = \bar{q} [2m+2]_{\bar{q}}\). \qed
\end{Corollary}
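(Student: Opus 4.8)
The plan is to reduce the computation of $b_1(\FF)$ to that of $b_{2m+1}(\FF)$, which turns out to be immediate from Theorem \parref{dl-betti}. First I would invoke Theorem \parref{theorem-fano-schemes}: since $X$ is smooth, $\FF$ is a smooth projective variety of dimension $m+1$, so $\ell$-adic Poincar\'e duality applies and gives $b_k(\FF) = b_{2m+2-k}(\FF)$ for all $0 \leq k \leq 2m+2$; in particular $b_1(\FF) = b_{2m+1}(\FF)$. Thus it suffices to evaluate the formula of Theorem \parref{dl-betti} at $k = 2m+1$.

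The key observation is that this value of $k$ annihilates the sum: one has $\lceil k/2 \rceil = \lceil m + \tfrac12 \rceil = m+1$, so the summation range $0 \leq i \leq m - \lceil k/2 \rceil$ is empty, and only the leading term $\bar q^{\binom{2m+3-k}{2}} \stirlingI{2m+2}{k}_{\bar q}$ survives. Substituting $k = 2m+1$ gives $\binom{2m+3-k}{2} = \binom{2}{2} = 1$ and, by the symmetry $\stirlingI{n}{j}_{\bar q} = \stirlingI{n}{n-j}_{\bar q}$ of Gaussian binomial coefficients, $\stirlingI{2m+2}{2m+1}_{\bar q} = \stirlingI{2m+2}{1}_{\bar q} = [2m+2]_{\bar q}$. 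Hence $b_{2m+1}(\FF) = \bar q\,[2m+2]_{\bar q}$, and the same value holds for $b_1(\FF)$ by the duality above.

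There is essentially no obstacle here: the content is entirely carried by having already established smoothness of $\FF$ and the Betti number formula. The one mild subtlety worth flagging is that one should resist simplifying the $k = 1$ specialization directly --- that expression is a sum of $m$ generically nonzero terms, and collapsing it to $\bar q\,[2m+2]_{\bar q}$ amounts to a nontrivial identity among Gaussian binomials, precisely the kind of \emph{curious identity} the surrounding discussion alludes to. Routing through Poincar\'e duality sidesteps it entirely, and the same trick explains why $b_{2m+1}$, rather than $b_1$, is the "easy end" of the middle cohomology.
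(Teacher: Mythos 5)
Your proposal is correct and matches the paper's own argument: the paper likewise invokes smoothness of \(\FF\) (dimension \(m+1\)) from Theorem \parref{theorem-fano-schemes} to get Poincar\'e duality \(b_k(\FF)=b_{2m+2-k}(\FF)\), and then reads off the value from \parref{dl-betti} at the end where the correction sum is empty, leaving \(\bar q\,[2m+2]_{\bar q}\). Your remark that the direct \(k=1\) evaluation would instead produce one of the ``curious identities'' is exactly the point the paper makes in passing.
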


Comparing with \parref{dl-hypersurface-cohomology} shows that \(b_1(\FF) = b_{2m+1}(X)\).
Adapting the argument of \citeThesis{2.8.3}
shows that the Fano correspondence \(\mathbf{L}\) between \(X\) and \(\FF\)
acting as in \parref{cgaj-action} induces an isomorphism
\[
\mathbf{L}^* \colon \mathrm{H}^{2m+1}_{\mathrm{\acute{e}t}}(X,\mathbf{Z}_\ell)
\to \mathrm{H}^1_{\mathrm{\acute{e}t}}(\FF,\mathbf{Z}_\ell).
\]
Details are omitted, as this will not be used in the following.

\section{Intermediate Jacobian}\label{section-cgaj}
This Section is concerned with finer aspects of the geometry of a smooth
\(q\)-bic hypersurface of odd dimension \(2m+1\); as such, the base field
\(\kk\) will be henceforth taken to be algebraically closed. The main result is
that the Albanese variety \(\mathbf{Alb}_\FF\) of the Fano scheme \(\FF\) of
\(m\)-planes in \(X\) is purely inseparably isogeneous via an Abel--Jacobi map
to a certain intermediate Jacobian \(\mathbf{Ab}_X^{m+1}\) associated with
\(X\): see \parref{cgaj-result}. Here, \(\mathbf{Ab}_X^{m+1}\) is taken to be
the algebraic representative for algebraically-trivial cycles of codimension
\(m+1\) in \(X\), see \parref{cgaj-algrep}. Existence of
\(\mathbf{Ab}_X^{m+1}\) is known in this case by work of Murre and Fakhruddin,
see \parref{cgaj-algrep-exists}. The result relies on a study of the geometry
of special cycles in \(\FF\), which occupies the first half of this Section,
the main computations being \parref{cgaj-point-divisor-intersections} and
\parref{cgaj-P-and-x}.

\subsectiondash{Incidence divisors}\label{cgaj-incidence-divisors}
The Fano scheme \(\FF\) contains two particularly interesting types of
divisors that arise from incidence relations; first are the divisors \(D_P\) of
\(m\)-planes in \(X\) that are incident with a fixed \(m\)-plane
\(P \subset X\) as in \parref{correspondence-incidence}; and second are the
divisors
\[
D_x \coloneqq \Set{[P] \in \FF | P\;\text{contains}\; x}
\]
of \(m\)-planes that contain a fixed Hermitian point \(x\) of the hypersurface
\(X\). It follows from \parref{incidences-containing-a-plane} and
\parref{basics-incidences-nondegenerate} that \(D_x\) is isomorphic to the Fano
scheme of \((m-1)\)-planes in a smooth \(q\)-bic hypersurface of dimension
\(2m-1\); moreover, writing \(L \subset V\) for the \(1\)-dimensional subspace
underlying \(x\), \(D_x\) may be obtained from \(\FF\) via intersection with
the Grassmannian \(\mathbf{G}(m,V/L)\), linearly embedded in
\(\mathbf{G}(m+1,V)\) as the subvariety of \((m+1)\)-dimensional subspaces
containing \(L\).

The next few paragraphs study the relations amongst these divisors. To begin,
consider how the divisors associated with two Hermitian points \(x\) and \(y\)
intersect. To state the result, phrased in terms of the associated line
bundles, one piece of notation: Suppose that \(x\) and \(y\) span a Hermitian
line \(\ell\) in \(X\). Then \(D_x\) is the Fano scheme of \((m-1)\)-planes in
the \(q\)-bic hypersurface \(\bar{X}\) lying at the base of the cone
\(X \cap \mathbf{T}_{X,x}\) obtained by intersecting \(X\) with its embedded
tangent space \(\mathbf{T}_{X,x}\) at \(x\). The line \(\ell\) corresponds to a
Hermitian point of \(\bar{X}\), denoted by \(\bar{y}\). The divisors \(D_x\)
and \(D_y\) intersect as follows:

\begin{Lemma}\label{cgaj-point-divisor-intersections}
Let \(x\) and \(y\) be Hermitian points of \(X\). Then
\[
\sO_{\FF}(D_y)\rvert_{D_x} \cong
\begin{dcases*}
\sO_{D_x}(1-q) & if \(x = y\), \\
\sO_{D_x}(D_{\bar{y}}) & if \(x \neq y\) and \(\langle x, y \rangle \subset X\), and \\
\sO_{D_x} & otherwise.
\end{dcases*}
\]
\end{Lemma}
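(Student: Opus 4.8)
The plan is to treat the three cases in turn, using throughout the setup introduced just above: for a Hermitian point $x = [L]$, the divisor $D_x$ is identified by \parref{incidences-containing-a-plane} and \parref{basics-incidences-nondegenerate} with the Fano scheme $\FF_{m-1}(\bar X)$ of $(m-1)$-planes in the smooth $q$-bic hypersurface $\bar X$ of dimension $2m-1$ sitting at the base of the cone $X \cap \mathbf{T}_{X,x}$; in particular $D_x$ is smooth, hence a reduced, irreducible, Cartier prime divisor on the smooth variety $\FF$. Moreover $\sO_\FF(1)\rvert_{D_x}$ is the Pl\"ucker polarization of $D_x$: under the linear embedding of $\mathbf{G}(m,V/L)$ in $\mathbf{G}(m+1,V)$ as the $(m+1)$-dimensional subspaces containing $L$, the Pl\"ucker bundle $\det\mathcal{S}^\vee$ restricts to $L^\vee \otimes \det\bar{\mathcal{S}}^\vee$, which is the Pl\"ucker bundle of $\mathbf{G}(m,V/L)$ up to the trivial twist by $L^\vee$. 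When $x \neq y$ one has $D_x \not\subseteq D_y$, so $\sO_\FF(D_y)\rvert_{D_x} \cong \sO_{D_x}(D_x \cap D_y)$ for the schematic intersection $D_x \cap D_y \coloneqq D_x \times_\FF D_y$, and the task is to identify this divisor on $D_x$; when $x = y$ the line bundle sought is the normal bundle $N_{D_x/\FF} \cong \sO_\FF(D_x)\rvert_{D_x}$.

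First I would dispatch the \emph{otherwise} case: if $x \neq y$ and $\ell \coloneqq \langle x, y\rangle$ is not contained in $X$, then no plane of $X$ contains both $x$ and $y$, so $D_x \cap D_y = \varnothing$ and $\sO_\FF(D_y)\rvert_{D_x} \cong \sO_{D_x}$. Next, suppose $x \neq y$ and $\ell \subseteq X$. Then $\ell$ is Hermitian by \parref{hermitian-subspaces}, so $\bar y \coloneqq \ell/L$ is a Hermitian point of $\bar X$, and set-theoretically $D_x \cap D_y$ is the locus of planes of $X$ containing $\ell$, which is the support of $D_{\bar y} \subseteq \FF_{m-1}(\bar X) = D_x$. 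To see that the schematic intersection is reduced, and hence equals $D_{\bar y}$, I would compute tangent spaces: at a point $[\PP U] \in D_x \cap D_y$, so that $L, M \subseteq U$ with $M$ the line underlying $y$, \parref{differential-identify} gives $\mathcal{T}_\FF \otimes \kappa([\PP U]) \cong \Hom_\kappa(U, U^{[1],\perp}/U)$, and the same first-order deformation analysis identifies $\mathcal{T}_{D_x}\rvert_{[\PP U]}$ and $\mathcal{T}_{D_y}\rvert_{[\PP U]}$ with the subspaces of homomorphisms vanishing on $L$, respectively on $M$. Since $\dim_\kappa U^{[1],\perp}/U = 1$ because $\beta$ is nonsingular, these two subspaces span $\Hom_\kappa(U, U^{[1],\perp}/U)$, so $D_x$ and $D_y$ meet transversally along their intersection; hence $D_x \cap D_y$ is smooth, so reduced, so equal to $D_{\bar y}$, giving $\sO_\FF(D_y)\rvert_{D_x} \cong \sO_{D_x}(D_{\bar y})$.

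For the remaining case $x = y$, I would combine the normal bundle sequence $0 \to \mathcal{T}_{D_x} \to \mathcal{T}_\FF\rvert_{D_x} \to N_{D_x/\FF} \to 0$ with the isomorphism $\mathcal{T}_\FF \cong \HomSheaf_{\sO_\FF}(\mathcal{S}_\FF, \mathcal{S}^{[1],\perp}_\FF/\mathcal{S}_\FF)$ of \parref{differential-identify}. By the tangent space computation of the previous paragraph, $\mathcal{T}_{D_x}$ is the subsheaf of homomorphisms vanishing on the trivial line subbundle $L_{D_x} \subseteq \mathcal{S}_\FF\rvert_{D_x}$, whence $N_{D_x/\FF} \cong \HomSheaf_{\sO_{D_x}}(L_{D_x}, (\mathcal{S}^{[1],\perp}_\FF/\mathcal{S}_\FF)\rvert_{D_x}) \cong (\mathcal{S}^{[1],\perp}_\FF/\mathcal{S}_\FF)\rvert_{D_x}$. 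To pin down the line bundle $\mathcal{S}^{[1],\perp}_\FF/\mathcal{S}_\FF$, I would observe that since $X$ is smooth the sequence \eqref{differential-globalize} is a short exact sequence of vector bundles by \parref{differential-smooth-points} and \parref{differential-singular-locus}; taking determinants gives $\mathcal{S}^{[1],\perp}_\FF/\mathcal{S}_\FF \cong \det\mathcal{Q}_\FF \otimes \det\mathcal{S}^{[1]}_\FF \cong \sO_\FF(1) \otimes \sO_\FF(-q) = \sO_\FF(1-q)$, using $\det\mathcal{S}^{[1]}_\FF = (\det\mathcal{S}_\FF)^{[1]} = \sO_\FF(-1)^{[1]} = \sO_\FF(-q)$. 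Restricting along $D_x \hookrightarrow \FF$ and applying the identification of $\sO_\FF(1)\rvert_{D_x}$ from the first paragraph yields $N_{D_x/\FF} \cong \sO_{D_x}(1-q)$.

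The step I expect to demand the most care is the transversality assertion in the second case, together with the accompanying check that the schematic intersection $D_x \cap D_y$ carries exactly the reduced structure $D_{\bar y}$ and not a nilpotent thickening of it; by contrast, the normal bundle computation and the determinant bookkeeping in the third case --- in particular the Frobenius twist $\det\mathcal{S}^{[1]}_\FF \cong \sO_\FF(-q)$ --- should be routine.
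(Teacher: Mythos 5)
Your proposal is correct, but at both of the substantive steps it takes a different route from the paper, so let me compare. For the case \(x \neq y\), the paper avoids any tangent-space or transversality argument: it observes that, scheme-theoretically, \(\mathbf{G}(m,V/L) \cap \mathbf{G}(m,V/M) = \mathbf{G}(m-1,V/\langle L,M\rangle)\), since both represent subspaces containing \(\langle L,M\rangle\); hence \(D_x \cap D_y = \FF \cap \mathbf{G}(m-1,V/\langle L,M\rangle)\), which under \(D_x \cong \FF_{m-1}(\bar{X})\) is exactly \(D_{\bar{y}}\) when \(\langle x,y\rangle \subset X\) and is empty otherwise. Your route---set-theoretic identification plus the transversality computation \(\mathcal{T}_{D_x} + \mathcal{T}_{D_y} = \mathcal{T}_\FF\) at intersection points, using \(\dim_\kappa U^{[1],\perp}/U = 1\) and \parref{differential-identify}, followed by the Jacobian criterion to get smoothness and hence reducedness of the schematic intersection---is valid, but does more work to reach the same scheme-theoretic conclusion; the payoff of the paper's argument is that the scheme structure comes for free from the functor of points of the sub-Grassmannians. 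For the case \(x = y\), the paper computes the normal bundle in one line by adjunction, \(\sO_\FF(D_x)\rvert_{D_x} \cong \omega_{D_x} \otimes \omega_\FF^\vee\rvert_{D_x}\), feeding in the canonical bundle formula of \parref{differential-generic-smoothness} for both \(\FF\) and \(D_x \cong \FF_{m-1}(\bar{X})\) together with the linearity of the embedding; you instead identify \(N_{D_x/\FF} \cong \HomSheaf_{\sO_{D_x}}(L_{D_x},(\mathcal{S}^{[1],\perp}_\FF/\mathcal{S}_\FF)\rvert_{D_x})\) from \parref{differential-identify} and evaluate the line bundle \(\mathcal{S}^{[1],\perp}_\FF/\mathcal{S}_\FF \cong \sO_\FF(1-q)\) by taking determinants in the short exact sequence \eqref{differential-globalize}. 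The two computations rest on the same underlying facts (the canonical bundle formula is itself obtained from determinants of the tangent sequence), and yours in fact records the slightly finer statement \(\mathcal{S}^{[1],\perp}_\FF/\mathcal{S}_\FF \cong \sO_\FF(1-q)\) on all of \(\FF\), at the mild cost of having to globalize the fibrewise identification of \(\mathcal{T}_{D_x}\) as the homomorphisms killing \(L_{D_x}\)---harmless, as you note, since both are subbundles of \(\mathcal{T}_\FF\rvert_{D_x}\) and \(D_x\) is smooth. Your determinant bookkeeping, including \(\det\mathcal{S}^{[1]}_\FF \cong \sO_\FF(-q)\) and the compatibility of \(\sO_\FF(1)\rvert_{D_x}\) with the Pl\"ucker polarization of \(D_x\), is accurate, so I see no gap.
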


\begin{proof}
The case \(x = y\) pertains to the normal bundle of \(D_x\) in \(\FF\), which
may be determined via \parref{differential-generic-smoothness} as
\[
\sO_{\FF}(D_x)\rvert_{D_x}
\cong \omega_{D_x} \otimes \omega_\FF^\vee\rvert_{D_x}
\cong \sO_{D_x}(1-q)
\]
since \(D_x\) is linearly embedded in \(\FF\) with respect to the Pl\"ucker
polarization. Now suppose that \(x \neq y\). Let \(L\) and \(M\) be the
\(1\)-dimensional subspaces of \(V\) underlying \(x\) and \(y\), respectively.
By the discussion of \parref{cgaj-incidence-divisors}, \(D_x \cap D_y\) is
the intersection of the Fano scheme with two sub-Grassmannian varieties in
\(\mathbf{G}(m+1,V)\), as on the the left side of:
\[
\FF \cap \mathbf{G}(m,V/L) \cap \mathbf{G}(m,V/M)
= \FF \cap \mathbf{G}(m-1,V/\langle L,M \rangle).
\]
The sub-Grassmannians intersect along the Grassmannian of \((m-1)\)-dimensional
subspaces containing the \(2\)-dimensional space \(\langle L,M \rangle\).
This implies that \(D_x\) and \(D_y\) intersect along \(D_{\bar{y}}\) if the
line \(\langle x, y \rangle\) is contained in \(X\), and are disjoint otherwise.
\end{proof}

The next task is to relate the divisors \(D_P\) and \(D_x\). The main
observation is that Hermitian planes intersect \(m\)-planes in Hermitan
subplanes:

\begin{Proposition}\label{cgaj-hermitian-incidence}
Let \(P_0 \subset X\) be a Hermitian \(k\)-plane for some \(0 \leq k \leq m\).
If \(P' \subset X\) is any \(m\)-plane, then \(P_0 \cap P'\) is either empty or
a Hermitian plane.
\end{Proposition}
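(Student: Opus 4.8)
The plan is to pass to the underlying linear subspaces and exploit the structure of the orthogonal of a maximal isotropic subspace. Write $P_0=\PP U_0$ and $P'=\PP U$, so that $U_0\subseteq V$ is a Hermitian isotropic subspace while $U\subseteq V$ is isotropic of dimension $m+1$; as $\dim_\kk V=2m+3$, this $U$ is a \emph{maximal} isotropic subspace. Put $W\coloneqq U_0\cap U$ and assume $W\neq\{0\}$. By \parref{hermitian-subspaces} it suffices to show $\phi(W)=W$; since $\phi\colon V\to V$ is a $q^2$-linear bijection one has $\dim_\kk\phi(W)=\dim_\kk\phi^{-1}(W)=\dim_\kk W$, so it is enough to prove \emph{either} $\phi(W)\subseteq W$ \emph{or} $\phi^{-1}(W)\subseteq W$. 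Since $\phi$ and $\phi^{-1}$ are bijections preserving the Hermitian subspace $U_0$, one has $\phi^{\pm1}(W)=U_0\cap\phi^{\pm1}(U)$.

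Two structural facts drive the argument. First, $\phi(N)\subseteq N^{[1],\perp}$ for every isotropic subspace $N\subseteq V$: for $u,v\in N$, combining the identity of \parref{hermitian-phi-properties} with the twisting rule for $\beta^{[1]}$ from \parref{basics-qbic-forms} gives $\beta(u^{[1]},\phi(v))=\beta^{[1]}(v^{[2]},u^{[1]})=\beta(v^{[1]},u)^q$, which is zero because $N$ is an isotropic \emph{subspace} (over the infinite field $\kk$, isotropy of a subspace forces $\beta(v^{[1]},u)=0$ for all $u,v\in N$). Second, since $X$ is smooth, $\beta$ is nonsingular by \parref{differential-smoothness-X}, so $U^{[1],\perp}$ has dimension $m+2$, the double-orthogonal identity gives $(U^{[1],\perp})^\perp=U^{[1]}$, and hence the left kernel of the restriction $\beta|_{U^{[1],\perp}}$ is exactly $U$, a hyperplane; thus $\beta|_{U^{[1],\perp}}$ has rank one. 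A rank-one $q$-bic form has every isotropic subspace contained in its left or its right kernel; since the left kernel of $\beta|_{U^{[1],\perp}}$ is $U$ and its right kernel is an $(m+1)$-dimensional isotropic subspace which, by the first fact, coincides with $\phi(U)$ unless $U$ is Hermitian (in which case both kernels equal $U$), we conclude: \emph{every isotropic subspace of $U^{[1],\perp}$ is contained in $U$ or in $\phi(U)$}.

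With this, run the dichotomy on the isotropic subspace $U_0\cap U^{[1],\perp}$, which contains both $W=U_0\cap U$ and $\phi(W)=U_0\cap\phi(U)$. If $U_0\cap U^{[1],\perp}\subseteq U$, then $\phi(W)\subseteq U_0\cap U=W$, so $W$ is Hermitian. Otherwise $U_0\cap U^{[1],\perp}\subseteq\phi(U)$, so $W\subseteq\phi(U)$, whence $\phi^{-1}(W)\subseteq U$; together with $\phi^{-1}(W)=U_0\cap\phi^{-1}(U)\subseteq U_0$ this gives $\phi^{-1}(W)\subseteq U_0\cap U=W$, so again $W$ is Hermitian.

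The heart of the matter, and the step I expect to take the most care, is the normal-form analysis in the second paragraph: pinning down that the left kernel of $\beta|_{U^{[1],\perp}}$ is precisely $U$, that its right kernel is $\phi(U)$, and that a rank-one $q$-bic form has no isotropic subspaces beyond those inside the two kernels. This is the only place where maximality of $P'$ enters — it is what makes $U^{[1],\perp}$ just one dimension larger than $U$ — and it is a matter of elementary bookkeeping with the twisted orthogonality of \parref{basics-qbic-forms} rather than of any deeper input.
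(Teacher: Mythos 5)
Your proof is correct, but it takes a genuinely different route from the paper's. The paper first reduces to the case that \(P_0\) is a Hermitian \(m\)-plane and \(P_0\cap P'\) contains no Hermitian plane, then argues by contradiction using dynamics and degeneration: it invokes the identification \(\FF_{m,\mathrm{cyc}}=\FF_m\) (via \parref{cyclic-planes} and the comments before \parref{cyclic-maximal}) to know \(\dim P'\cap\phi(P')=m-1\), shows inductively that all iterates \(\phi^i(P')\) lie in the span \(L\) of \(P_0\) and \(P'\), and then uses the Fano-scheme dimension theory (\parref{basics-fano-equations}, \parref{differential-singular-locus}) to force \(P_0\) and every \(\phi^i(P')\) through \(\Sing(X\cap L)\), producing a nonempty \(\phi\)-stable, hence Hermitian, subspace of \(P_0\cap P'\). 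You replace all of this by a direct linear-algebra argument: for the maximal isotropic \(U\) the restriction of \(\beta\) to the \((m+2)\)-dimensional space \(U^{[1],\perp}\) has rank one, with descended left kernel \(U\) and right kernel \(\phi(U)\), so every isotropic subspace of \(U^{[1],\perp}\) lies in \(U\) or in \(\phi(U)\); applied to \(U_0\cap U^{[1],\perp}\), which contains both \(W\) and \(\phi(W)\), this yields \(\phi(W)\subseteq W\) or \(\phi^{-1}(W)\subseteq W\), hence \(\phi(W)=W\) by dimension count. Your approach buys simplicity and self-containedness: no reduction to \(k=m\), no appeal to \parref{cyclic-resolve} or to the singular-locus results, it works uniformly for all \(k\), and as a byproduct it re-proves \(\dim(P'\cap\phi(P'))\geq m-1\) since \(U\) and \(\phi(U)\) are both hyperplanes of \(U^{[1],\perp}\). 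One point to make explicit when writing it up: identifying the right kernel of \(\beta\rvert_{U^{[1],\perp}}\) with \(\phi(U)\) requires \(\beta(a^{[1]},\phi(v))=\beta(v^{[1]},a)^q=0\) for \(a\) ranging over all of \(U^{[1],\perp}\) and \(v\in U\), not merely for \(a\in U\) as in your stated ``first fact''; since the identity from \parref{hermitian-phi-properties} holds for all \(a,v\in V\), this is immediate, but the appeal as phrased does not literally cover it.
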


\begin{proof}
Begin with three observations: First, it follows from
\parref{hermitian-complete-intersection} or, alternatively, the arguments of
\parref{hermitian-planes-count} that every Hermitian \(k\)-plane in \(X\) is
contained in a Hermitian \(m\)-plane in \(X\), so it suffices to consider the
case \(k = m\). Second, if \(P'\) is Hermitian, then \(P_0 \cap P'\)
is stable under \(\phi\) and is Hermitian by \parref{hermitian-subspaces}.
Third, suppose \(P_0 \cap P'\) contains a nonempty Hermitian plane; let
\(K \subseteq V\) be a nonzero Hermitian subspace corresponding to a maximal
such. Let \(\bar{X}\) be the nonsingular \(q\)-bic hypersurface at the base of
the cone \(X \cap \PP K^\dagger\) as in
\parref{incidences-containing-a-plane} and
\parref{basics-incidences-nondegenerate}, and let \(\bar{P}_0\) and
\(\bar{P}'\) be the corresponding planes in \(\bar{X}\). Since taking Hermitian
vectors respects orthogonal decompositions, as in the proof of
\parref{hermitian-planes-count}, it follows that \(\bar{P}_0\) is a Hermitian
plane in \(\bar{X}\) and \(\bar{P}_0 \cap \bar{P}'\) does not contain a
Hermitian plane. Thus it suffices to consider the case where \(P_0\) is a
Hermitian \(m\)-plane, \(P'\) is not Hermitian, and where \(P_0 \cap P'\) does
not contain any Hermitian planes, in which case the task is to show that
\(P_0 \cap P' = \varnothing\).

Assume, on the contrary, that \(P_0 \cap P'\) is an \(r\)-plane with \(r \geq 0\),
so that \(P_0\) and \(P'\) together span an \((2m-r)\)-plane \(L\) in \(\PP V\).
Observe that for each \(i \geq 0\), the \(m\)-plane \(\phi^i(P')\) is isotropic
by \parref{hermitian-phi-properties}\ref{hermitian-phi-properties.isotropic}
and is moreover contained in \(L\): Indeed, as follows from
\parref{cyclic-planes} with the comments preceding \parref{cyclic-maximal},
\(\phi^i(P') \cap \phi^{i+1}(P')\) has dimension \(m-1\), and since \(P_0\) is
Hermitian while \(P_0 \cap P'\) is not,
\[
P_0 \cap \phi^{i+1}(P')
= \phi^{i+1}(P_0 \cap P')
\neq \phi^i(P_0 \cap P')
= P_0 \cap \phi^i(P').
\]
Here, the first and third equalities use the fact that \(\phi\) is injective
and stabilizes \(P_0\). Therefore \(\phi^{i+1}(P')\) is spanned by the
\((m-1)\)-plane \(\phi^i(P') \cap \phi^{i+1}(P')\), and any point in \(P_0 \cap
\phi^{i+1}(P')\)---which the assumption implies is not empty!--- not contained
in \(\phi^i(P')\). Induction now implies that \(\phi^{i+1}(P')\) lies in \(L\),
as claimed.

Consider the \(q\)-bic hypersurface \(X' \coloneqq X \cap L\) of dimension
\(2m-r-1\). Then \(X'\) contains the \(m\)-planes \(P_0\) and, by
the claim above, each of the \(\phi^i(P')\) with \(i \geq 0\). This implies
two things: first, since a smooth hypersurface can never contain linear spaces
larger than half its dimension, \(X'\) is singular; and, second, the Fano
scheme of \(m\)-planes in \(X'\) is everywhere of larger than its expected
dimension by \parref{basics-fano-equations}. Thus
\parref{differential-singular-locus} implies that each of \(P_0\) and the
\(\phi^i(P')\) must pass through the singular locus of \(X'\). But, finally, on
the one hand,
\[
\varnothing \neq
\Sing X' \subseteq
P_0 \cap \Big(\bigcap\nolimits_{i \geq 0} \phi^i(P')\Big).
\]
On the other hand, the intersection on the right is stable under \(\phi\),
whence is Hermitian \parref{hermitian-subspaces}. This contradicts the
assumption that \(P_0 \cap P'\) contains no Hermitian planes, so
\(P_0 \cap P' = \varnothing\).
\end{proof}

The next statement gives the basic relationship between the divisors
\(D_P\) and \(D_x\). Continuing with the notation set at the end of
\parref{cgaj-incidence-divisors}, given a Hermitian point \(x\) of \(X\),
write \(\bar{P}\) for the \((m-1)\)-plane induced by \(P\) in the \(q\)-bic
hypersurface \(\bar{X}\) at the base of the cone \(X \cap \mathbf{T}_{X,x}\).

\begin{Proposition}\label{cgaj-P-and-x}
Let \(P \subset X\) be an \(m\)-plane. Then for every Hermitian point
\(x\) of \(X\),
\[
\sO_{\FF}(D_P)\rvert_{D_x} \cong
\begin{dcases*}
\sO_{D_x}(D_{\bar{P}}) & if \(x \notin P\), and \\
\sO_{D_x}(D_{\bar{P}})^{\otimes q^2} \otimes_{\sO_{D_x}} \sO_{D_x}(1-q) & if \(x \in P\).
\end{dcases*}
\]
Moreover, there is a decomposition of effective Cartier divisors on \(\FF\)
given by
\[
D_P
= D_P' + \sum\nolimits_{x \in P \cap X_{\mathrm{Herm}}} D_x
\]
where the components of \(D_P'\) generically
parameterize \(m\)-planes \(X\) which are incident with \(P\) and are
disjoint from \(P \cap X_{\mathrm{Herm}}\).
\end{Proposition}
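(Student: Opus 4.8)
The plan is to establish the decomposition \(D_P = D_P' + \sum_x D_x\) first---it is needed as an input, for both \(X\) and the derived hypersurface \(\bar X\), in the restriction formula---and then the formula for \(\sO_\FF(D_P)\rvert_{D_x}\), the case \(x\in P\) being the principal difficulty. Parts of the argument are mildly inductive on \(\dim X\), but the decomposition is proved directly.

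\emph{The decomposition.} Realize \(D_P\) as the cycle-theoretic pushforward \(\pr_{\FF,*}[Z_P]\) of the incidence scheme \(Z_P = \{([P'],y) : y \in \PP W' \cap P\}\subseteq\mathbf{L} = \FF_{m,0}(X)\) along the projection \(\pr_\FF\), where \(W'\subseteq V\) underlies \(P'\); this matches \(D_P = \mathbf{L}^*([P])\) and exhibits \(D_P\) as an effective divisor. For \(x\in P\cap X_{\mathrm{Herm}}\), the divisor \(D_x\) lies set-theoretically in \(D_P\) (every \([P']\in D_x\) has \(x\in P\cap P'\)) and has dimension \(m\) by \parref{incidences-containing-a-plane} and \parref{basics-incidences-nondegenerate}, hence is a component; it occurs with multiplicity one because for a general \([P']\in D_x\) the scheme \(\PP W'\cap P\) is the reduced point \(\{x\}\) (intersections of linear subspaces are reduced, and a general \((m+1)\)-plane through the line \(L\) underlying \(x\) meets the fixed \(P_0\) only in \(L\)), so \(\pr_\FF\) is birational over \(D_x\). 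Any other component \(C\) of \(D_P\) maps rationally to \(P\) via \([P']\mapsto\PP W'\cap P\); if the image is a point it must be Hermitian (else \(\dim D_y < m = \dim C\) by \parref{basics-incidences-nondegenerate}) and then \(C = D_y\), while if it is positive-dimensional then \(C\) is a component of \(D_P' := D_P-\sum_x D_x\), whose general member meets \(P\) at a point outside the finite set \(P\cap X_{\mathrm{Herm}}\). This yields the decomposition with \(D_P'\) effective.

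\emph{Restriction formula, \(x\notin P\).} Here \(D_x\not\subseteq D_P\): every \(m\)-plane \(P'\ni x\) in \(X\) lies in \(\mathbf{T}_{X,x}\), hence is a cone with vertex \(x\) over an \((m-1)\)-plane \(\bar{P'}\subseteq\bar X\), and a general such \(\bar{P'}\) misses the \((m-1)\)-plane \(\bar P\subseteq\bar X\) obtained by projecting \(P\cap\mathbf{T}_{X,x}\) from \(x\) (note \(P\not\subseteq\mathbf{T}_{X,x}\), since \(\bar X\) contains no \(m\)-planes). So \(\sO_\FF(D_P)\rvert_{D_x}\) is \(\sO_{D_x}\) of the scheme-theoretic intersection, and a linear-algebra computation modulo \(L\) (as in the proof of \parref{incidences-containing-a-plane}) identifies the condition \(W'\cap P_0\neq 0\) with \(\bar W'\cap\bar P\neq 0\); both loci being reduced divisors of the expected type, \(D_P\cap D_x = D_{\bar P}\) on \(D_x\cong\FF_{m-1}(\bar X)\) and \(\sO_\FF(D_P)\rvert_{D_x}\cong\sO_{D_x}(D_{\bar P})\).

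\emph{Restriction formula, \(x\in P\): the obstacle.} Now \(D_x\subseteq D_P\), so \(\sO_\FF(D_P)\rvert_{D_x} = \sO_\FF(D_x)\rvert_{D_x}\otimes\sO_\FF(D_P-D_x)\rvert_{D_x}\); the first factor is the normal bundle \(\sO_{D_x}(1-q)\) by the adjunction argument of \parref{cgaj-point-divisor-intersections} (the canonical bundle formula \parref{differential-generic-smoothness} for \(\FF\) and for \(\bar X\), together with \(D_x\) being Pl\"ucker-linearly embedded). By the decomposition, \(D_P-D_x = D_P' + \sum_{z\in(P\cap X_{\mathrm{Herm}})\setminus\{x\}}D_z\); for each such \(z\) the line \(\langle x,z\rangle\subseteq P\subseteq X\) is Hermitian, so \parref{cgaj-point-divisor-intersections} gives \(\sO_\FF(D_z)\rvert_{D_x}\cong\sO_{D_x}(D_{\bar z})\), and since each Hermitian line through \(x\) carries exactly \(q^2\) Hermitian points other than \(x\) (\parref{hermitian-coordinates}), the assignment \(z\mapsto\bar z\) is \(q^2\)-to-one onto \(\bar P\cap\bar X_{\mathrm{Herm}}\). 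Applying the decomposition on \(\bar X\) to \(\bar P\) to split off those points, the whole assertion comes down to the identity
\[
\sO_\FF(D_P')\rvert_{D_x}\cong\sO_{D_x}(D_{\bar P}')^{\otimes q^2},
\]
\(D_{\bar P}'\) being the residual incidence divisor on \(D_x\). This is the crux, and where the Frobenius enters: I expect to prove it by showing that \(D_P'\cap D_x\) equals the pullback \(\phi_{D_x}^{*}D_{\bar P}'\) under the \(q^2\)-power Frobenius \(\phi_{D_x}\) of \(D_x\), so that \(\sO_\FF(D_P')\rvert_{D_x}=\phi_{D_x}^{*}\sO_{D_x}(D_{\bar P}')\cong\sO_{D_x}(D_{\bar P}')^{\otimes q^2}\). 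The point is that, since \(x\in P\), both \(P\) and every \(P'\in D_x\) are cones with vertex \(x\), and the incidences contributing to \(D_P'\cap D_x\)---planes \(P'\ni x\) arising as limits of planes meeting \(P\) at points tending to \(x\)---are detected to order exactly \(q^2\) by the \(q^2\)-linear map \(\phi\) through the cone structure of \(X\) at \(x\) recorded in \S\parref{section-hermitian} and \parref{hermitian-complete-intersection}, while \parref{cgaj-hermitian-incidence} and \parref{hermitian-maximal} rule out spurious contributions over the Hermitian strata. As a consistency check: multiplying the claimed formula by \(q+1\) and applying \parref{correspondence-plucker} on \(X\) and on \(\bar X\) collapses both sides to the Pl\"ucker class of \(D_x\).
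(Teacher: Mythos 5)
Your overall architecture (the decomposition, the case \(x\notin P\), and the normal-bundle factor \(\sO_{D_x}(1-q)\) via \parref{cgaj-point-divisor-intersections}) agrees with the paper, but there is a genuine gap exactly where you flag "the crux": the identity \(\sO_\FF(D_P')\rvert_{D_x}\cong\sO_{D_x}(D_{\bar P}')^{\otimes q^2}\) is never proved. The proposed mechanism --- that \(D_P'\cap D_x\) equals the Frobenius pullback \(\phi_{D_x}^{*}D_{\bar P}'\) --- is only asserted ("I expect to prove\ldots"), with no argument that the incidence is "detected to order exactly \(q^2\)". Moreover, even granting that scheme-theoretic identity, the step \(\phi_{D_x}^{*}\sO_{D_x}(D_{\bar P}')\cong\sO_{D_x}(D_{\bar P}')^{\otimes q^2}\) is not automatic: \(\phi_{D_x}\) is the \(\kk\)-linear (geometric) Frobenius attached to the \(\FF_{q^2}\)-structure \(\sigma_\beta\), so the pullback of a line bundle is the \(q^2\)-th power of its Frobenius \emph{twist}, and this coincides with \(\sO_{D_x}(D_{\bar P}')^{\otimes q^2}\) only when the class of \(D_{\bar P}'\) is \(\FF_{q^2}\)-rational --- which fails for a general (non-Hermitian) \(P\), since then \(\bar P\) is not \(\phi\)-stable. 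Finally, your closing "consistency check" cannot be upgraded to a proof inside your reduction: \(\mathbf{Pic}(D_x)\) is not generated by the Pl\"ucker class (already \(b_1(D_x)=\bar q\,[2m]_{\bar q}\neq 0\) for \(m\geq 1\) by \parref{dl-first-betti}), so matching Pl\"ucker degrees does not determine a line bundle, and if the restriction is only known to be supported on \(D_{\bar P}\) a single degree relation cannot separate the multiplicities on its several components.

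For comparison, the paper closes this step by a support-plus-degree argument that is essentially the computation you relegate to a check: it shows that the effective divisor \((D_P-D_x)\rvert_{D_x}\) is supported on \(D_{\bar P}\) (a plane through \(x\) meeting \(P\) elsewhere contains a line through \(x\) meeting \(P\), which after projection from \(x\) is incidence with \(\bar P\)), writes \(\sO_\FF(D_P)\rvert_{D_x}\cong\sO_{D_x}(D_{\bar P})^{\otimes a}\otimes\sO_{D_x}(1-q)\) for some integer \(a\), and then pins down \(a=q^2\) by computing the Pl\"ucker degree of both sides, using \parref{correspondence-plucker} once on \(\FF\) and once on \(D_x\). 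If you replace your Frobenius step by this argument (or genuinely prove your conjectured identity, including the rationality/twist issue), the rest of your write-up goes through; the decomposition itself is essentially the paper's, except that you should tighten the multiplicity-one claim for \(D_x\) inside \(D_P\): birationality of \(\pr_\FF\) over \(D_x\) controls the pushforward but not the intersection multiplicity of \(\pr_X^{-1}(P)\cap\mathbf{L}\) along that component; the paper gets this from smoothness of \(\mathbf{L}\to X\) over the Hermitian point \(x\) (\parref{basics-incidences-nondegenerate}) together with the construction in \parref{correspondence-incidence}.
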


\begin{proof}
Fix a Hermitian point \(x\) of \(X\). If \(x \notin P\), then
\(D_P \cap D_x\) parameterizes \(m\)-planes in \(X\) which pass through the
vertex of the cone \(X \cap \mathbf{T}_{X,x}\) and which intersect the \((m-1)\)-plane
\(P \cap \mathbf{T}_{X,x}\). In other words, projecting from the vertex \(x\)
to obtain a \(q\)-bic hypersurface \(\bar{X}\), identifying
\(D_x\) as the Fano scheme of \((m-1)\)-planes in \(\bar{X}\), and
letting \(\bar{P}\) be the \((m-1)\)-plane induced in \(\bar{X}\), it follows
that
\[
\sO_\FF(D_P)\rvert_{D_x} \cong \sO_{D_x}(D_{\bar{P}}).
\]

If \(x \in P\), then it follows from \parref{incidences-containing-a-plane}
that \(D_P - D_x\) is an effective Cartier divisor. Smoothness of the Fano
correspondence \(\mathbf{L} \to X\) above \(x\) from
\parref{basics-incidences-nondegenerate} together with the construction of
\(D_P\) in \parref{correspondence-incidence} shows that \(D_P - D_x\) is the
closure of the image under \(\mathbf{L} \to \FF\) of
\[
\Set{
(z, [P']) \in \mathbf{L} |
P \cap P' \ni z \neq x
}.
\]
Therefore \(D_P - D_x\) intersects \(D_x\) at the \((m-1)\)-dimensional locus
parameterizing \(m\)-planes in \(X\) containing a line through \(x\) which
intersects \(P\). Projecting from \(x\), this is the divisor in \(D_x\)
parameterizing \((m-1)\)-planes in \(\bar{X}\) incident with \(\bar{P}\).
Therefore there is some \(a \in \mathbf{Z}\) such that
\[
\sO_\FF(D_P)\rvert_{D_x} \cong
\sO_\FF(D_P - D_x)\rvert_{D_x} \otimes_{\sO_{D_x}} \sO_\FF(D_x)\rvert{D_x} \cong
\sO_{D_x}(D_{\bar{P}})^{\otimes a} \otimes_{\sO_{D_x}} \sO_{D_x}(1-q),
\]
where the second identification is by
\parref{cgaj-point-divisor-intersections}.
Determine the multiplicity \(a\) by computing the degree of
\(\sO_\FF(D_P)\rvert_{D_x}\) with respect to the Pl\"ucker polarization of
\(D_x\) in two ways. On the one hand, this identification
together with the fact that \((q+1)D_{\bar{P}}\) is algebraically equivalent to
\(c_1(\bar{\mathcal{Q}}) = c_1(\sO_{D_x}(1))\) from \parref{correspondence-plucker} gives
\[
\deg(\sO_\FF(D_P)\rvert_{D_x}) =
a\deg(\sO_{D_x}(D_{\bar{P}})) + (1-q)\deg(\sO_{D_x}(1)) =
\frac{a + 1 - q^2}{q+1}\deg(\sO_{D_x}(1)).
\]
On the other hand, since \((q+1)D_P\) is algebraically equivalent to
\(c_1(\sO_\FF(1))\) again by \parref{correspondence-plucker}, and since the
Pl\"ucker polarizations of \(\FF\) and \(D_x\) are compatible, it follows that
\[
\deg(\sO_\FF(D_P)\rvert_{D_x}) = \frac{1}{q+1} \deg(\sO_{D_x}(1)).
\]
Comparing the two expressions shows that \(a = q^2\).
Finally, that \(D_P' \coloneqq D_P - \sum\nolimits_{x \in P \cap X_{\mathrm{Herm}}} D_x\)
is the effective Cartier divisor generically parameterizing \(m\)-planes
incident with \(P\) and disjoint from \(P \cap  X_{\mathrm{Herm}}\)
now follows from the discussion above together with \parref{cgaj-hermitian-incidence}.
\end{proof}

In the case \(P\) is Hermitian, \parref{cgaj-hermitian-incidence} implies that
the divisor \(D_P'\) appearing in \parref{cgaj-P-and-x} vanishes, and so:

\begin{Corollary}\label{cgaj-hermitian-m-plane}
Let \(P \subset X\) be a Hermitian \(m\)-plane. Then
\[
D_P = \sum\nolimits_{x \in P \cap X_{\mathrm{Herm}}} D_x
\]
as divisors on \(\FF\). The sum ranges over the Hermitian points of
\(X\) contained in \(P\).
\qed
\end{Corollary}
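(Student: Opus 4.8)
The plan is to combine the decomposition of \parref{cgaj-P-and-x} with the geometric constraint of \parref{cgaj-hermitian-incidence}. Recall from \parref{cgaj-P-and-x} that one always has \(D_P = D_P' + \sum_{x \in P \cap X_{\mathrm{Herm}}} D_x\) as effective Cartier divisors on \(\FF\), where the components of \(D_P'\) generically parameterize \(m\)-planes \(P' \subset X\) that meet \(P\) but are disjoint from the finite set \(P \cap X_{\mathrm{Herm}}\) of Hermitian points of \(X\) contained in \(P\). So the whole task reduces to showing that, when \(P\) itself is Hermitian, the effective Cartier divisor \(D_P'\) vanishes; and since an effective Cartier divisor vanishes as soon as its support does, it is enough to check that there is no \(m\)-plane \(P' \subset X\) incident with \(P\) and disjoint from \(P \cap X_{\mathrm{Herm}}\).

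To do this I would take an arbitrary \(m\)-plane \(P' \subset X\) with \(P \cap P' \neq \varnothing\) and apply \parref{cgaj-hermitian-incidence} with the Hermitian plane there taken to be \(P\), in the case \(k = m\): this forces \(P \cap P'\) to be a nonempty Hermitian plane \(\PP K\), with \(K \subseteq V\) a nonzero Hermitian subspace. Because \(\kk\) is algebraically closed, \parref{hermitian-subspaces}\ref{hermitian-subspaces.U} provides a Hermitian vector \(v \in K\); as \(K\) is isotropic, \(x \coloneqq [v]\) is a Hermitian point of \(X\), it lies on \(P\) since \(K \subseteq P\), and it lies on \(P'\) since \(K \subseteq P'\). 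Hence \(x \in P' \cap P \cap X_{\mathrm{Herm}}\), contradicting disjointness.

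Therefore the locus of \(m\)-planes of \(X\) incident with \(P\) and disjoint from \(P \cap X_{\mathrm{Herm}}\) is empty, its closure is empty, and consequently \(D_P'\) has empty support and is the zero divisor; plugging this back into \parref{cgaj-P-and-x} yields \(D_P = \sum_{x \in P \cap X_{\mathrm{Herm}}} D_x\). There is no serious obstacle here: all the substance lives in \parref{cgaj-hermitian-incidence} and the divisor decomposition \parref{cgaj-P-and-x}, and the one point deserving a moment's attention is the elementary observation that over an algebraically closed field a nonempty Hermitian plane contains a Hermitian point, which is exactly \parref{hermitian-subspaces}\ref{hermitian-subspaces.U}.
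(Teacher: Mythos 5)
Your proof is correct and is exactly the paper's argument: the paper derives the corollary by observing that when \(P\) is Hermitian, \parref{cgaj-hermitian-incidence} forces the residual divisor \(D_P'\) from the decomposition in \parref{cgaj-P-and-x} to vanish. You simply spell out the (correct) extra detail that a nonempty Hermitian plane contains a Hermitian point over an algebraically closed field.
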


The Pl\"ucker degree of \(\FF\) can be easily determined combining
\parref{cgaj-hermitian-m-plane} and the proof method of \parref{cgaj-P-and-x};
this has also been recently computed in \cite[Theorem 1.1]{Li:DL} with
similar methods.

\begin{Corollary}\label{cgaj-plucker-degree}
\(\displaystyle
\deg_{\sO_\FF(1)}(\FF) = \prod\nolimits_{i = 0}^m \frac{q^{2i+2}-1}{q-1}\).
\end{Corollary}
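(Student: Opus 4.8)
The plan is to run an induction on $m$, extracting a recursion for the Pl\"ucker degree from the divisor identity of \parref{cgaj-hermitian-m-plane}. Write $d_m$ for the Pl\"ucker degree of the Fano scheme of $m$-planes in a smooth $q$-bic hypersurface of dimension $2m+1$, so that the target formula reads $d_m = \prod_{i=0}^m (q^{2i+2}-1)/(q-1)$. The base case $m=0$ is immediate: there $\FF$ is the smooth plane $q$-bic curve $X$ itself and $\sO_\FF(1)$ its planar polarization, so $d_0 = \deg X = q+1 = (q^2-1)/(q-1)$.

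For the inductive step (with $m \geq 1$), choose a Hermitian $m$-plane $P\subset X$, which exists by \parref{hermitian-maximal-count}, and let $g = c_1(\sO_\FF(1))$ be the Pl\"ucker class. By \parref{correspondence-plucker}, applied with $k=r=m$ and $n=2m+2$, so that $c_{n-r-k-1}(\mathcal{Q}) = c_1(\mathcal{Q}) = g$, one has $(q+1)[D_P]\sim_{\mathrm{alg}} g$; since intersection numbers depend only on algebraic equivalence, using the projection formula and the decomposition $D_P = \sum_{x\in P\cap X_{\mathrm{Herm}}} D_x$ of \parref{cgaj-hermitian-m-plane},
\[
d_m = \int_\FF g^{m+1} = (q+1)\int_\FF g^m\cdot[D_P] = (q+1)\sum_{x\in P\cap X_{\mathrm{Herm}}} \int_{D_x}\big(g|_{D_x}\big)^m.
\]
As $P$ is Hermitian, its Hermitian points are exactly its $\FF_{q^2}$-points by \parref{hermitian-coordinates}, so the sum has $\#\PP^m(\FF_{q^2}) = (q^{2m+2}-1)/(q^2-1)$ terms. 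For each such $x$, \parref{cgaj-incidence-divisors} identifies $D_x$ with the Fano scheme of $(m-1)$-planes in a smooth $q$-bic hypersurface of dimension $2m-1$, linearly embedded via $\GG(m,V/L)\hookrightarrow\GG(m+1,V)$, so that $g|_{D_x}$ is the Pl\"ucker polarization of $D_x$ and $\int_{D_x}(g|_{D_x})^m = d_{m-1}$ by the induction hypothesis. Hence
\[
d_m = (q+1)\cdot\frac{q^{2m+2}-1}{q^2-1}\cdot d_{m-1} = \frac{q^{2m+2}-1}{q-1}\,d_{m-1},
\]
and iterating from $d_0$ gives the product.

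I expect no serious obstacle: the geometric content — the divisor identity $D_P = \sum_x D_x$ for Hermitian $P$ and the recognition of $D_x$ as a lower-dimensional Fano scheme — is already established in \parref{cgaj-hermitian-m-plane} and \parref{cgaj-P-and-x}. The steps requiring a moment's care are purely formal: that $c_{n-r-k-1}(\mathcal{Q})$ in the relevant range is the Pl\"ucker class $g$ (immediate from $c_1(\mathcal{Q}) = -c_1(\mathcal{S}) = c_1(\sO_\GG(1))$); that restricting the ambient Pl\"ucker bundle to $D_x \cong \FF_{m-1}(\bar X)$ recovers its own Pl\"ucker polarization, which follows from linearity of $\GG(m,V/L)\hookrightarrow\GG(m+1,V)$ and is used already in the proof of \parref{cgaj-P-and-x}; and the elementary identity $(q+1)/(q^2-1) = 1/(q-1)$. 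Note also that the induction hypothesis is invoked for every smooth $q$-bic hypersurface $\bar X$ of dimension $2m-1$ simultaneously, which is exactly what the statement of the Corollary provides at level $m-1$.
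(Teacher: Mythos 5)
Your proof is correct and follows essentially the same route as the paper's: both arguments induct on \(m\), using the decomposition \(D_P = \sum_{x \in P \cap X_{\mathrm{Herm}}} D_x\) of \parref{cgaj-hermitian-m-plane} for a Hermitian \(m\)-plane, the algebraic equivalence \((q+1)[D_P] \sim_{\mathrm{alg}} c_1(\sO_\FF(1))\) from \parref{correspondence-plucker}, and the identification of each \(D_x\) as the Plücker-compatibly embedded Fano scheme of \((m-1)\)-planes in a smooth \(q\)-bic \((2m-1)\)-fold, with the same base case \(d_0 = q+1\).
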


\begin{proof}
Choose a Hermitian point \(x\) and a Hermitian \(m\)-plane \(P \subset X\)
containing it. Note that \(D_x\) is isomorphic to \(D_y\) for each
of the \(\#\PP^m(\FF_{q^2}) = \frac{q^{2m+2}-1}{q^2-1}\) points
\(y \in P \cap X_{\mathrm{Herm}}\), and that the Pl\"ucker polarization
of \(\FF\) is compatible with those on these divisors. Therefore, by
\parref{cgaj-hermitian-m-plane} and \parref{correspondence-plucker},
\begin{align*}
\frac{q^{2m+2}-1}{q^2-1} \deg(\sO_{D_x}(1))
& = \sum\nolimits_{y \in P \cap X_{\mathrm{Herm}}} \deg(\sO_\FF(1)\rvert_{D_x}) \\
& =
\int_\FF
  c_1(\sO_\FF(1))^m \cdot \Big(\sum\nolimits_{y \in P \cap X_{\mathrm{Herm}}} [D_y]\Big) \\
& = \int_\FF c_1(\sO_\FF(1))^m c_1(\sO_\FF(D_P))
= \frac{1}{q+1}\deg(\sO_\FF(1)).
\end{align*}
Since \(D_x\) is the Fano scheme of \((m-1)\)-planes on a smooth \(q\)-bic
hypersurface of dimension \(2m-1\), the result follows by induction upon
noting that, when \(m = 0\), \(\FF\) is a plane curve of degree
\(q+1 = \frac{q^2-1}{q-1}\).
\end{proof}

Finally, let \(R \subset X\) be a Hermitian \((m-1)\)-plane and consider
the restriction of \(\sO_{\FF}(D_P)\) to
\[
C_R \coloneqq \Set{[P] \in \FF | P\;\text{contains}\; R},
\]
the incidence scheme parameterizing \(m\)-planes in \(X\) containing \(R\).
This is a smooth \(q\)-bic curve by \parref{incidences-containing-a-plane} and
\parref{basics-incidences-nondegenerate}, and is even a complete intersection
of the divisors \(D_x\) as \(x\) ranges over a basis of Hermitian points in
\(R\) by \parref{cgaj-point-divisor-intersections}. The result is as follows:

\begin{Corollary}\label{cgaj-curves}
Let \(P \subset X\) be an \(m\)-plane and let \(R \subset X\) be a Hermitian \((m-1)\)-plane.
Then
\[
\sO_{\FF}(D_P)\rvert_{C_R} \cong
\sO_{C_R}([\bar{P}])^{\otimes q^{2k+2}} \otimes_{\sO_{C_R}}
\sO_{C_R}(-1)^{\otimes (q^{2k+2}-1)/(q+1)}
\]
where \(k \coloneqq \dim P \cap R\) and \([\bar{P}] \in C_R\) is the unique
\(m\)-plane in \(X\) containing \(R\) and incident with \(P \setminus R\).
\end{Corollary}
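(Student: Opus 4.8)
The plan is to argue by induction on $m$. The base case $m = 0$ is immediate: then $R$ is empty, $C_R = \FF = X$ is a smooth $q$-bic curve, $\bar P = P$ is a point of $X$, and $D_P$ is the reduced point $[P]$ since two $0$-planes in $X$ meet if and only if they coincide; hence $\sO_\FF(D_P)\rvert_{C_R} \cong \sO_{C_R}([\bar P])$, which is the asserted formula with $k = -1$.

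For the inductive step, fix a Hermitian point $x$ of $X$ lying on $R$: when $k \geq 0$ take $x \in P \cap R$, which is possible because $P \cap R$ is a Hermitian $k$-plane by \parref{cgaj-hermitian-incidence} and so has $\FF_{q^2}$-points; when $k = -1$ take any Hermitian point of $R$. Since $x \in R$, the curve $C_R$ lies in $D_x$, and the isomorphism $D_x \cong \FF_{m-1}(\bar X)$ of \parref{incidences-containing-a-plane} and \parref{basics-incidences-nondegenerate} — with $\bar X$ the smooth $q$-bic $(2m-1)$-fold at the base of the cone $X \cap \mathbf{T}_{X,x}$ — identifies $C_R$ with the curve $C_{\bar R}$ attached to the Hermitian $(m-2)$-plane $\bar R \subset \bar X$ obtained by projecting $R$ from $x$; that $\bar R$ is again Hermitian follows because projecting from a Hermitian point respects Hermitian structure, as in the proof of \parref{hermitian-planes-count}. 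The Plücker polarizations of $\FF$, $D_x$, and $C_R$ are compatible, so restricting $\sO(1)$'s causes no trouble.

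I would then compute $\sO_\FF(D_P)\rvert_{C_R}$ by restricting first to $D_x$ via \parref{cgaj-P-and-x}, then applying the inductive hypothesis inside $\FF_{m-1}(\bar X)$. If $x \in P$ (forcing $k \geq 0$), then $P \subset \mathbf{T}_{X,x}$ projects from $x$ to an $(m-1)$-plane $P^x \subset \bar X$, and \parref{cgaj-P-and-x} gives $\sO_\FF(D_P)\rvert_{D_x} \cong \sO_{D_x}(D_{P^x})^{\otimes q^2} \otimes \sO_{D_x}(1-q)$. Restricting to $C_R$, the last factor becomes $\sO_{C_R}(1-q)$, while the inductive hypothesis applied to $(\bar X, P^x, \bar R)$ — with $\dim(P^x \cap \bar R) = k-1$, since $x \in P \cap R$ — computes $\sO_{D_x}(D_{P^x})\rvert_{C_R}$ with leading exponent $q^{2k}$; the elementary identity $q^2\cdot\tfrac{q^{2k}-1}{q+1} + (q-1) = \tfrac{q^{2k+2}-1}{q+1}$ then yields the stated bundle, once the special point supplied by induction is matched with $[\bar P]$. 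If instead $x \notin P$ (which necessarily occurs when $k = -1$), then $P \cap \mathbf{T}_{X,x}$ is an $(m-1)$-plane projecting to $\bar P_x \subset \bar X$, \parref{cgaj-P-and-x} gives $\sO_\FF(D_P)\rvert_{D_x} \cong \sO_{D_x}(D_{\bar P_x})$, and since a line through $x$ meeting both $P$ and $R$ in a further point would lie in $R$, one has $\dim(\bar P_x \cap \bar R) = -1$ whenever $k = -1$, so the inductive hypothesis returns $\sO_{C_R}([\bar P])$ directly.

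The arithmetic is routine; the point that needs genuine care is the identification of the special point produced by the inductive hypothesis with $[\bar P] \in C_R$ — namely, that the unique $(m-1)$-plane of $\bar X$ through $\bar R$ meeting $P^x \setminus \bar R$ spans, together with $x$, the unique $m$-plane of $X$ through $R$ meeting $P \setminus R$. This, and the accompanying dimension count $\dim(P^x \cap \bar R) = \dim(P \cap R) - 1$ which makes the exponents balance, is a linear-algebra verification about projection from the point $x \in R$, resting on the observation that any line through $x$ meeting the linear space $R \ni x$ in a further point is contained in $R$.
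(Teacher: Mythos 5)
Your proposal is correct and is essentially the paper's own argument: the paper writes \(C_R = D_{x_0} \cap \cdots \cap D_{x_{m-1}}\) for a Hermitian basis of \(R\) adapted to \(P \cap R\) and successively applies \parref{cgaj-P-and-x}, which is exactly your one-point-at-a-time induction with the same bookkeeping of exponents. The extra care you flag (the dimension drop \(\dim(P^x\cap\bar R)=k-1\) and matching the distinguished point with \([\bar P]\)) is left implicit in the paper but is the same verification.
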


\begin{proof}
Choose a basis of Hermitian points
\(x_0,\ldots,x_k \in P \cap R \cap X_{\mathrm{Herm}}\)
and complete this to a basis of \(R\) with Hermitian points
\(x_{k+1},\ldots,x_{m-1} \in R \cap X_{\mathrm{Herm}}\). The result now
follows from successively applying \parref{cgaj-P-and-x} upon writing
\[
C_R = D_{x_0} \cap D_{x_1} \cap \cdots \cap D_{x_{m-1}}
\]
as the complete intersection of the divisors associated with these Hermitian
points.
\end{proof}

An important special case is when \(P \cap R\) is properly contained in
\(P \cap P_0\) for some Hermitian \(m\)-plane \(P_0\) containing \(R\):

\begin{Corollary}\label{cgaj-curves-P0}
Let \(P \subset X\) be an \(m\)-plane and let \(R \subset X\) be a Hermitian
\((m-1)\)-plane. Assume there exists a Hermitian \(m\)-plane \(P_0 \subset X\)
containing \(R\) such that \(P \cap R \subsetneq P \cap P_0\). Then
\[
\sO_{\FF}(D_P)\rvert_{C_R} \cong \sO_{C_R}([P_0]).
\]
\end{Corollary}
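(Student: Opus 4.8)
The plan is to deduce this directly from \parref{cgaj-curves} by identifying the plane $\bar{P}$ occurring there with $P_0$ and then collapsing the resulting line bundle using the Hermitian-point relation on the $q$-bic curve $C_R$. First I would observe that the hypothesis $P \cap R \subsetneq P \cap P_0$ furnishes a point $z \in P \cap P_0$ with $z \notin R$, so that $P_0$, an $m$-plane in $X$ containing $R$, is incident with $P \setminus R$. The uniqueness clause of \parref{cgaj-curves} then forces $[\bar{P}] = [P_0]$ in $C_R$, while $k \coloneqq \dim P \cap R$ is as there, and \parref{cgaj-curves} gives
\[
\sO_{\FF}(D_P)\rvert_{C_R} \cong
\sO_{C_R}([P_0])^{\otimes q^{2k+2}} \otimes_{\sO_{C_R}}
\sO_{C_R}(-1)^{\otimes (q^{2k+2}-1)/(q+1)}.
\]

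Next I would use the structure of $C_R$. As recalled before \parref{cgaj-curves}, \parref{incidences-containing-a-plane} and \parref{basics-incidences-nondegenerate} present $C_R$ as a smooth $q$-bic curve --- concretely the $q$-bic hypersurface in $\PP(R^\dagger/R) \cong \PP^2$ cut out by the form induced by $\beta$ --- whose Pl\"ucker polarization $\sO_{C_R}(1)$ is the hyperplane class of this plane model. Since $P_0 \supseteq R$ is Hermitian, the line $P_0/R$ in $R^\dagger/R$ is $\phi$-stable, hence Hermitian for the induced $q$-bic form by compatibility of Hermitian subspaces with the formation of orthogonal quotients, as in the proof of \parref{hermitian-planes-count}; thus $[P_0]$ is a Hermitian point of the $q$-bic curve $C_R$. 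Applying \parref{incidences-hermitian-perp} with $r = 0$ to $C_R$ and this Hermitian point yields $\sO_{C_R}(1) \cong \sO_{C_R}\big((q+1)[P_0]\big)$.

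Finally I would substitute $\sO_{C_R}(-1)^{\otimes (q^{2k+2}-1)/(q+1)} \cong \sO_{C_R}(-[P_0])^{\otimes(q^{2k+2}-1)}$ into the displayed formula, which collapses its right-hand side to $\sO_{C_R}([P_0])^{\otimes(q^{2k+2}-(q^{2k+2}-1))} = \sO_{C_R}([P_0])$, the assertion. The only delicate point is the equality $[\bar{P}] = [P_0]$, which rests entirely on the uniqueness built into \parref{cgaj-curves}; everything else is routine bookkeeping with divisor classes on $C_R$ once the relation $(q+1)[P_0] \sim \sO_{C_R}(1)$ is in hand.
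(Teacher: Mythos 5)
Your proof is correct and follows essentially the same route as the paper: identify \(\bar{P} = P_0\) via the uniqueness clause of \parref{cgaj-curves}, note that \([P_0]\) is a Hermitian point of the \(q\)-bic curve \(C_R\) by compatibility of Hermitian vectors with orthogonal decompositions, and use \(\sO_{C_R}(1) \cong \sO_{C_R}([P_0])^{\otimes q+1}\) from \parref{incidences-hermitian-perp} to collapse the formula. The exponent bookkeeping you spell out is exactly what the paper leaves implicit.
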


\begin{proof}
Since \(P_0\) contains \(R\) and is incident with \(P\) away from \(P \cap R\),
\(\bar{P} = P_0\) by uniqueness in \parref{cgaj-curves}. Since taking Hermitian
vectors is compatible with orthogonal decompositions, as in the proof of
\parref{hermitian-planes-count}, \([P_0]\) is a Hermitian point of \(C_R\).
Then by \parref{incidences-hermitian-perp} and the comments that precede it,
\(\sO_{C_R}(1) \cong \sO_{C_R}([P_0])^{\otimes q+1}\). Combining this with
\parref{cgaj-curves} now gives the result.
\end{proof}

The remainder of this Section will use these results regarding divisors on
\(\FF\) to relate the the Albanese variety of \(\FF\) with a certain abelian
variety \(\mathbf{Ab}_X^{m+1}\) attached to the \(q\)-bic hypersurface \(X\),
referred to as the \emph{intermediate Jacobian of \(X\)}: see
\parref{cgaj-result}. The definitions here follow \cite[\S3.2]{Beauville:Prym}
and \cite[\S\S1.5--1.8]{Murre:Jacobian}, but see also \cite{ACMV:Functorial,
ACMV:Descending} for a more modern perspective that works in more generality.

\subsectiondash{Algebraic representatives}\label{cgaj-algrep}
Let \(Y\) be a smooth projective variety over an algebraically closed field
\(\kk\). For each integer \(0 \leq k \leq \dim Y\), denote by
\(\mathrm{CH}^k(Y)_{\mathrm{alg}}\) the subgroup of algebraically trivial
codimension \(k\) cycle classes on \(Y\). Given an abelian variety \(A\) over
\(\kk\), a group homomorphism
\[
\phi \colon
\mathrm{CH}^k(Y)_{\mathrm{alg}} \to
A(\kk)
\]
is said to be \emph{regular} if for every pointed smooth projective variety
\((T,t_0)\) over \(\kk\), and every cycle class
\(Z \in \mathrm{CH}^k(T \times Y)\), the map
\[
T(\kk) \to \mathrm{CH}^k(Y)_{\mathrm{alg}} \to A(\kk),
\qquad
t \mapsto \phi(Z_t - Z_{t_0})
\]
is induced by a morphism \(T \to A\) of varieties over \(\kk\). Suppose
there exists a regular homomorphism
\[
\phi^k_Y \colon
\mathrm{CH}^k(Y)_{\mathrm{alg}} \to
\mathbf{Ab}_Y^k(\kk)
\]
that is initial amongst all regular homomorphisms from
\(\mathrm{CH}^k(Y)_{\mathrm{alg}}\). Then the pair
\((\mathbf{Ab}^k_Y, \phi^k_Y)\) is called an \emph{algebraic representative}
for codimension \(k\) cycles in \(Y\).

The basic general existence results regarding algebraic representatives are as
follows:

\begin{Theorem}\label{cgaj-algrep-exists}
Let \(Y\) be a smooth projective variety of dimension \(d\) over \(\kk\). Then
an algebraic representative for codimension \(k\) cycles exists when
\begin{enumerate}
\item\label{cgaj-algrep-exists.alb}
\(k = d\), and \(\mathbf{Ab}_Y^d = \mathbf{Alb}_Y\),
\item\label{cgaj-algrep-exists.pic}
\(k = 1\), and \(\mathbf{Ab}_Y^1 = \mathbf{Pic}_{Y,\mathrm{red}}^0\),
\item\label{cgaj-algrep-exists.jac}
\(k = 2\), and
\(2\dim \mathbf{Ab}_Y^2 \leq \dim_{\mathbf{Q}_\ell} \mathrm{H}^3_{\mathrm{\acute{e}t}}(Y,\mathbf{Q}_\ell)\), and
\item\label{cgaj-algrep-exists.supersingular}
the rational Chow motive of \(Y\) is a summand of the motive of a supersingular
abelian variety, in which case \(2\dim\mathbf{Ab}_Y^k = \dim_{\mathbf{Q}_\ell}\mathrm{H}^{2k-1}_{\mathrm{\acute{e}t}}(Y,\mathbf{Q}_\ell)\).
\end{enumerate}
\end{Theorem}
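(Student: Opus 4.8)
The plan is to treat this statement as an assembly of results from the literature: each of the four cases \ref{cgaj-algrep-exists.alb}--\ref{cgaj-algrep-exists.supersingular} is established by a separate construction, and for each one I would exhibit the candidate abelian variety together with its Abel--Jacobi-type homomorphism and then check that this homomorphism is regular in the sense of \parref{cgaj-algrep} and initial among such. The conceptual input is genuinely different in the four cases, so I would simply go through them one at a time.

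For \ref{cgaj-algrep-exists.alb} I would take $\mathbf{Ab}_Y^d$ to be the Albanese variety $\mathbf{Alb}_Y$, equipped with the Albanese homomorphism. Since $\kk$ is algebraically closed and $Y$ is irreducible, the difference of any two closed points is algebraically trivial, so $\mathrm{CH}^d(Y)_{\mathrm{alg}}$ is precisely the group of zero-cycles of degree zero; the universal property that defines $\mathbf{Alb}_Y$ among abelian varieties receiving a morphism from $Y$ is then exactly the assertion that the Albanese map is initial among regular homomorphisms out of $\mathrm{CH}^d(Y)_{\mathrm{alg}}$. For \ref{cgaj-algrep-exists.pic} I would invoke representability of the Picard functor: $\mathbf{Pic}^0_{Y,\mathrm{red}}$ is an abelian variety whose group of $\kk$-points is canonically identified with $\mathrm{CH}^1(Y)_{\mathrm{alg}}$, and a cycle $Z \in \mathrm{CH}^1(T \times Y)$ gives rise to a morphism $T \to \mathbf{Pic}_Y$; this makes the identity map the sought-after universal regular homomorphism. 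Both of these are classical, and the modern references \cite{ACMV:Functorial, ACMV:Descending} carry out the comparison with the notion of regularity used here.

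For \ref{cgaj-algrep-exists.jac} the plan is to cite Murre's construction in \cite[\S\S1.5--1.8]{Murre:Jacobian}, refining Samuel \cite{Samuel}: for codimension-two cycles on an arbitrary smooth projective $Y$ over an algebraically closed field there is an algebraic representative $\mathbf{Ab}_Y^2$, whose $\ell$-adic Tate module embeds into $\mathrm{H}^3_{\mathrm{\acute et}}(Y,\mathbf{Q}_\ell(1))$, which forces the bound $2\dim\mathbf{Ab}_Y^2 \le \dim_{\mathbf{Q}_\ell}\mathrm{H}^3_{\mathrm{\acute et}}(Y,\mathbf{Q}_\ell)$. Here the only thing to do is to match Murre's hypotheses and his definition of a regular homomorphism with \parref{cgaj-algrep}, which is routine, so I would simply record the citation.

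For \ref{cgaj-algrep-exists.supersingular} the plan is to cite Fakhruddin \cite{Fakhruddin:Supersingular}: write the rational Chow motive of $Y$ as a direct summand, cut out by an idempotent correspondence, of the motive of a supersingular abelian variety $A$, use that for supersingular abelian varieties the algebraic representatives exist in every codimension and realize the full odd-degree cohomology, and transport this structure along the projector to obtain $\mathbf{Ab}_Y^k$ together with the equality $2\dim\mathbf{Ab}_Y^k = \dim_{\mathbf{Q}_\ell}\mathrm{H}^{2k-1}_{\mathrm{\acute et}}(Y,\mathbf{Q}_\ell)$. I expect this to be the step needing the most care: one must verify that a motivic summand of a supersingular abelian variety inherits not merely the existence of algebraic representatives but also the cohomological saturation --- an equality rather than an inequality --- which is exactly where the supersingularity hypothesis is used, and one must again reconcile the ambient notion of regular homomorphism with \parref{cgaj-algrep}. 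All of this is contained in \cite{Fakhruddin:Supersingular}, so the remaining effort is citation management rather than new mathematics; I would also point the reader to \cite[\S3.2]{Beauville:Prym} and \cite{ACMV:Functorial, ACMV:Descending} for the surrounding formalism.
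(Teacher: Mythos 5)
Your proposal is correct and takes essentially the same route as the paper, which likewise proves all four cases by citation: Murre for \ref{cgaj-algrep-exists.alb}--\ref{cgaj-algrep-exists.jac} and Fakhruddin for \ref{cgaj-algrep-exists.supersingular}. The only details the paper adds that you omit are bibliographic caveats---Kahn's correction to Murre's codimension-two result and a misprint in Fakhruddin's dimension statement (fixed by comparing with Murre)---which do not affect the substance of your argument.
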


\begin{proof}
For \ref{cgaj-algrep-exists.alb}
and \ref{cgaj-algrep-exists.pic}, see
\cite[\S1.4]{Murre:Jacobian}.
For \ref{cgaj-algrep-exists.jac},
see \cite{Murre:Jacobian-CR} or \cite[Theorem 1.9]{Murre:Jacobian}, along with
a correction by \cite{Kahn:Jacobian}. For \ref{cgaj-algrep-exists.supersingular},
see \cite[Theorem 2]{Fakhruddin:Supersingular}, though notice the misprint
with the dimension statement, which may be corrected by comparing with
\cite[\S7]{Murre:Jacobian}.
\end{proof}

Returning to the situation of a smooth \(q\)-bic hypersurface \(X\) of
dimension \(2m+1\), an algebraic representative \(\mathbf{Ab}_X^{m+1}\) in
codimension \(m+1\) is referred to as the \emph{intermediate Jacobian} of
\(X\). Since \(X\) is isomorphic to a supersingular Fermat variety, this exists
by \parref{cgaj-algrep-exists}\ref{cgaj-algrep-exists.supersingular}; when \(m
= 1\), so that \(X\) is a smooth \(q\)-bic threefold, this also follows from
the general existence result
\parref{cgaj-algrep-exists}\ref{cgaj-algrep-exists.jac}.

The next statement relates cycles on \(X\) with those on \(\FF\) via the
Fano correspondence \(\mathbf{L}\) from \parref{cgaj-action}:

\begin{Lemma}\label{cgaj-intermediate-jacobian-morphisms}
There exists a commutative diagram of abelian groups
\[
\begin{tikzcd}
  \CH^{m+1}(\FF)_{\mathrm{alg}} \rar["\mathbf{L}_*"'] \dar["\phi^{m+1}_{\FF}"]
& \CH^{m+1}(X)_{\mathrm{alg}} \rar["\mathbf{L}^*"'] \dar["\phi^{m+1}_X"]
& \CH^1(\FF)_{\mathrm{alg}} \dar["\phi^1_{\FF}"] \\
  \mathbf{Alb}_{\FF}(\kk) \rar
& \mathbf{Ab}^{m+1}_X(\kk) \rar
& \mathbf{Pic}^0_{\FF}(\kk)
\end{tikzcd}
\]
and hence morphisms of abelian varieties
\[
\mathbf{Alb}_{\FF} \xrightarrow{\mathbf{L}_*}
\mathbf{Ab}^{m+1}_X \xrightarrow{\mathbf{L}^*}
\mathbf{Pic}^0_{\FF, \mathrm{red}}.
\]
\end{Lemma}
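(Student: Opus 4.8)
The plan is to build the two rows, and then fill in the vertical compatibilities using the universal property that defines an algebraic representative. First I would check the top row is well-defined. Since $\FF$ is an irreducible smooth projective variety of dimension $m+1$ by Theorem \parref{theorem-lagrangian}, one has $\CH^{m+1}(\FF) = \CH_0(\FF)$ and $\CH^1(\FF) = \CH_m(\FF)$, so the correspondence operators $\mathbf{L}_*$ and $\mathbf{L}^*$ of \parref{cgaj-action} have the claimed sources and targets on Chow groups. They preserve algebraic triviality: an algebraically trivial class on a smooth projective variety has the form $Z_{t_1} - Z_{t_0}$ for a cycle $Z$ on $T \times (-)$ with $T$ a smooth connected curve, and applying a fixed correspondence fibrewise over $T$ again yields a difference of two members of such a family --- this is the compatibility of correspondence actions with cycles on $T \times (-)$, recalled below. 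Hence the top row lands in the $\mathrm{alg}$-subgroups. The bottom row makes sense because $\mathbf{Ab}^{m+1}_{\FF} = \mathbf{Alb}_{\FF}$ and $\mathbf{Ab}^1_{\FF} = \mathbf{Pic}^0_{\FF,\mathrm{red}}$ by \parref{cgaj-algrep-exists}\ref{cgaj-algrep-exists.alb} and \ref{cgaj-algrep-exists.pic}, while $\mathbf{Ab}^{m+1}_X$ exists by \parref{cgaj-algrep-exists}\ref{cgaj-algrep-exists.supersingular} as already noted.

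For the left-hand square I would show the composite $\psi \coloneqq \phi^{m+1}_X \circ \mathbf{L}_* \colon \CH^{m+1}(\FF)_{\mathrm{alg}} \to \mathbf{Ab}^{m+1}_X(\kk)$ is a regular homomorphism in the sense of \parref{cgaj-algrep}. Given a pointed smooth projective $(T,t_0)$ and $Z \in \CH^{m+1}(T \times \FF)$, form the relative correspondence $W \coloneqq \pr_{T \times X,*}\big(\pr_{T \times \FF}^{*}(Z) \cdot \pr_{\FF \times X}^{*}(\mathbf{L})\big) \in \CH^{m+1}(T \times X)$ on $T \times \FF \times X$; a check with flat base change and the projection formula gives $W_t = \mathbf{L}_*(Z_t)$ for every $t \in T(\kk)$. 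Then $t \mapsto \psi(Z_t - Z_{t_0}) = \phi^{m+1}_X(W_t - W_{t_0})$ is induced by a morphism $T \to \mathbf{Ab}^{m+1}_X$ since $\phi^{m+1}_X$ is regular, so $\psi$ is regular. As $(\mathbf{Ab}^{m+1}_{\FF},\phi^{m+1}_{\FF})$ is initial among regular homomorphisms out of $\CH^{m+1}(\FF)_{\mathrm{alg}}$, there is a unique morphism of abelian varieties $\mathbf{L}_* \colon \mathbf{Alb}_{\FF} \to \mathbf{Ab}^{m+1}_X$ with $\psi = \mathbf{L}_* \circ \phi^{m+1}_{\FF}$, which is exactly the commutativity of the left square. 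The right-hand square is handled identically: $\phi^1_{\FF} \circ \mathbf{L}^*$ is regular by the same relative-correspondence argument with the roles of the two factors exchanged, and since $(\mathbf{Ab}^{m+1}_X,\phi^{m+1}_X)$ is, by definition of algebraic representative, initial among regular homomorphisms out of $\CH^{m+1}(X)_{\mathrm{alg}}$, this composite factors uniquely through $\phi^{m+1}_X$, producing $\mathbf{L}^* \colon \mathbf{Ab}^{m+1}_X \to \mathbf{Pic}^0_{\FF,\mathrm{red}}$ and the commutativity of the right square.

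The only genuinely technical point --- and the main obstacle --- is the identity $W_t = \mathbf{L}_*(Z_t)$ and its analogue for $\mathbf{L}^*$: this base-change property for correspondences is what turns a correspondence together with a regular homomorphism into a new regular homomorphism, and so underlies every step above. It is standard (cf.\ \cite[\S1.6]{Murre:Jacobian} and \cite[Ch.~10]{Fulton}) but requires care with the several projections from $T \times \FF \times X$, with flatness of the pullbacks involved, and with the projection formula; one should also note that $\mathbf{L}$, being a closed subscheme of $\FF \times X$, defines a genuine cycle class to which these operations apply. Everything else is then formal.
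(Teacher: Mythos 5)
Your proposal is correct and follows essentially the same route as the paper: the top row comes from the correspondence action of \(\mathbf{L}\) (which preserves algebraic triviality), the vertical arrows are the universal regular homomorphisms supplied by \parref{cgaj-algrep-exists}, and the horizontal morphisms of abelian varieties come from the initiality of \(\mathbf{Alb}_\FF = \mathbf{Ab}^{m+1}_\FF\) and \(\mathbf{Ab}^{m+1}_X\) applied to the regular composites \(\phi^{m+1}_X \circ \mathbf{L}_*\) and \(\phi^1_\FF \circ \mathbf{L}^*\). The paper merely compresses into one line the family-of-cycles compatibility \(W_t = \mathbf{L}_*(Z_t)\) that you rightly flag and justify as the only technical ingredient.
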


\begin{proof}
The action of the Fano correspondence gives the top row of maps, see
\parref{cgaj-action}. The vertical maps are the universal
regular homomorphisms recognizing the Albanese variety of \(\FF\), the
intermediate Jacobian of \(X\), and the Picard variety of \(\FF\) as
the algebraic representatives for algebraically trivial, respectively,
\(0\)-cycles of \(\FF\), \(m\)-cycles of \(X\), and \(m\)-cycles of
\(\FF\): see \parref{cgaj-algrep-exists}. The morphisms of the group
schemes arise from the corresponding universal property of each scheme.
\end{proof}

Fix a Hermitian \(m\)-plane \(P_0 \subset X\) and consider the Albanese
morphism \(\operatorname{alb}_{\FF} \colon \FF \to \mathbf{Alb}_\FF\)
centred at \([P_0]\). Composing this with the morphisms of abelian schemes
from \parref{cgaj-intermediate-jacobian-morphisms} yields a morphism
\(\FF \to \mathbf{Pic}_{\FF}^0\). Its action on \(\kk\)-points is
easily understood:

\begin{Lemma}\label{cgaj-intjac-F-to-Pic}
The morphism \(\FF \to \mathbf{Pic}_{\FF}^0\) acts on \(\kk\)-points by
\[
[P] \mapsto \sO_{\FF}(D_P - D_{P_0}).
\]
\end{Lemma}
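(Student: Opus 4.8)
The plan is to chase the point $[P] \in \FF(\kk)$ through the three morphisms $\operatorname{alb}_\FF$, $\mathbf{L}_*$, $\mathbf{L}^*$ defining the composite $\FF \to \mathbf{Pic}^0_\FF$, and to use the commutative diagram of \parref{cgaj-intermediate-jacobian-morphisms} to reduce everything to computations on Chow groups. First I would record that, by \parref{cgaj-algrep-exists}\ref{cgaj-algrep-exists.alb}, the variety $\mathbf{Alb}_\FF$ is the algebraic representative in top codimension $\dim\FF = m+1$ and the universal regular homomorphism $\phi^{m+1}_\FF$ is the usual Albanese map on $0$-cycles of degree $0$; hence the Albanese morphism centred at $[P_0]$ acts on $\kk$-points by $[P] \mapsto \phi^{m+1}_\FF([P] - [P_0])$, the difference being algebraically trivial since $\FF$ is connected by \parref{hypersurfaces-fano-connected}.

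Next I would compute the two correspondence actions at the level of cycles. The incidence variety $\mathbf{L} = \FF_{m,0}$ is the projectivization of the tautological subbundle over $\FF$, so its fibre over $[P]$ is the $m$-plane $P$ itself, mapping isomorphically onto $P \subset X$; therefore $\mathbf{L}_*([P]) = [P] \in \CH^{m+1}(X)$ is the class of the $m$-plane $P$, and $\mathbf{L}_*([P] - [P_0]) = [P] - [P_0]$, which lies in $\CH^{m+1}(X)_{\mathrm{alg}}$ because any two $m$-planes of $X$ are algebraically equivalent, as in \parref{correspondence-plucker}. Dually, $\mathbf{L}^*([P]) = \FF_{m,0}^*([P]) = [D_P] \in \CH^1(\FF)$ by the very definition of the incidence divisor in \parref{correspondence-incidence}, and $\mathbf{L}^*([P] - [P_0]) = [D_P - D_{P_0}]$ is again algebraically trivial for the same reason.

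It then remains to assemble the diagram chase: applying $\mathbf{L}_* \colon \mathbf{Alb}_\FF \to \mathbf{Ab}^{m+1}_X$ to $\phi^{m+1}_\FF([P] - [P_0])$ gives $\phi^{m+1}_X([P] - [P_0])$ by commutativity of the left square of \parref{cgaj-intermediate-jacobian-morphisms}; applying $\mathbf{L}^* \colon \mathbf{Ab}^{m+1}_X \to \mathbf{Pic}^0_{\FF,\mathrm{red}}$ and commutativity of the right square gives $\phi^1_\FF([D_P - D_{P_0}])$; and $\phi^1_\FF$ is the Abel--Jacobi map $[D] \mapsto \sO_\FF(D)$ by \parref{cgaj-algrep-exists}\ref{cgaj-algrep-exists.pic}, yielding $\sO_\FF(D_P - D_{P_0})$ as claimed. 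There is no genuine obstacle here — the statement is a formal consequence of functorialities already established; the only points warranting a line of care are that each difference of cycles met along the way is truly algebraically trivial (connectedness of $\FF$ plus the algebraic equivalences of \parref{correspondence-plucker}) and that $\phi^{m+1}_\FF$ and $\phi^1_\FF$ are the standard Albanese and Abel--Jacobi maps, which is part of \parref{cgaj-algrep-exists}.
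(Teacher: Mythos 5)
Your proposal is correct and follows essentially the same route as the paper: factor the Albanese map through $[P]\mapsto [P]-[P_0]$ in $\CH^{m+1}(\FF)_{\mathrm{alg}}$, chase through the commutative diagram of \parref{cgaj-intermediate-jacobian-morphisms} to reduce to computing $\mathbf{L}^*\mathbf{L}_*([P]-[P_0]) = [D_P]-[D_{P_0}]$, and apply $\phi^1_\FF$. The only difference is that you spell out the cycle-level computations of $\mathbf{L}_*$ and $\mathbf{L}^*$ and the algebraic triviality of the intermediate differences more explicitly than the paper does, which is harmless extra care.
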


\begin{proof}
The Albanese morphism on \(\kk\)-points factorizes as
\[
\phi^{m+1}_\FF \circ \operatorname{alb}_{\FF}(\kk) \colon
\FF(\kk) \to \CH^{m+1}(\FF)_{\mathrm{alg}} \to \mathbf{Alb}_{\FF}(\kk)
\]
where the first map is \([P] \mapsto [P] - [P_0]\), and the second map is
the universal regular homomorphism from \parref{cgaj-algrep-exists}\ref{cgaj-algrep-exists.alb}.
The commutative diagram from \parref{cgaj-intermediate-jacobian-morphisms}
then shows that \(\FF \to \mathbf{Pic}^0_{\FF}\) factors through the map
\(\FF(\kk) \to \mathrm{CH}^1(\FF)_{\mathrm{alg}}\) given by
\[
[P] \mapsto
\mathbf{L}^*\mathbf{L}_*([P] - [P_0]) =
[D_P] - [D_{P_0}],
\]
where the actions of \(\mathbf{L}\) are as in \parref{cgaj-action}. Composing
with
\(\phi^1_{\FF} \colon \CH^1(\FF)_{\mathrm{alg}} \to \mathbf{Pic}_{\FF}^0(\kk)\)
gives the result.
\end{proof}

With the fixed Hermitian \(m\)-plane \(P_0 \subset X\), let \(C_0 \subset \FF\)
be the closed subscheme parameterizing \(m\)-planes containing a Hermitian
\((m-1)\)-plane in \(P_0\); in other words, \(C_0\) is the union of the smooth
\(q\)-bic curves \(C_R\) from \parref{cgaj-curves}, where \(R\) ranges over
all Hermitian \((m-1)\)-planes contained in \(P_0\). Let
\[
C \coloneqq \coprod\nolimits_{R \subset P_0} C_R
\]
be the disjoint union of the irreducible components of \(C\) and let
\(\nu \colon C \to C_0\) be the normalization morphism. The following relates
the abelian varieties appearing in
\parref{cgaj-intermediate-jacobian-morphisms} with the Jacobian
\[
\mathbf{Jac}_C \coloneqq \prod\nolimits_R \mathbf{Jac}_{C_R}
\]
of the curve \(C\), defined as the product of the Jacobians of its connected
components:

\begin{Proposition}\label{cgaj-multiplication}
The composite morphism
\[
\Phi \colon
\mathbf{Jac}_C \xrightarrow{\nu_*}
\mathbf{Alb}_{\FF} \xrightarrow{\mathbf{L}_*}
\mathbf{Ab}_X^{m+1} \xrightarrow{\mathbf{L}^*}
\mathbf{Pic}^0_{\FF,\mathrm{red}} \xrightarrow{\nu^*}
\mathbf{Jac}_C
\]
is multiplication by \(q^{2m}\).
\end{Proposition}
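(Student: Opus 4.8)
The plan is to evaluate $\Phi$ on $\kk$-points, which determines it: a homomorphism of abelian varieties over the algebraically closed field $\kk$ is determined by its values on $\kk$-points, and $\mathbf{Jac}_C(\kk) = \prod\nolimits_R \mathbf{Pic}^0_{C_R}(\kk)$ is generated by the differences $[P] - [P']$ of classes of $\kk$-points lying on a common component $C_R$. For such a difference, tracing through the diagram of \parref{cgaj-intermediate-jacobian-morphisms} as in the proof of \parref{cgaj-intjac-F-to-Pic} shows: $\nu_*([P] - [P'])$ agrees with $\phi^{m+1}_\FF([P] - [P'])$; since $C_R$ is connected, $[P] - [P']$ lies in $\CH^{m+1}(\FF)_{\mathrm{alg}}$ and the correspondence $\mathbf{L}$ carries it into $\CH^1(\FF)_{\mathrm{alg}}$, so its image under $\mathbf{L}^* \circ \mathbf{L}_*$ is $\phi^1_\FF([D_P] - [D_{P'}]) = \sO_\FF(D_P - D_{P'})$; and hence the $R'$-component of $\Phi([P] - [P'])$ is $\nu_{R'}^* \sO_\FF(D_P - D_{P'}) = \sO_\FF(D_P - D_{P'})\rvert_{C_{R'}}$ in $\mathbf{Pic}^0_{C_{R'}}(\kk)$. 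Thus everything comes down to the restrictions $\sO_\FF(D_P)\rvert_{C_{R'}}$ for $P \in C_R$ and varying Hermitian $(m-1)$-planes $R, R' \subset P_0$, which are exactly what \parref{cgaj-curves} and \parref{cgaj-curves-P0} compute.

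\textbf{Diagonal component.} Take $R' = R$. As $P \in C_R$ contains $R$, we have $P \cap R = R$, so $k \coloneqq \dim(P \cap R) = m-1$, and $P$ is itself the unique $m$-plane in $X$ through $R$ incident with $P \setminus R$; thus $\bar P = P$ and \parref{cgaj-curves} gives $\sO_\FF(D_P)\rvert_{C_R} \cong \sO_{C_R}([P])^{\otimes q^{2m}} \otimes_{\sO_{C_R}} \sO_{C_R}(-1)^{\otimes (q^{2m}-1)/(q+1)}$. The $\sO_{C_R}(-1)$-factor does not depend on $P$, so it cancels in $\sO_\FF(D_P - D_{P'})\rvert_{C_R} \cong \sO_{C_R}(q^{2m}([P] - [P']))$. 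Hence the $R$-component of $\Phi\rvert_{\mathbf{Jac}_{C_R}}$ is multiplication by $q^{2m}$.

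\textbf{Off-diagonal component.} Take $R' \neq R$, both Hermitian $(m-1)$-planes in the $m$-plane $P_0 \cong \PP^m$, so that $R \cap R'$ is an $(m-2)$-plane. If $P = P_0$, then $P \cap R' = R' \subsetneq P_0 = P \cap P_0$. If $P \neq P_0$, then $P$ and $P_0$ are distinct $m$-planes of $\PP^{2m+1}$ both containing the $(m-1)$-plane $R$, forcing $P \cap P_0 = R$, whereas $P \cap R' \subseteq (P \cap P_0) \cap R' = R \cap R'$ has dimension at most $m-2 < m-1$. Either way $P \cap R' \subsetneq P \cap P_0$, so \parref{cgaj-curves-P0}, applied with the Hermitian $m$-plane $P_0 \supset R'$, gives $\sO_\FF(D_P)\rvert_{C_{R'}} \cong \sO_{C_{R'}}([P_0])$, independently of $P$. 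Therefore $\sO_\FF(D_P - D_{P'})\rvert_{C_{R'}} \cong \sO_{C_{R'}}$, so the $(R, R')$-component of $\Phi$ vanishes. Combining the two cases, $\Phi$ is multiplication by $q^{2m}$ on $\mathbf{Jac}_C = \prod\nolimits_R \mathbf{Jac}_{C_R}$.

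\textbf{Main obstacle.} The principal obstacle is the off-diagonal vanishing: it hinges on the geometric fact that, as $P$ ranges over $C_R$, the plane $\bar P$ of \parref{cgaj-curves} through $R'$ incident with $P \setminus R'$ is constantly the fixed Hermitian plane $P_0$, which in turn rests on the dimension count $P \cap P_0 = R$ for $P \neq P_0$. The remaining work is bookkeeping — checking that the reduction to $\kk$-points is legitimate and that the identifications of $\nu_*$ with $\phi^{m+1}_\FF$, of the $\mathbf{L}$-action on Chow groups, and of $\nu^*$ with the componentwise restrictions are all compatible — which is routine given \parref{cgaj-algrep-exists}, \parref{cgaj-intermediate-jacobian-morphisms}, and \parref{cgaj-intjac-F-to-Pic}.
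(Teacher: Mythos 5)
Your proposal is correct and follows essentially the same route as the paper: reduce to \(\kk\)-points generating \(\mathbf{Jac}_C\), identify the relevant component of \(\Phi\) applied to a difference of points with the restriction \(\sO_\FF(D_P - D_{P'})\rvert_{C_{R'}}\) via \parref{cgaj-intjac-F-to-Pic}, and compute these restrictions with \parref{cgaj-curves} and \parref{cgaj-curves-P0}, splitting into the diagonal case \(R' = R\) and the off-diagonal case \(R' \neq R\) (where the key point, as in the paper, is \(P \cap R' \subsetneq P \cap P_0\)). The only cosmetic difference is that the paper centres everything at the Hermitian plane \([P_0]\) and invokes \(\sO_{C_R}(1) \cong \sO_{C_R}([P_0])^{\otimes (q+1)}\) from \parref{incidences-hermitian-perp} to handle the Pl\"ucker twist in the diagonal case, whereas you cancel that \(P\)-independent \(\sO_{C_R}(-1)\)-factor by working with differences of points on the same component.
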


\begin{proof}
Consider a \(\kk\)-point of a connected component \(C_R\) of \(C\) and identify
it with a \(\kk\)-point \([P]\) of its image in \(\FF\). Let
\(\mathrm{alb}_C \colon C \to \mathbf{Jac}_C\) be the Albanese map of \(C\),
which on the connected component \(C_R\), is the usual Abel--Jacobi map
centred at \([P_0]\) into the \(R\)-th factor of \(\mathbf{Jac}_C\) and
the constant map onto the identity otherwise. Combined with
\parref{cgaj-intjac-F-to-Pic}, this means that the map
\(\Phi \circ \mathrm{alb}_C \colon C \to \mathbf{Jac}_C\) acts as
\[
\Phi(\mathrm{alb}_C([P]))
= \nu^*\sO_\FF(D_P - D_{P_0}).
\]
Since the image of \(C\) under \(\mathrm{alb}_C\) generates \(\mathbf{Jac}_C\),
that \(\Phi\) is multiplication by \(q^{2m}\) will follow upon showing that,
for each Hermitian \((m-1)\)-plane \(R' \subset P_0\),
\[
\sO_\FF(D_P - D_{P_0})\rvert_{C_{R'}} \cong
\begin{dcases*}
\sO_{C_{R'}} & if \(R' \neq R\), and \\
\sO_{C_R}([P] - [P_0])^{\otimes q^{2m}} & if \(R' = R\).
\end{dcases*}
\]
Note that \parref{cgaj-curves-P0} implies
\(\sO_\FF(D_{P_0})\rvert_{C_{R'}} \cong \sO_{C_{R'}}([P_0])\) for all \(R'\).
When \(R' \neq R\), since \(P \cap R' \subsetneq P \cap P_0\), applying
\parref{cgaj-curves-P0} once more gives the conclusion. When \(R' = R\),
\parref{cgaj-curves} gives
\[
\sO_{\FF}(D_P)\rvert_{C_R} \cong
\sO_{C_R}([P])^{\otimes q^{2m}} \otimes_{\sO_{C_R}} \sO_{C_R}(-1)^{\otimes (q^{2m}-1)/(q+1)}.
\]
The conclusion now follows upon using the fact
\(\sO_{C_R}(1) \cong \sO_{C_R}([P_0])^{\otimes q+1}\) from
\parref{incidences-hermitian-perp}.
\end{proof}

Putting everything together shows that each of the abelian varieties in
question are related to one another via purely inseparable isogenies:

\begin{Theorem}\label{cgaj-result}
Each of the morphisms of abelian varieties
\[
\nu_* \colon \mathbf{Jac}_C \to \mathbf{Alb}_{\FF}, \quad
\mathbf{L}_* \colon \mathbf{Alb}_{\FF} \to \mathbf{Ab}_X^{m+1},
\quad
\mathbf{L}^* \colon \mathbf{Ab}_X^{m+1} \to \mathbf{Pic}_{\FF,\mathrm{red}}^0,
\quad
\nu^* \colon \mathbf{Pic}^0_{\FF,\mathrm{red}} \to \mathbf{Jac}_C
\]
is a purely inseparable \(p\)-power isogeny.
\end{Theorem}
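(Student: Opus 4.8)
The plan is to deduce the statement entirely from \parref{cgaj-multiplication}, which exhibits the four-fold composite
\[
\Phi = \nu^* \circ \mathbf{L}^* \circ \mathbf{L}_* \circ \nu_* \colon \mathbf{Jac}_C \to \mathbf{Jac}_C
\]
as multiplication by \(q^{2m}\), hence as an isogeny whose degree \((q^{2m})^{2\dim\mathbf{Jac}_C}\) is a power of \(p\). The argument then proceeds in four steps: first, check that the four abelian varieties \(\mathbf{Jac}_C\), \(\mathbf{Alb}_{\FF}\), \(\mathbf{Ab}_X^{m+1}\), \(\mathbf{Pic}^0_{\FF,\mathrm{red}}\) all have the same dimension; second, conclude by a short diagram chase along \(\Phi\) that each of \(\nu_*\), \(\mathbf{L}_*\), \(\mathbf{L}^*\), \(\nu^*\) is an isogeny; third, note that the degrees of these four isogenies multiply to a power of \(p\), hence each is itself a power of \(p\); fourth, invoke supersingularity to upgrade ``\(p\)-power degree'' to ``purely inseparable''.

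For the dimension count, \(\dim\mathbf{Pic}^0_{\FF,\mathrm{red}} = \dim\mathbf{Alb}_{\FF} = \tfrac{1}{2}b_1(\FF) = \tfrac{1}{2}\bar{q}[2m+2]_{\bar{q}}\) by \parref{dl-first-betti}; since \(X\) is a supersingular Fermat variety, \parref{cgaj-algrep-exists}\ref{cgaj-algrep-exists.supersingular} gives \(2\dim\mathbf{Ab}_X^{m+1} = \dim_{\mathbf{Q}_\ell}\mathrm{H}^{2m+1}_{\mathrm{\acute{e}t}}(X,\mathbf{Q}_\ell) = b_{2m+1}(X) = \bar{q}[2m+2]_{\bar{q}}\) by \parref{dl-hypersurface-cohomology}, using that the middle cohomology of \(X\) is primitive. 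Finally, \(\mathbf{Jac}_C = \prod_R \mathbf{Jac}_{C_R}\) is a product of Jacobians of smooth plane curves of degree \(q+1\), indexed by the Hermitian \((m-1)\)-planes of \(P_0\), that is, by the \(\mathbf{F}_{q^2}\)-rational hyperplanes of \(P_0 \cong \PP^m\); hence \(\dim\mathbf{Jac}_C = \tfrac{q^{2m+2}-1}{q^2-1}\cdot\binom{q}{2}\), and a direct simplification identifies this with \(\tfrac{1}{2}\bar{q}[2m+2]_{\bar{q}}\). Thus all four abelian varieties share the common dimension \(g \coloneqq \tfrac{1}{2}\bar{q}[2m+2]_{\bar{q}}\).

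With equidimensionality in hand, chase \(\Phi\) from left to right: \(\ker\nu_* \subseteq \ker\Phi\) is finite and \(\dim\mathbf{Jac}_C = \dim\mathbf{Alb}_{\FF}\), so \(\nu_*\) is surjective, hence an isogeny; since \(\nu_*\) is surjective, finiteness of \(\ker(\mathbf{L}_*\circ\nu_*) \subseteq \ker\Phi\) forces \(\ker\mathbf{L}_*\) finite, so \(\mathbf{L}_*\) is an isogeny; repeating the same reasoning with \(\mathbf{L}_*\circ\nu_*\) surjective shows \(\mathbf{L}^*\) is an isogeny, and with \(\mathbf{L}^*\circ\mathbf{L}_*\circ\nu_*\) surjective shows \(\nu^*\) is an isogeny. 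Degrees of isogenies are multiplicative, so \(\deg\nu_*\cdot\deg\mathbf{L}_*\cdot\deg\mathbf{L}^*\cdot\deg\nu^* = \deg[q^{2m}] = q^{4mg}\), a power of \(p\); since each factor is a positive integer dividing a prime power, each of the four degrees is a power of \(p\).

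Finally, all four abelian varieties are supersingular: they are mutually isogenous by the previous step, and \(\mathbf{Ab}_X^{m+1}\) is supersingular because \(X\) is a supersingular Fermat variety (equivalently, each \(\mathbf{Jac}_{C_R}\) is the Jacobian of a supersingular plane curve of degree \(q+1\)). It then remains to invoke the elementary fact that an isogeny \(f\colon A \to B\) of \(p\)-power degree out of a supersingular abelian variety over an algebraically closed field of characteristic \(p\) is purely inseparable: \(\ker f\) is a finite group scheme of \(p\)-power order, so \(\ker f \subseteq A[p^N]\) for some \(N\), and supersingularity — the vanishing of the \(p\)-rank — forces \(A[p^N]\), and hence its closed subgroup scheme \(\ker f\), to be connected, so that \(\ker f\) is infinitesimal. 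This gives all four assertions. The only genuinely non-formal input beyond \parref{cgaj-multiplication} is the dimension comparison, and the one new point there is the combinatorial identity matching \(\dim\mathbf{Jac}_C\) with \(\tfrac{1}{2}\bar{q}[2m+2]_{\bar{q}}\); everything else rests on \parref{cgaj-multiplication}, the Betti-number computations of \S\parref{section-dl}, and standard facts about isogenies and supersingular abelian varieties.
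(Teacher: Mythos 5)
Your proposal is correct and follows essentially the same route as the paper: equate the dimensions of the four abelian varieties (via \parref{dl-first-betti}, \parref{cgaj-algrep-exists}, \parref{dl-hypersurface-cohomology}, and the count of Hermitian hyperplanes in \(P_0\) times the genus \(\tfrac{q(q-1)}{2}\) of a \(q\)-bic plane curve), use \parref{cgaj-multiplication} to see that the composite is multiplication by \(q^{2m}\), and conclude pure inseparability from supersingularity of the \(q\)-bic curves, hence of \(\mathbf{Jac}_C\). The only difference is cosmetic: you transfer supersingularity across the isogenies and treat each factor's kernel inside \(A[p^N]\) separately, whereas the paper deduces pure inseparability of all constituents directly from that of \([q^{2m}]\) on the supersingular \(\mathbf{Jac}_C\).
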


\begin{proof}
Each of the abelian varieties \(\mathbf{Ab}_X^{m+1}\), \(\mathbf{Alb}_{\FF}\),
\(\mathbf{Pic}_{\FF,\mathrm{red}}^0\), and \(\mathbf{Jac}_C\) are
of dimension
\[
\frac{1}{2}\bar{q}[2m+2]_{\bar{q}} =
\frac{1}{2}\dim_{\mathbf{Q}_\ell} \mathrm{H}^{2m+1}_{\mathrm{\acute{e}t}}(X,\mathbf{Q}_\ell) =
\frac{1}{2}\dim_{\mathbf{Q}_\ell} \mathrm{H}^1_{\mathrm{\acute{e}t}}(\FF,\mathbf{Q}_\ell).
\]
This is
\parref{cgaj-algrep-exists}\ref{cgaj-algrep-exists.supersingular}
for \(\mathbf{Ab}_X^{m+1}\); this follows from \parref{dl-first-betti} for
the Picard and Albanese varieties of \(\FF\); and, for the Jacobian, compute:
\[
\dim\mathbf{Jac}_C =
\dim\mathbf{Jac}_{C_R} \cdot \#\check{\PP}^m(\FF_{q^2}) =
\frac{q(q-1)}{2} \cdot \frac{q^{2m+2}-1}{q^2-1} =
\frac{1}{2} q \cdot \frac{q^{2m+2}-1}{q+1} =
\frac{1}{2}\bar{q}[2m+2]_{\bar{q}}.
\]
Since \(\Phi\) is an \(p\)-power isogeny by \parref{cgaj-multiplication}, it
follows that each of the maps factoring \(\Phi\) are themselves \(p\)-power
isogenies. To see that each are purely inseparable, note that the smooth
\(q\)-bic curves are supersingular: this was known to Weil, as in
\cite{Weil:Fermat, Weil:Grossenchar}, though see also \cite[\S3]{SK:Fermat}.
Thus \(\mathbf{Jac}_C\), being the product of Jacobians of smooth \(q\)-bic
curves, is itself supersingular. Whence multiplication by \(q^{2m}\) is purely
inseparable, and so each of the constituent morphisms are as well.
\end{proof}

I feel that the case \(m = 1\) for \(q\)-bic threefolds is best understood in
analogy with cubic threefolds over the complex numbers: that the
maps in \parref{cgaj-multiplication} are purely inseparable isogenies means
that they are close to being isomorphisms; the statement about \(\mathbf{L}_*
\colon \mathbf{Alb}_\FF \to \mathbf{Ab}_X^2\) is analogous to the result of
Clemens and Griffiths in \cite[Theorem 11.19]{CG} saying that the Abel--Jacobi
map from the Albanese of the surface of lines on a complex cubic threefold is
an isomorphism onto the Hodge-theoretic intermediate Jacobian of the
hypersurface; the statement regarding
\(\nu_* \colon \mathbf{Jac}_C \to \mathbf{Alb}_S\) is an analogue of Mumford's
identification between the Albanese of the Fano surface with a
Prym variety in \cite[Appedix C]{CG}; and the statement regarding
\(\nu^* \circ \mathbf{L}^* \colon \mathbf{Ab}^2_X \to \mathbf{Jac}_C\) may be
seen as an analogue of Murre's identification in \cite[Theorem
10.8]{Murre:Prym} between the group of algebraically trivial \(1\)-cycles on a
cubic and the Prym. The latter two analogies will be explained further in
future work.

\bibliographystyle{amsalpha}
\bibliography{main}

\newcommand{\etalchar}[1]{$^{#1}$}
\providecommand{\bysame}{\leavevmode\hbox to3em{\hrulefill}\thinspace}
\providecommand{\MR}{\relax\ifhmode\unskip\space\fi MR }
% \MRhref is called by the amsart/book/proc definition of \MR.
\providecommand{\MRhref}[2]{%
  \href{http://www.ams.org/mathscinet-getitem?mr=#1}{#2}
}
\providecommand{\href}[2]{#2}
\begin{thebibliography}{ACMV23}

\bibitem[ACMV17]{ACMV:Descending}
Jeffrey~D. Achter, Sebastian Casalaina-Martin, and Charles Vial, \emph{On
  descending cohomology geometrically}, Compos. Math. \textbf{153} (2017),
  no.~7, 1446--1478.

\bibitem[ACMV23]{ACMV:Functorial}
\bysame, \emph{A functorial approach to regular homomorphisms}, Algebr. Geom.
  \textbf{10} (2023), no.~1, 87--129.

\bibitem[AK77]{AK:Fano}
Allen~B. Altman and Steven~L. Kleiman, \emph{Foundations of the theory of
  {F}ano schemes}, Compositio Math. \textbf{34} (1977), no.~1, 3--47.

\bibitem[BC66]{BC:Hermitian}
Raj~C. Bose and Indra~M. Chakravarti, \emph{Hermitian varieties in a finite
  projective space {${\rm PG}(N,\,q^{2})$}}, Canadian J. Math. \textbf{18}
  (1966), 1161--1182.

\bibitem[Bea77]{Beauville:Prym}
Arnaud Beauville, \emph{Vari\'{e}t\'{e}s de {P}rym et jacobiennes
  interm\'{e}diaires}, Ann. Sci. \'{E}cole Norm. Sup. (4) \textbf{10} (1977),
  no.~3, 309--391.

\bibitem[Bea90]{Beauville:Moduli}
\bysame, \emph{Sur les hypersurfaces dont les sections hyperplanes sont \`a
  module constant}, The {G}rothendieck {F}estschrift, {V}ol. {I}, Progr. Math.,
  vol.~86, Birkh\"{a}user Boston, Boston, MA, 1990, pp.~121--133.

\bibitem[B{\'e}d85]{Bedard}
Robert B{\'e}dard, \emph{On the {B}rauer liftings for modular representations},
  J. Algebra \textbf{93} (1985), no.~2, 332--353.

\bibitem[BPRS21]{BPRS:Lines}
Anna Brosowsky, Janet Page, Tim Ryan, and Karen~E. Smith, \emph{Geometry of
  {S}mooth {E}xtremal {S}urfaces}, preprint at
  \href{https://arxiv.org/abs/2110.15908}{arXiv:2110.15908} (2021).

\bibitem[BVdV79]{BVV:Fano}
Wolf Barth and Antonius Van~de Ven, \emph{Fano varieties of lines on
  hypersurfaces}, Arch. Math. (Basel) \textbf{31} (1978/79), no.~1, 96--104.

\bibitem[CG72]{CG}
C.~Herbert Clemens and Phillip~A. Griffiths, \emph{The intermediate {J}acobian
  of the cubic threefold}, Ann. of Math. (2) \textbf{95} (1972), 281--356.

\bibitem[Che22]{thesis}
Raymond Cheng, \emph{Geometry of q-bic {H}ypersurfaces}, ProQuest LLC, Ann
  Arbor, MI, 2022, Thesis (Ph.D.)--Columbia University.

\bibitem[Che23]{qbic-forms}
\bysame, \emph{$q$-bic forms}, preprint at
  \href{https://arxiv.org/abs/2301.09929}{arXiv:2301.09929} (2023).

\bibitem[Che24a]{qbic-threefolds}
\bysame, \emph{{$q$}-bic threefolds and their surface of lines}, preprint at
  \href{https://arxiv.org/abs/2402.09884}{arXiv:2402.09884} (2024).

\bibitem[Che24b]{ratconn}
\bysame, \emph{Minimal free rational curves in fano hypersurfaces have high
  degree}, in preparation (2024).

\bibitem[Col79]{Collino}
Alberto Collino, \emph{Lines on quartic threefolds}, J. London Math. Soc. (2)
  \textbf{19} (1979), no.~2, 257--267.

\bibitem[Con06]{Conduche}
Denis Conduch\'e, \emph{Courbes rationnelles et hypersurfaces de l'espace
  projectif}, Ph.D. thesis, Universit\'e Louis Pasteur, 2006.

\bibitem[Deb96]{Debarre:Connectivity}
Olivier Debarre, \emph{Th\'{e}or\`emes de connexit\'{e} pour les produits
  d'espaces projectifs et les grassmanniennes}, Amer. J. Math. \textbf{118}
  (1996), no.~6, 1347--1367.

\bibitem[Deb01]{Debarre:HDAG}
\bysame, \emph{Higher-dimensional algebraic geometry}, Universitext,
  Springer-Verlag, New York, 2001.

\bibitem[DK73]{SGAVII}
Pierre Deligne and Nicholas Katz, \emph{S\'{e}minaire de {G}\'{e}om\'{e}trie
  {A}lg\'{e}brique du {B}ois-{M}arie 1967--1969 - {G}roupes de {M}onodromie en
  {G}\'{e}om\'{e}trie {A}lg\'{e}brique. {II} {(SGA 7 II)}}, Lecture Notes in
  Mathematics, Vol. 340, Springer-Verlag, Berlin-New York, 1973.

\bibitem[DL76]{DL}
Pierre Deligne and George Lusztig, \emph{Representations of reductive groups
  over finite fields}, Ann. of Math. (2) \textbf{103} (1976), no.~1, 103--161.

\bibitem[DM98]{DM:Fano}
Olivier Debarre and Laurent Manivel, \emph{Sur la vari\'{e}t\'{e} des espaces
  lin\'{e}aires contenus dans une intersection compl\`ete}, Math. Ann.
  \textbf{312} (1998), no.~3, 549--574.

\bibitem[Eke03]{Ekedahl}
Torsten Ekedahl, \emph{On non-liftable {C}alabi--{Y}au threefolds},
  \href{https://arxiv.org/abs/math/0306435}{arXiv:0306435}, 2003.

\bibitem[Enn63]{Ennola}
Veikko Ennola, \emph{On the characters of the finite unitary groups.}, Ann.
  Acad. Sci. Fenn. Ser. A I No. (1963), 35.

\bibitem[Fak02]{Fakhruddin:Supersingular}
Najmuddin Fakhruddin, \emph{On the {C}how groups of supersingular varieties},
  Canad. Math. Bull. \textbf{45} (2002), no.~2, 204--212. \MR{1904084}

\bibitem[Ful98]{Fulton}
William Fulton, \emph{Intersection theory}, second ed., Ergebnisse der
  Mathematik und ihrer Grenzgebiete. 3. Folge. A Series of Modern Surveys in
  Mathematics, vol.~2, Springer-Verlag, Berlin, 1998.

\bibitem[He09]{He:PartialFlag}
Xuhua He, \emph{{$G$}-stable pieces and partial flag varieties}, Representation
  theory, Contemp. Math., vol. 478, Amer. Math. Soc., Providence, RI, 2009,
  pp.~61--70.

\bibitem[Hef85]{Hefez:Thesis}
Abramo Hefez, \emph{Duality for projective varieties}, Ph.D. thesis,
  Massachusetts Institute of Technology, 1985.

\bibitem[Hir79]{Hirschfeld:Geometries}
James W.~P. Hirschfeld, \emph{Projective geometries over finite fields}, Oxford
  Mathematical Monographs, The Clarendon Press, Oxford University Press, New
  York, 1979.

\bibitem[Hir99]{Hirokado}
Masayuki Hirokado, \emph{A non-liftable {C}alabi-{Y}au threefold in
  characteristic {$3$}}, Tohoku Math. J. (2) \textbf{51} (1999), no.~4,
  479--487.

\bibitem[HLZ19]{HLZ}
Xuhua He, Chao Li, and Yihang Zhu, \emph{Fine {D}eligne-{L}usztig varieties and
  arithmetic fundamental lemmas}, Forum Math. Sigma \textbf{7} (2019), e47, 55.

\bibitem[HM78]{HM:TT-Lemma}
Ryoshi Hotta and Kiyoshi Matsui, \emph{On a lemma of {T}ate-{T}hompson},
  Hiroshima Math. J. \textbf{8} (1978), no.~2, 255--268.

\bibitem[Huy23]{Huybrechts:Cubics}
Daniel Huybrechts, \emph{The geometry of cubic hypersurfaces}, Cambridge
  Studies in Advanced Mathematics, vol. 206, Cambridge University Press,
  Cambridge, 2023.

\bibitem[Kah21]{Kahn:Jacobian}
Bruno Kahn, \emph{On the universal regular homomorphism in codimension 2}, Ann.
  Inst. Fourier (Grenoble) \textbf{71} (2021), no.~2, 843--848.

\bibitem[KKP{\etalchar{+}}22]{KKPSSW:F-Pure}
Zhibek Kadyrsizova, Jennifer Kenkel, Janet Page, Jyoti Singh, Karen~E. Smith,
  Adela Vraciu, and Emily~E. Witt, \emph{Lower bounds on the {F}-pure threshold
  and extremal singularities}, Trans. Amer. Math. Soc. Ser. B \textbf{9}
  (2022), 977--1005.

\bibitem[Kol96]{Kollar:RationalCurves}
J\'{a}nos Koll\'{a}r, \emph{Rational curves on algebraic varieties}, Ergebnisse
  der Mathematik und ihrer Grenzgebiete. 3. Folge. A Series of Modern Surveys
  in Mathematics [Results in Mathematics and Related Areas. 3rd Series. A
  Series of Modern Surveys in Mathematics], vol.~32, Springer-Verlag, Berlin,
  1996.

\bibitem[KP91]{KP:Gauss}
Steven Kleiman and Ragni Piene, \emph{On the inseparability of the {G}auss
  map}, Enumerative algebraic geometry ({C}openhagen, 1989), Contemp. Math.,
  vol. 123, Amer. Math. Soc., Providence, RI, 1991, pp.~107--129.

\bibitem[Lan19]{Langer:Drinfeld}
Adrian Langer, \emph{Birational geometry of compactifications of {D}rinfeld
  half-spaces over a finite field}, Adv. Math. \textbf{345} (2019), 861--908.

\bibitem[Li23]{Li:DL}
Chao Li, \emph{Degrees of unitary {D}eligne--{L}usztig varieties}, preprint at
  \href{https://arxiv.org/abs/2301.08886}{arXiv:2301:08886} (2023).

\bibitem[LZ22]{LZ:Kudla}
Chao Li and Wei Zhang, \emph{Kudla-{R}apoport cycles and derivatives of local
  densities}, J. Amer. Math. Soc. \textbf{35} (2022), no.~3, 705--797.

\bibitem[LTX{\etalchar{+}}22]{LTXZZ}
Yifeng Liu, Yichao Tian, Liang Xiao, Wei Zhang, and Xinwen Zhu, \emph{On the
  {B}eilinson-{B}loch-{K}ato conjecture for {R}ankin-{S}elberg motives},
  Invent. Math. \textbf{228} (2022), no.~1, 107--375.

\bibitem[Lus76a]{Lusztig:Unipotent}
George Lusztig, \emph{On the finiteness of the number of unipotent classes},
  Invent. Math. \textbf{34} (1976), no.~3, 201--213.

\bibitem[Lus76b]{Lusztig:Green}
\bysame, \emph{On the {G}reen polynomials of classical groups}, Proc. London
  Math. Soc. (3) \textbf{33} (1976), no.~3, 443--475.

\bibitem[Lus07]{Lusztig:PartialFlag}
\bysame, \emph{A class of perverse sheaves on a partial flag manifold},
  Represent. Theory \textbf{11} (2007), 122--171.

\bibitem[Lus77]{Lusztig:Frobenius}
\bysame, \emph{Coxeter orbits and eigenspaces of {F}robenius}, Invent. Math.
  \textbf{38} (1976/77), no.~2, 101--159.

\bibitem[Mur72]{Murre:Prym}
Jacob~P. Murre, \emph{Algebraic equivalence modulo rational equivalence on a
  cubic threefold}, Compositio Math. \textbf{25} (1972), 161--206.

\bibitem[Mur83]{Murre:Jacobian-CR}
\bysame, \emph{Un r\'{e}sultat en th\'{e}orie des cycles alg\'{e}briques de
  codimension deux}, C. R. Acad. Sci. Paris S\'{e}r. I Math. \textbf{296}
  (1983), no.~23, 981--984.

\bibitem[Mur85]{Murre:Jacobian}
\bysame, \emph{Applications of algebraic {$K$}-theory to the theory of
  algebraic cycles}, Algebraic geometry, {S}itges ({B}arcelona), 1983, Lecture
  Notes in Math., vol. 1124, Springer, Berlin, 1985, pp.~216--261.

\bibitem[Nom95]{Noma}
Atsushi Noma, \emph{Hypersurfaces with smooth dual varieties}, Amer. J. Math.
  \textbf{117} (1995), no.~6, 1507--1515.

\bibitem[Sam60]{Samuel}
Pierre Samuel, \emph{Relations d'\'{e}quivalence en g\'{e}om\'{e}trie
  alg\'{e}brique.}, Proc. {I}nternat. {C}ongress {M}ath. 1958., ,, 1960,
  pp.~470--487.

\bibitem[Seg65]{Segre:Hermitian}
Beniamino Segre, \emph{Forme e geometrie hermitiane, con particolare riguardo
  al caso finito}, Ann. Mat. Pura Appl. (4) \textbf{70} (1965), 1--201.

\bibitem[She12]{Shen:Fermat}
Mingmin Shen, \emph{Rational curves on {F}ermat hypersurfaces}, C. R. Math.
  Acad. Sci. Paris \textbf{350} (2012), no.~15-16, 781--784.

\bibitem[Shi74]{Shioda:Fermat}
Tetsuji Shioda, \emph{An example of unirational surfaces in characteristic
  {$p$}}, Math. Ann. \textbf{211} (1974), 233--236.

\bibitem[Shi77]{Shioda:Unirationality}
\bysame, \emph{Some results on unirationality of algebraic surfaces}, Math.
  Ann. \textbf{230} (1977), no.~2, 153--168.

\bibitem[Shi01]{Shimada:Lattices}
Ichiro Shimada, \emph{Lattices of algebraic cycles on {F}ermat varieties in
  positive characteristics}, Proc. London Math. Soc. (3) \textbf{82} (2001),
  no.~1, 131--172.

\bibitem[SK79]{SK:Fermat}
Tetsuji Shioda and Toshiyuki Katsura, \emph{On {F}ermat varieties}, Tohoku
  Math. J. (2) \textbf{31} (1979), no.~1, 97--115.

\bibitem[{Stacks}]{stacks-project}
The {Stacks Project Authors}, \emph{\textit{Stacks Project}},
  {\scriptsize\url{https://stacks.math.columbia.edu}}.

\bibitem[Sta12]{Stanley:ECI}
Richard~P. Stanley, \emph{Enumerative combinatorics. {V}olume 1}, second ed.,
  Cambridge Studies in Advanced Mathematics, vol.~49, Cambridge University
  Press, Cambridge, 2012.

\bibitem[Tat65]{Tate:Conjecture}
John~T. Tate, \emph{Algebraic cycles and poles of zeta functions}, Arithmetical
  {A}lgebraic {G}eometry ({P}roc. {C}onf. {P}urdue {U}niv., 1963), Harper \&
  Row, New York, 1965, pp.~93--110.

\bibitem[Vol10]{Vollaard}
Inken Vollaard, \emph{The supersingular locus of the {S}himura variety for
  {${\rm GU}(1,s)$}}, Canad. J. Math. \textbf{62} (2010), no.~3, 668--720.

\bibitem[Wal56]{Wallace:Duality}
Andrew~H. Wallace, \emph{Tangency and duality over arbitrary fields}, Proc.
  London Math. Soc. (3) \textbf{6} (1956), 321--342.

\bibitem[Wei49]{Weil:Fermat}
Andr\'{e} Weil, \emph{Numbers of solutions of equations in finite fields},
  Bull. Amer. Math. Soc. \textbf{55} (1949), 497--508.

\bibitem[Wei52]{Weil:Grossenchar}
\bysame, \emph{Jacobi sums as ``{G}r\"{o}ssencharaktere''}, Trans. Amer. Math.
  Soc. \textbf{73} (1952), 487--495.

\end{thebibliography}
\end{document}